\numberwithin{equation}{section}
\newtheorem{Theorem}{Theorem}[section]
\newtheorem{Definition}[Theorem]{Definition}
\newtheorem{Proposition}[Theorem]{Proposition}
\newtheorem{Lemma}[Theorem]{Lemma}
\newtheorem{Corollary}[Theorem]{Corollary}
\newtheorem{Remark}[Theorem]{Remark}
\newcommand{\N}{\mathbb{N}}
\newcommand{\Z}{\mathbb{Z}}
\newcommand{\R}{\mathbb{R}}
\newcommand{\norm}[1]{\lVert #1 \rVert}	
\newcommand{\ra}{\rightarrow}
\newcommand\restr[2]{{
		\left.\kern-\nulldelimiterspace 
		#1 
		\vphantom{\big|} 
		\right|_{#2} 
}}
\begin{document}
	
	\title{Central limit theorems for stationary random fields under weak dependence with application to ambit and mixed moving average fields \\
	}
	\author{Imma Valentina Curato, Robert Stelzer and Bennet Str{\"o}h\footnote{Ulm University, Institute of Mathematical Finance, Helmholtzstra\ss e 18, 89069 Ulm, Germany. Emails: imma.curato@uni-ulm.de, robert.stelzer@uni-ulm.de, bennet.stroeh@uni-ulm.de. }}

	\maketitle
	
	\textwidth=160mm \textheight=225mm \parindent=8mm \frenchspacing
	\vspace{3mm}

	\begin{abstract}
We obtain central limit theorems for stationary random fields employing a novel measure of dependence called $\theta$-lex weak dependence. We show that this dependence notion is more general than strong mixing, i.e., it applies to a broader class of models. Moreover, we discuss hereditary properties for $\theta$-lex and $\eta$-weak dependence and illustrate the possible applications of the weak dependence notions to the study of the asymptotic properties of stationary random fields.
Our general results apply to mixed moving average fields (MMAF in short) and ambit fields. We show general conditions such that MMAF and ambit fields, with the volatility field being an MMAF or a $p$-dependent random field, are weakly dependent. For all the models mentioned above, we give a complete characterization of their weak dependence coefficients and sufficient conditions to obtain the asymptotic normality of their sample moments. Finally, we give explicit computations of the weak dependence coefficients of MSTOU processes and analyze under which conditions the developed asymptotic theory applies to CARMA fields.

	\end{abstract}
	
	\noindent
	{\it MSC 2020: primary 60G10, 60G57, 60G60,62M40; 
		secondary 62F10, 62M30.}  
	\\
	\\
	{\it Keywords: stationary random fields, weak dependence, central limit theorems, mixed moving average fields, CARMA fields, ambit fields.}
	
\section{Introduction}
Many modern statistical applications consider the modeling of phenomena evolving in time and/or space with either a countable or uncountable index set.
To this end, we can employ random fields on $\Z^m$ or $\R^m$ which are defined, for example, as solutions of recurrence equations, e.g., in \cite{DT2007}, or stochastic partial differential equations \cite{B2019b, CK2015, P2018}. Noticeable examples of the latter come from the class of ambit and mixed moving average fields. 

The mixed moving average fields, MMAF in short, are defined as 
\begin{equation}
	\label{begin}
	X_t=\int_{S} \int_{\R^m} f(A,t-s) \, \Lambda(dA,ds),\,\,\, t \in \R^m,
\end{equation}
where $S$ is a Polish space, $f$ a deterministic function called kernel and $\Lambda$ a L\'evy basis.
The above model encompasses Gaussian and non-Gaussian random fields by choosing the L\'evy basis $\Lambda$.
Ambit fields are defined by considering an additional multiplicative random function in the integrand (\ref{begin}) called volatility or intermittency field. However, an ambit field is typically defined without the variable $A$ in its kernel function. We refer the reader to \cite{BBV2018} for a comprehensive introduction to ambit fields which provide a rich class of spatio-temporal models on $\R \times \R^m$. Overall, MMAF and ambit fields are used in many applications throughout different disciplines, like geophysics \cite{H2002}, brain imaging \cite{JN2013}, physics \cite{BS2004}, biology \cite{BJJS2007,BNS2007}, economics and finance \cite{BBV2010,BBV2015,BM2017,NV2015,NV2017}. 
The generality and flexibility of these models motivate an in-depth analysis of their asymptotic properties. 

Central limit theorems for moving average fields, a sub-class of MMAF, are discussed in \cite{B2019} where the author shows the asymptotic normality of its sample mean and autocovariance. However, we do not pursue this approach because it is not directly applicable to the study of higher-order sample moments. Under strong mixing conditions, several central limit theorems for stationary random fields are available in the literature, see \cite{B1982, C1991, D1998, IL89, M1999, N1988}. In \cite{IL89}, we also find some of the first results related to the analysis of central limit theorems for autocovariance functions. Note that, in general, caution must be used when applying some of the classical strong mixing dependence notions to random fields. We refer to \cite{B1989} and \cite[Chapter 29]{B2007} for a thorough investigation on this point. The above said, for an MMAF on $\R$, i.e., a mixed moving average process, several difficulties already arise in showing that it is strongly mixing, see \cite{CS2018}. Usually, strong mixing is established by using a Markovian representation and showing geometric ergodicity of it. In turn, this often requires smoothness conditions on the driving random noise, and it is well-known that even autoregressive processes of order one are not strongly mixing when the distribution of the noise is not sufficiently regular, see \cite{A1984}. For $m \geq 2$, a Gaussian MMAF on $\R^m$ satisfying the conditions of \cite[Theorem $7$, pg. 73]{R1985} is strong mixing. However, for general driving L\'evy bases, no results in the literature can be found regarding the strong mixing of MMAF. Sharp central limit results for stationary random fields can also be obtained under the dependence notion of association (see \cite{BS2007, N1980} for a comprehensive introduction on this topic). However, in this case, central limit theorems for MMAF hold just under restrictive conditions on the kernel function $f$ in (\ref{begin}), see, e.g., \cite[Theorem $3.27$]{BS2007}. Moreover, association is inherited only under monotone functions, restricting the possible extension of its related asymptotic theory.

Concerning purely temporal ambit fields, i.e., L\'evy semistationary processes, in \cite{BCP2011,BPS2014,BHP2018} the authors obtain infill asymptotic results for this class of processes, that is, under the assumption that the number of observations in a given interval approaches to infinity. For ambit fields on $\R \times \R^m$ with $m \geq 1$ where $\Lambda$ is of Gaussian type and the volatility field is independent of $\Lambda$, the asymptotic behavior of the lattice power variation of the field is studied in \cite{P2014}. We notice that in the literature, there are no asymptotic results for partial sums of ambit fields when the number of observations approaches infinity without infill asymptotics.

We are interested in studying the asymptotic behavior of the partial sums (and of higher-order sample moments) of MMAF and ambit fields in general, i.e., without imposing regularity conditions on the driving L\'evy basis $\Lambda$ apart from moment conditions. To do so, we apply the $\eta$-weak dependence as defined in \cite{DMT2008} and a new notion of dependence called $\theta$-lex weak dependence. Although all the examples of our theory come from the model classes mentioned above, we want to emphasize that we present general central limit theorem results that apply to different stationary random fields. 

To introduce the notion of $\theta$-lex weak dependence, let us start with a brief digression into the notions of  $\eta$ and $\theta$-weak dependence for stochastic processes defined in \cite{DL1999} and \cite{DD2003}, respectively. $\eta$-weak dependence is typically associated with the study of non-causal processes, whereas $\theta$-weak dependence is related to the analysis of the causal ones. Central limit theorems for $\theta$-weakly dependent processes hold under weaker conditions compared to results for $\eta$-weakly dependent processes (different demands on the decay rate of the $\eta$ and $\theta$-coefficients as determined in \cite[Theorem 2.2]{DW2007} and \cite[Theorem 2]{DD2003}). We have that the definition of $\eta$ and $\theta$-weak dependence can be easily extended to the random field case by following \cite[Remark 2.1]{DDLLLP2008}. However, just for $\eta$-weakly dependent random fields, asymptotics of the partial sums of stationary random fields have been so far analyzed in \cite{DMT2008}. We aim to determine a central limit theorem that improves the results obtained in \cite{DMT2008}. We achieve this by defining the notion of $\theta$-lex-weak dependence, which is a modification of the original definition of $\theta$-weak dependence. We show that for $\theta$-lex-weakly dependent random fields, the sufficient conditions of a very powerful central limit theorem from Dedecker \cite{D1998} hold. Moreover, we obtain hereditary properties for $\theta$-lex and $\eta$-weakly dependent random fields, which allow us to easily extend the asymptotic results under weak dependence to the study of higher-order sample moments. 
We then investigate the relationship between $\theta$-lex weak dependence and strong mixing. We prove that for random fields defined on $\Z^m$, $\theta$-lex weak dependence is a more general notion of dependence than $\alpha_{\infty,1}$-mixing as defined in Dedecker \cite{D1998}, i.e., it applies to a broader class of models. In the case of processes, we also show that $\theta$-lex weak dependence is a more general notion of dependence than $\alpha$-mixing as defined in \cite{B2007}, see for more details Section \ref{sec2-2}.

Let us now look at the class of MMAF. We distinguish in our theory between influenced and non-influenced MMAF, see Definition \ref{definition:influenced}. Influenced MMAF represent a possible extension of causal mixed moving average processes, see \cite[Section 3.2]{CS2018}, to random fields.
Hence, we show that influenced MMAF are $\theta$-lex-weakly dependent and that non-influenced MMAF are $\eta$-weakly dependent with coefficients computable in terms of the kernel function $f$ and the characteristic quadruplet of the L\'evy basis $\Lambda$. From this, we notice that in the case of influenced MMAF, the conditions ensuring asymptotic normality of the partial sums of $X$ are weaker-- in terms of the decay rate of the weak dependence coefficients-- in comparison with the one obtained for non-influenced MMAF. We then observe a parallel between our results and the one obtained for causal and non-causal mixed moving average processes \cite{CS2018}. Moreover, we exploit the hereditary properties of $\eta$ as well as $\theta$-lex-weak dependence and obtain conditions for the sample moments of order $p$ with $p \geq 1$ to be asymptotic normally distributed. Finally, we give explicit computations for mixed spatio-temporal Ornstein-Uhlenbeck processes \cite{NV2017}, also called MSTOU processes, and L\'evy-driven CARMA fields \cite{BM2017,P2018}. In particular, our calculations in the case of the MSTOU processes show that it is possible to determine the asymptotic normality of the generalized method of moments estimator, GMM in short, proposed in \cite{NV2017}.

At last, we apply our theory to ambit fields. We assume that the volatility field is an MMAF or a $p$-dependent random field which is independent of the L\'evy basis $\Lambda$. Under these assumptions, we show that homogeneous and stationary ambit fields are $\theta$-lex-weakly dependent and give sufficient conditions on the $\theta$-lex-coefficients to ensure asymptotic normality of the sample moments. 

The paper is structured as follows. In Section \ref{sec2}, we introduce $\eta$-weak dependence and the novel $\theta$-lex-weak dependence. In Section \ref{sec2-2} we state central limit theorems for $\theta$-lex weakly dependent random fields in an ergodic, non-ergodic, and multivariate setting. Additionally, we analyze the relationship between $\theta$-lex weak dependence and strong mixing and provide some insight into possible functional extensions of the central limit theorem. 
In Section \ref{sec3}, we discuss the weak dependence properties of MMAF. We first give a comprehensive introduction to L\'evy bases and its related integration theory, leading to the formal definition of an MMAF.
We discuss conditions on MMAF to be $\theta$-lex or $\eta$-weakly dependent and their related sample moment asymptotics.
In Section \ref{sec3-7}, we apply the developed theory to MSTOU processes and give explicit conditions assuring their sample moments' asymptotic normality under a Gamma distributed mean reversion parameter. We conclude Section \ref{sec3} by giving conditions under which the developed asymptotic theory can be applied to L\'evy-driven CARMA fields. In Section \ref{sec4}, we discuss weak dependence properties and related limit theorems for ambit fields. Section \ref{sec5} contains the detailed proofs of most of the results presented in the paper.

\section{Weak dependence and central limit theorems}
\label{sec2}

\subsection{Notations}
$\N_0$ denotes the set of non-negative integers, $\N$  the set of positive integers, and $\R^+$ the set of the non-negative real numbers.  For $x\in \R^d$, $x^(j)$ denotes the $j$-th coordinate of $x$, $\norm{x}$ its Euclidean norm and we define $|x|=\norm{x}_\infty=\max_{j=1,\ldots,d}|x^{(j)}|$. For $d, k \geq 1$ and $F:\R^d\ra \R^k$, we define $\norm{F}_\infty=\sup_{t\in\R^d}\norm{F(t)}$. Let $A \in M_{n \times d}(\R)$,  $A^{\prime}$ denote the transpose of the matrix $A$.

In the following Lipschitz continuous is understood to mean globally Lipschitz. For $u,n\in\N$, let $\mathcal{G}_u^*$ be the class of bounded functions from $(\R^n)^u$ to $\R$ and $\mathcal{G}_u$ be the class of bounded, Lipschitz continuous functions from $(\R^n)^u$ to $\R$ with respect to the distance $\sum_{i=1}^{u}\norm{x_i-y_i}$, where $x,y\in(\R^n)^u$. For $G\in\mathcal{G}_u$ we define
\begin{gather*}
	Lip(G)=\sup_{x\neq y}\frac{|G(x)-G(y)|}{\norm{x_1-y_1}+\ldots+\norm{x_u-y_u}}.
\end{gather*} 
We assume that all random elements in this paper are defined on a given complete probability space $(\Omega, \mathcal{F},P)$.  $\norm{\cdot}_p$ for $p>0$ denotes throughout the $L^p$-norm of a random element. For a random field $X=(X_t)_{t\in\R^m}$ and a finite set $\Gamma\subset\R^m$ with $\Gamma=(i_1,\ldots,i_u)$, we define the vector $X_\Gamma=(X_{i_1},\ldots,X_{i_u})$. $A\subset B$ denotes a not necessarily proper subset $A$ of a set $B$, $|B|$ denotes the cardinality of $B$ and $dist(A,B)=\inf_{i\in A, j\in B} \norm{i-j}_\infty$ indicates the distance of two sets $A,B\subset\R^m$.

Hereafter, we often use the lexicographic order on $\R^m$. For distinct elements $y=(y_1,\ldots,y_m)\in\R^m$ and $z=(z_1,\ldots,z_m)\in\R^m$ we say $y<_{lex}z$ if and only if $y_1<z_1$ or $y_p<z_p$ for some $p\in\{2,\ldots,m\}$ and $y_q=z_q$ for $q=1,\ldots,p-1$. Moreover, $y\leq_{lex}z$ if $y<_{lex}z$ or $y=z$ holds. Finally, let us define the sets $V_t=\{s\in\R^m:s<_{lex}t\}\cup\{t\}$ and $V_t^h=V_t\cap \{s\in\R^m: \norm{t-s}_\infty\geq h \}$ for $h>0$. The definitions of the sets $V_t$ and $V_t^h$ are also used when referring to the lexicographic order on $\Z^m$.

\subsection{Weak dependence properties}
\label{sec2-1}

\begin{Definition}\label{thetaweaklydependent}
	Let $X=(X_t)_{t\in\R^m}$ be an $\R^n$-valued random field. Then, $X$ is called $\theta$-lex-weakly dependent if
	\begin{gather*}
		\theta(h)=\sup_{u\in\N}\theta_{u}(h) \underset{h\ra\infty}{\longrightarrow} 0,
	\end{gather*} 
	where
	\begin{align*}
		\theta_{u}(h)=\sup\bigg\{&\frac{|Cov(F(X_{\Gamma}),G(X_j))|}{\norm{F}_{\infty}Lip(G)},
		F\in\mathcal{G}^*_u,G\in\mathcal{G}_1,j\in\R^m, \Gamma \subset V_j^h, |\Gamma|= u  \bigg\}.
	\end{align*}
	We call $(\theta(h))_{h\in\R^+}$ the $\theta$-lex-coefficients. 
\end{Definition}

\begin{Remark}
	Our definition of $\theta$-lex-weak dependence differs from the $\theta$-weak dependence definition for random fields given in \cite[Remark 2.1]{DDLLLP2008}. In fact, instead of considering the covariance of two arbitrary finite-dimensional samples $X_\Gamma$ and $X_{\tilde{\Gamma}}$, for $\Gamma, \tilde{\Gamma} \subset \R^m$, we control the covariance of a finite-dimensional sample $X_\Gamma$ and an arbitrary one point sample $X_j$. Secondly, assuming that all points in the sampling set $\Gamma$ are lexicographically smaller than $j$, we provide order in the sampling scheme.
	
	For $m=1$, i.e., in the process case, our definition of $\theta$-lex-weak dependence coincides with the definition of $\theta$-weak dependence given in \cite{DD2003}. 
\end{Remark}

\begin{Definition}[{\cite[Definition 2.2 and Remark 2.1]{DDLLLP2008}}]\label{etaweaklydependent}\
	Let $X=(X_t)_{t\in\R^m}$ be an $\R^n$-valued random field. Then, $X$ is called $\eta$-weakly dependent if
	\begin{gather*}
		\eta(h)=\sup_{u,v\in\N}\eta_{ u,v}(h) \underset{h\ra\infty}{\longrightarrow} 0,
	\end{gather*}
	where
	\begin{align*}
		\eta_{ u,v}(h) =\sup\bigg\{&\frac{|Cov(F(X_{\Gamma}),G(X_{\tilde\Gamma}))|}{u\norm{G}_{\infty}Lip(F)+v\norm{F}_{\infty}Lip(G)},
		\\ &F\in\mathcal{G}_u,G\in\mathcal{G}_v, \Gamma,\tilde\Gamma\subset\R^m,|\Gamma|= u,  |\tilde\Gamma|= v,dist(\Gamma,\tilde\Gamma)\geq h \bigg\}.
	\end{align*}
	We call $(\eta(h))_{h\in\R^+}$ the $\eta$-coefficients.
\end{Definition}

Let $(X_t)_{t\in\R^m}$ be $\theta$-lex- or $\eta$-weakly dependent and $h:\R^n\ra\R^k$ be an arbitrary Lipschitz function, then the field $(h(X_t))_{t\in\R^m}$ is also $\theta$-lex- or $\eta$-weakly dependent. The latter can be readily checked based on Definition \ref{thetaweaklydependent} and \ref{etaweaklydependent}.
In the next proposition, we give conditions for hereditary properties of functions that are only locally Lipschitz continuous. The proof of the result below is analogous to Proposition 3.2 \cite{CS2018}.

\begin{Proposition}\label{proposition:mmathetahereditary}
	Let $X=(X_t)_{t\in\R^m}$ be an $\R^n$-valued stationary random field and assume that there exists a constant $C>0$ such that $E[\norm{X_0}^p]\leq C$, for $p>1$. Let $h:\R^n\ra\R^k$ be a function such that $h(0)=0, h(x)=(h_1(x),\ldots,h_k(x))$ and 
	\begin{gather*}
		\norm{h(x)-h(y)}\leq c\norm{x-y}(1+\norm{x}^{a-1}+\norm{y}^{a-1}),
	\end{gather*} 
	for $x,y\in\R^n$, $c>0$ and $1\leq a<p$. Define $Y=(Y_t)_{t\in\R^m}$ by $Y_t=h(X_t)$. If $X$ is $\theta$-lex or $\eta$-weakly dependent, then $Y$ is $\theta$-lex or $\eta$-weakly dependent respectively with coefficients
	\begin{gather*}
		\theta_Y(h)\leq\mathcal{C}\theta_X(h)^{\frac{p-a}{p-1}} \,\,\text{ or } \,\,
		\eta_Y(h)\leq\mathcal{C}\eta_X(h)^{\frac{p-a}{p-1}}  
	\end{gather*}
	for all $h>0$ and a constant $\mathcal{C}$ independent of $h$.
\end{Proposition}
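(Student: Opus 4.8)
The plan is to reduce to the globally Lipschitz case by truncating the function $h$ and then optimising the truncation level. For $M>0$ let $\pi_M:\R^n\ra\R^n$ denote the radial projection onto the closed ball of radius $M$, i.e. $\pi_M(x)=x$ if $\norm{x}\leq M$ and $\pi_M(x)=Mx/\norm{x}$ otherwise, and set $h_M=h\circ\pi_M$. Since $\pi_M$ is $1$-Lipschitz and the growth bound gives $\norm{h(y)-h(y')}\leq c\norm{y-y'}(1+2M^{a-1})$ for $y,y'$ in the ball of radius $M$, the function $h_M$ is globally Lipschitz with $Lip(h_M)\leq c(1+2M^{a-1})$, satisfies $h_M(0)=0$, and agrees with $h$ on $\{\norm{x}\leq M\}$. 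The two quantities that will compete are the Lipschitz constant $Lip(h_M)\sim M^{a-1}$, which multiplies the dependence coefficient of $X$, and the truncation error, which is governed by the tail estimate
\begin{gather*}
	E\big[\norm{X_0}^a\,\mathbb{1}\{\norm{X_0}>M\}\big]\leq E[\norm{X_0}^p]\,M^{a-p}\leq C\,M^{a-p},
\end{gather*}
valid because $a<p$; together with $E[\norm{X_0}\,\mathbb{1}\{\norm{X_0}>M\}]\leq CM^{1-p}$ this yields $E[\norm{h(X_0)-h_M(X_0)}]\leq C'M^{a-p}$ for $M\geq 1$ and a constant $C'$ depending only on $c,C,a,p$.

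For the $\theta$-lex case, fix $j$, $\Gamma\subset V_j^h$ with $|\Gamma|=u$, $F\in\mathcal{G}_u^*$ and $G\in\mathcal{G}_1$, and write $F(Y_\Gamma)=\tilde F(X_\Gamma)$, $G(Y_j)=\tilde G(X_j)$ with $\tilde F=F\circ h^{\otimes u}$ and $\tilde G=G\circ h$. The essential observation is that $\tilde F$ is already admissible on the $F$-side, since $F$ (hence $\tilde F$) need only be bounded, with $\norm{\tilde F}_\infty=\norm{F}_\infty$; therefore I would truncate only on the $G$-side. Writing $G_M=G\circ h_M\in\mathcal{G}_1$, I split
\begin{gather*}
	Cov(\tilde F(X_\Gamma),\tilde G(X_j))=Cov(\tilde F(X_\Gamma),G_M(X_j))+Cov\big(\tilde F(X_\Gamma),\tilde G(X_j)-G_M(X_j)\big).
\end{gather*}
The first covariance is bounded by $\theta_X(h)\norm{F}_\infty Lip(G)Lip(h_M)$ via Definition \ref{thetaweaklydependent}, using $Lip(G_M)\leq Lip(G)Lip(h_M)$. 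For the second, the elementary bound $|Cov(A,B)|\leq 2\norm{A}_\infty E[|B|]$ together with the Lipschitz property of $G$ and the tail estimate give $2\norm{F}_\infty Lip(G)C'M^{a-p}$. Crucially, because only the single point $X_j$ enters the truncated side, no factor depending on $u$ appears, which is exactly what is needed to pass to $\theta(h)=\sup_u\theta_u(h)$.

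For the $\eta$ case the argument is symmetric: both $\tilde F=F\circ h^{\otimes u}$ and $\tilde G=G\circ h^{\otimes v}$ must be made Lipschitz, so I would replace them by $F_M=F\circ h_M^{\otimes u}$ and $G_M=G\circ h_M^{\otimes v}$ and expand into the main term $Cov(F_M(X_\Gamma),G_M(X_{\tilde\Gamma}))$ plus the two truncation errors. The main term, controlled by Definition \ref{etaweaklydependent}, produces the factor $Lip(h_M)\big[u\norm{G}_\infty Lip(F)+v\norm{F}_\infty Lip(G)\big]$. Each error term is estimated as above, but now the Lipschitz bound on $F$ (resp. $G$) produces a sum over the $u$ (resp. $v$) coordinates of $\Gamma$ (resp. $\tilde\Gamma$), so by stationarity the errors are bounded by $2\norm{G}_\infty Lip(F)\,u\,C'M^{a-p}$ and $2\norm{F}_\infty Lip(G)\,v\,C'M^{a-p}$; the factors $u,v$ match precisely the normalisation of the $\eta$-coefficients.

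In both cases this yields a bound of the form $\theta_Y(h)\leq c\,\theta_X(h)(1+2M^{a-1})+2C'M^{a-p}$ (and the analogue for $\eta$), uniformly in $u$ (and $v$). Balancing the increasing term $M^{a-1}$ against the decreasing term $M^{a-p}$ by choosing $M=\theta_X(h)^{-1/(p-1)}$ makes both contributions equal to $\theta_X(h)^{(p-a)/(p-1)}$, giving the claimed rate with $\mathcal C$ independent of $h$; the lags where $\theta_X(h)>1$ (so $M<1$) are handled by a crude bound using the moment condition, absorbed into $\mathcal C$. The only genuine difficulty is the bookkeeping of the combinatorial factors: one must truncate on the correct side(s) so that the $u$-dependence disappears in the $\theta$-lex setting yet reproduces exactly the weights $u,v$ in the $\eta$ setting, and one must check that the tail estimate converts the polynomial growth exponent $a$ into the moment order $p$ with the correct power.
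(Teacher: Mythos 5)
Your proposal is correct and follows essentially the same route as the paper's proof (which is deferred to Proposition 3.2 of \cite{CS2018}): truncate at level $M$ to obtain a globally Lipschitz function with $Lip(h_M)\lesssim M^{a-1}$, split the covariance into a main term controlled by the weak dependence of $X$ and a tail term of order $M^{a-p}$ via the moment bound, and optimise with $M=\theta_X(h)^{-1/(p-1)}$. Your observation that in the $\theta$-lex case only the $G$-side needs truncating (since $F$ need only be bounded), so that no $u$-dependent factor appears, is exactly the bookkeeping the argument requires.
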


\subsection{Mixing properties}
\label{sec2-2}

Let $\mathcal{M}$ and $\mathcal{V}$ be two sub-$\sigma$-algebras of $\mathcal{F}$. We define the strong mixing coefficient of Rosenblatt \cite{R1956}
\begin{align*}
	\alpha(\mathcal{M},\mathcal{V})=\sup\{|P(M)P(V)-P(M\cap V)|,M\in\mathcal{M},V\in\mathcal{V}\}.
\end{align*}
A random field $X=(X_t)_{t\in\Z^m}$ is said to be $\alpha_{u,v}$-mixing for $u,v\in\N\cup\{\infty\}$ if 
\begin{align*}
	\alpha_{u,v}(h)=\sup\left\{ \alpha(\sigma(X_\Gamma),\sigma(X_{\tilde{\Gamma}})),\, \Gamma,\tilde{\Gamma}\subset\R^m,|\Gamma|\leq u, |\tilde{\Gamma}|\leq v, dist(\Gamma,\tilde{\Gamma})\geq h \right\}
\end{align*}
converges to zero as $h\rightarrow \infty$. Moreover, for $m=1$, the stochastic process $X$ is said to be $\alpha$-mixing if 
\begin{align*}
	\alpha(h)=\alpha\left( \sigma(\{X_s, s\leq0\}),\sigma(\{X_s, s\geq h\})\right)
\end{align*}
converges to zero as $h\rightarrow \infty$. Clearly, we have that $\alpha(h) \leq \alpha_{\infty,\infty}(h)$ for $m=1$. For a comprehensive discussion on the coefficients $\alpha_{u,v}(h)$, $\alpha(h)$ and their relation to other strong mixing coefficients we refer to \cite{B2007, BCS2020, D1998}.\\
The following proposition establishes a relationship between the $\theta$-lex-coefficients and the mixing coefficients $\alpha(h)$ and $\alpha_{\infty,1}(h)$.

\begin{Proposition}\label{proposition:thetavsalpha}
	Let $X=(X_t)_{t\in\Z^m}$ be a stationary real-valued random field such that $E[\norm{X_0}^q]<\infty$ for some $q>1$. Then, for all $h\in\R^+$ and $m=1$, we have that
	$$
	\theta(h)\leq 2^{\frac{2q-1}{q}} (\alpha(h))^{\frac{q-1}{q}} \norm{X_0}_q.
	$$
	Moreover, for all $h\in\R^+$ and $m \geq 1$
	$$\theta(h)\leq 2^{\frac{2q-1}{q}} (\alpha_{\infty,1}(h))^{\frac{q-1}{q}} \norm{X_0}_q.$$
\end{Proposition}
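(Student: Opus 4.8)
The plan is to bound, for every admissible choice of functions and indices, the covariance $|Cov(F(X_\Gamma),G(X_j))|$ appearing in the numerator of $\theta_u(h)$ by $2^{(2q-1)/q}\,(\alpha(\sigma(X_\Gamma),\sigma(X_j)))^{(q-1)/q}\,\norm{X_0}_q\,\norm{F}_\infty Lip(G)$, and then to replace the local mixing coefficient $\alpha(\sigma(X_\Gamma),\sigma(X_j))$ by the uniform coefficients $\alpha(h)$ (for $m=1$) or $\alpha_{\infty,1}(h)$ (for $m\geq1$). So I fix $u\in\N$, $F\in\mathcal{G}_u^*$, $G\in\mathcal{G}_1$, $j\in\Z^m$ and $\Gamma\subset V_j^h$ with $|\Gamma|=u$, and work with this single covariance throughout.

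The first step is to trade the boundedness of $G$ for its Lipschitz constant. Since a covariance is unchanged by subtracting a constant, $Cov(F(X_\Gamma),G(X_j))=Cov\big(F(X_\Gamma),\,G(X_j)-G(0)\big)$. Here $F(X_\Gamma)$ is $\sigma(X_\Gamma)$-measurable and bounded by $\norm{F}_\infty$, while $G(X_j)-G(0)$ is $\sigma(X_j)$-measurable and satisfies $|G(X_j)-G(0)|\leq Lip(G)\norm{X_j}$, so by stationarity $\norm{G(X_j)-G(0)}_q\leq Lip(G)\norm{X_j}_q=Lip(G)\norm{X_0}_q$, which is finite because $q>1$.

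The second step is the analytic core: I apply the sharp covariance inequality for strongly mixing random variables (Rio's inequality, the refinement of Davydov's bound). For a bounded $\mathcal{M}$-measurable $\xi$ and a $\mathcal{V}$-measurable $\eta\in L^q$, writing $\alpha_0=\alpha(\mathcal{M},\mathcal{V})$, it gives $|Cov(\xi,\eta)|\leq 2\,(2\alpha_0)^{1/r}\norm{\xi}_\infty\norm{\eta}_q$ with $1/r=1-1/q$. This follows from the quantile bound $|Cov(\xi,\eta)|\leq 2\int_0^{2\alpha_0}Q_\xi(t)Q_\eta(t)\,dt$ together with a threefold Hölder inequality at the endpoint exponents $(\infty,q,r)$, since $\int_0^{2\alpha_0}Q_\xi Q_\eta\,dt\leq\norm{\xi}_\infty\norm{\eta}_q\,(2\alpha_0)^{1/r}$. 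Taking $\xi=F(X_\Gamma)$, $\eta=G(X_j)-G(0)$, inserting the two estimates from the first step, and noting that the constants collect as $2\cdot2^{(q-1)/q}=2^{(2q-1)/q}$, I obtain $|Cov(F(X_\Gamma),G(X_j))|\leq 2^{(2q-1)/q}\,(\alpha_0)^{(q-1)/q}\,\norm{F}_\infty Lip(G)\,\norm{X_0}_q$ with $\alpha_0=\alpha(\sigma(X_\Gamma),\sigma(X_j))$.

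The final step transfers $\alpha_0$ to the uniform coefficients. For general $m$, the constraint $\Gamma\subset V_j^h$ forces $dist(\Gamma,\{j\})\geq h$, and since $|\Gamma|$ is finite and $|\{j\}|=1$, the pair $(\Gamma,\{j\})$ is admissible in the supremum defining $\alpha_{\infty,1}(h)$, so $\alpha_0\leq\alpha_{\infty,1}(h)$; dividing by $\norm{F}_\infty Lip(G)$ and taking the supremum over $u,F,G,j,\Gamma$ yields the second bound. For $m=1$ the lexicographic order is the usual order, so $V_j^h=\{s\leq j-h\}$, giving $\sigma(X_\Gamma)\subset\sigma(\{X_s:s\leq j-h\})$ and $\sigma(X_j)\subset\sigma(\{X_s:s\geq j\})$; monotonicity of $\alpha$ in its two $\sigma$-algebras together with stationarity (translating by $j-h$) gives $\alpha_0\leq\alpha(h)$, hence the first bound. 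I expect the only genuinely delicate point to be invoking the covariance inequality with the \emph{exact} constant $2$ at the endpoint exponent $p=\infty$; the remaining arguments are routine verifications of measurability, stationarity, and the monotonicity of the mixing coefficients.
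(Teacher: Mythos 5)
Your proof is correct, and its analytic core coincides with the paper's: both rest on the quantile covariance inequality $|Cov(\xi,\eta)|\leq 2\int_0^{2\alpha}Q_{\xi}(u)Q_{\eta}(u)\,du$ followed by H\"older at the exponents $(\infty,q,q/(q-1))$, and both arrive at the same constant $2\cdot 2^{(q-1)/q}=2^{(2q-1)/q}$. The organization, however, is genuinely different. The paper first proves the identity $\theta(h)=\theta_h$, where $\theta_h=\sup_{g\in\mathscr{L}_1}\lVert E[g(X_0)\mid\mathcal{F}_{V_0^h}]-E[g(X_0)]\rVert_1$ is the mixingale-type coefficient (Lemma \ref{lemma:thetaiscondexp}, which requires a martingale-convergence argument and a choice of the sign function as the optimizing $F$), and then applies the conditional-expectation form of the covariance inequality from Dedecker--Doukhan to the single pair $(\mathcal{F}_{V_0^h},X_0)$. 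You instead estimate each covariance $Cov(F(X_\Gamma),G(X_j))$ directly: you trade $\lVert G\rVert_\infty$ for $Lip(G)$ by subtracting $G(0)$ (legitimate since $E[\lVert X_0\rVert^q]<\infty$), apply Rio's inequality with $\xi=F(X_\Gamma)$ bounded and $\eta=G(X_j)-G(0)$ in $L^q$, and only then pass to $\alpha_{\infty,1}(h)$, respectively to $\alpha(h)$ via monotonicity and stationarity for $m=1$. Your route is self-contained and avoids Lemma \ref{lemma:thetaiscondexp} altogether; the paper's route reuses that identification because it is needed anyway for the central limit theorem. Your transfer steps at the end (that $\Gamma\subset V_j^h$ forces $dist(\Gamma,\{j\})\geq h$ in the $\lVert\cdot\rVert_\infty$ metric, and that for $m=1$ one has $\sigma(X_\Gamma)\subset\sigma(\{X_s:s\leq j-h\})$ and $\sigma(X_j)\subset\sigma(\{X_s:s\geq j\})$) are exactly right and match the paper's implicit use of the same monotonicity.
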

\begin{proof}
	See Section \ref{sec5-1}.
\end{proof}

\begin{Remark}
	If a stationary real-valued random field admits all finite moments, then Proposition \ref{proposition:thetavsalpha} ensures that $\theta(h)\leq C \alpha(h)$ for $m=1$ and $\theta(h)\leq C \alpha_{\infty,1}(h)$ for all $m\geq1$, where $C>0$ is a constant independent of $h$.
\end{Remark}

\begin{Proposition}\label{proposition:AR1notmixingbuttheta}
	Let $(\xi_k)_{k\in\Z}$ be a sequence of independent random variables such that $\xi_k\sim Ber(\frac{1}{2})$ for all $k\in\Z$. Then, the stationary process $X_t=\sum_{j=0}^\infty 2^{-j-1}\xi_{t-j}$ for $t\in\Z$ is
	$\theta$-lex-weakly dependent but neither $\alpha$-mixing nor $\alpha_{\infty,1}$-mixing.
	
\end{Proposition}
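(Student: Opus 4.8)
The plan is to establish the two assertions separately: first, that $X$ is $\theta$-lex-weakly dependent, and second, that $X$ is neither $\alpha$-mixing nor $\alpha_{\infty,1}$-mixing.

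For the $\theta$-lex weak dependence, I would proceed directly from Definition \ref{thetaweaklydependent}. The process $X_t=\sum_{j=0}^\infty 2^{-j-1}\xi_{t-j}$ is a causal linear process with bounded (indeed $[0,1]$-valued) marginals and absolutely summable coefficients $a_j=2^{-j-1}$. Since $X$ is bounded, all moments are finite, so I expect the computation of the $\theta$-lex coefficient to reduce to controlling how much $X_j$ depends on the innovations $\xi_{j-\ell}$ with $\ell$ large. Concretely, for a one-point sample $X_j$ and a finite set $\Gamma\subset V_j^h$, the variables $X_\Gamma$ are measurable functions of innovations $(\xi_{i-\ell})_{\ell\geq 0, i\in\Gamma}$, and because every $i\in\Gamma$ satisfies $\norm{j-i}_\infty\geq h$ (here $m=1$, so $i\leq j-h$), the truncated variable $\tilde X_j=\sum_{\ell=0}^{\lfloor h\rfloor -1}2^{-\ell-1}\xi_{j-\ell}$ uses only innovations $\xi_{j-\ell}$ with index $>j-h$, hence is independent of $X_\Gamma$. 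I would then bound
\begin{gather*}
|Cov(F(X_\Gamma),G(X_j))|=|Cov(F(X_\Gamma),G(X_j)-G(\tilde X_j))|\leq 2\norm{F}_\infty Lip(G)\,\E|X_j-\tilde X_j|,
\end{gather*}
using $\E|X_j-\tilde X_j|\leq\sum_{\ell\geq\lfloor h\rfloor}2^{-\ell-1}\,\E|\xi_0|\leq 2^{-\lfloor h\rfloor}$, which gives $\theta(h)\leq C\,2^{-h}$ and hence geometric decay to zero. This part is routine and I expect no real obstacle.

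The genuinely hard part is showing the \emph{failure} of mixing. The idea, going back to Andrews's counterexample \cite{A1984} cited in the introduction, is that the Bernoulli$(\tfrac12)$ innovations make $X_0$ a dyadic expansion, so the entire infinite past $(\xi_{-j})_{j\geq 0}$ is recoverable from $X_0$ itself: the binary digits of $2X_0$ are exactly $\xi_0,\xi_{-1},\xi_{-2},\dots$. This means the single variable $X_0$ already determines all past innovations almost surely, and the future variable $X_h$ contains the innovation $\xi_h$ together with the past innovations $\xi_{h-1},\dots$ that overlap with what $X_0$ encodes. To exhibit non-mixing I would construct, for each $h$, explicit events $M\in\sigma(X_0)$ and $V\in\sigma(X_h)$ that remain strongly correlated as $h\to\infty$. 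A clean choice is to fix some digit of $X_0$, say $M=\{\xi_{-k}=1\}$ for a suitable $k$ depending on $h$, reconstructed as a measurable function of $X_0$, and a corresponding event $V\in\sigma(X_h)$ that reads off the same innovation $\xi_{-k}$ via the tail digits of $X_h$. Since $\xi_{-k}$ is simultaneously measurable with respect to $\sigma(X_0)$ and (through the far tail of its dyadic expansion) with respect to $\sigma(X_h)$, one gets $P(M\cap V)=P(M)=\tfrac12$ while $P(M)P(V)=\tfrac14$, forcing $\alpha(\sigma(X_0),\sigma(X_h))\geq\tfrac14$ for every $h$, so $\alpha(h)\not\to 0$.

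The main obstacle is making this reconstruction argument rigorous and measurable: one must verify that the map $X_0\mapsto(\xi_0,\xi_{-1},\dots)$ is well-defined and measurable despite the dyadic-rational ambiguity (which occurs only on a $P$-null set, since the $\xi$'s are genuinely random), and that the single coordinate $X_h$ indeed encodes the overlapping innovations needed to match an event in $\sigma(X_0)$. For the $\alpha_{\infty,1}$ claim I would note that $\sigma(X_h)$ is a single-point $\sigma$-algebra, matching the "$1$" in $\alpha_{\infty,1}$, while $\sigma(\{X_s:s\leq 0\})\supseteq\sigma(X_0)$ serves the "$\infty$" side; the same pair of events $M,V$ shows $\alpha_{\infty,1}(h)\geq\tfrac14$. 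I would take care to phrase the construction so that a single event in $\sigma(X_h)$ suffices on the future side, which is exactly what the index $1$ in $\alpha_{\infty,1}$ permits, and so that the lower bound is uniform in $h$, thereby ruling out convergence to zero for both mixing coefficients simultaneously.
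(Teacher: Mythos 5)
Your proposal is correct and follows essentially the same route as the paper: the weak-dependence half reduces in both cases to the bound $\E\lvert X_j-\tilde X_j\rvert\le 2^{-h}$ for a version of $X_j$ with the far past removed (the paper phrases this via the coupling coefficient $\tilde\delta_{1,h}$ of \cite{DDLLLP2008} together with Lemma \ref{lemma:thetaiscondexp}, rather than your direct truncation of the covariance, but the underlying computation is identical), and the non-mixing half rests on the same observation that every past innovation is a.s.\ recoverable from the dyadic expansion of a single observation. The paper's choice of event is simply $A=\{X_0\le\tfrac12\}$, which lies in $\sigma(X_0)$, in $\sigma(X_h)$ and in $\sigma(\{X_s,s\ge h\})$ simultaneously and yields the same uniform lower bound $\tfrac14$ for both $\alpha(h)$ and $\alpha_{\infty,1}(h)$, with no need for your $h$-dependent choice of digit.
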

\begin{proof}
	See Section \ref{sec5-1}.
\end{proof}

\begin{Remark}
	Proposition \ref{proposition:thetavsalpha} shows that every stationary $\alpha$-mixing stochastic process and every stationary $\alpha_{\infty,1}$-mixing random field with finite $q$-th moments are $\theta$-lex-weakly dependent. On the other hand, Proposition \ref{proposition:AR1notmixingbuttheta} shows that there exists a stationary $\theta$-lex-weakly dependent process with finite variance, that is neither $\alpha$-mixing nor $\alpha_{\infty,1}$-mixing. Therefore, $\theta$-lex-weak dependence is a more general notion of dependence than $\alpha$- and $\alpha_{\infty,1}$-mixing.
\end{Remark}

We cite for completeness the results available in the literature regarding the relationship between $\eta$-weak dependence and $\alpha$-mixing. For integer-valued processes, the authors show in \cite[Proposition 1]{DFL2012}  that $\eta$-weak dependence implies $\alpha$-mixing. Moreover, Andrews \cite{A1984} gives an example of an $\eta$-weakly dependent process that is not $\alpha$-mixing.

\subsection{Central limit theorems for $\theta$-lex-weakly dependent random fields}
\label{sec2-3}

In the theory of stochastic processes, one of the typical ways to prove central limit type results is to approximate the process of interest by a sequence of martingale differences. This approach was first introduced by Gordin \cite{G1969}. However, the latter does not apply to high-dimensional random fields as successfully as to processes. This unpleasant circumstance has been known among researchers for almost 40 years, as Bolthausen \cite{B1982} noted that martingale approximation appears a difficult concept to generalize to dimensions greater or equal than two.

For stationary random fields $X=(X_t)_{t \in \Z^m}$, Dedecker derived a central limit  result in \cite{D1998} under the projective criterion
\begin{gather}\label{equation:l1criteria}
	\sum_{k\in V_0^1}|X_kE[X_0|\mathcal{F}_{\Gamma(k)}]|\in L^1, \,\,\, \textrm{for $\mathcal{F}_{\Gamma(k)}=\sigma(X_k :k\in V_0^{|k|})$}.
\end{gather}
This condition is weaker than a martingale-type assumption and provides optimal results for mixing random fields.
Early use of such a projective criterion can be found in the central limit theorems for stationary processes derived in \cite{DM2002, PU2006}.

We show in this section that (\ref{equation:l1criteria}) is also fulfilled by appropriate $\theta$-lex-weakly dependent random fields. 

In the following, by stationarity we mean stationarity in the strict sense. Let $\Gamma$ be a subset of $\Z^m$. We define $\partial\Gamma=\{i\in\Gamma: \exists j \notin \Gamma: \norm{i-j}_\infty=1\}$. Let $(D_n)_{n\in\N}$ be a sequence of finite subsets of $\Z^m$ such that 
\begin{gather*}
	\lim_{n\rightarrow\infty} |D_n|=\infty \text{ and }\lim_{n\rightarrow\infty} \frac{|\partial D_n|}{|D_n|}=0.
\end{gather*}

\begin{Theorem}\label{theorem:clt}
	Let $X=(X_t)_{t\in\Z^m}$ be a stationary centered real-valued random field such that $E[|X_t|^{2+\delta}]<\infty$ for some $\delta>0$. 
	Additionally, assume that $\theta(h)\in \mathcal{O}(h^{-\alpha})$ with $\alpha>m(1+\frac{1}{\delta})$. Define
	\begin{gather*}
		\sigma^2=\sum_{k\in\Z^m}E[X_0X_k|\mathcal{I}],
	\end{gather*}
	where $\mathcal{I}$ is the $\sigma$-algebra of shift invariant sets as defined in \cite[Section 2]{D1998} (see \cite[Chapter 1]{K1985} for an introduction to ergodic theory). Then, $\sigma^2$ is finite, non-negative and
	\begin{gather}\label{eq:clt}
		\frac{1}{|D_n|^{\frac{1}{2}}}\sum_{j\in D_n}X_j\underset{n\ra\infty}{\xrightarrow{\makebox[2em][c]{d}}}\varepsilon \sigma,
	\end{gather}
	where $\varepsilon$ is a standard normally distributed random variable which is independent of $\sigma^2$.
\end{Theorem}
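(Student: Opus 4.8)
The plan is to verify that the projective criterion \eqref{equation:l1criteria} holds under the stated hypotheses and then to invoke Dedecker's central limit theorem \cite{D1998} directly. The key observation is that $\theta$-lex weak dependence was tailored precisely so that conditional expectations of the form $E[X_0 \mid \mathcal{F}_{\Gamma(k)}]$, where $\mathcal{F}_{\Gamma(k)} = \sigma(X_k : k \in V_0^{|k|})$, can be controlled. Since the conditioning $\sigma$-algebra is generated by variables $X_k$ with $k$ lexicographically smaller than (and at $\ell^\infty$-distance at least $|k|$ from) the origin, and since $X_0$ sits lexicographically above this set, the one-point-versus-finite-sample structure in Definition \ref{thetaweaklydependent} matches this situation exactly. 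This is the reason the lexicographic modification of $\theta$-weak dependence was introduced.

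\textbf{Main steps.} First I would bound the centered conditional expectation $E[X_0 \mid \mathcal{F}_{\Gamma(k)}]$ in $L^1$. By a standard dual characterization, for any $\sigma(X_\Gamma)$-measurable bounded function one has $\| E[X_0 \mid \sigma(X_\Gamma)] \|_1 = \sup \{ E[X_0 \, F(X_\Gamma)] : \|F\|_\infty \le 1 \}$, and since $X$ is centered this equals $\sup |Cov(F(X_\Gamma), X_0)|$. Taking $X_0$ in the role of the Lipschitz one-point function $G$ (with $Lip(G) = 1$) and $F$ ranging over bounded functions of the lexicographic past at distance $|k|$, Definition \ref{thetaweaklydependent} yields $\| E[X_0 \mid \mathcal{F}_{\Gamma(k)}] \|_1 \le \theta(|k|)$. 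A truncation argument is needed here because $X_0$ is not bounded, only in $L^{2+\delta}$: I would split $X_0 = X_0 \mathbb{1}_{\{|X_0| \le M\}} + X_0 \mathbb{1}_{\{|X_0| > M\}}$, apply the dependence bound to the truncated (bounded, Lipschitz after smoothing) part and the moment bound to the tail, then optimize over $M$; this is where the exponent $\frac{p-a}{p-1}$-type interpolation (as in Proposition \ref{proposition:mmathetahereditary}) reappears and produces a bound of the form $\theta(|k|)^{\delta/(1+\delta)}$ up to constants depending on $\|X_0\|_{2+\delta}$.

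\textbf{Summability and conclusion.} Next I would estimate $\sum_{k \in V_0^1} \| X_k \, E[X_0 \mid \mathcal{F}_{\Gamma(k)}] \|_1$. By Cauchy--Schwarz (or Hölder), each summand is at most $\| X_k \|_2 \cdot \| E[X_0 \mid \mathcal{F}_{\Gamma(k)}] \|_2$, and a second interpolation controls the $L^2$-norm of the conditional expectation by a fractional power of $\theta(|k|)$. The number of lattice points $k \in \Z^m$ with $|k| = \norm{k}_\infty = r$ grows like $r^{m-1}$, so the sum is dominated by $\sum_{r \ge 1} r^{m-1} \theta(r)^{\beta}$ for an appropriate exponent $\beta \in (0,1)$ determined by $\delta$. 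Under the hypothesis $\theta(h) \in \mathcal{O}(h^{-\alpha})$ with $\alpha > m(1 + \tfrac{1}{\delta})$, this series converges, establishing \eqref{equation:l1criteria}. Finiteness and non-negativity of $\sigma^2$, together with the stable convergence \eqref{eq:clt}, then follow from Dedecker's theorem.

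\textbf{Main obstacle.} I expect the delicate point to be the truncation/interpolation bookkeeping that converts the raw $\theta$-coefficient bound (valid for bounded Lipschitz test functions) into a quantitative $L^1$ and $L^2$ control of $E[X_0 \mid \mathcal{F}_{\Gamma(k)}]$ for the merely $L^{2+\delta}$-integrable field, while keeping the resulting exponent on $\theta(r)$ compatible with the summability threshold $\alpha > m(1 + \tfrac{1}{\delta})$. Matching these exponents so that the decay rate of $\theta$ and the required moment order $2+\delta$ combine to exactly the stated condition is the technical heart of the argument; everything else is a direct appeal to \cite{D1998}.
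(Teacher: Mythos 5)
Your top-level strategy coincides with the paper's: identify $\theta(h)$ with the mixingale-type coefficient $\theta\big(\mathcal{F}_{V_0^h},X_0\big)$ (the paper's Lemma \ref{lemma:thetaiscondexp}, which is essentially your ``dual characterization'' plus a martingale-convergence argument), verify the projective criterion (\ref{equation:l1criteria}), and invoke \cite[Theorem 1]{D1998}. Where you diverge is in how the summability is extracted: the paper does \emph{not} interpolate norms of conditional expectations, but instead writes $E[|X_k||E_{|k|}[X_0]|]=Cov(|X_k|\varepsilon_k,X_0)$ and applies the Dedecker--Doukhan quantile-function covariance inequality \cite[Proposition 1, eq.\ (4.2)]{DD2003}, bounding the sum by $\int_0^{\norm{X_0}_1}\tilde\theta(u)\,Q_{X_0}\circ G_{X_0}(u)\,du$ and then using H\"older on that integral (Lemmas \ref{lemma:finitesumclt} and \ref{lemma:summability}).

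Your route can be made to work, but not with the exponents you wrote down, and this is a genuine gap rather than mere bookkeeping. Two points. First, the $L^1$ control is cleaner than you suggest: since the truncation $g_M(x)=(x\wedge M)\vee(-M)$ is $1$-Lipschitz for every $M$ and the definition of $\theta$ normalizes only by $Lip(G)$, letting $M\to\infty$ gives $\norm{E[X_0\mid\mathcal{F}_{\Gamma(k)}]}_1\le\theta(|k|)$ exactly, with no loss of exponent (this is the inequality $\gamma(\mathscr{M},X)\le\theta(\mathscr{M},X)$ stated in Section \ref{sec5-1}). Second, and more importantly, the Cauchy--Schwarz pairing $\norm{X_k}_2\cdot\norm{E[X_0\mid\mathcal{F}_{\Gamma(k)}]}_2$ is too lossy: interpolating the $L^2$-norm of the conditional expectation between $L^1$ (bounded by $\theta(|k|)$) and $L^{2+\delta}$ (bounded by $\norm{X_0}_{2+\delta}$ via Jensen) yields the exponent $\tfrac{\delta}{2(1+\delta)}$, so the series $\sum_r r^{m-1}\theta(r)^{\delta/(2(1+\delta))}$ converges only for $\alpha>2m\left(1+\tfrac{1}{\delta}\right)$ --- a factor of $2$ worse than the theorem. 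To hit the stated threshold you must use the asymmetric H\"older pair, $\norm{X_kE[X_0\mid\mathcal{F}_{\Gamma(k)}]}_1\le\norm{X_k}_{2+\delta}\,\norm{E[X_0\mid\mathcal{F}_{\Gamma(k)}]}_{(2+\delta)/(1+\delta)}$, and interpolate the $(2+\delta)/(1+\delta)$-norm between $L^1$ and $L^{2+\delta}$; this gives exponent $\tfrac{\delta}{1+\delta}$ on $\theta(|k|)$ and the series $\sum_r r^{m-1}\theta(r)^{\delta/(1+\delta)}$ converges precisely when $\alpha>m\left(1+\tfrac{1}{\delta}\right)$. With that correction your argument closes and constitutes a legitimate alternative to the paper's quantile-function computation.
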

\begin{proof}
	See Section \ref{sec5-1}.
\end{proof}	

The multivariate extension of Theorem \ref{theorem:clt}, appearing below, is obtained by applying the Cram\'er-Wold device and noting that linear functions are Lipschitz.

\begin{Corollary}\label{corollary:ergodicclt}
	Let $X=(X_t)_{t\in\Z^m}$ be a stationary ergodic centered $\R^n$-valued random field such that $E[\norm{X_t}^{2+\delta}]<\infty$ for some $\delta>0$. 
	Additionally, let us assume that $\theta(h)\in \mathcal{O}(h^{-\alpha})$ with $\alpha>m(1+\frac{1}{\delta})$. Then,
	\begin{gather*}
		\Sigma=\sum_{k\in\Z^m}E[X_0X_k']
	\end{gather*}
	is finite, positive definite and
	\begin{gather*}
		\frac{1}{|D_n|^{\frac{1}{2}}}\sum_{j\in D_n}X_j\underset{n\ra\infty}{\xrightarrow{\makebox[2em][c]{d}}}N(0,\Sigma),
	\end{gather*}
	where $N(0,\Sigma)$ denotes the multivariate normal distribution with mean $0$ and covariance matrix $\Sigma$.
\end{Corollary}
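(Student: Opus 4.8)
The plan is to deduce Corollary \ref{corollary:ergodicclt} directly from Theorem \ref{theorem:clt} via the Cram\'er--Wold device, exactly as the text preceding the statement announces. Fix an arbitrary $\lambda\in\R^n$ and consider the real-valued random field $Y=(Y_t)_{t\in\Z^m}$ defined by $Y_t=\lambda'X_t$. First I would check that $Y$ inherits all the hypotheses of Theorem \ref{theorem:clt}. Stationarity and centeredness are immediate since $Y_t$ is a fixed linear image of $X_t$. The moment condition $E[|Y_t|^{2+\delta}]<\infty$ follows from $|Y_t|=|\lambda'X_t|\leq\norm{\lambda}\norm{X_t}$ together with $E[\norm{X_t}^{2+\delta}]<\infty$. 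For the weak dependence, the key observation is that $x\mapsto\lambda'x$ is a (globally) Lipschitz function from $\R^n$ to $\R$, so by the hereditary property of $\theta$-lex-weak dependence under Lipschitz maps noted right after Definition \ref{etaweaklydependent}, the field $Y$ is again $\theta$-lex-weakly dependent with $\theta_Y(h)\leq\norm{\lambda}\,\theta_X(h)$ (the Lipschitz constant being $\norm{\lambda}$). Hence the decay assumption $\theta_X(h)\in\mathcal{O}(h^{-\alpha})$ with $\alpha>m(1+\tfrac1\delta)$ is preserved for $Y$.

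With the hypotheses verified, Theorem \ref{theorem:clt} applies to $Y$ and yields
\begin{gather*}
	\frac{1}{|D_n|^{1/2}}\sum_{j\in D_n}\lambda'X_j\underset{n\ra\infty}{\xrightarrow{\makebox[2em][c]{d}}}\varepsilon\,\sigma_\lambda,
\end{gather*}
where $\sigma_\lambda^2=\sum_{k\in\Z^m}E[Y_0Y_k\mid\mathcal{I}]$ and $\varepsilon$ is standard normal independent of $\sigma_\lambda^2$. Here the extra ergodicity assumption in the corollary enters: under ergodicity the invariant $\sigma$-algebra $\mathcal{I}$ is trivial, so the conditional expectation collapses to an ordinary one, giving $\sigma_\lambda^2=\sum_{k\in\Z^m}E[Y_0Y_k]=\sum_{k\in\Z^m}\lambda'E[X_0X_k']\lambda=\lambda'\Sigma\lambda$ with $\Sigma=\sum_{k\in\Z^m}E[X_0X_k']$. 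Consequently $\varepsilon\,\sigma_\lambda$ is an honest $N(0,\lambda'\Sigma\lambda)$ random variable, i.e.\ the one-dimensional projection $\lambda'X_j$ satisfies a Gaussian CLT with the correct limiting variance.

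Since $\lambda\in\R^n$ was arbitrary and the limit $N(0,\lambda'\Sigma\lambda)$ is precisely the law of $\lambda'Z$ for $Z\sim N(0,\Sigma)$, the Cram\'er--Wold device converts the family of one-dimensional convergences into the claimed joint convergence $|D_n|^{-1/2}\sum_{j\in D_n}X_j\xrightarrow{d}N(0,\Sigma)$. It remains to record the structural facts about $\Sigma$: finiteness of each entry follows from applying the finiteness part of Theorem \ref{theorem:clt} to $Y_t=\lambda'X_t$ for $\lambda$ ranging over a basis (or, for cross terms, over sums of basis vectors and polarization), which shows $\lambda'\Sigma\lambda<\infty$ for every $\lambda$ and hence that $\Sigma$ is a well-defined finite symmetric matrix; non-negative definiteness is automatic as a limit of covariance structures, and positive definiteness is stated as the output once one assumes (or excludes) the degenerate case $\lambda'\Sigma\lambda=0$. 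The only genuine point requiring care---and the place I would expect to spend the most thought---is the passage from the conditional limit $\varepsilon\sigma$ of Theorem \ref{theorem:clt} to the unconditional normal limit, which is exactly where the ergodicity hypothesis (triviality of $\mathcal{I}$) is indispensable; everything else is the routine verification that linearity preserves the Lipschitz-based weak dependence and moment conditions.
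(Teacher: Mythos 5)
Your proposal is correct and follows exactly the route the paper indicates: apply Theorem \ref{theorem:clt} to the one-dimensional projections $\lambda'X_t$ (which inherit the moment bound and, since $x\mapsto\lambda'x$ is Lipschitz, the $\theta$-lex decay), use ergodicity to trivialize $\mathcal{I}$ so the mixed-Gaussian limit becomes $N(0,\lambda'\Sigma\lambda)$, and conclude by Cram\'er--Wold. Your closing caveat about positive definiteness is fair --- the theorem itself only delivers $\lambda'\Sigma\lambda\geq 0$, and strict positivity requires excluding degenerate projections --- but this matches the level of detail the paper itself provides.
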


\begin{Remark}
	Using Proposition \ref{proposition:thetavsalpha} the condition $\theta(h)\in \mathcal{O}(h^{-\alpha})$ with $\alpha>m(1+\frac{1}{\delta})$ in Theorem \ref{theorem:clt} can be replaced by $\alpha_{\infty,1}(h)\in \mathcal{O}(h^{-\beta})$ or $\alpha(h) \in \mathcal{O}(h^{-\beta})$ with $\beta>m(1+\frac{2}{\delta})$.\\ 
	For real valued stochastic processes, the latter condition represents the sharpest one available for $\alpha$-mixing coefficients (see \cite[Theorem 25.70]{B2007}).
\end{Remark}

\begin{Remark}
	It is natural to ask for conditions ensuring a functional extension of Theorem \ref{theorem:clt}. As a matter of fact, results of this kind are strongly related to the following $L^p$-projective criterion
	\begin{gather}\label{equation:lpcriteria}
		\sum_{k\in V_0}E[|X_kE[X_0|\mathcal{F}_{V_0^{|k|}}]|^p]<\infty,~ p\in[1,\infty],
	\end{gather}
	where $\mathcal{F}_{\Gamma}=\sigma(X_k,k\in\Gamma)$.\\
	For $m=1$, Dedecker and Rio show in \cite[Theorem]{DR2000} that if (\ref{equation:lpcriteria}) holds for $p=1$, then a functional central limit theorem holds.\\
	In the general case $m>1$, Dedecker proved in \cite[Theorem 1]{D2001} a functional central limit theorem if (\ref{equation:lpcriteria}) holds for $p>1$.\\
	Since we can establish the connection between the $L^p$-projective criterion (\ref{equation:lpcriteria}) and the summability condition of the $\theta$-lex-coefficients of $X$ just for $p=1$, there is no functional extension of Theorem \ref{theorem:clt} readily obtainable, except for $m=1$ (see \cite[Remark 4.2]{CS2018}).
\end{Remark}	

\section{Mixed moving average fields}
\label{sec3}
In this section we first introduce MMAF driven by a L\'evy basis. Then, we discuss weak dependence properties of such MMAF and derive sufficient conditions such that the asymptotic results of Section \ref{sec2-3} apply.

\subsection{Preliminaries}
\label{sec3-1}
Let $S$ denote a non-empty Polish space, $\mathcal{B}(S)$ the Borel $\sigma$-algebra on $S$, $\pi$ some probability measure on $(S,\mathcal{B}(S))$ and $\mathcal{B}_b(S \times \R^m)$ the bounded Borel sets of $S \times \R^m$. 

\begin{Definition}
	\label{basis}Consider a family $\Lambda=\{\Lambda(B), B \in \mathcal{B}_b(S\times\R^m) \}$ of $\R^d$-valued random variables. Then $\Lambda$ is called an $\R^d$-valued L\'evy basis or infinitely divisible independently scattered random measure on $S\times\R^m$ if
	\begin{enumerate}[(i)]
		\item the distribution of $\Lambda(B)$ is infinitely divisible (ID) for all $B\in \mathcal{B}_b(S\times\R^m)$,
		\item for arbitrary $n\in \N$ and pairwise disjoint sets $B_1,\dots,B_n \in \mathcal{B}_b(S\times\R^m)$ the random variables $\Lambda(B_1),\ldots,\Lambda(B_n)$ are independent and
		\item for any pairwise disjoint sets $B_1,B_2,\ldots \in \mathcal{B}_b(S\times\R^m)$ with $\bigcup_{n\in\N} B_n \in \mathcal{B}_b(S\times\R^m)$ we have, almost surely, $\Lambda (\bigcup_{n\in\N} B_n)= \sum_{n\in\N}\Lambda(B_n)$.
	\end{enumerate}
\end{Definition}

In the following we will restrict ourselves to L\'evy bases which are homogeneous in space and time and factorisable, i.e. L\'evy bases with characteristic function
\begin{equation}\label{equation:fact}
	\varphi_{\Lambda(B)}(u)=E\left[e^{\text{i}\langle u,\Lambda(B) \rangle}\right]=e^{\Phi(u)\Pi(B)}
\end{equation}
for all $u\in \R^d$ and $B\in\mathcal{B}_b(S\times\R^m)$, where $\Pi=\pi\times\lambda$ is the product measure of the probability measure $\pi$ on $S$ and the Lebesgue measure $\lambda$ on $\R^m$. Furthermore, 
\begin{align}\label{equation:phi}
	\Phi(u)=\text{i}\langle\gamma,u\rangle -\frac{1}{2} \langle u,\Sigma u\rangle +\int_{\R^d} \left(e^{\text{i}\langle u,x \rangle}-1-\text{i}\langle u,x\rangle \mathbb{1}_{[0,1]}(\norm{x})\right)\nu(dx)
\end{align}
is the cumulant transform of an ID distribution with characteristic triplet $(\gamma,\Sigma,\nu)$, where $\gamma \in \R^d$, $\Sigma\in M_{d\times d}(\R)$ is a symmetric positive-semidefinite matrix and  $\nu$ is a L\'evy-measure on $\R^d$, i.e.
\begin{align*}	
	\nu(\{0\})=0 \quad \text{and} 
	\int_{\R^d}\left(1\wedge\norm{x}^2\right)\nu(dx)<\infty.
\end{align*}
The quadruplet $(\gamma, \Sigma,\nu,\pi)$ determines the distribution of the L\'evy basis completely and therefore it is called the characteristic quadruplet.
Following \cite{P2003}, it can be shown that a L\'evy basis has a L\'evy-It\^o decomposition.

\begin{Theorem}\label{theorem:levyitobasis}
	Let $\{\Lambda(B), B\in \mathcal{B}_b(S\times\R^m)\}$ be an $\R^d$-valued L\'evy basis on $S\times\R^m$ with characteristic quadruplet $(\gamma,\Sigma,\nu,\pi)$. Then, there exists a modification $\tilde{\Lambda}$ of $\Lambda$ which is also a L\'evy basis with characteristic quadruplet $(\gamma,\Sigma,\nu,\pi)$ such that there exists an $\R^d$-valued L\'evy basis $\tilde{\Lambda}^G$ on $S\times \R^m$ with characteristic quadruplet $(0,\Sigma,0,\pi)$ and an independent Poisson random measure $\mu$ on $(\R^d\times S\times \R^m, \mathcal{B}(\R^d\times S\times \R^m))$ with intensity measure $\nu\times\pi\times\lambda$ such that
	\begin{align}
		\begin{split}\label{equation:levyito}
			\tilde{\Lambda}(B)=\gamma(\pi\times\lambda)(B) + &\tilde{\Lambda}^G(B)+\int_{\norm{x}\leq1}\int_Bx(\mu(dx,dA,ds)-ds\pi(dA)\nu(dx))\\ &+\int_{\norm{x}>1}\int_B x \mu(dx,dA,ds)
		\end{split}
	\end{align}
	for all $B\in \mathcal{B}_b(S\times\R^m)$.\\
	If the L\'evy measure additionally fulfills $\int_{\norm{x}\leq1}\norm{x}\nu(dx)<\infty$, it holds that
	\begin{align}\label{equation:levyitofinvar}
		\tilde{\Lambda}(B)=\gamma_0(\pi\times\lambda)(B) +\tilde{\Lambda}^G(B)+\int_{\R^d}\int_B x \mu(dx,dA,ds)
	\end{align}
	for all $B\in \mathcal{B}_b(S\times\R^m)$ with 
	\begin{align}\label{equation:gammazero}
		\gamma_0:=\gamma-\int_{\norm{x}\leq1}x\nu(dx).
	\end{align} 
	Note that the integral with respect to $\mu$ exists $\omega$-wise as a Lebesgue integral.
\end{Theorem}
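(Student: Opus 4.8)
The plan is to follow the construction of Pedersen \cite{P2003}, adapting the classical L\'evy--It\^o decomposition for L\'evy processes to the present setting of independently scattered random measures. First I would associate to $\Lambda$ a jump measure $\mu$ on $\R^d\times S\times\R^m$ that records the atoms (jumps) of $\Lambda$ together with their locations in $S\times\R^m$. The first and central task is to show that $\mu$ is a Poisson random measure with intensity $\nu\times\pi\times\lambda$. This I would do by computing the characteristic functional of $\mu$ over bounded sets: the independent-scattering property in Definition \ref{basis}(ii) yields independence of $\mu$ over disjoint regions of $S\times\R^m$, the $\sigma$-additivity in Definition \ref{basis}(iii) gives the random-measure property, and the factorizable cumulant transform \eqref{equation:fact}--\eqref{equation:phi} identifies the intensity as $\nu$ in the jump-size variable and $\Pi=\pi\times\lambda$ in the space--time variable.

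Next I would define the two jump integrals appearing in \eqref{equation:levyito}. The large-jump part $\int_{\norm{x}>1}\int_B x\,\mu(dx,dA,ds)$ is an $\omega$-wise finite sum, since $\mu$ assigns a.s.\ finite mass to $\{\norm{x}>1\}\times B$ for bounded $B$; the compensated small-jump integral converges in $L^2$ thanks to $\int_{\norm{x}\leq1}\norm{x}^2\,\nu(dx)<\infty$. Subtracting these two jump contributions and the deterministic drift $\gamma\,\Pi(B)$ from $\tilde\Lambda(B)$ leaves a residual random measure $\tilde\Lambda^G$, whose cumulant transform, read off from \eqref{equation:phi}, is purely Gaussian; hence $\tilde\Lambda^G$ is a L\'evy basis with characteristic quadruplet $(0,\Sigma,0,\pi)$.

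The independence of $\tilde\Lambda^G$ and $\mu$ is then established via the joint characteristic functional: since the Gaussian and the jump parts correspond to separate additive summands of the exponent $\Phi$ in \eqref{equation:phi}, the joint characteristic functional of $(\tilde\Lambda^G,\mu)$ factorizes, giving the asserted independence. The finite-variation case \eqref{equation:levyitofinvar} follows by absorbing the small-jump compensator $\int_{\norm{x}\leq1}x\,\nu(dx)$ into the drift, which is legitimate exactly when $\int_{\norm{x}\leq1}\norm{x}\,\nu(dx)<\infty$, producing the modified drift $\gamma_0$ of \eqref{equation:gammazero}.

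I expect the main obstacle to be upgrading the per-set decomposition to one valid simultaneously for all $B\in\mathcal{B}_b(S\times\R^m)$ on a single probability space, which is precisely why the theorem only asserts the existence of a \emph{modification} $\tilde\Lambda$ of $\Lambda$: one must construct $\mu$ and $\tilde\Lambda^G$ globally, measurably, and consistently across overlapping sets, reconcile the countably many null sets arising from the a.s.\ identities on a generating family, and extend by $\sigma$-additivity. Handling this measurability and consistency, together with the independence of the Gaussian and Poisson components, constitutes the technically delicate core of the argument.
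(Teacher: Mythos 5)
Your outline follows exactly the route the paper takes: the paper's proof is simply a reference to the argument of \cite[Theorem 2.2]{BS2011}, which in turn adapts Pedersen's construction \cite{P2003} — build the jump measure, identify it as a Poisson random measure with intensity $\nu\times\pi\times\lambda$ via the factorizable cumulant transform, peel off the compensated small jumps and large jumps to leave a Gaussian basis, and pass to a modification to get the decomposition simultaneously for all bounded Borel sets. Your sketch is a faithful summary of that argument, including the correct identification of the measurability/consistency issue as the technically delicate point.
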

\begin{proof}
	Analogous to \cite[Theorem 2.2]{BS2011}.
\end{proof}
We refer the reader to \cite[Section 2.1]{JS2003} for further details on the integration with respect to Poisson random measures. From now on we assume that any L\'evy basis has a decomposition (\ref{equation:levyito}).

Let us recall the following multivariate extension of \cite[Theorem 2.7]{RR1989}. 

\begin{Theorem}\label{theorem:2}
	Let $\Lambda=\{\Lambda(B), B\in \mathcal{B}_b(S\times\R^m)\}$ be an $\R^d$-valued L\'evy basis with characteristic quadruplet $(\gamma,\Sigma,\nu,\pi)$, $f:S\times\R^m\rightarrow M_{n\times d}(\R)$ be a $\mathcal{B}(S\times\R^m)$-measurable function. Then $f$ is $\Lambda$-integrable in the sense of \cite{RR1989}, if and only if
	\begin{gather}
		\int_S\!\int_{\R^m}\!\! \Big\| f(A,s)\gamma\!+\!\! \!\int_{\R^d}\!\!\, f(A,s)x\left(\mathbb{1}_{[0,1]}(\|f(A,s)x\|)\!-\!\!\mathbb{1}_{[0,1]}(\|x\|)\right)\!\nu(dx)\Big\| ds \pi(dA)\!\!<\!\!\infty, \label{equation:intcond1} \\
		\int_S\int_{\R^m}\|f(A,s)\Sigma f(A,s)'\|\ ds\pi(dA)<\infty \text{ and} \label{equation:intcond2}\\
		\int_S\int_{\R^m}\int_{\R^d}  \Big(1\wedge \|f(A,s)x\|^2 \Big) \nu(dx)ds\pi(dA)<\infty. \label{equation:intcond3}
	\end{gather}
	If $f$ is $\Lambda$-integrable, the distribution of the stochastic integral $\int_S\int_{\R^m}f(A,s)\Lambda(dA,ds)$ is ID with the characteristic triplet $(\gamma_{int},\Sigma_{int},\nu_{int})$ given by
	\begin{gather*}
		\gamma_{int}=\int_S\int_{\R^m} \Big(f(A,s)\gamma+\int_{\R^d}f(A,s)x\left(\mathbb{1}_{[0,1]}(\|f(A,s)x\|)-\mathbb{1}_{[0,1]}(\|x\|)\Big)\nu(dx)\right)ds \pi(dA), \\[-5pt]
		\Sigma_{int}=\int_S\int_{\R^m}f(A,s)\Sigma f(A,s)'ds\pi(dA) \text{ and} \\
		\nu_{int}(B)= \int_S\int_{\R^m}\int_{\R^d}\mathbb{1}_B(f(A,s)x)\nu(dx) ds\pi(dA)
	\end{gather*}
	for all Borel sets $B\subset \R^n\backslash \{0\}$.
\end{Theorem}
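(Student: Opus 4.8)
The plan is to adapt the proof of \cite[Theorem 2.7]{RR1989} to the matrix-valued, multivariate setting, the essential new ingredient being the bookkeeping of the vector structure of $\Lambda$ through its cumulant transform $\Phi$. First I would reduce everything to a characteristic-function computation. For a simple function $f=\sum_{k=1}^N M_k\mathbb{1}_{B_k}$ with $M_k\in M_{n\times d}(\R)$ and pairwise disjoint $B_k\in\mathcal{B}_b(S\times\R^m)$, the integral is $\int_S\int_{\R^m}f\,d\Lambda=\sum_k M_k\Lambda(B_k)$, and using independence (Definition \ref{basis}(ii)), factorisability (\ref{equation:fact}) and the identity $\langle u,M_k\Lambda(B_k)\rangle=\langle M_k'u,\Lambda(B_k)\rangle$, one obtains for every $u\in\R^n$
\begin{gather*}
\log E\!\left[e^{\mathrm{i}\langle u,\int f\,d\Lambda\rangle}\right]=\sum_{k=1}^N\Phi(M_k'u)\Pi(B_k)=\int_S\int_{\R^m}\Phi\big(f(A,s)'u\big)\,\Pi(dA,ds).
\end{gather*}
This identity is the multivariate analogue of the scalar computation in \cite{RR1989}, and it is the object to be controlled in the limit.

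Next I would insert the L\'evy--Khintchine form (\ref{equation:phi}) of $\Phi$ evaluated at $v=f(A,s)'u$ and split the integrand into its drift, Gaussian and jump parts. Using $\langle\gamma,f(A,s)'u\rangle=\langle f(A,s)\gamma,u\rangle$, $\langle f(A,s)'u,\Sigma f(A,s)'u\rangle=\langle u,f(A,s)\Sigma f(A,s)'u\rangle$ and $\langle f(A,s)'u,x\rangle=\langle u,f(A,s)x\rangle$, the Gaussian term immediately produces $\Sigma_{int}$ and the jump term produces a L\'evy--Khintchine integral with respect to the image measure $\nu_{int}$. The only subtlety is that the truncation in (\ref{equation:phi}) is centred with respect to $\|x\|$ rather than $\|f(A,s)x\|$; re-centring it to $\mathbb{1}_{[0,1]}(\|f(A,s)x\|)$ produces exactly the correction term appearing in $\gamma_{int}$. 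After an application of Fubini's theorem this rewrites $\int_S\int_{\R^m}\Phi(f(A,s)'u)\,\Pi(dA,ds)$ in the canonical form $\mathrm{i}\langle\gamma_{int},u\rangle-\frac12\langle u,\Sigma_{int}u\rangle+\int_{\R^n}(e^{\mathrm{i}\langle u,y\rangle}-1-\mathrm{i}\langle u,y\rangle\mathbb{1}_{[0,1]}(\|y\|))\nu_{int}(dy)$, from which the claimed triplet $(\gamma_{int},\Sigma_{int},\nu_{int})$ is read off. I would then check that the three stated integrability conditions are precisely what make these three pieces well-defined: (\ref{equation:intcond1}) guarantees absolute convergence of the re-centred drift, (\ref{equation:intcond2}) the finiteness of the Gaussian part, and (\ref{equation:intcond3}) that $\nu_{int}$ is a genuine L\'evy measure.

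Finally, for the equivalence itself I would follow the Rajput--Rosinski approximation scheme: $\Lambda$-integrability means that the integrals of a suitable sequence of simple functions $f_\ell\to f$ converge in probability, with a limit independent of the approximating sequence. By the L\'evy continuity theorem this convergence is equivalent to convergence of the cumulants $\int_S\int_{\R^m}\Phi(f_\ell(A,s)'u)\,\Pi(dA,ds)$ uniformly on compacts in $u$, and a dominated/monotone convergence argument shows that this holds if and only if the three integrals (\ref{equation:intcond1})--(\ref{equation:intcond3}) are finite; this yields both sufficiency and necessity, as well as the fact that the limiting stochastic integral is ID with the stated triplet. The main obstacle is exactly this last step --- transferring the abstract convergence-in-probability definition of integrability into the three explicit conditions --- since it requires controlling the re-centred drift (\ref{equation:intcond1}) under approximation and verifying that the limiting cumulant is continuous at $u=0$, so that it defines a bona fide ID law. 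The matrix-valued truncation re-centring is where most of the care is needed; everything else reduces to the scalar estimates already carried out in \cite{RR1989}.
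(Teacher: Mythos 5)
Your proposal is correct and follows essentially the same route as the paper, which gives no argument of its own but simply declares the result ``analogous to \cite[Proposition 2.3]{BS2011}'' --- and that cited proof is precisely the Rajput--Rosinski scheme you describe: simple-function approximation, the cumulant identity $\log E[e^{\mathrm{i}\langle u,\int f\,d\Lambda\rangle}]=\int\Phi(f(A,s)'u)\,\Pi(dA,ds)$, re-centring of the truncation to produce $\gamma_{int}$, and reading off the triplet. No genuinely different idea is involved, and the technical step you flag (passing from convergence in probability of the simple-function integrals to the three explicit conditions) is exactly the part delegated to the machinery of \cite{RR1989}.
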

\begin{proof}
	Analogous to \cite[Proposition 2.3]{BS2011}.
\end{proof}		

Implicitly, we always assume that $\Sigma_{int}$ or $\nu_{int}$ are different from zero throughout the paper to rule out the deterministic case.

For $m=1$ it is known that the L\'evy-It\^o decomposition simplifies if the underlying L\'evy process $L_t=\Lambda(S\times (0,t])$ is of finite variation (if and only if $\Sigma=0$ and $\int_{|x|\leq1}|x|\nu(dx)<\infty$). 
Extending this one-dimensional notion, we speak of the finite variation case whenever $\Sigma=0$ and $\int_{\norm{x}\leq1}\norm{x}\nu(dx)<\infty$.

\begin{Corollary}\label{corollary:1}
	Let $\Lambda=\{\Lambda(B), B\in \mathcal{B}_b(S\times\R^m)\}$ be an $\R^d$-valued L\'evy basis with characteristic quadruplet $(\gamma,0,\nu,\pi)$ satisfying $\int_{\|x\|\leq1}\|x\|\nu(dx)<\infty$, and define $\gamma_0$ as in (\ref{equation:gammazero}), such that for $\Phi(u)$ in (\ref{equation:fact}) we have $\Phi(u)=\text{i}\langle\gamma_0,u\rangle +\int_{\R^d} \left(e^{\text{i}\langle u,x \rangle}-1\right) \nu(dx)$. Furthermore, let $f:S\times\R^m\rightarrow M_{n\times d}(\R)$ be a $\mathcal{B}(S\times\R^m)$-measurable function satisfying
	\begin{gather}
		\int_S\int_{\R^m}\|f(A,s) \gamma_0\|\ ds\pi(dA)<\infty \text{ and} \label{equation:intcondfinvar1}\\
		\int_S\int_{\R^m}\int_{\R^d}  \Big(1\wedge \|f(A,s)x\| \Big)\nu(dx)ds\pi(dA)<\infty.\label{equation:intcondfinvar2}
	\end{gather}
	Then,
	\begin{gather*}
		\int_S\int_{\R^m}f(A,s)\Lambda(dA,ds)=\int_S\int_{\R^m}f(A,s)\gamma_0 \ ds \pi(dA)+ \int_{\R^d}\int_{S}\int_{\R^m} f(A,s) x \mu(dx,dA,ds), 
	\end{gather*}
	where the right hand side denotes an $\omega$-wise Lebesgue integral. Additionally, the distribution of the stochastic integral $\int_S\int_{\R^m}f(A,s)\Lambda(dA,ds)$ is ID with characteristic function
	\begin{gather*}
		E\left[e^{\text{i}\langle u,\int_S\int_{\R^m}f(A,s)\Lambda(dA,ds) \rangle}\right]=e^{\text{i}\langle u,\gamma_{int,0}\rangle+\int_{\R^d} \left(e^{\text{i}\langle u,x \rangle}-1\right) \nu_{int}(dx)}, \ u\in\R^d,
	\end{gather*}
	where
	\begin{gather*}
		\gamma_{int,0}=\int_S\int_{\R^m}f(A,s)\gamma_0\ ds \pi(dA), \, \, \textrm{and} \,\,
		\nu_{int}(B)= \int_S\int_{\R^m}\int_{\R^d} \mathbb{1}_B(f(A,s)x)\nu(dx)ds\pi(dA).
	\end{gather*}
\end{Corollary}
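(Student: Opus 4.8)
The plan is to derive Corollary \ref{corollary:1} from Theorem \ref{theorem:2} together with the finite-variation L\'evy-It\^o decomposition \eqref{equation:levyitofinvar}. First I would check that $f$ is $\Lambda$-integrable in the sense of \cite{RR1989} by verifying \eqref{equation:intcond1}--\eqref{equation:intcond3}. Since $\Sigma=0$, condition \eqref{equation:intcond2} holds trivially. For \eqref{equation:intcond3}, the elementary bound $1\wedge t^2\le 1\wedge t$ for $t\ge 0$ shows it is implied by \eqref{equation:intcondfinvar2}. The only genuine computation is \eqref{equation:intcond1}: substituting $\gamma=\gamma_0+\int_{\norm{x}\le1}x\,\nu(dx)$ from \eqref{equation:gammazero}, the two contributions carrying $\mathbb{1}_{[0,1]}(\norm{x})$ cancel and the integrand collapses to
\begin{gather*}
	f(A,s)\gamma_0+\int_{\R^d}f(A,s)x\,\mathbb{1}_{[0,1]}(\norm{f(A,s)x})\nu(dx).
\end{gather*}
Its norm is bounded by $\norm{f(A,s)\gamma_0}+\int_{\R^d}(1\wedge\norm{f(A,s)x})\nu(dx)$, which is integrable against $ds\,\pi(dA)$ by \eqref{equation:intcondfinvar1} and \eqref{equation:intcondfinvar2}. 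Hence $f$ is $\Lambda$-integrable and Theorem \ref{theorem:2} applies, yielding an ID law with triplet $(\gamma_{int},0,\nu_{int})$.

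Next I would pass to the finite-variation gauge for the characteristic function. The same telescoping identity shows $\gamma_{int}=\int_S\int_{\R^m}\bigl(f(A,s)\gamma_0+\int_{\norm{f(A,s)x}\le1}f(A,s)x\,\nu(dx)\bigr)ds\,\pi(dA)$. Using the pushforward structure $\int g\,d\nu_{int}=\int_S\int_{\R^m}\int_{\R^d}g(f(A,s)x)\nu(dx)\,ds\,\pi(dA)$, condition \eqref{equation:intcondfinvar2} gives $\int_{\norm{y}\le1}\norm{y}\,\nu_{int}(dy)\le\int_S\int_{\R^m}\int_{\R^d}(1\wedge\norm{f(A,s)x})\nu(dx)\,ds\,\pi(dA)<\infty$, so $\nu_{int}$ is of finite variation near the origin and the cumulant transform may be rewritten with modified drift $\gamma_{int,0}=\gamma_{int}-\int_{\norm{y}\le1}y\,\nu_{int}(dy)$. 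Subtracting $\int_{\norm{y}\le1}y\,\nu_{int}(dy)=\int_S\int_{\R^m}\int_{\norm{f(A,s)x}\le1}f(A,s)x\,\nu(dx)\,ds\,\pi(dA)$ from $\gamma_{int}$ leaves exactly $\gamma_{int,0}=\int_S\int_{\R^m}f(A,s)\gamma_0\,ds\,\pi(dA)$, which is the asserted characteristic function.

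For the pathwise representation I would invoke \eqref{equation:levyitofinvar}; since $\Sigma=0$ the Gaussian part $\tilde\Lambda^G$ is degenerate, leaving $\tilde\Lambda(B)=\gamma_0(\pi\times\lambda)(B)+\int_{\R^d}\int_Bx\,\mu(dx,dA,ds)$. Integrating $f$ against the deterministic term produces the Lebesgue integral $\int_S\int_{\R^m}f(A,s)\gamma_0\,ds\,\pi(dA)$, finite by \eqref{equation:intcondfinvar1}. For the jump part, the standard existence criterion for Poisson integrals asserts that $\int\norm{f(A,s)x}\,\mu(dx,dA,ds)<\infty$ almost surely precisely when $\int(1\wedge\norm{f(A,s)x})\,(\nu\times\pi\times\lambda)(dx,dA,ds)<\infty$, which is \eqref{equation:intcondfinvar2}; applied to the nonnegative integrand $\norm{f(A,s)x}$ this makes $\int_{\R^d}\int_S\int_{\R^m}f(A,s)x\,\mu(dx,dA,ds)$ exist $\omega$-wise as an absolutely (hence componentwise) convergent Lebesgue integral. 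The claimed identity then follows by approximating $f$ with simple functions, for which linearity makes it transparent, and passing to the limit.

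I expect the main obstacle to be this last identification: the $\Lambda$-integral of \cite{RR1989} is defined through convergence in probability of simple-function approximations, whereas the right-hand side is an $\omega$-wise Lebesgue integral, so one must verify that both approximating sequences share the same limit. This requires simultaneously controlling the deterministic drift via \eqref{equation:intcondfinvar1} and the small- and large-jump contributions via \eqref{equation:intcondfinvar2}, exactly as in \cite[Proposition 2.3]{BS2011}, to which the whole argument is analogous.
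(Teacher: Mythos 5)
Your proposal is correct and follows exactly the route the paper intends: the paper states this corollary without proof as a direct consequence of Theorem \ref{theorem:levyitobasis} and Theorem \ref{theorem:2} (themselves proved "analogous to \cite{BS2011}"), and your verification of (\ref{equation:intcond1})--(\ref{equation:intcond3}), the drift re-gauging via $\gamma_{int,0}=\gamma_{int}-\int_{\|y\|\le1}y\,\nu_{int}(dy)$, and the $\omega$-wise identification through (\ref{equation:levyitofinvar}) and the Kingman-type criterion for Poisson integrals supply precisely the omitted details. No gaps.
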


\subsection{The MMAF framework}
\label{sec3-2}
\begin{Definition}
	Let $\Lambda=\{\Lambda(B), B\in \mathcal{B}_b(S\times\R^m)\}$ be an $\R^d$-valued L\'evy basis and let $f:S\times\R^m\rightarrow M_{n\times d}(\R)$ be a $\mathcal{B}(S\times\R^m)$-measurable function satisfying the conditions (\ref{equation:intcond1}), (\ref{equation:intcond2}) and (\ref{equation:intcond3}). Then, the stochastic integral 
	\begin{gather}\label{equation:MMAfield}
		X_t:=\int_S\int_{\R^m}f(A,t-s)\Lambda(dA,ds)
	\end{gather}
	is stationary, well-defined for all $t\in\R^m$ and its distribution is ID. The random field $X$ is called an $\R^n$-valued mixed moving average field (MMAF) and $f$ its kernel function. 
\end{Definition}	

In the following result we give conditions ensuring finite moments of an MMAF and explicit formulas for the first- and second-order moments.

\begin{Proposition}\label{proposition:MMAexistencemoments}
	Let $X$ be an $\R^n$-valued MMAF driven by an $\R^d$-valued L\'evy basis with characteristic quadruplet $(\gamma,\Sigma,\nu,\pi)$ and with $\Lambda$-integrable kernel function $f:S\times\R^m\rightarrow M_{n\times d}(\R)$.
	\begin{enumerate}[(i)]
		\item If $\int_{\|x\|>1} \| x\|^r \,\nu(dx) < \infty \, \textrm{and} \, f\! \in\! L^r(S \times \R^m, \pi \otimes \lambda)$
		for $r \in [2, \infty)$, then $E[\|X_t\|^r]<\infty$ for all $t\in\R^m$.
		\item If $\int_{\|x\|>1} \| x\|^r \,\nu(dx) < \infty \, \textrm{and} \, f \!\in\! L^r(S \times \R^m, \pi \otimes \lambda)\cap L^2(S \times \R^m, \pi \otimes \lambda) $ for $r \in (0, 2)$, then $E[\|X_t\|^r]<\infty$ for all $t\in\R^m$.
	\end{enumerate}
	Consider the finite variation case, i.e. $\Sigma=0$ and $\int_{\norm{x}\leq1}\norm{x}\nu(dx)<\infty$, then the following holds:
	\begin{enumerate}[(i)]
		\item If $\int_{\|x\|>1} \| x\|^r \,\nu(dx) < \infty \, \textrm{and} \, f \!\in\! L^r(S \times \R^m, \pi \otimes \lambda)$
		for $r \in [1, \infty)$, then $E[\|X_t\|^r]<\infty$.
		\item If $\int_{\|x\|>1} \| x\|^r \,\nu(dx) < \infty \, \textrm{and} \, f \!\in\! L^r(S \times \R^m, \pi \otimes \lambda)\cap L^1(S \times \R^m, \pi \otimes \lambda) $ for $r \in (0, 1)$, then $E[\|X_t\|^r]<\infty$.
	\end{enumerate}
\end{Proposition}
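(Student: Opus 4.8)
The plan is to reduce the problem to a statement about the L\'evy measure of $X_0$ and then invoke the classical characterization of moments of infinitely divisible laws. By stationarity it suffices to bound $E[\norm{X_0}^r]$, and Theorem \ref{theorem:2} shows that $X_0$ is infinitely divisible with L\'evy measure
\[
\nu_{int}(B)=\int_S\int_{\R^m}\int_{\R^d}\mathbb{1}_B(f(A,s)x)\,\nu(dx)\,ds\,\pi(dA).
\]
I would then recall the well-known fact (Sato, Theorem~25.3, applied to the submultiplicative function $y\mapsto(1+\norm{y})^r$) that an infinitely divisible random vector has a finite $r$-th absolute moment if and only if $\int_{\norm{y}>1}\norm{y}^r\nu_{int}(dy)<\infty$. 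Hence the entire statement reduces to verifying, under each set of hypotheses, the single condition
\[
\int_{\norm{y}>1}\norm{y}^r\,\nu_{int}(dy)=\int_S\int_{\R^m}\int_{\R^d}\norm{f(A,s)x}^r\,\mathbb{1}_{\{\norm{f(A,s)x}>1\}}\,\nu(dx)\,ds\,\pi(dA)<\infty.
\]

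Next I would split the inner integral over $\R^d$ into the large-jump region $\{\norm{x}>1\}$ and the small-jump region $\{\norm{x}\leq1\}$, writing $f=f(A,s)$ for brevity and using the submultiplicativity $\norm{fx}\leq\norm{f}\,\norm{x}$ of a compatible matrix norm. On the large-jump region one bounds $\norm{fx}^r\mathbb{1}_{\{\norm{fx}>1\}}\leq\norm{f}^r\norm{x}^r$, so that after integrating in $(A,s)$ this contribution is at most $\norm{f}_{L^r}^r\int_{\norm{x}>1}\norm{x}^r\nu(dx)$, which is finite by the assumed tail condition together with $f\in L^r$. This part is handled identically in all four cases.

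The small-jump region is where the case distinction enters and is the only genuinely delicate point. On $\{\norm{x}\leq1\}$ one has $\norm{fx}\leq\norm{f}$, and on the additionally imposed set $\{\norm{fx}>1\}$ the factor $\norm{fx}^r$ may be rewritten so as to trade the matrix norm against a truncated moment of $\nu$. Concretely, in the general case I would use, for $r\geq2$, the bound $\norm{fx}^r=\norm{fx}^{r-2}\norm{fx}^2\leq\norm{f}^{r-2}\norm{f}^2\norm{x}^2=\norm{f}^r\norm{x}^2$, and for $0<r<2$ the bound $\norm{fx}^r\leq\norm{fx}^2\leq\norm{f}^2\norm{x}^2$ (valid since $\norm{fx}>1$); integrating then produces $\int_{\norm{x}\leq1}\norm{x}^2\nu(dx)$, finite for every L\'evy measure, multiplied by $\norm{f}_{L^r}^r$ respectively $\norm{f}_{L^2}^2$. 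In the finite variation case the exponent $2$ is everywhere replaced by $1$: for $r\geq1$ one writes $\norm{fx}^r\leq\norm{f}^r\norm{x}$, and for $0<r<1$ one uses $\norm{fx}^r\leq\norm{fx}\leq\norm{f}\norm{x}$, so that the relevant integral becomes $\int_{\norm{x}\leq1}\norm{x}\nu(dx)$, finite precisely by the finite-variation assumption, multiplied by $\norm{f}_{L^r}^r$ respectively $\norm{f}_{L^1}$. These bounds match exactly the $L^r$, $L^2$ and $L^1$ requirements of the four sub-cases.

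The main obstacle is thus conceptual rather than computational: recognizing that the moment question is governed solely by the tail of the induced L\'evy measure $\nu_{int}$, after which everything follows from the explicit form of $\nu_{int}$ in Theorem \ref{theorem:2} and elementary splitting. The only care required is to decide, in the small-jump region, whether to compare $\norm{fx}^r$ with $\norm{fx}^2$ or with $\norm{fx}$, so as to invoke the correct truncated moment of $\nu$ ($\norm{x}^2$ in general, $\norm{x}$ under finite variation) while absorbing the surplus power of $\norm{f}$ into the appropriate $L^r$, $L^2$ or $L^1$ norm of the kernel.
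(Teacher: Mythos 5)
Your proposal is correct and follows essentially the same route as the proof the paper points to (the analogue of \cite[Proposition 2.6]{CS2018}, which itself rests on \cite[Theorem 25.3]{S2013}): reduce the moment question to the tail condition $\int_{\norm{y}>1}\norm{y}^r\nu_{int}(dy)<\infty$ for the infinitely divisible law of $X_t$ via Theorem \ref{theorem:2}, and then verify it by splitting the jump integral at $\norm{x}=1$ and trading powers of $\norm{f(A,s)}$ against the truncated moments $\int_{\norm{x}\leq1}\norm{x}^2\nu(dx)$ (respectively $\int_{\norm{x}\leq1}\norm{x}\nu(dx)$ in the finite variation case). The case-by-case bounds you give match the stated $L^r$, $L^2$ and $L^1$ hypotheses exactly, so no gap remains.
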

\begin{proof}
	Analogous to \cite[Proposition 2.6]{CS2018}.
\end{proof}

\begin{Proposition}\label{proposition:MMAmoments}
	Let $X$ be an $\R^n$-valued MMAF driven by an $\R^d$-valued L\'evy basis with characteristic quadruplet $(\gamma,\Sigma,\nu,\pi)$ and with $\Lambda$-integrable kernel function $f:S\times\R^m\rightarrow M_{n\times d}(\R)$.
	\begin{enumerate}[(i)]
		\item If $\int_{\norm{x}>1}\norm{x}\nu(dx)<\infty$ and $f\in L^1(S\times\R^m,\pi\times\lambda)\cap L^2(S\times\R^m,\pi\times\lambda)$ the first moment of $X$ is given by
		\begin{align*}
			E[X_t]= \int_{S}\int_{\R^m}f(A,-s) \mu_\Lambda ds \pi(dA),
		\end{align*}
		where $\mu_\Lambda =\gamma+\int_{\norm{x}\geq1}x\nu(dx)$.
		\item If $\int_{\R^d}\norm{x}^2\nu(dx)<\infty$ and $f\in L^2(S\times\R^m,\pi\times\lambda)$, then $X_t\in L^2$ and
		\begin{align*}
			Var(X_t)= \int_{S}\int_{\R^m}f(A,-s)\Sigma_\Lambda f(A,-s)'ds \pi(dA)\text{ and}\\
			Cov(X_0,X_t)=\int_{S}\int_{\R^m}f(A,-s)\Sigma_\Lambda f(A,t-s)'ds \pi(dA),
		\end{align*}
		where $\Sigma_\Lambda =\Sigma+\int_{\R^d}xx'\nu(dx)$. 
		\item Consider the finite variation case, i.e. it holds that $\Sigma=0$ and $\int_{\norm{x}\leq1}\norm{x}\nu(dx)<\infty$. If $\int_{\norm{x}>1}\norm{x}\nu(dx)<\infty$ and $f\in L^1(S\times\R^m,\pi\times\lambda)$, the first moment of $X$ is given by
		\begin{align*}
			E[X_t]= \int_{S}\int_{\R^m}f(A,-s)\Big(\gamma_0+\int_{\R^d}x\nu(dx)\Big) ds \pi(dA),
		\end{align*}
		where $\gamma_0$ as defined in (\ref{equation:gammazero}).
	\end{enumerate}
\end{Proposition}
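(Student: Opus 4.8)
The plan is to use that, by the definition of an MMAF, $X_t$ is an infinitely divisible $\R^n$-valued random vector whose characteristic triplet $(\gamma_{int},\Sigma_{int},\nu_{int})$ is given explicitly by Theorem \ref{theorem:2}, and then to read off its moments from this triplet. Existence of the relevant moments under the stated hypotheses is already guaranteed by Proposition \ref{proposition:MMAexistencemoments}, so the only task is to evaluate the integrals. Recall that an infinitely divisible law with triplet $(\gamma_{int},\Sigma_{int},\nu_{int})$ in the $\mathbb{1}_{[0,1]}$-truncation convention of (\ref{equation:phi}) satisfies, whenever the corresponding moments exist,
\[
E[X_t]=\gamma_{int}+\int_{\norm{y}>1}y\,\nu_{int}(dy)\quad\text{and}\quad Var(X_t)=\Sigma_{int}+\int_{\R^n}yy'\,\nu_{int}(dy),
\]
both being obtained by differentiating the cumulant transform of $X_t$ (which is (\ref{equation:phi}) with $(\gamma,\Sigma,\nu)$ replaced by $(\gamma_{int},\Sigma_{int},\nu_{int})$) once, resp. twice, at $u=0$; the interchange of differentiation and integration is legitimate precisely because of the moment bounds behind Proposition \ref{proposition:MMAexistencemoments}. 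Throughout, any integral of an $f(A,\cdot)$-expression taken over all of $\R^m$ is invariant under $s\mapsto -s$ and $s\mapsto t-s$ by translation and reflection invariance of $\lambda$, so the kernel may be written as $f(A,-s)$, $f(A,t-s)$ or $f(A,s)$ interchangeably.

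For part (i) I would substitute the expressions for $\gamma_{int}$ and $\nu_{int}$ from Theorem \ref{theorem:2}. Since $\nu_{int}$ is the pushforward of $\nu\times\lambda\times\pi$ under $(x,s,A)\mapsto f(A,t-s)x$, the large-jump term reads $\int_{\norm{y}>1}y\,\nu_{int}(dy)=\int_S\int_{\R^m}\int_{\R^d}f(A,t-s)x\,\mathbb{1}_{\{\norm{f(A,t-s)x}>1\}}\,\nu(dx)\,ds\,\pi(dA)$. Adding the truncation correction contained in $\gamma_{int}$ and writing $g:=f(A,t-s)x$, the three indicator factors collapse via $\mathbb{1}_{\{\norm{g}>1\}}+\mathbb{1}_{[0,1]}(\norm{g})=1$ to $1-\mathbb{1}_{[0,1]}(\norm{x})=\mathbb{1}_{\{\norm{x}>1\}}$, so the large-jump contributions telescope and leave $\int_S\int_{\R^m}f(A,t-s)\big(\gamma+\int_{\norm{x}>1}x\,\nu(dx)\big)\,ds\,\pi(dA)$. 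The change of variables then yields the stated formula with $\mu_\Lambda=\gamma+\int_{\norm{x}\geq1}x\,\nu(dx)$; Fubini is justified by $f\in L^1\cap L^2$ together with $\int_{\norm{x}>1}\norm{x}\,\nu(dx)<\infty$.

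For part (ii) the variance is handled identically: pushing $\nu_{int}$ forward gives $\int_{\R^n}yy'\,\nu_{int}(dy)=\int_S\int_{\R^m}f(A,-s)\big(\int_{\R^d}xx'\,\nu(dx)\big)f(A,-s)'\,ds\,\pi(dA)$, which added to $\Sigma_{int}$ produces the integrand $f\Sigma_\Lambda f'$ with $\Sigma_\Lambda=\Sigma+\int_{\R^d}xx'\,\nu(dx)$. To obtain $Cov(X_0,X_t)$ I would apply Theorem \ref{theorem:2} to the stacked $\R^{2n}$-valued integral $\int_S\int_{\R^m}\tilde f(A,s)\Lambda(dA,ds)$ with $\tilde f(A,s)=\big(f(A,-s)',f(A,t-s)'\big)'$, whose distribution equals that of $(X_0',X_t')'$ by linearity of the stochastic integral. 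Its covariance matrix is $\int_S\int_{\R^m}\tilde f(A,s)\Sigma_\Lambda\tilde f(A,s)'\,ds\,\pi(dA)$, and reading off the off-diagonal $n\times n$ block gives exactly $\int_S\int_{\R^m}f(A,-s)\Sigma_\Lambda f(A,t-s)'\,ds\,\pi(dA)$. Finiteness of every integral follows from $f\in L^2$ and $\int_{\R^d}\norm{x}^2\,\nu(dx)<\infty$.

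Part (iii) is most cleanly handled through Corollary \ref{corollary:1}: in the finite variation case the characteristic function of $X_t$ has drift $\gamma_{int,0}=\int_S\int_{\R^m}f(A,-s)\gamma_0\,ds\,\pi(dA)$ and an uncompensated Lévy part governed by $\nu_{int}$, so $E[X_t]=\gamma_{int,0}+\int_{\R^n}y\,\nu_{int}(dy)$; pushing $\nu_{int}$ forward and applying Fubini (valid since $f\in L^1$ and $\int_{\norm{x}>1}\norm{x}\,\nu(dx)<\infty$) yields $\int_S\int_{\R^m}f(A,-s)\big(\gamma_0+\int_{\R^d}x\,\nu(dx)\big)\,ds\,\pi(dA)$. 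Equivalently one may take expectations in the $\omega$-wise Lebesgue representation of Corollary \ref{corollary:1} using the compensation formula for the Poisson random measure $\mu$. I expect the \emph{main obstacle} to lie not in any single algebraic identity but in rigorously justifying the two interchanges on which every step rests, namely the differentiation-under-the-integral that extracts the moment formulas from the triplet and the Fubini interchange between the $(A,s)$- and the $\nu$-integration; both rely on the integrability estimates underlying Proposition \ref{proposition:MMAexistencemoments}, and the only genuinely delicate algebraic point is the telescoping of the truncation indicators in part (i).
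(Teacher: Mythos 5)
Your proposal is correct and follows essentially the same route as the paper, whose proof is the one-line citation ``Immediate from \cite[Section 25]{S2013} and Theorem \ref{theorem:2}'': you extract the characteristic triplet of $X_t$ from Theorem \ref{theorem:2} and apply the standard moment formulas for infinitely divisible laws, which is exactly what the Sato reference supplies. Your expansion of the details (the telescoping of truncation indicators, the stacked $\R^{2n}$-valued integral for the covariance, and Corollary \ref{corollary:1} for the finite variation case) is a faithful and accurate fleshing-out of that argument.
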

\begin{proof}
	Immediate from \cite[Section 25]{S2013} and Theorem \ref{theorem:2}.
\end{proof}

\subsection{Weak dependence properties of $(A,\Lambda)$-influenced MMAF}
\label{sec3-3}

Since there is no natural order on $\R^m$ for $m>1$, we cannot extend the definition of a natural filtration and, therefore, causal processes in a natural way to random fields. In the following, we will propose such an extension and prove $\theta$-lex-weak dependence for MMAF falling within this framework. Examples will be presented in Section \ref{sec3-7}.

\begin{Definition}\label{definition:influenced}
	Let $X=(X_t)_{t\in\R^m}$ be a random field, $A=(A_t)_{t\in\R^m}\subset\R^m$ a family of Borel sets with Lebesgue measure strictly greater than zero and $M=\{M(B),B\in\mathcal{B}_b(S\times\R^m)\}$ an independently scattered random measure. Assume that $X_t$ is measurable with respect to $\sigma(M(B),B\in \mathcal{B}_b(S\times A_t))$. We then call $A$ the \emph{sphere of influence}, $M$ the \emph{influencer}, $(\sigma(M(B),B\in \mathcal{B}_b(S\times A_t)))_{t\in\R^m}$ the \emph{filtration of influence} and $X$ an \emph{$(A,M)$-influenced random field}. If $A$ is translation invariant, i.e. $A_t=t+A_0$, the sphere of influence is fully described by the set $A_0$ and we call $A_0$ the \emph{initial sphere of influence}.
\end{Definition}

Note that for $m=1$, the class of causal mixed moving average processes driven by a L\'evy basis $\Lambda$ equals the class of $(A,\Lambda)$-influenced mixed moving average processes driven by $\Lambda$ with $A_t=V_t$.

Let $A=(A_t)_{t\in\R^m}$ be a translation invariant sphere of influence with initial sphere of influence $A_0$. In this section we consider the filtration $(\mathcal{F}_{t})_{t\in\R^{m}}$ generated by $\Lambda$, i.e. the $\sigma$-algebra generated by the set of random variables $\{\Lambda(B): B\in\mathcal{B}(S\times A_t)\}$ with $t\in\R^{m}$.\\
Consider an MMAF $X$ that is adapted to $(\mathcal{F}_{t})_{t \in\R^{m}}$. Then, $X$ is $(A,\Lambda)$-influenced and can be written as 
\begin{gather}\label{equation:influencedMMAfield}
	\begin{gathered}
		X_t=\int_S\int_{\R^m}f(A,t-s)
		\Lambda(dA,ds)=\int_S\int_{A_t}f(A,t-s)\Lambda(dA,ds).
	\end{gathered}
\end{gather}
Note that the translation invariance of $A$ is required to ensure stationarity of $X$.

In the following, we discuss under which assumptions an $(A,\Lambda)$-influenced MMAF is $\theta$-lex-weakly dependent. We start with a preliminary definition.

\begin{Definition}[{\cite[Definition 2.4.1]{BV2004}}]\label{definition:propercone}
	$K\subset\R^m$ is called a closed convex proper cone if it satisfies the following properties
	\begin{enumerate}[(i)]
		\item $K+K\subset K$ (ensures convexity)
		\item $\alpha K\subset K$ for all $\alpha\geq0$ (ensures that $K$ is a cone)
		\item $K$ is closed
		\item $K$ is pointed (i.e., if $x\in K$ and $-x\in K$, then $x=0$). 
	\end{enumerate}
\end{Definition}

We then apply a truncation technique to show that $X$ is $\theta$-lex-weakly dependent. Define $X_j$, $X_{\Gamma}$ as in Definition \ref{thetaweaklydependent} such that $j\in\R^m$ and $\Gamma\subset V_j^h$ (see Figure \ref{Plot1}). We truncate $X_j$ such that the truncation $\tilde{X}_j$ and $X_{\Gamma}$ become independent. From our construction, it will become clear that it is enough to find a truncation such that $\tilde{X}_j$ and $X_i$ are independent for the lexicographic greatest point $i\in V_j^h$. \\
For a given point $j$, we determine the truncation of $X_j$ by intersecting the integration set with $V_j^\psi$ for $\psi>0$ such that it does not intersect with $A_i$ (see Figure \ref{Plot2} and \ref{Plot3}). In the following, we will describe the choice of $\psi$. The figures illustrate the case $m=2$.

Let $i\in V_j^h$ be the lexicographic greatest point in $V_j^h$, i.e. $k\leq_{lex}i$ for all $k\in V_j^h$. In the following $dist(A,B)=\inf_{a\in A, b\in B}\norm{a-b}$ denotes the Euclidean distance of the sets $A$ and $B$. To ensure the existence of the above truncation, we assume that there exists an $\alpha\in\R^m\backslash\{0\}$ such that
\begin{gather}
	\sup_{x\in A_0, x\neq0}\frac{\alpha'x}{\norm{x}}<0.\label{condition:scalarproduct}
\end{gather}
Intuitively, (\ref{condition:scalarproduct}) ensures that the initial sphere of influence $A_0$  can be covered by a closed convex proper cone.
Moreover, w.l.o.g. by applying a rotation to $A_0$, we can always assume to work with $A_0 \subset V_0$. The following remark discusses such transformation.

\begin{Remark}\label{remark:halfspaces}
	Let $A_0$ be a subset of a half-space with Lebesgue measure strictly greater than zero such that $A_0 \nsubseteq V_0$.
	Define the translation invariant sphere of influence $A=(A_t)_{t\in\R^m}$ by $A_t=(A_0+ t)_{t\in\R^m}$ and consider the $(A,\Lambda)$-influenced MMAF $X=(X_{t})_{t\in\R^m}$ of the form $X_{t}=\int_S\int_{A_0+t}f(A,t-s)\Lambda(dA,ds)$.
	Note that if $A_0$ had Lebesgue measure zero, $X$ would be $0$ since the Lebesgue measure of $A_0$ is zero. 
	Define the hyperplane $D=\{x\in\R^m: \alpha'x=0\}$. Using the principal axis theorem we find an orthogonal matrix $O$ such that the axis of the first coordinate is orthogonal to the rotated hyperplane $OD$. Since $O$ is orthogonal, it holds that $|Det(D\varphi)(u)|=|Det(O)|=1$, where $D\varphi$ denotes the Jacobian matrix of the function $\varphi:u\mapsto Ou$. Additionally, for the rotated initial set $OA_0$ it holds that $OA_0\backslash V_0\subset \{0\}\times[0,\infty)^{m-1}$, such that $\lambda(\{0\}\times[0,\infty)^{m-1})=0$. By substitution for multiple variables we obtain for $\tilde{t}=O t$ 
	\begin{align}\label{equation:MMArotateinitialsphere}
		\begin{aligned}
			X_{t}&=\!\!\int_S\!\int_{\R^m}f(\!A,t-s)\mathbb{1}_{A_0+t}(s)\Lambda(dA,ds)\\
			&=\!\!\int_S\!\int_{\R^m}f(\!A,O^{-1}(Ot-Os))\mathbb{1}_{OA_0+Ot}(Os)\Lambda(dA,ds)\\
			&=\!\!\int_S\!\int_{O\!A_0+\tilde{t}}\!f(\!A,O^{-1}\!(\tilde{t}-\tilde{s}))\Lambda(dA,d\tilde{s})\!
			=\!\!\int_S\!\int_{O\!A_0\cap V_0+\tilde{t}}\!f(\!A,O^{-1}\!(\tilde{t}-\tilde{s}))\Lambda(dA,d\tilde{s})\\
			&=\!\!\int_S\!\int_{V_{\tilde{t}}}\tilde{f}^{O}(\!A,\tilde{t}-\tilde{s})\Lambda(dA,d\tilde{s})=\tilde{X}_{\tilde{t}}
		\end{aligned}
	\end{align}
	with $\tilde{f^O}(A,t-s)= f(A,O^{-1}(t-s))\mathbb{1}_{\{s\in OA_0+t \}}$. 
\end{Remark} 

Figure \ref{Plot4} shows the smallest closed convex proper cone covering $A_i$, which is called $K$. Note that all conditions can be formulated in terms of $A_0$ since the sphere of influence $A$ is translation invariant.

\begin{figure}[H]
	\begin{minipage}[H]{3.5cm}
		\center{
			\includegraphics[width=4.5cm]{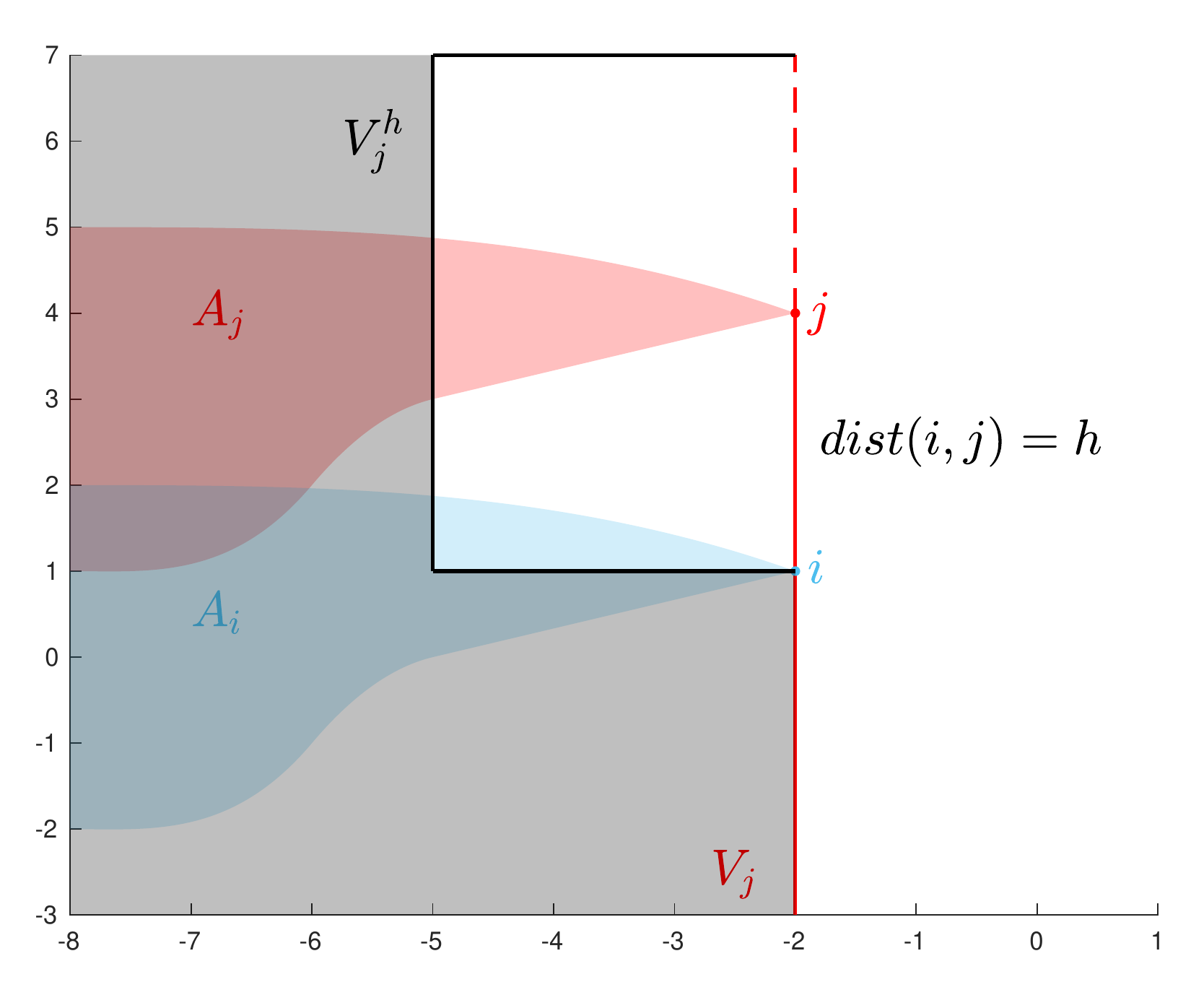}\label{figurePlot1}\\
			\caption{\small{Integration sets $A_j$ and $A_i$} of $X_j$ and $X_i$}\label{Plot1}}
	\end{minipage}
	\hspace{0.9cm}
	\begin{minipage}[H]{3.5cm}
		\vspace{0.19cm}
		\centering
		\includegraphics[width=4.5cm]{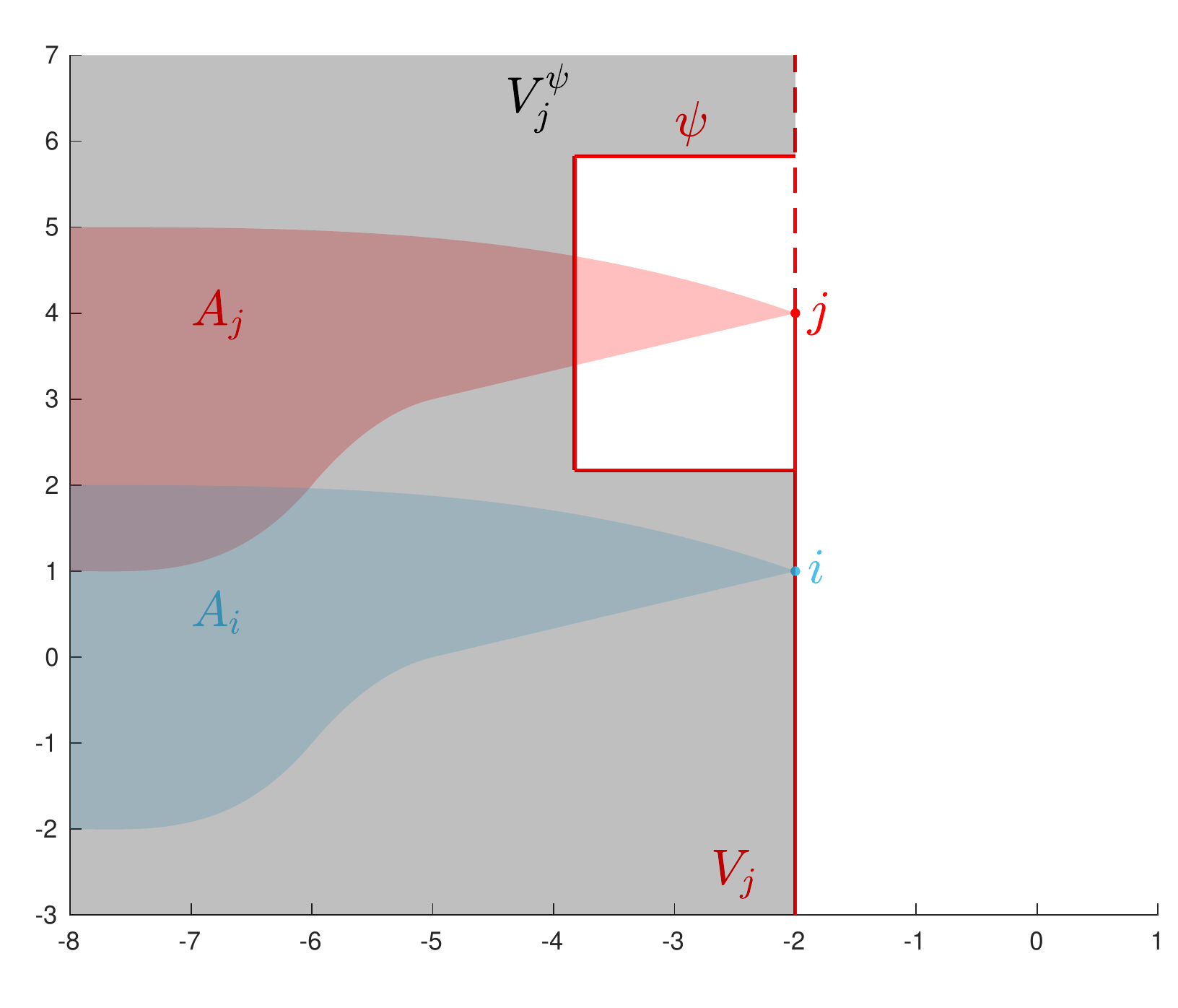}\label{figurePlot2}\\
		\caption{\small{$A_j$ and $A_i$ together with $V_j^\psi$}}
		\label{Plot2}
	\end{minipage}
	\hspace{0.9cm}
	\begin{minipage}[H]{3.5cm}
		\vspace{0.6cm}
		\centering
		\includegraphics[width=4.5cm]{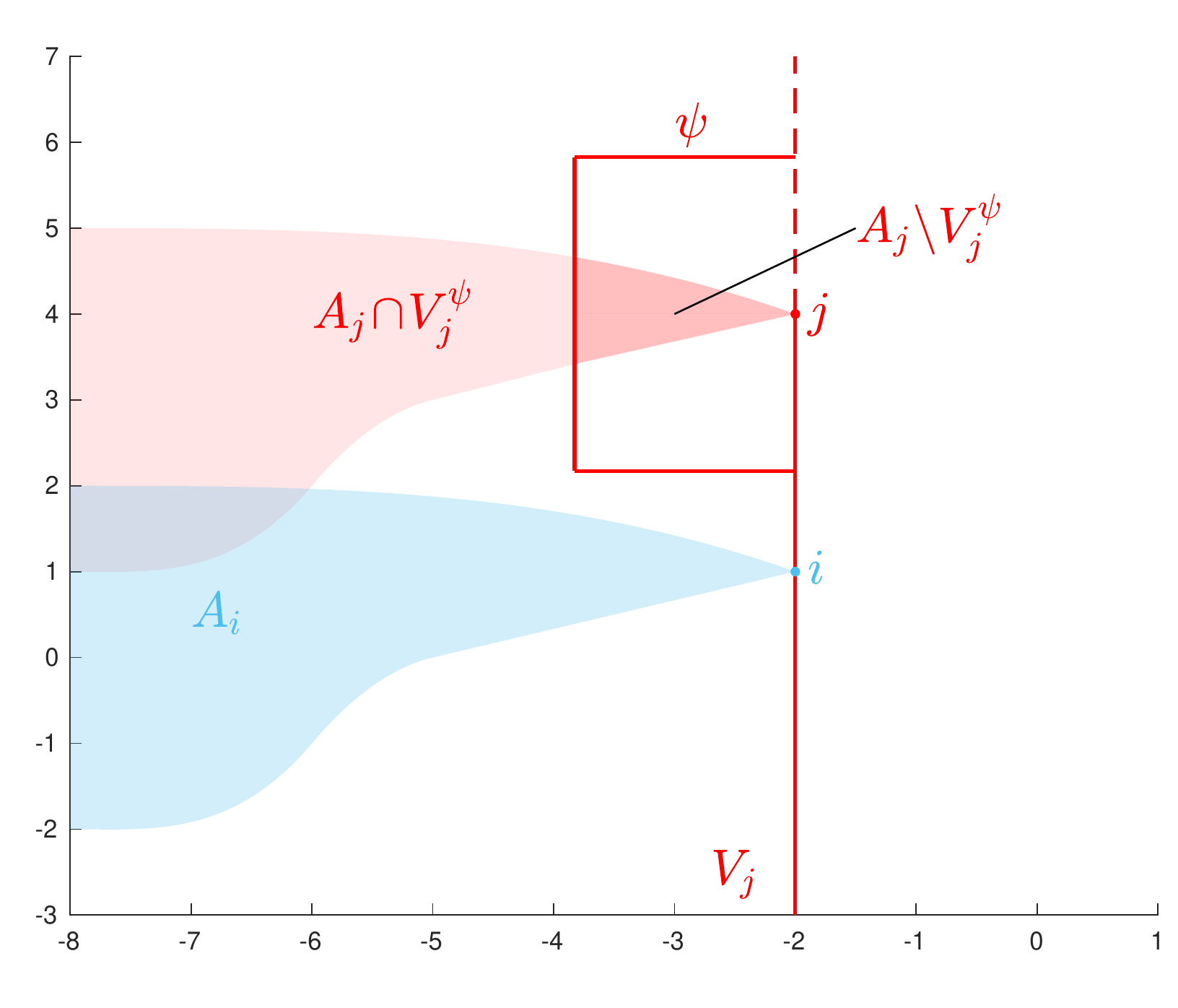}\label{figurePlot3}\\
		\caption{\small{Integration sets $A_i$ and $A_j \backslash V_j^{\psi}$ of $X_i$ and $\tilde{X}_j$}}
		\label{Plot3}
	\end{minipage}
\end{figure}

In order to choose $\psi$ we first define 
\begin{gather}\label{equation:xi}
	b=\sup_{\substack{x\in A_0\\ \norm{x}=1}}\frac{\alpha'x}{\norm{\alpha}} \text{ and }\tilde{K}=\left\{x\in\R^m: \frac{\alpha'x}{\norm{x}}\leq b \right\}.
\end{gather}
Due to (\ref{condition:scalarproduct}) (see Figure \ref{Plot4}), it holds $-1 \leq b<0$. For $x_1,x_2\in\tilde{K}$ it holds 
\begin{gather*}
	\frac{\alpha'(x_1+x_2)}{\norm{x_1+x_2}}= \frac{\alpha'x_1}{\norm{x_1+x_2}}+\frac{\alpha'x_2}{\norm{x_1+x_2}}\leq b \frac{\norm{x_1}+\norm{x_2}}{\norm{x_1+x_2}}\leq b
\end{gather*}
such that $\tilde{K}$ is a closed convex proper cone. It can be interpreted as the smallest equiangular closed convex proper cone that contains $A_0$.
Then, $\cos(\beta+\frac{\pi}{2})=b$ such that $\beta=\arcsin(-b)\in [0,\frac{\pi}{2})$ (see Figure \ref{Plot5}) and $dist(j,\tilde{K})\geq \sin(\beta)h=-bh$ (see Figure \ref{Plot6}). We choose $\psi$ as 
\begin{gather}\label{equation:psi}
	\psi(h)=\frac{-bh}{\sqrt{m}}.
\end{gather}
In particular we have $\psi(h)= \mathcal{O}(h)$. 

Let $l\in V_j^h$ be an arbitrary point. From the given choice of $\psi$ and $i$ it holds $dist(l,j)\geq dist(i,j)$, $A_i\cap (A_j\backslash V_j^\psi) =\emptyset$, $A_i=i+A_0\subset i+ \tilde{K}$ and $A_l=l+A_0\subset l+ \tilde{K}$. Since $\tilde{K}$ is an equiangular closed convex proper cone we get $A_l\cap (A_j\backslash V_j^\psi) =\emptyset$.

\begin{figure}[H]
	\begin{minipage}[H]{3.5cm}
		\centering
		\includegraphics[width=4.5cm]{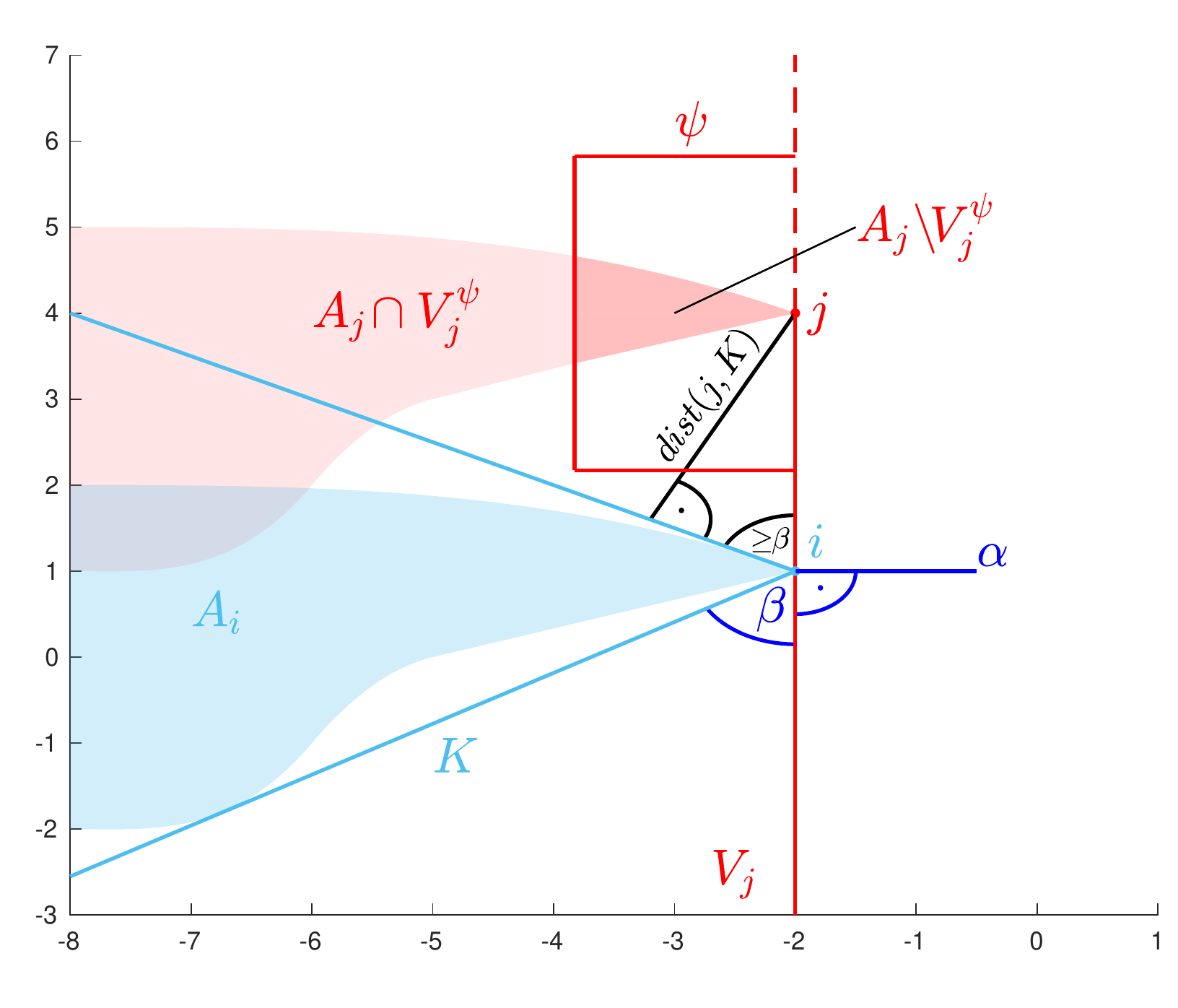}\label{figurePlot4}\\
		\caption{\small{ Choice of $\alpha$ and $\beta$ }}\label{Plot4}
	\end{minipage}
	\hspace{0.9cm}
	\begin{minipage}[H]{3.5cm}
		\vspace{-0.04cm}
		\centering
		\includegraphics[width=4.5cm]{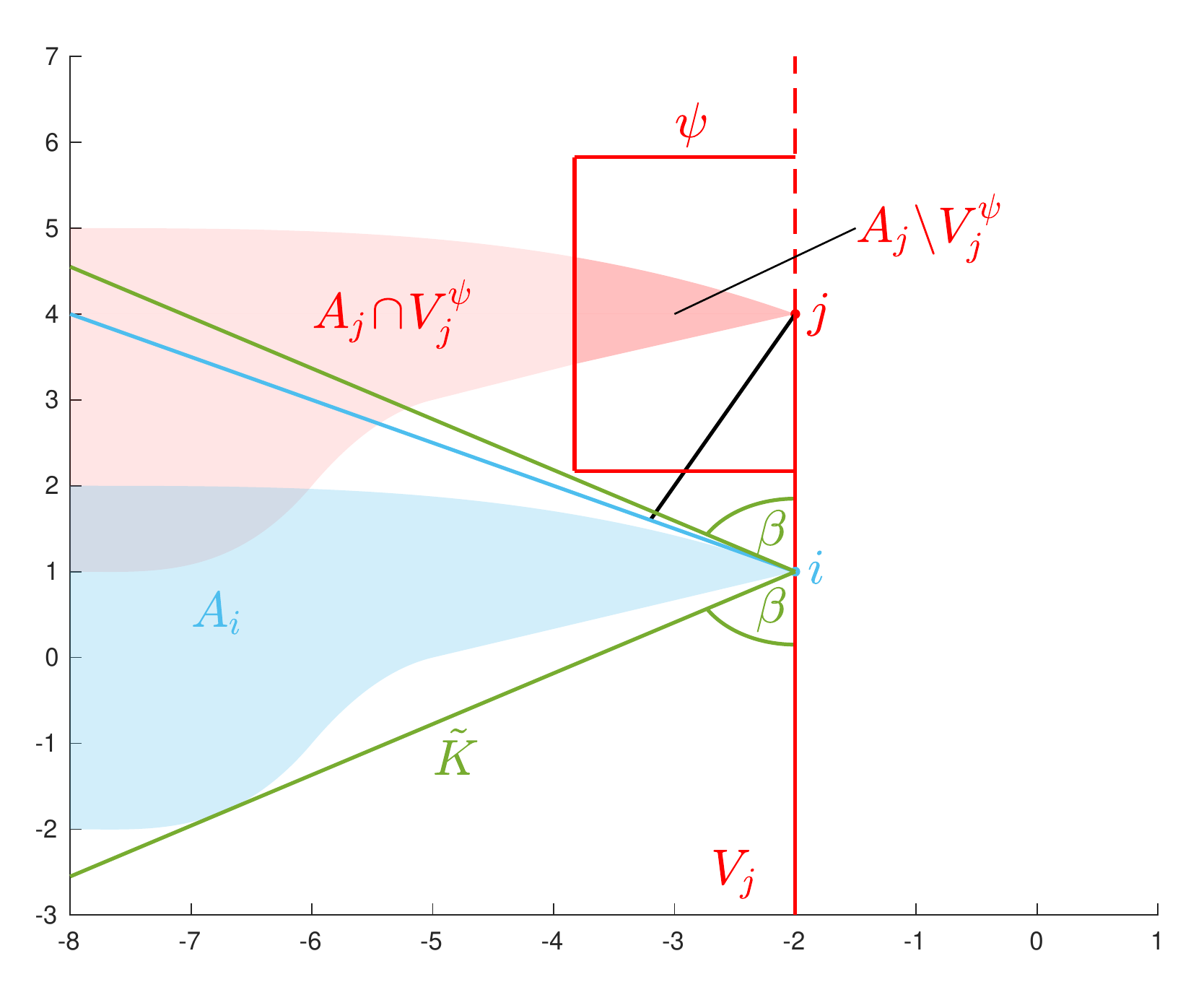}\label{figurePlot5}\\
		\caption{\small{ Choice of $\tilde{K}$ }}	\label{Plot5}
	\end{minipage}
	\hspace{0.9cm}
	\begin{minipage}[H]{3.5cm}
		\centering
		\includegraphics[width=4.5cm]{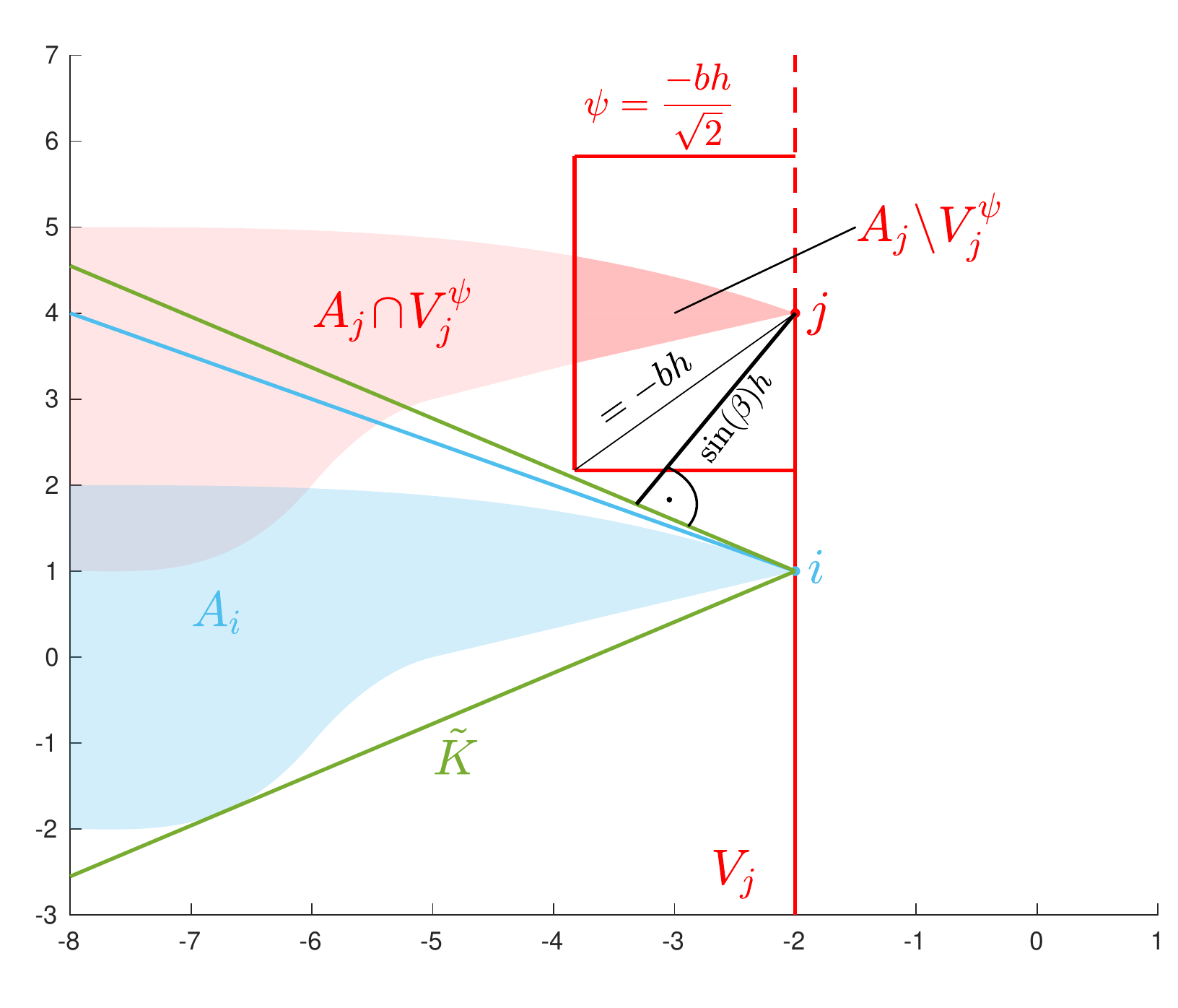}\label{figurePlot6}\\
		\caption{\small{ Construction of $\psi$ }}
		\label{Plot6}
	\end{minipage}
\end{figure}

The conditions below, which are expressed in terms of the kernel function $f$ and the characteristic quadruplet of the driving L\'evy basis, are sufficient to show that an $(A,\Lambda)$-influenced MMAF is $\theta$-lex-weakly dependent.

\begin{Proposition}\label{proposition:mmathetaweaklydep}
	Let $\Lambda$ be an $\R^d$-valued L\'evy basis with characteristic quadruplet $(\gamma,\Sigma,\nu,\pi)$ and $f:S\times\R^{m}\ra M_{n\times d}(\R)$ a $\mathcal{B}(S\times\R^m)$-measurable function. Consider the $(A,\Lambda)$-influenced MMAF
	\begin{gather*}
		X_{t}=\int_S\int_{A_{t}}f(A,t-s)\Lambda(dA,ds), \ t\in\R^{m}
	\end{gather*}
	with translation invariant sphere of influence $A$ such that (\ref{condition:scalarproduct}) holds.
	\begin{enumerate}[(i)]
		\item If $\int_{\norm{x}>1}\norm{x}^2\nu(dx)<\infty$, $\gamma+\int_{\norm{x}>1}x\nu(dx)=0$ and $f\in L^2(S\times\R^{m},\pi\otimes\lambda)$, then $X$ is $\theta$-lex-weakly dependent with $\theta$-lex-coefficients satisfying 
		\begin{gather}\label{equation:thetalexcoefficientsmmafield}
			\theta_X(h)\leq 2 \Big(\int_S\int_{A_0\cap V_{0}^{\psi(h)}}\textup{tr}(f(A,-s)\Sigma_\Lambda f(A,-s)')ds\pi(dA)\Big)^{\frac{1}{2}}=:\hat{\theta}_X^{(i)}(h).
		\end{gather}
		\item If $\int_{\norm{x}>1}\norm{x}^2\nu(dx)<\infty$ and $f\in L^2(S\times\R^{m},\pi\otimes\lambda)\cap L^1(S\times\R^{m},\pi\otimes\lambda)$, then $X$ is $\theta$-lex-weakly dependent with $\theta$-lex-coefficients satisfying 
		\begin{gather}
			\begin{aligned}
				\theta_X(h)\leq 2& \bigg(\int_S\int_{A_0\cap V_{0}^{\psi(h)}}\textup{tr}(f(A,-s)\Sigma_\Lambda f(A,-s)')ds\pi(dA)\\
				&+\Big\lVert\int_S\int_{A_0\cap V_{0}^{\psi(h)}} f(A,-s)\mu_\Lambda  ds\pi(dA)\Big\rVert^2\bigg)^{\frac{1}{2}}=:\hat{\theta}_X^{(ii)}(h).\label{equation:thetalexcoefficientsmeanmmafield}
			\end{aligned}
		\end{gather}
		\item If $\int_{\R^d}\norm{x}\nu(dx)<\infty$, $\Sigma=0$ and $f\in L^1(S\times\R^m,\pi\otimes\lambda)$ with $\gamma_0$ as in (\ref{equation:gammazero}), then $X$ is $\theta$-lex-weakly dependent with $\theta$-lex-coefficients satisfying 
		\begin{gather}
			\begin{aligned}
				\theta_X(h)\leq 2& \bigg(\int_S\int_{A_0\cap V_{0}^{\psi(h)}}\norm{f(A,-s)\gamma_0}ds\pi(dA)\\
				+&\int_S\int_{A_0\cap V_{0}^{\psi(h)}} \int_{\R^d}\norm{f(A,-s)x} \nu(dx)ds\pi(dA)\bigg)=:\hat{\theta}_X^{(iii)}(h).\label{equation:thetalexcoefficientsfinitevariationmmafield}
			\end{aligned}                
		\end{gather}
		\item If $\int_{\norm{x}>1}\norm{x}\nu(dx)<\infty$ and $f\in L^1(S\times\R^m,\pi\otimes\lambda)\cap L^2(S\times\R^m,\pi\otimes\lambda)$, then $X$ is $\theta$-lex-weakly dependent with $\theta$-lex-coefficients satisfying 
		\begin{gather*}
			\theta_X(h)\leq 2\bigg(\int_S\int_{A_0\cap V_0^{\psi(h)}}\textup{tr}(f(A,-s)\Sigma_{\Lambda_1} f(A,-s)')ds\pi(dA)\\
			+\Big\lVert\int_S\int_{A_0\cap V_0^{\psi(h)}} f(A,-s)\gamma  ds\pi(dA)\Big\rVert^2\bigg)^{\frac{1}{2}}\\
			+2\int_S\int_{A_0\cap V_0^{\psi(h)}} \int_{\norm{x}>1}\norm{f(A,-s)x} \nu(dx)ds\pi(dA)=:\hat{\theta}_X^{(iv)}(h).
		\end{gather*}
	\end{enumerate}
	The results above hold for all $h>0$ with $\psi$ as defined in $(\ref{equation:psi})$, $\Sigma_\Lambda =\Sigma+\int_{\R^d}xx'\nu(dx)$, $\Sigma_{\Lambda_1} =\Sigma+\int_{\norm{x}\leq1}xx'\nu(dx)$ and $\mu_\Lambda =\gamma-\int_{\norm{x}\geq1}x\nu(dx)$.
\end{Proposition}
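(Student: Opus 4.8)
The plan is to combine a truncation argument with the elementary covariance inequality for a bounded function against a Lipschitz one, and then to estimate the resulting truncation error separately in each of the four moment regimes using the moment formulas of Proposition~\ref{proposition:MMAmoments} together with the integration theory of Theorem~\ref{theorem:2} and Corollary~\ref{corollary:1}. Throughout I would work with fixed $j\in\R^m$, $\Gamma\subset V_j^h$ with $|\Gamma|=u$, and define the truncated variable
$$\tilde{X}_j=\int_S\int_{A_j\setminus V_j^{\psi(h)}}f(A,j-s)\,\Lambda(dA,ds),\qquad \psi(h)=\tfrac{-bh}{\sqrt{m}}\ \text{as in }(\ref{equation:psi}).$$

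First I would establish the independence of $\tilde X_j$ and $X_\Gamma$. The geometric construction preceding the proposition — covering $A_0$ by the smallest equiangular closed convex proper cone $\tilde K$ and deriving $dist(j,\tilde K)\geq -bh$ — already yields $A_l\cap(A_j\setminus V_j^{\psi(h)})=\emptyset$ for every $l\in V_j^h$, hence for every point of $\Gamma$. Since each $X_l$ with $l\in\Gamma$ is measurable with respect to $\sigma(\Lambda(B):B\in\mathcal B_b(S\times A_l))$ while $\tilde{X}_j$ is measurable with respect to $\sigma(\Lambda(B):B\in\mathcal B_b(S\times(A_j\setminus V_j^{\psi(h)})))$, and these index regions are disjoint, the independent scattering of $\Lambda$ in Definition~\ref{basis}(ii) forces $\tilde{X}_j$ to be independent of $X_\Gamma$.

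Next, for $F\in\mathcal G_u^*$ and $G\in\mathcal G_1$ I would split
$$Cov(F(X_\Gamma),G(X_j))=Cov(F(X_\Gamma),G(X_j)-G(\tilde X_j))+Cov(F(X_\Gamma),G(\tilde X_j)),$$
where the second term vanishes by the independence just established. Bounding the first term by $|Cov(F(X_\Gamma),Z)|\leq 2\norm{F}_\infty E[|Z|]$ with $Z=G(X_j)-G(\tilde X_j)$ and then using $E[|Z|]\leq Lip(G)\,E[\norm{X_j-\tilde X_j}]$, I obtain
$$\frac{|Cov(F(X_\Gamma),G(X_j))|}{\norm{F}_\infty Lip(G)}\leq 2\,E[\norm{X_j-\tilde X_j}].$$
Taking the supremum over all admissible $F,G,j,\Gamma$ and invoking stationarity reduces the entire statement to estimating the single quantity $E[\norm{X_0-\tilde X_0}]$, where the truncation error $X_0-\tilde X_0$ is the MMAF-type integral of $f$ over $S\times(A_0\cap V_0^{\psi(h)})$; the factor $2$ here is exactly the prefactor appearing in every $\hat\theta_X$ bound.

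It then remains to bound this expectation in each regime, which is where the four cases diverge. In cases (i) and (ii) I would use $E[\norm{X_0-\tilde X_0}]\leq (E[\norm{X_0-\tilde X_0}^2])^{1/2}$ together with Proposition~\ref{proposition:MMAmoments}(ii): in the centered case (i) the hypothesis $\gamma+\int_{\norm{x}>1}x\,\nu(dx)=0$ gives $E[X_0-\tilde X_0]=0$, so the second moment equals $\textup{tr}\,Var(X_0-\tilde X_0)$, whereas in (ii) one retains the extra summand $\norm{E[X_0-\tilde X_0]}^2$. In the finite-variation case (iii) I would instead invoke the $\omega$-wise Lebesgue representation of Corollary~\ref{corollary:1}, bounding the drift contribution by $\int_S\int_{A_0\cap V_0^{\psi(h)}}\norm{f(A,-s)\gamma_0}\,ds\,\pi(dA)$ and the Poisson-integral contribution, via Campbell's formula and the triangle inequality, by $\int_{\R^d}\int_S\int_{A_0\cap V_0^{\psi(h)}}\norm{f(A,-s)x}\,\nu(dx)\,ds\,\pi(dA)$. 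Case (iv) fuses both techniques: I would split the error according to jumps of size $\norm{x}\leq 1$ together with the Gaussian and drift parts (finite second moment, treated as in (ii) with $\Sigma_{\Lambda_1}$) versus jumps of size $\norm{x}>1$ (treated by the $L^1$ Poisson estimate restricted to $\norm{x}>1$), and add the two bounds. I expect the main obstacle to be precisely this last case, where the decomposition of the truncation error must be organized so that the second-moment and first-moment estimates apply to the correct pieces without double counting; the geometric independence step, though conceptually the heart of the argument, is already furnished by the construction carried out before the proposition.
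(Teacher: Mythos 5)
Your proposal is correct and follows essentially the same route as the paper: the same truncation $\tilde X_j$ over $A_j\setminus V_j^{\psi(h)}$, the same appeal to the cone construction for independence of $\tilde X_j$ from $X_\Gamma$, the same covariance splitting yielding the bound $2\,E[\norm{X_j-\tilde X_j}]$, and the same case-by-case moment estimates (Cauchy--Schwarz plus Proposition~\ref{proposition:MMAmoments} in (i)--(ii), the $\omega$-wise Lebesgue representation in (iii), and in (iv) the decomposition of $\Lambda$ into a small-jump/Gaussian/drift part treated in $L^2$ and a large-jump part treated in $L^1$, exactly as the paper does via independent L\'evy bases $\Lambda_1$ and $\Lambda_2$).
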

\begin{proof}
	See Section \ref{sec5-2}.
\end{proof}

In the next proposition, we consider a vector of a shifted real-valued $(A,\Lambda)$-influenced MMAF, and we show that it is $\theta$-lex weakly dependent. 
This result is necessary to analyze, for example, the asymptotic behavior of the sample autocovariances. Define the set of possible shifts 
\begin{gather}
	S_k=\{(a,b)'\in\{0,\ldots,k\}\times\{-k,\ldots,k\}^{m-1} \},\ k\in\N_0\label{equation:shiftedmmafieldvector}
\end{gather} 
and consider the enumeration $\{s_1,\ldots,s_{|S_k|}\}$ of $S_k$, where $|S_k|=(k+1)(2k+1)^{m-1}$.
Besides the hereditary properties from Proposition \ref{proposition:mmathetahereditary} we show that the field 
\begin{gather}\label{equation:shiftedrandomfield}
	Z_t=(X_t,X_{t+s_1},X_{t+s_2},\ldots,X_{t+s_{|S_k|}})
\end{gather}
inherits weak dependence properties.
\begin{Proposition}\label{proposition:vectorinfluencedweaklydep}
	Let $\Lambda$ be an $\R^d$-valued L\'evy basis with characteristic quadruplet $(\gamma,\Sigma,\nu,\pi)$ and $f:S\times\R^m\ra M_{1\times d}(\R)$ be a $\Lambda$-integrable, $\mathcal{B}(S\times\R^m)$-measurable function. Consider the $(A,\Lambda)$-influenced MMAF 
	\begin{gather*}
		X_{t}=\int_S\int_{A_{t}}f(A,t-s)\Lambda(dA,ds), \ t\in\R^{m}
	\end{gather*}
	with translation invariant sphere of influence $A$ such that (\ref{condition:scalarproduct}) holds. Then
	\begin{gather*}
		Z_t:=\int_S\int_{A_t}g(A,t-s)\Lambda(dA,ds), \ t\in\R^m,
	\end{gather*}
	where $g(A,s)=(f(A,s),f(A,s-s_1),\ldots, f(A,s-s_{|S_k|}))'$ is a $\mathcal{B}(S\times\R^m)$-measurable function with values in $M_{(k+1)(2k+1)^{m-1}\times d}(\R)$ for $k\in\N_0$, is an $(A,\Lambda)$-influenced MMAF.\\
	If $X$ additionally satisfies the conditions of Proposition \ref{proposition:mmathetaweaklydep} (i), (ii), (iii) or (iv), then $Z$ is $\theta$-lex-weakly dependent with coefficients respectively given by
	\begin{gather}
		\begin{aligned}\label{eq:mmainfluencedvectorweakly}
			\theta_Z^{(i)}(h)\leq&\mathcal{D}\hat{\theta}_X^{(i)}(h-\psi^{-1}(k)),\qquad\qquad 
			\theta_Z^{(ii)}(h)\leq&\mathcal{D} \hat{\theta}_X^{(ii)}(h-\psi^{-1}(k)),\\
			\theta_Z^{(iii)}(h)\leq&\mathcal{C} \hat{\theta}_X^{(iii)}(h-\psi^{-1}(k)),\text{ and}\qquad
			\theta_Z^{(iv)}(h)\leq&\mathcal{C}\hat{\theta}_X^{(iv)}(h-\psi^{-1}(k)),
		\end{aligned}
	\end{gather}
	where $\mathcal{D}=|S_k|^{m/2}$, $\mathcal{C}=|S_k|^{m}$ for $\psi(h)>k$ and $\hat{\theta}^{(\cdot)}(h)$ are defined as in Proposition \ref{proposition:mmathetaweaklydep}. 
\end{Proposition}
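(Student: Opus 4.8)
The plan is to recognise $Z$ as a vector-valued $(A,\Lambda)$-influenced MMAF in its own right and apply Proposition \ref{proposition:mmathetaweaklydep} to $Z$, afterwards converting the kernel integrals of $g$ back into integrals of $f$ by a translation argument. First I would settle the structural claim. The function $g(A,\cdot)$ stacks the $|S_k|$ spatial translates $f(A,\cdot-s_i)$ of the $\Lambda$-integrable kernel $f$; since Lebesgue measure is translation invariant, each translate lies in the same $L^p$-spaces as $f$ and satisfies the integrability conditions (\ref{equation:intcond1})--(\ref{equation:intcond3}) (or their finite-variation analogues), so $g$ is $\Lambda$-integrable and $Z$ is a well-defined MMAF with values in $M_{|S_k|\times d}(\R)$ by Theorem \ref{theorem:2}. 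Because $f(A,\cdot)$ is supported on $-A_0$, writing $g(A,t-s)$ row by row shows that $\int_S\int_{A_t}g(A,t-s)\Lambda(dA,ds)$ has the components listed in (\ref{equation:shiftedrandomfield}); as the integration domain is $A_t$, the vector $Z_t$ is $\sigma(\Lambda(B):B\in\mathcal{B}_b(S\times A_t))$-measurable, hence $Z$ is $(A,\Lambda)$-influenced with the same translation invariant sphere of influence $A$.

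For the dependence coefficients I would then apply Proposition \ref{proposition:mmathetaweaklydep} to $Z$. Under the hypotheses of case (i) one has $\norm{g}_{L^2}^2=\sum_i\norm{f(\cdot-s_i)}_{L^2}^2=|S_k|\,\norm{f}_{L^2}^2<\infty$, while the moment and centering conditions on $\Lambda$ are unchanged; the analogous reductions hold in cases (ii)--(iv), where the stacking only multiplies the relevant $L^1$- or $L^2$-norm of the kernel by $|S_k|$. Consequently the proposition yields $\theta_Z(h)\le\hat\theta_Z^{(\cdot)}(h)$, namely the same expression as for $X$ but with $f$ replaced by $g$ and integrated over $A_0\cap V_0^{\psi(h)}$.

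The core estimate is to bound $\hat\theta_Z^{(\cdot)}(h)$ by $\hat\theta_X^{(\cdot)}(h-\psi^{-1}(k))$. Writing the rows of $g$ as $g_i(A,-s)=f(A,-s-s_i)$, in case (i) one has $\textup{tr}(g(A,-s)\Sigma_\Lambda g(A,-s)')=\sum_i f(A,-s-s_i)\Sigma_\Lambda f(A,-s-s_i)'$, and for each $i$ the substitution $\tilde s=s+s_i$ rewrites the $i$-th term as the integral of $f(A,-\tilde s)\Sigma_\Lambda f(A,-\tilde s)'$ over $(A_0\cap V_0^{\psi(h)})+s_i$. Since the integrand is supported on $\tilde s\in A_0\subseteq V_0$ and $\norm{s_i}_\infty\le k$, the triangle inequality gives $\norm{\tilde s}_\infty\ge\psi(h)-k$, so the effective region is contained in $A_0\cap V_0^{\psi(h)-k}$; by linearity of $\psi$ in (\ref{equation:psi}) we have $\psi(h)-k=\psi(h-\psi^{-1}(k))$, which identifies each term with a piece of $(\tfrac12\hat\theta_X^{(i)}(h-\psi^{-1}(k)))^2$. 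Summing the $|S_k|$ terms and taking the square root produces the $L^2$-type constant, while summing directly in the $L^1$-type cases (iii) and (iv) produces the corresponding linear factor; crudely bounding the term count and the norm-equivalence factors on $\R^{|S_k|}$ then yields $\mathcal{D}=|S_k|^{m/2}$ and $\mathcal{C}=|S_k|^{m}$, and the hypothesis $\psi(h)>k$ guarantees $\psi(h)-k>0$.

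I expect the main obstacle to be precisely this geometric translation step: one must verify that shifting the truncation set $A_0\cap V_0^{\psi(h)}$ by $s_i$ and intersecting with the support $A_0$ leaves it inside $A_0\cap V_0^{\psi(h)-k}$, which rests on the equiangular closed convex proper cone $\tilde K$ from (\ref{equation:xi}) covering $A_0$ and on the translation invariance of $A$. A secondary, purely bookkeeping difficulty is tracking the dimension-dependent norm-equivalence constants on $\R^{|S_k|}$ so that they are absorbed into $\mathcal{D}$ and $\mathcal{C}$ (these constants are not optimised, which is why the stated exponents $m/2$ and $m$ are merely convenient upper bounds, tight only for $m=1$).
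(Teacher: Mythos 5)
Your proposal matches the paper's proof essentially step for step: the paper likewise verifies $\Lambda$-integrability of the stacked kernel $g$ by comparing the conditions (\ref{equation:intcond1})--(\ref{equation:intcond3}) termwise with those for $f$ (using $\norm{g}^2\le|S_k|\sum_i\norm{f(\cdot-s_i)}^2$), then applies Proposition \ref{proposition:mmathetaweaklydep} to $Z$, expands the resulting kernel integral into the shifted copies of $f$, translates each back onto $A_0\cap V_0^{\psi(h)-k}$, and uses the linearity of $\psi$ to identify $\psi(h)-k=\psi(h-\psi^{-1}(k))$, absorbing the term count into the generous constants $\mathcal{D}$ and $\mathcal{C}$ exactly as you describe. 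The only blemish is the sign in your substitution (it should be $\tilde s=s-s_i$ so that $f(A,s_i-s)$ becomes $f(A,-\tilde s)$ and the region shifts to $(A_0\cap V_0^{\psi(h)})-s_i$), which does not affect the argument.
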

\begin{proof}
	See Section \ref{sec5-2}.
\end{proof}

\subsection{Sample moments of $(A,\Lambda)$-influenced MMAF}
\label{sec3-4}
Let us consider an $\R^n$-valued $(A,\Lambda)$-influenced MMAF
\begin{gather}
	X=(X_u)_{u\in\Z^m} \text{ with } X_u=\int_S\int_{A_u}f(A,u-s)\Lambda(dA,ds),\label{equation:discreteinfluencedMMAfield}
\end{gather}
translation invariant sphere of influence $A$, and initial sphere of influence $A_0\subset V_0$ such that (\ref{condition:scalarproduct}) holds. We assume that we observe $X$ on the finite sampling sets $D_n\subset \Z^m$, such that 
\begin{gather}\label{condition:samplingset}
	\lim_{n\rightarrow\infty} |D_n|=\infty \text{ and }\lim_{n\rightarrow\infty} \frac{|D_n|}{|\partial D_n|}=0.
\end{gather}
We note that this includes in particular the equidistant sampling 
\begin{gather}\label{equation:equidistantsample}
	E_n=(0,n]^m\cap\Z^m\text{ such that }|E_n|=n^{m}, n\in\N.
\end{gather} 
The sample mean of the random field $X$ is then defined as
\begin{gather}\label{equation:samplemean}
	\frac{1}{|D_n|}\sum_{u\in D_n} X_{u}.
\end{gather}
If $\int_{\norm{x}>1}\norm{x}\nu(dx)<\infty$, we define the centered MMAF $\tilde{X}_u=X_u-E[X_u]$ and the sample autocovariance on $E_n$ at lag $k\in\N_0\times\Z^{m-1}$
\begin{gather}\label{equation:sampleautocovariance}
	\frac{1}{|E_{n-\tilde{k}}|}\sum_{u\in E_{n-\tilde{k}}} \tilde{X}_{u}\tilde{X}_{u+k},\ k\in\N_0\times\Z^{m-1},
\end{gather}
where $\tilde{k}=|k|$. Let us start by analyzing the asymptotic properties of the sample mean (\ref{equation:samplemean}) for a centered $(A,\Lambda)$-influenced MMAF.

\begin{Theorem}\label{theorem:thetasamplemean}
	Let $X=(X_u)_{u\in\Z^m}$ be an $(A,\Lambda)$-influenced MMAF as defined in (\ref{equation:discreteinfluencedMMAfield}) such that $\int_{\norm{x}>1}\norm{x}^{2+\delta}\nu(dx)<\infty$, $\gamma+\int_{\norm{x}>1}x\nu(dx)=0$ and $f\in L^2(S\times\R^{m},\pi\otimes\lambda)\cap L^{2+\delta}(S\times\R^{m},\pi\otimes\lambda)$ for some $\delta>0$. Assume that $X$ has $\theta$-lex-coefficients satisfying $\theta_X(h)=\mathcal{O}(h^{-\alpha})$, where $\alpha>m(1+\frac{1}{\delta})$. Then,
	\begin{gather*}
		\Sigma=\sum_{k\in\Z^m}E[X_0X_k']
	\end{gather*}
	is finite, positive semidefinite and
	\begin{gather}\label{eq:mmathetaclt}
		\frac{1}{|D_n|^{\frac{1}{2}}}\sum_{j\in D_n}X_j\underset{n\ra\infty}{\xrightarrow{\makebox[2em][c]{d}}}N(0,\Sigma).
	\end{gather}
\end{Theorem}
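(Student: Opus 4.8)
The plan is to obtain the statement as a direct application of the multivariate ergodic central limit theorem, Corollary~\ref{corollary:ergodicclt}, to the field $X$. That corollary requires $X$ to be stationary, ergodic, centered and $\R^n$-valued, to satisfy $E[\norm{X_t}^{2+\delta}]<\infty$, and to have $\theta$-lex-coefficients with $\theta(h)\in\mathcal{O}(h^{-\alpha})$ for $\alpha>m(1+\frac1\delta)$. Stationarity and the $\R^n$-structure are automatic for an $(A,\Lambda)$-influenced MMAF, and the decay rate is assumed outright, so the substance of the proof is to verify the remaining conditions: finiteness of the $(2+\delta)$-moment, centering, $\theta$-lex-weak dependence, and ergodicity.

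First I would dispatch the moment, centering and weak-dependence conditions, each of which follows from a result already established for MMAF. Since $\int_{\norm{x}>1}\norm{x}^{2+\delta}\nu(dx)<\infty$ and $f\in L^{2+\delta}(S\times\R^m,\pi\otimes\lambda)$ with $2+\delta\geq2$, Proposition~\ref{proposition:MMAexistencemoments}(i) gives $E[\norm{X_u}^{2+\delta}]<\infty$, which supplies the moment hypothesis and in particular $X_u\in L^1$. The centering assumption $\gamma+\int_{\norm{x}>1}x\,\nu(dx)=0$ is exactly the vanishing of $\mu_\Lambda$ in Proposition~\ref{proposition:MMAmoments}(i), so the first-moment formula there collapses to $E[X_u]=0$ and $X$ is centered. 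Finally, the hypotheses of the present theorem contain precisely those of Proposition~\ref{proposition:mmathetaweaklydep}(i), so $X$ is $\theta$-lex-weakly dependent with coefficients bounded by $\hat\theta_X^{(i)}$; together with the assumed rate $\theta_X(h)\in\mathcal{O}(h^{-\alpha})$ this yields the required decay of $\theta(h)$.

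The main obstacle is ergodicity, which is not listed among the hypotheses and must be extracted from the structure of the model. I would establish it from the fact that $X$ is a deterministic, stationary functional of the homogeneous, factorisable L\'evy basis $\Lambda$, whose values on disjoint Borel sets are independent. A stationary random field generated by such an independently scattered infinitely divisible random measure is mixing in the ergodic-theoretic sense, by the field analogue of the mixing results for mixed moving averages driven by L\'evy bases; mixing implies that every shift-invariant set has probability $0$ or $1$, hence $X$ is ergodic. I expect this step to require the most care, since it goes beyond the finite-dimensional covariance control encoded in the $\theta$-lex-coefficients (which, by Definition~\ref{thetaweaklydependent}, only bound $Cov(F(X_\Gamma),G(X_j))$ with $G$ a function of a single coordinate) and instead exploits the independence properties of $\Lambda$ on disjoint regions.

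With stationarity, centering, the $(2+\delta)$-moment bound, the decay of $\theta(h)$, and ergodicity all verified, Corollary~\ref{corollary:ergodicclt} applies verbatim and gives that $\Sigma=\sum_{k\in\Z^m}E[X_0X_k']$ is finite together with the convergence $|D_n|^{-1/2}\sum_{j\in D_n}X_j\xrightarrow{d}N(0,\Sigma)$. Positive semidefiniteness of $\Sigma$ follows from its identification as the limiting covariance matrix of the normalised partial sums, completing the proof.
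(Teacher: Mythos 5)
Your proposal is correct and follows essentially the same route as the paper: the paper's proof also reduces the statement to Corollary~\ref{corollary:ergodicclt}, obtaining ergodicity from the mixing property of infinitely divisible (mixed moving average) random fields, citing \cite[Theorem 3.6]{PV2017} for exactly the step you sketch by hand. Your additional verifications of the moment, centering and weak-dependence hypotheses via Propositions~\ref{proposition:MMAexistencemoments}, \ref{proposition:MMAmoments} and \ref{proposition:mmathetaweaklydep} are correct and merely make explicit what the paper leaves implicit.
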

\begin{proof}
	By \cite[Theorem 3.6]{PV2017}, it follows that an MMAF is ergodic. Then, the result follows from Corollary \ref{corollary:ergodicclt}.
\end{proof}

In the theorem above, the initial sphere of influence $A_0$ must satisfy (\ref{condition:scalarproduct}). Additionally, we observe a trade-off between moment conditions on $X$ and the decay rate of the $\theta$-lex coefficients. 
However, one can derive similar results for the sample mean of an MMAF by relaxing condition (\ref{condition:scalarproduct}) and exploiting the second order moment structure of an MMAF. On the other hand, the following technique does not carry over to higher-order moments.

\begin{Theorem}\label{theorem:thetasamplemeanspecial}
	Let $X=(X_u)_{u\in\Z^m}$ be an $(A,\Lambda)$-influenced MMAF defined by
	\begin{gather*}
		X_u=\int_S\int_{A_u}f(A,u-s)\Lambda(dA,ds),
	\end{gather*}
	with translation invariant sphere of influence $A$ and initial sphere of influence $A_0\subset V_0$ such that $\gamma+\int_{\norm{x}>1}x\nu(dx)=0$ and $E[\norm{X_0}^{2}]<\infty$. Assume that $X$ has $\theta$-lex-coefficients satisfying $\theta_X(h)=\mathcal{O}(h^{-\alpha})$, where $\alpha>m$. Then,
	\begin{gather*}
		\Sigma=\sum_{k\in\Z^m}E[X_0X_k']
	\end{gather*}
	is finite, positive definite and
	\begin{gather}\label{eq:mmathetacltspecial}
		\frac{1}{|D_n|^{\frac{1}{2}}}\sum_{j\in D_n}X_j\underset{n\ra\infty}{\xrightarrow{\makebox[2em][c]{d}}}N(0,\Sigma).
	\end{gather}
\end{Theorem}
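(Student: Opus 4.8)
The plan is to reduce, via the Cram\'er--Wold device, to a real-valued MMAF and to invoke Dedecker's central limit theorem \cite{D1998}, whose hypothesis---beyond stationarity, ergodicity and square-integrability---is the projective criterion (\ref{equation:l1criteria}). The point is that (\ref{equation:l1criteria}) can be verified for an $(A,\Lambda)$-influenced MMAF using only its second-order structure, so that---unlike in Theorem \ref{theorem:thetasamplemean}, where $(2+\delta)$-th moments are spent to deduce (\ref{equation:l1criteria}) from the $\theta$-decay together with the cone condition (\ref{condition:scalarproduct})---a single square moment and $\alpha>m$ suffice, at the price that the argument no longer extends to higher-order sample moments. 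Ergodicity of $X$ is supplied by \cite[Theorem 3.6]{PV2017}, and since a linear combination of the coordinates of $X$ is again a real-valued $(A,\Lambda)$-influenced MMAF with the same sphere of influence and the same order of $\theta$-lex decay, it suffices to treat $n=1$.

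First I would exploit that $A_0\subset V_0$ makes $X$ adapted to the lexicographic filtration $\mathcal F^{\Lambda}_{t}=\sigma(\Lambda(B):B\in\mathcal B(S\times V_t))$ generated by $\Lambda$. Because $\Lambda$ is independently scattered and the centering $\gamma+\int_{\norm{x}>1}x\,\nu(dx)=0$ gives $E[X_0]=0$ and, more generally, mean-zero stochastic integrals against $\Lambda$, the disjoint decomposition $A_0=(A_0\cap V_0^{h})\cup(A_0\setminus V_0^{h})$ together with independence of $\Lambda$ on disjoint sets yields the exact identity
\begin{gather*}
  E\big[X_0\,\big|\,\mathcal F^{\Lambda}_{V_0^{h}}\big]=\int_S\int_{A_0\cap V_0^{h}}f(A,-s)\,\Lambda(dA,ds),
\end{gather*}
whose second moment is, by Theorem \ref{theorem:2},
\begin{gather*}
  \rho(h)^2:=\big\|E[X_0\,|\,\mathcal F^{\Lambda}_{V_0^{h}}]\big\|_2^2=\int_S\int_{A_0\cap V_0^{h}}\textup{tr}\big(f(A,-s)\Sigma_\Lambda f(A,-s)'\big)\,ds\,\pi(dA).
\end{gather*}
It is precisely this closed form that removes the need to truncate $X_0$ at a finite level and therefore keeps the whole argument within $L^2$.

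The verification of (\ref{equation:l1criteria}) then proceeds by the Cauchy--Schwarz inequality and stationarity: each summand is bounded by $\|X_0\|_2\,\rho(|k|)$, so the criterion follows once $\sum_{k\in V_0^{1}}\rho(|k|)<\infty$; grouping the lattice points of $V_0^{1}$ by their $\norm{\cdot}_\infty$-distance $r$, of which there are $\mathcal{O}(r^{m-1})$, this is the series $\sum_{r\ge1}r^{m-1}\rho(r)$. The main obstacle is to bound $\rho(r)$ by the assumed decay of $\theta_X$. Here I would use that, for an $(A,\Lambda)$-influenced MMAF, the $\theta$-lex coefficient and the truncated second moment $\rho$ are controlled by the same tail integral of $f$ against $\Sigma_\Lambda$---this is exactly the quantity $\hat\theta_X^{(i)}$ appearing in (\ref{equation:thetalexcoefficientsmmafield}) of Proposition \ref{proposition:mmathetaweaklydep}---so that the assumed rate $\theta_X(r)=\mathcal{O}(r^{-\alpha})$ is matched by $\rho(r)=\mathcal{O}(r^{-\alpha})$ and the series converges exactly when $\alpha>m$. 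This transfer from the $L^1$-type dependence measure $\theta_X$ to the $L^2$-tail $\rho$ is the heart of the matter and the step that genuinely relies on the second-order structure; it is also what forces the restriction to the sample mean, since for a sample moment of order $p>1$ the conditional expectation of $X_0^p$ given the lex-past no longer reduces to a single truncated stochastic integral with an explicit second moment.

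Finally, the limiting covariance $\Sigma=\sum_{k\in\Z^m}E[X_0X_k']$ is finite by the same tail estimate and admits the explicit representation of Proposition \ref{proposition:MMAmoments}(ii); its positive definiteness follows from the non-degeneracy of $\Sigma_\Lambda$ together with the standing assumption that $\Sigma_{int}$ or $\nu_{int}$ is non-zero. Combining the scalar central limit theorem of \cite{D1998} for each linear functional with the Cram\'er--Wold device then gives (\ref{eq:mmathetacltspecial}).
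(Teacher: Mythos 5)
Your proposal follows essentially the same route as the paper's proof: Cauchy--Schwarz reduces Dedecker's projective criterion to $\lVert X_0\rVert_2\,\lVert E[X_0\mid\mathcal F_{V_0^{|k|}}]\rVert_2$, the conditional expectation is computed exactly as the truncated stochastic integral over $A_0\cap V_0^{|k|}$ using the independent scattering of $\Lambda$ and the centering condition $\gamma+\int_{\norm{x}>1}x\,\nu(dx)=0$, its second moment is the same tail integral that controls $\theta_X$, and summability for $\alpha>m$ plus Dedecker's theorem and the Cram\'er--Wold device finish the argument. The only differences are cosmetic: you make the lattice-point count over shells of $V_0^1$ and the ergodicity reference to \cite{PV2017} explicit, where the paper leaves them implicit.
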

\begin{proof}
	See Section \ref{sec5-3}.
\end{proof}

To lighten notation, we assume in the following that $X$ is real-valued and centered, i.e., $E[X_0]=0$. In order to derive asymptotic properties for the distribution of (\ref{equation:sampleautocovariance}), we need to show weak dependence properties of the random field $Y=(Y_{j,k})_{j\in\Z^m}$ defined as
\begin{gather}\label{equation:sampleautocovy}
	Y_{j,k}=X_jX_{j+k}-R(k),\ k\in\N_0\times\Z^{m-1},
\end{gather}
where 
\begin{gather*}
	R(k)=Cov(X_0,X_k)=E[X_0X_k']=\int_S\int_{A_0\cap A_k}\!\!\!\!f(A,-s)\Sigma_\Lambda f(A,k-s)'ds\pi(dA),
\end{gather*}
$k\in\N_0\times\Z^{m-1}$ with $\Sigma_\Lambda =\Sigma+\int_{\R^d}xx'\nu(dx)$ for an $(A,\Lambda)$-influenced MMAF $X$ with characteristic quadruplet $(\gamma,\Sigma,\nu,\pi)$. The last equality follows from Proposition \ref{proposition:MMAmoments}.

\begin{Proposition}\label{proposition:deltavererbungtheta}
	Let $X=(X_u)_{u\in\Z^m}$ be a real-valued $(A,\Lambda)$-influenced MMAF as defined in (\ref{equation:discreteinfluencedMMAfield}) such that $E[X_0]=0$ and $E[\norm{X_0}^{2+\delta}]<\infty$ for some $\delta>0$ with $\theta$-lex-coefficients $\theta_X$. Then, $(Y_{j,k})_{j\in\Z^m}$, $k\in\N_0\times\Z^{m-1}$ as defined in (\ref{equation:sampleautocovy}) is $\theta$-lex-weakly dependent with coefficients
	\begin{gather*}
		\theta_Y(h)\leq\mathcal{C}\left(\sqrt{2}\hat{\theta}^{(i)}_X\left(h-\psi^{-1}(|k|)\right)\right)^{\frac{\delta}{1+\delta}},
	\end{gather*}
	where $\mathcal{C}$ is a constant independent of $h$, $\psi$ as defined in $(\ref{equation:psi})$, and $\hat\theta_X^{(i)}(\cdot)$ is defined as in Proposition \ref{proposition:vectorinfluencedweaklydep}.
	
	Furthermore, in the finite variation case and for $\hat\theta_X^{(iii)}(\cdot)$ defined as in Proposition \ref{proposition:vectorinfluencedweaklydep}, it holds
	\begin{gather*}
		\theta_Y(h)\leq\mathcal{C}\left(2\hat{\theta}^{(iii)}_X\left(h-\psi^{-1}(|k|)\right)\right)^{\frac{\delta}{1+\delta}}.
	\end{gather*}
\end{Proposition}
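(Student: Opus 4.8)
The plan is to write $(Y_{j,k})_j$ as a locally Lipschitz, constant--shifted image of a two--component vector MMAF, and then to chain the vector weak dependence of Proposition \ref{proposition:vectorinfluencedweaklydep} with the hereditary property of Proposition \ref{proposition:mmathetahereditary}. Fixing the lag $k\in\N_0\times\Z^{m-1}$, I would set $Z_j=(X_j,X_{j+k})$, the $(A,\Lambda)$-influenced vector MMAF with kernel $g(A,s)=(f(A,s),f(A,s-k))'$, and write $Y_{j,k}=\Phi(Z_j)-R(k)$ with the scalar product map $\Phi(x_1,x_2)=x_1x_2$. Thus the whole field $(Y_{j,k})_j$ is, up to the deterministic constant $R(k)$, the image of $Z$ under $\Phi$.

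The first step is to bound $\theta_Z$. Running the cone/truncation construction of Section \ref{sec3-3} for the union of spheres of influence $A_j\cup A_{j+k}$ exactly as in the proof of Proposition \ref{proposition:vectorinfluencedweaklydep}, but for the two--element shift set $\{0,k\}$, shows that $Z$ is $\theta$-lex-weakly dependent with $\theta_Z(h)\leq\sqrt 2\,\hat\theta_X^{(i)}(h-\psi^{-1}(|k|))$ under the second--moment hypotheses of Proposition \ref{proposition:mmathetaweaklydep}(i), and $\theta_Z(h)\leq 2\,\hat\theta_X^{(iii)}(h-\psi^{-1}(|k|))$ in the finite variation case (iii). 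The factor $\sqrt2$ (resp.\ $2$) is the only numerical change from the scalar bound and comes from measuring the truncation error of the two coordinates in $\R^2$ via an $L^2$ (resp.\ $L^1$) estimate; the shifted argument $h-\psi^{-1}(|k|)$ records that the second coordinate $X_{j+k}$ is influenced by $A_{j+k}=j+k+A_0$, so the truncation must reach a lexicographic distance larger by the displacement that $k$ induces, converted through $\psi$.

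Next I would invoke Proposition \ref{proposition:mmathetahereditary} for $Z$ and $\Phi$. From $x_1x_2-y_1y_2=x_1(x_2-y_2)+y_2(x_1-y_1)$ one gets $|\Phi(x)-\Phi(y)|\leq\norm{x-y}(\norm{x}+\norm{y})\leq\norm{x-y}(1+\norm{x}+\norm{y})$, which is the growth hypothesis with $a=2$ and $c=1$, together with $\Phi(0)=0$. Since $E[\norm{Z_0}^{2+\delta}]<\infty$ follows from stationarity and $E[|X_0|^{2+\delta}]<\infty$, one may take $p=2+\delta>a$, so the proposition gives that $W_j:=X_jX_{j+k}$ is $\theta$-lex-weakly dependent with $\theta_W(h)\leq\mathcal C\,\theta_Z(h)^{(p-a)/(p-1)}=\mathcal C\,\theta_Z(h)^{\delta/(1+\delta)}$.

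Finally, subtracting the deterministic constant $R(k)$ leaves the coefficients unchanged: for admissible $F\in\mathcal G_u^*$ and $G\in\mathcal G_1$, replacing them by $F(\,\cdot-R(k)\mathbf 1\,)$ and $G(\,\cdot-R(k)\,)$ preserves $\norm{F}_\infty$ and $Lip(G)$ and ranges over the same classes, so $\theta_Y(h)=\theta_W(h)$. Chaining the three estimates then yields the two claimed bounds, the $\sqrt2$ (resp.\ $2$) staying inside the power $\delta/(1+\delta)$ and $\mathcal C$ being the hereditary constant. I expect the genuine work to be concentrated in the first step: transplanting the truncation/cone argument to the union $A_j\cup A_{j+k}$ and verifying that the lexicographically greatest relevant point after the shift still falls inside the truncation cone, so that the truncated $\tilde Z_j$ is independent of $Z_l$ for every $l\in V_j^h$ --- this is what pins the loss at exactly $\psi^{-1}(|k|)$. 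The remaining steps are routine, the only mild care being the deferral of the additive constant $R(k)$ so that the hereditary property can be applied to the map $\Phi$, which vanishes at the origin.
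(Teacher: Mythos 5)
Your proposal is correct and follows essentially the same route as the paper: form the bivariate MMAF $Z_j=(X_j,X_{j+k})$, extract $\theta_Z(h)\leq\sqrt{2}\,\hat\theta_X^{(i)}(h-\psi^{-1}(|k|))$ (resp.\ $2\,\hat\theta_X^{(iii)}$) from the proof of Proposition \ref{proposition:vectorinfluencedweaklydep}, and apply Proposition \ref{proposition:mmathetahereditary} to the product map with $p=2+\delta$, $a=2$, $c=1$ to get the exponent $\delta/(1+\delta)$. Your explicit remark that subtracting the deterministic constant $R(k)$ leaves the coefficients unchanged is a detail the paper passes over silently, but it is handled correctly.
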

\begin{proof}
	Consider the 2-dimensional process $Z=(X_j,X_{j+k})_{j\in\Z^m}$ with $k\in\N_0\times\Z^{m-1}$. Proposition \ref{proposition:vectorinfluencedweaklydep} implies that $Z$ is $\theta$-lex-weakly dependent and from the proof we obtain
	\begin{gather*}
		\theta_Z(h)\leq\sqrt{2}\hat{\theta}^{(i)}_X(h-\psi^{-1}(|k|))\text{ for $\psi(h)>|k|$}.
	\end{gather*} 
	Consider the function $h:\R^2\ra\R$ such that $h(x_1,x_2)=x_1x_2$. The function $h$ satisfies the assumptions of Proposition \ref{proposition:mmathetahereditary} for $p=2+\delta$, $c=1$ and $a=2$. Considering $h(Z)=X_jX_{j+k}$, we obtain the $\theta$-lex-coefficients of $(Y_{j,k})_{j\in\Z^m}$
	\begin{gather*}
		\theta_Y(h)\leq\mathcal{C}(\sqrt{2}\hat{\theta}^{(i)}_X(h-\psi^{-1}(|k|)))^{\frac{\delta}{1+\delta}} \text{ for $\psi(h)>|k|$.}
	\end{gather*} 
	The coefficients for the finite variation case can be obtained from Proposition \ref{proposition:mmathetahereditary} and (\ref{eq:mmainfluencedvectorweakly}).
\end{proof}

The next corollary gives asymptotic properties of the sample autocovariances (\ref{equation:sampleautocovariance}) for $(A,\Lambda)$-influenced MMAF, i.e. we can give a distributional limit theorem for the process $(Y_{j,k})_{j\in\Z^m}$ by determining the asymptotic distribution of
\begin{gather*}
	\frac{1}{|E_{n-\tilde{k}}|^{\frac{1}{2}}}\sum_{j\in E_{n-\tilde{k}}}Y_{j,k},\ k\in\N_0\times\Z^{m-1},
\end{gather*}
where $\tilde{k}=|k|$.

\begin{Corollary}\label{corollary:sampleautocovariancetheta}
	Let $X=(X_u)_{u\in\Z^m}$ be a real-valued $(A,\Lambda)$-influenced MMAF as defined in (\ref{equation:discreteinfluencedMMAfield}) such that $E[X_0]=0$ and $E[\norm{X_0}^{4+\delta}]<\infty$ for some $\delta>0$. Let $\hat\theta_X^{(i)}$ be defined as in Proposition \ref{proposition:vectorinfluencedweaklydep}.  If $\hat{\theta}_X^{(i)}(h)=\mathcal{O}(h^{-\alpha})$ for $\alpha>m\left(1+\frac{1}{\delta}\right)(\frac{3+\delta}{2+\delta})$, then
	\begin{gather*}
		\Sigma=\sum_{l\in\Z^m}Cov\left(\left(\begin{array}{c}
			Y_{0,0}\\
			\vdots \\
			Y_{0,k}
		\end{array}\right),
		\left(\begin{array}{c}
			Y_{l,0}\\
			\vdots \\
			Y_{l,k}
		\end{array}\right)
		\right)=\sum_{l\in\Z^m}Cov\left(\left(\begin{array}{c}
			X_0X_0\\
			\vdots \\
			X_0X_k
		\end{array}\right),
		\left(\begin{array}{c}
			X_lX_l\\
			\vdots \\
			X_lX_{l+k}
		\end{array}\right)
		\right)
	\end{gather*}
	is finite, positive semidefinite and 
	\begin{gather*}
		\frac{1}{|E_{n-\tilde{k}}|^{\frac{1}{2}}}\sum_{j\in E_{n-\tilde{k}}}
		\left(\begin{array}{c}
			Y_{j,0}\\
			\vdots \\
			Y_{j,k}
		\end{array}\right)
		\underset{N\ra\infty}{\xrightarrow{\makebox[2em][c]{d}}}N\left(0,\Sigma\right),
	\end{gather*}
	where $\tilde{k}=|k|$.
\end{Corollary}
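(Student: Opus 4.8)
The plan is to deduce the statement from the multivariate central limit theorem of Corollary~\ref{corollary:ergodicclt}, applied to the stacked field $W=(W_j)_{j\in\Z^m}$ with $W_j=(Y_{j,s_0},\ldots,Y_{j,s_{|S_k|}})'$, where $s_0=0,s_1,\ldots,s_{|S_k|}$ enumerate the finitely many lags collected in the shift set $S_k$ of (\ref{equation:shiftedmmafieldvector}) and $Y_{j,l}=X_jX_{j+l}-R(l)$ as in (\ref{equation:sampleautocovy}). First I would record that $W$ is stationary and centered: since $E[X_0]=0$ we have $R(l)=E[X_0X_l]$, hence $E[Y_{j,l}]=0$. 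Moreover $W$ is a fixed measurable (indeed polynomial) function of the shifted field $Z_j=(X_j,X_{j+s_1},\ldots,X_{j+s_{|S_k|}})$ of (\ref{equation:shiftedrandomfield}), which is itself an $(A,\Lambda)$-influenced MMAF by Proposition~\ref{proposition:vectorinfluencedweaklydep} and therefore ergodic by \cite[Theorem 3.6]{PV2017}; as a factor of an ergodic field, $W$ is ergodic as well. Thus Corollary~\ref{corollary:ergodicclt} becomes applicable once its moment and decay hypotheses are checked, and the finiteness and positive semidefiniteness of $\Sigma$ then follow automatically, being the covariance of the limiting Gaussian vector.

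Next I would verify the moment condition. Each component $Y_{0,l}=X_0X_l-R(l)$ is, up to the constant $R(l)$, the product of two entries of $X$, so by the Cauchy--Schwarz inequality and stationarity $E[|X_0X_l|^{2+\delta/2}]\le E[|X_0|^{4+\delta}]<\infty$; hence $E[\|W_0\|^{2+\tilde\delta}]<\infty$ with the moment order $\tilde\delta=\delta/2$ genuinely available for the products.

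The core step is to control the $\theta$-lex-coefficients of $W$. Writing $W_j=h(Z_j)$ with the map $h(z)=(z_0z_i-R(s_i))_i$, one checks that $h$ obeys the local Lipschitz bound of Proposition~\ref{proposition:mmathetahereditary} with growth exponent $a=2$ (subtracting the constants $R(s_i)$ alters neither the covariance nor the Lipschitz estimate, and the uncentered map sends $0$ to $0$). Since $E[\|Z_0\|^{4+\delta}]<\infty$, applying the hereditary Proposition~\ref{proposition:mmathetahereditary} with $p=4+\delta$, $a=2$ together with the bound $\theta_Z(h)\le\mathcal{D}\,\hat\theta_X^{(i)}(h-\psi^{-1}(k))$ from Proposition~\ref{proposition:vectorinfluencedweaklydep} (exactly as in the proof of Proposition~\ref{proposition:deltavererbungtheta}) yields
\begin{gather*}
	\theta_W(h)\le\mathcal{C}\,\hat\theta_X^{(i)}\!\big(h-\psi^{-1}(k)\big)^{\frac{2+\delta}{3+\delta}},
\end{gather*}
with $\psi$ as in (\ref{equation:psi}) and $\mathcal{C}$ independent of $h$. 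As $\psi^{-1}(k)$ is a fixed constant, the hypothesis $\hat\theta_X^{(i)}(h)\in\mathcal{O}(h^{-\alpha})$ gives $\theta_W(h)\in\mathcal{O}(h^{-\alpha\frac{2+\delta}{3+\delta}})$.

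Finally I would invoke Corollary~\ref{corollary:ergodicclt} for $W$: its decay hypothesis demands $\theta_W(h)\in\mathcal{O}(h^{-\alpha'})$ with $\alpha'>m(1+1/\tilde\delta)$, where $2+\tilde\delta$ is the moment order used, and combining this with $\theta_W(h)\in\mathcal{O}(h^{-\alpha\frac{2+\delta}{3+\delta}})$ produces the threshold on $\alpha$. Once the moment and decay hypotheses are in force, the Cram\'er--Wold reduction built into Corollary~\ref{corollary:ergodicclt} delivers the joint asymptotic normality of $|E_{n-\tilde k}|^{-1/2}\sum_{j\in E_{n-\tilde k}}W_j$, which is the asserted limit. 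The delicate point I would treat most carefully is precisely this exponent bookkeeping: one must balance the loss $\frac{2+\delta}{3+\delta}$ in the decay rate incurred by the quadratic hereditary estimate against the moment order actually available for the products $X_jX_{j+l}$, and verify that the two combine to the advertised condition $\alpha>m(1+\tfrac1\delta)\frac{3+\delta}{2+\delta}$ rather than to a strictly stronger requirement; keeping track of which $\tilde\delta$ is admissible and how it feeds into $m(1+1/\tilde\delta)$ is where the argument is most prone to slippage.
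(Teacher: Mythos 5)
Your route is exactly the paper's: the published proof of this corollary is the single sentence that the convergence follows ``analogous to Theorem \ref{theorem:thetasamplemean} \ldots using Proposition \ref{proposition:deltavererbungtheta}'', i.e.\ precisely the chain you assemble --- stack the lagged products, get $\theta$-lex weak dependence of $(Y_{j,k})$ from Proposition \ref{proposition:vectorinfluencedweaklydep} together with the hereditary Proposition \ref{proposition:mmathetahereditary} applied with $p=4+\delta$, $a=2$ (exponent $\tfrac{2+\delta}{3+\delta}$), invoke ergodicity of the MMAF via \cite{PV2017}, and conclude with Corollary \ref{corollary:ergodicclt}. All of these ingredients are correctly identified and correctly justified in your write-up, including the remark that subtracting the constants $R(l)$ affects neither the covariances nor the Lipschitz estimate.

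The one point you flag but do not close is, however, a genuine one, and your own arithmetic is the correct side of it. By Cauchy--Schwarz and stationarity, $E[|X_0X_k|^{2+\delta''}]\le E[|X_0|^{4+2\delta''}]$, so the largest moment order available for the components $Y_{j,l}$ is $2+\delta/2$; Corollary \ref{corollary:ergodicclt} then requires $\theta_Y(h)\in\mathcal{O}(h^{-\alpha'})$ with $\alpha'>m\big(1+\tfrac{2}{\delta}\big)$, and combined with $\theta_Y(h)\in\mathcal{O}\big(h^{-\alpha\frac{2+\delta}{3+\delta}}\big)$ this forces $\alpha>m\big(1+\tfrac{2}{\delta}\big)\tfrac{3+\delta}{2+\delta}=m\tfrac{3+\delta}{\delta}$, which is strictly stronger than the stated threshold $m\big(1+\tfrac{1}{\delta}\big)\tfrac{3+\delta}{2+\delta}$. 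No H\"older split rescues the weaker constant, since exponents with $\tfrac1p+\tfrac1q=1$ force $(2+\delta'')\max(p,q)\le 4+\delta$ and hence $\delta''\le\delta/2$. So the discrepancy originates in the paper's stated decay condition (the same pattern recurs in Corollary \ref{corollary:samplemomentsofhigherorder}, where the analogous computation gives $m(1+\tfrac{p}{\delta})$ in place of $m(1+\tfrac{1}{\delta})$), not in your method; to make your proof self-contained you should either state and prove the result under the threshold $\alpha>m\tfrac{3+\delta}{\delta}$ that your bookkeeping actually delivers, or supply an argument --- absent from the paper --- for the weaker one.
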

\begin{proof}
	Analogous to Theorem \ref{theorem:thetasamplemean} we obtain the stated convergence using Proposition \ref{proposition:deltavererbungtheta}. 
\end{proof}

\begin{Corollary}\label{corollary:samplemomentsofhigherorder}
	Let $X=(X_u)_{u\in\Z^m}$ be a real-valued $(A,\Lambda)$-influenced MMAF as defined in (\ref{equation:discreteinfluencedMMAfield}) and $p\geq1$ such that $E[|X_0|^{2p+\delta}]<\infty$ for some $\delta>0$.
	Let $\hat\theta_X^{(i)}$ be defined as in Proposition \ref{proposition:vectorinfluencedweaklydep}.  If $\hat{\theta}_X^{(i)}(h)=\mathcal{O}(h^{-\alpha})$ for $\alpha>m\left(1+\frac{1}{\delta}\right)(\frac{2p-1+\delta}{p+\delta})$, then
	\begin{gather*}
		\Sigma=\sum_{k\in\Z^m}Cov(X_0^{p},{X_k^{p}})
	\end{gather*}
	is finite, positive semidefinite and
	\begin{gather*}
		\frac{1}{|E_{n}|^{\frac{1}{2}}}\sum_{j\in E_{n}}(X_j^p-E[X_0^p])\underset{N\ra\infty}{\xrightarrow{\makebox[2em][c]{d}}}N(0,\Sigma).
	\end{gather*}
	
\end{Corollary}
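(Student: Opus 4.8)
\textbf{Proof proposal for Corollary \ref{corollary:samplemomentsofhigherorder}.}

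The plan is to reduce the statement to an application of Corollary \ref{corollary:ergodicclt} applied to the centered random field $W=(W_j)_{j\in\Z^m}$ defined by $W_j=X_j^p-E[X_0^p]$. The strategy mirrors exactly the route used for the sample autocovariances in Corollary \ref{corollary:sampleautocovariancetheta}: first establish that $W$ is $\theta$-lex-weakly dependent with explicitly controlled coefficients, then verify the moment and decay-rate hypotheses of the ergodic central limit theorem, and finally invoke ergodicity of the MMAF.

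First I would obtain the weak dependence of $W$ by applying the hereditary Proposition \ref{proposition:mmathetahereditary} to the map $h:\R\ra\R$, $h(x)=x^p$. This map satisfies $h(0)=0$ and the local Lipschitz bound $\norm{h(x)-h(y)}\leq c\norm{x-y}(1+\norm{x}^{p-1}+\norm{y}^{p-1})$ with $a=p$ and a suitable constant $c>0$. Since $E[|X_0|^{2p+\delta}]<\infty$, we may set the integrability exponent in Proposition \ref{proposition:mmathetahereditary} to be any value in $(p,2p+\delta]$; choosing it as $2p+\delta$ and using $a=p$ yields the exponent $\frac{(2p+\delta)-p}{(2p+\delta)-1}=\frac{p+\delta}{2p-1+\delta}$. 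Hence $W=(X_j^p-E[X_0^p])_{j\in\Z^m}$ is $\theta$-lex-weakly dependent with
\begin{gather*}
	\theta_W(h)\leq\mathcal{C}\,\theta_X(h)^{\frac{p+\delta}{2p-1+\delta}}
\end{gather*}
for a constant $\mathcal{C}$ independent of $h$. Substituting the decay hypothesis $\hat\theta_X^{(i)}(h)=\mathcal{O}(h^{-\alpha})$ with $\alpha>m(1+\tfrac1\delta)(\tfrac{2p-1+\delta}{p+\delta})$ gives $\theta_W(h)=\mathcal{O}(h^{-\alpha'})$ with $\alpha'=\alpha\cdot\frac{p+\delta}{2p-1+\delta}>m(1+\tfrac1\delta)$, which is precisely the decay rate required by Corollary \ref{corollary:ergodicclt} with the same $\delta$.

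Next I would check the remaining hypotheses of Corollary \ref{corollary:ergodicclt} for $W$. The field $W$ is centered by construction, stationary as a fixed measurable function of the stationary field $X$, and ergodic because $X$ is ergodic by \cite[Theorem 3.6]{PV2017} and ergodicity is preserved under measurable transformations. The moment condition $E[|W_0|^{2+\delta}]<\infty$ follows from $E[|X_0|^{(2+\delta)p}]\leq E[|X_0|^{2p+\delta}]<\infty$, since $(2+\delta)p\le 2p+\delta$ would fail for $p>1$; to handle this correctly one instead notes that the relevant moment of $W$ of order $2+\delta$ requires $E[|X_0|^{(2+\delta)p}]<\infty$, so the exponent bookkeeping must be arranged so that the available moment $2p+\delta$ suffices. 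The cleanest remedy is to work with a reduced $\delta'\le\delta$ so that the moment budget and the decay exponent are simultaneously met; this is the one place demanding care. Finally, applying Corollary \ref{corollary:ergodicclt} to $W$ yields the asserted convergence with $\Sigma=\sum_{k\in\Z^m}Cov(X_0^p,X_k^p)$, which is finite and positive semidefinite by the same corollary.

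The main obstacle I anticipate is precisely the exponent bookkeeping described above: matching the moment order available on $X$ (namely $2p+\delta$) against the order $(2+\delta')p$ needed to give $W$ a finite $(2+\delta')$-th moment, while simultaneously ensuring the transformed decay exponent $\alpha'=\alpha\cdot\frac{p+\delta'}{2p-1+\delta'}$ still exceeds $m(1+\tfrac1{\delta'})$. Tracking how the hereditary exponent $\frac{p+\delta}{2p-1+\delta}$ interacts with the threshold $m(1+\tfrac1\delta)(\tfrac{2p-1+\delta}{p+\delta})$ is the crux; once this algebraic consistency is verified, the rest is a direct citation of Corollary \ref{corollary:ergodicclt} exactly as in the proof of Corollary \ref{corollary:sampleautocovariancetheta}.
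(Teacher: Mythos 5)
Your route is the one the paper intends: apply the hereditary Proposition \ref{proposition:mmathetahereditary} to $x\mapsto x^p$ (so $a=p$, moment order $2p+\delta$, hereditary exponent $\frac{(2p+\delta)-p}{(2p+\delta)-1}=\frac{p+\delta}{2p-1+\delta}$), conclude $\theta_{X^p}(h)\leq \mathcal{C}\,\theta_X(h)^{\frac{p+\delta}{2p-1+\delta}}$, and then feed the centered field $W_j=X_j^p-E[X_0^p]$ into the ergodic central limit theorem, Corollary \ref{corollary:ergodicclt}, using ergodicity of the MMAF from \cite[Theorem 3.6]{PV2017}. The paper states the corollary without a printed proof, and the shape of the threshold $m\left(1+\frac{1}{\delta}\right)\frac{2p-1+\delta}{p+\delta}$ is exactly $m\left(1+\frac{1}{\delta}\right)$ divided by the hereditary exponent, so there is no doubt this is the intended argument.

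The gap is the step you explicitly defer. You correctly observe that $E[|W_0|^{2+\delta}]<\infty$ would require $E[|X_0|^{(2+\delta)p}]<\infty$, which the hypothesis $E[|X_0|^{2p+\delta}]<\infty$ does not supply when $p>1$, and you propose to repair this by passing to some $\delta'\leq\delta$ --- but you never carry out the computation, and it does not close as a formality. The largest admissible choice is $\delta'=\delta/p$ (since $E[|X_0^p|^{2+\delta'}]=E[|X_0|^{2p+p\delta'}]$), and with that choice Corollary \ref{corollary:ergodicclt} demands $\theta_W(h)=\mathcal{O}(h^{-\alpha'})$ with $\alpha'>m\left(1+\frac{p}{\delta}\right)$, whereas the stated decay hypothesis only yields $\alpha'=\alpha\frac{p+\delta}{2p-1+\delta}>m\left(1+\frac{1}{\delta}\right)$; for $p>1$ this is strictly weaker than what is needed. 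So the ``algebraic consistency'' you flag as the crux in fact fails under the literal hypotheses: a complete proof must either strengthen the moment assumption to $E[|X_0|^{(2+\delta)p}]<\infty$ (which makes the argument run with $\delta'=\delta$ and matches the stated threshold exactly) or correspondingly enlarge the required decay exponent to $m\left(1+\frac{p}{\delta}\right)\frac{2p-1+\delta}{p+\delta}$. For $p=1$ everything is consistent and your argument reduces to Theorem \ref{theorem:thetasamplemean}. As written, the proposal identifies but does not resolve the only non-routine point.
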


\begin{Remark}\label{remark:gmmestimation}
	The theory developed in this section is an essential step in showing the asymptotic normality of parametric estimators based on moment functions as the generalized method of moments (for a comprehensive introduction, see \cite{H2005}). The weak dependence properties and related central limit theorems analyzed in this section find application in the study of the GMM estimators presented in \cite[Section 6.1]{CS2018}, where the authors analyze parametric estimators of the supOU process.
\end{Remark}

\subsection{Weak dependence properties of non-influenced MMAF}
\label{sec3-5}
We now consider a general MMAF $X=(X_t)_{t\in\R^m}$ as defined in (\ref{equation:MMAfield}), i.e.
\begin{gather*}
	X_t=\int_S\int_{\R^m} f(A,t-s)\Lambda(dA,ds),\ t\in\R^m,
\end{gather*}
and discuss under which assumptions a non-influenced MMAF is $\eta$-weakly dependent.
Note that we do not demand any additional assumption on the structure of $X$ as assumed in Section \ref{sec3-2} and \ref{sec3-3}. 

\begin{Proposition}\label{proposition:mmaetaweaklydep}
	Let $\Lambda$ be an $\R^d$-valued L\'evy basis with characteristic quadruplet $(\gamma,\Sigma,\nu,\pi)$ and $f:S\times\R^{m}\ra M_{n\times d}(\R)$ a $\mathcal{B}(S\times\R^m)$-measurable function. Consider the MMAF $X=(X_t)_{t\in\R^m}$ with
	\begin{gather*}
		X_t=\int_S\int_{\R^m}f(A,t-s)\Lambda(dA,ds), \ t\in\R^m.
	\end{gather*}
	\begin{enumerate}[(i)]
		\item If $\int_{\norm{x}>1}\norm{x}^2\nu(dx)<\infty$, $\gamma+\int_{\norm{x}>1}x\nu(dx)=0$ and $f\in L^2(S\times\R^{m},\pi\otimes\lambda)$, then $X$ is $\eta$-weakly dependent with $\eta$-coefficients satisfying 
		\begin{gather*}
			\eta_X(h)\leq\Bigg(\int_S\int_{\left(\left(-\frac{h}{2},\frac{h}{2}\right)^m\right)^c}\textup{tr}(f(A,-s)\Sigma_\Lambda f(A,-s)')ds\pi(dA)\Bigg)^{\frac{1}{2}}=\hat{\eta}_X^{(i)}(h).
		\end{gather*}
		\item If $\int_{\norm{x}>1}\norm{x}^2\nu(dx)<\infty$ and $f\in L^2(S\times\R^{m},\pi\otimes\lambda)\cap L^1(S\times\R^{m},\pi\otimes\lambda)$, then $X$ is $\eta$-weakly dependent with $\eta$-coefficients satisfying 
		\begin{align*}
			\eta_X(h) & \leq 
			\bigg(\int_S\int_{\left(\left(-\frac{h}{2},\frac{h}{2}\right)^m\right)^c}\textup{tr}(f(A,-s)\Sigma_\Lambda f(A,-s)')ds\pi(dA)\\
			&+\Big\lVert\int_S\int_{\left(\left(-\frac{h}{2},\frac{h}{2}\right)^m\right)^c} f(A,-s)\mu_\Lambda  ds\pi(dA)\Big\rVert^2\bigg)^{\frac{1}{2}}=\hat{\eta}_X^{(ii)}(h).
		\end{align*}
		\item If $\int_{\R^d}\norm{x}\nu(dx)<\infty$, $\Sigma=0$ and $f\in L^1(S\times\R^m,\pi\otimes\lambda)$ with $\gamma_0$ as in (\ref{equation:gammazero}), then $X$ is $\eta$-weakly dependent with $\eta$-coefficients satisfying 
		\begin{align*}
			\eta_X(h) &\leq
			\int_S\int_{\left(\left(-\frac{h}{2},\frac{h}{2}\right)^m\right)^c}\norm{f(A,-s)\gamma_0}ds\pi(dA)\\
			&+\int_S\int_{\left(\left(-\frac{h}{2},\frac{h}{2}\right)^m\right)^c} \int_{\R^d}\norm{f(A,-s)x} \nu(dx)ds\pi(dA)=\hat{\eta}_X^{(iii)}(h).
		\end{align*}
		\item If $\int_{\norm{x}>1}\norm{x}\nu(dx)<\infty$ and $f\in L^1(S\times\R^m,\pi\otimes\lambda)\cap L^2(S\times\R^m,\pi\otimes\lambda)$, then $X$ is $\eta$-weakly dependent with $\eta$-coefficients satisfying 
		\begin{align*}
			\eta_X(h) &\leq
			\bigg(\int_S\int_{\left(\left(-\frac{h}{2},\frac{h}{2}\right)^m\right)^c}\textup{tr}(f(A,-s)\Sigma_{\Lambda_1} f(A,-s)')ds\pi(dA)\\
			&+\Big\lVert\int_S\int_{\left(\left(-\frac{h}{2},\frac{h}{2}\right)^m\right)^c} f(A,-s)\gamma  ds\pi(dA)\Big\rVert^2\bigg)^{\frac{1}{2}}\\
			&+\int_S\int_{\left(\left(-\frac{h}{2},\frac{h}{2}\right)^m\right)^c} \int_{\norm{x}>1}\norm{f(A,-s)x} \nu(dx)ds\pi(dA)=\hat{\eta}_X^{(iv)}(h).
		\end{align*}
	\end{enumerate}
	The results above hold for all $h>0$, where $\Sigma_\Lambda =\Sigma+\int_{\R^d}xx'\nu(dx)$, $\Sigma_{\Lambda_1} =\Sigma+\int_{\norm{x}\leq1}xx'\nu(dx)$ and $\mu_\Lambda =\gamma-\int_{\norm{x}\geq1}x\nu(dx)$.
\end{Proposition}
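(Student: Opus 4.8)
The plan is to follow the truncation strategy of Proposition~\ref{proposition:mmathetaweaklydep}, replacing the lexicographic cone geometry by the symmetric, distance-based geometry matching the $\eta$-coefficients of Definition~\ref{etaweaklydependent}; in particular no analogue of condition~\eqref{condition:scalarproduct} is needed here, since a centred cube can always be split off. For $h>0$ and $t\in\R^m$ I would define the truncated field
\[
\tilde X_t=\int_S\int_{t+\left(-\frac{h}{2},\frac{h}{2}\right)^m} f(A,t-s)\,\Lambda(dA,ds),
\]
that is, $X_t$ with its integration region restricted to the cube of sup-radius $h/2$ around $t$. If $\Gamma,\tilde\Gamma\subset\R^m$ satisfy $dist(\Gamma,\tilde\Gamma)\ge h$, then for all $i\in\Gamma$, $j\in\tilde\Gamma$ the cubes $i+\left(-\frac{h}{2},\frac{h}{2}\right)^m$ and $j+\left(-\frac{h}{2},\frac{h}{2}\right)^m$ are disjoint, so that $\tilde X_\Gamma$ and $\tilde X_{\tilde\Gamma}$ are functions of $\Lambda$ evaluated on disjoint Borel sets and hence independent by the independently scattered property (Definition~\ref{basis}).

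I would then establish the covariance inequality. Since $Cov(F(\tilde X_\Gamma),G(\tilde X_{\tilde\Gamma}))=0$ by independence, I write
\[
Cov(F(X_\Gamma),G(X_{\tilde\Gamma}))=Cov\big(F(X_\Gamma)-F(\tilde X_\Gamma),\,G(X_{\tilde\Gamma})\big)+Cov\big(F(\tilde X_\Gamma),\,G(X_{\tilde\Gamma})-G(\tilde X_{\tilde\Gamma})\big),
\]
bound each term by the sup-norm of its bounded factor times the $L^1$-norm of the Lipschitz increment, and use $Lip(F),Lip(G)$ together with stationarity, so that $E\norm{X_i-\tilde X_i}=E\norm{X_0-\tilde X_0}$ for every $i$. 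With $|\Gamma|=u$, $|\tilde\Gamma|=v$ this yields $|Cov(F(X_\Gamma),G(X_{\tilde\Gamma}))|\le C\,(u\norm{G}_\infty Lip(F)+v\norm{F}_\infty Lip(G))\,E\norm{X_0-\tilde X_0}$ for an absolute constant $C$; dividing by the denominator of Definition~\ref{etaweaklydependent} removes the dependence on $F,G,u,v$ and leaves $\eta_X(h)\le C\,E\norm{X_0-\tilde X_0}$.

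It remains to estimate the truncation tail
\[
X_0-\tilde X_0=\int_S\int_{\left(\left(-\frac{h}{2},\frac{h}{2}\right)^m\right)^c} f(A,-s)\,\Lambda(dA,ds)
\]
in each regime, whose law is infinitely divisible with triplet read off from Theorem~\ref{theorem:2} applied to the restricted kernel $f(A,-s)\mathbb{1}_{\left(\left(-\frac{h}{2},\frac{h}{2}\right)^m\right)^c}(s)$. In cases (i)--(ii) I use the finite second moment: by Proposition~\ref{proposition:MMAmoments} the tail has covariance $\int_S\int_{\left(\left(-\frac{h}{2},\frac{h}{2}\right)^m\right)^c} f(A,-s)\Sigma_\Lambda f(A,-s)'\,ds\,\pi(dA)$ and mean $\int_S\int_{\left(\left(-\frac{h}{2},\frac{h}{2}\right)^m\right)^c} f(A,-s)\mu_\Lambda\,ds\,\pi(dA)$, so that
\[
E\norm{X_0-\tilde X_0}\le \big(E\norm{X_0-\tilde X_0}^2\big)^{1/2}=\Big(\textup{tr}\,Var(X_0-\tilde X_0)+\norm{E[X_0-\tilde X_0]}^2\Big)^{1/2}
\]
gives $\hat\eta_X^{(ii)}(h)$, which collapses to $\hat\eta_X^{(i)}(h)$ precisely because the hypothesis $\gamma+\int_{\norm{x}>1}x\nu(dx)=0$ makes the tail centred. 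In case (iii) I instead invoke the finite-variation representation of Corollary~\ref{corollary:1}, writing the tail as a drift plus an $\omega$-wise Poisson integral and bounding $E\norm{X_0-\tilde X_0}$ by the triangle inequality and the Campbell formula, which produces the two integral terms of $\hat\eta_X^{(iii)}(h)$. Case (iv) combines the two: I split $\Lambda$ into its Gaussian/small-jump part, estimated in $L^2$ through $\Sigma_{\Lambda_1}$ and $\gamma$ as in (i)--(ii), and its large-jump part on $\{\norm{x}>1\}$, estimated in $L^1$ as in (iii).

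The routine ingredients are the independence of the truncations and the covariance split, which follow from the independently scattered structure of $\Lambda$ and directly parallel Proposition~\ref{proposition:mmathetaweaklydep}. The main obstacle is the case-specific bookkeeping for the tail norm: checking that the restricted kernel is still $\Lambda$-integrable so that $X_0-\tilde X_0$ is well defined with triplet equal to the restriction of the MMAF triplet to $\left(\left(-\frac{h}{2},\frac{h}{2}\right)^m\right)^c$, and, in case (iv), carrying out the jump-size decomposition so that the small- and large-jump contributions are correctly controlled in $L^2$ and $L^1$ respectively.
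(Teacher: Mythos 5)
Your proposal follows essentially the same route as the paper's own proof: the same cube truncation $X_t^{(\psi)}$ with $\psi=h/2$, the same independence of the truncations for sets at distance $\geq h$, the same two-term covariance splitting bounded via $Lip(F)$, $Lip(G)$ and stationarity, and the same case-by-case control of $E\norm{X_0-\tilde X_0}$ ($L^2$ via Proposition~\ref{proposition:MMAmoments} in (i)--(ii), the finite-variation representation in (iii), and the small/large-jump decomposition of $\Lambda$ in (iv)). The argument is correct and matches the paper's proof in all essential respects.
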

\begin{proof}
	See Section \ref{sec5-4}.
\end{proof}

Analogous to Proposition \ref{proposition:vectorinfluencedweaklydep} we obtain the following result.

\begin{Proposition}\label{proposition:vectorgeneralmmaweaklydep}
	Let $\Lambda$ be an $\R^d$-valued L\'evy basis with characteristic quadruplet $(\gamma,\Sigma,\nu,\pi)$ and $f:S\times\R^m\ra M_{1\times d}(\R)$ be a $\Lambda$-integrable, $\mathcal{B}(S\times\R^m)$-measurable function. Consider the real-valued MMAF 
	\begin{gather*}
		X_{t}=\int_S\int_{\R^m}f(A,t-s)\Lambda(dA,ds), \ t\in\R^{m}.
	\end{gather*}
	Then,
	\begin{gather*}
		Z_t:=\int_S\int_{\R^m}g(A,t-s)\Lambda(dA,ds), \ t\in\R^m,
	\end{gather*}
	is an MMAF, where $g(A,s)=(f(A,s),f(A,s-s_1),\ldots, f(A,s-s_{|S_k|}))'$ is a $\mathcal{B}(S\times\R^m)$-measurable function with values in $M_{(k+1)(2k+1)^{m-1}\times d}(\R)$ for $k\in\N_0$.\\
	If $X$ additionally satisfies the conditions of Proposition \ref{proposition:mmaetaweaklydep} (i), (ii), (iii) or (iv), then $Z$ is $\eta$-weakly dependent with coefficients respectively given by
	\begin{gather}
		\begin{aligned}\label{eq:mmavectorweakly}
			\eta_Z^{(i)}(h)\leq&\mathcal{D}\hat{\eta}_X^{(i)}(h-2k),\qquad\qquad 
			\eta_Z^{(ii)}(h)\leq&\mathcal{D} \hat{\eta}_X^{(ii)}(h-2k),\\
			\eta_Z^{(iii)}(h)\leq&\mathcal{C} \hat{\eta}_X^{(iii)}(h-2k)\text{ and}\qquad
			\eta_Z^{(iv)}(h)\leq&\mathcal{C}\hat{\eta}_X^{(iv)}(h-2k),
		\end{aligned}
	\end{gather}
	where $\mathcal{D}=|S_k|^{m/2}$, $\mathcal{C}=|S_k|^{m}$ for $h>2k$, and  $\hat{\eta}^{(\cdot)}(h)$ are defined as in Proposition \ref{proposition:mmaetaweaklydep}.
\end{Proposition}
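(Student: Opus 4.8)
The plan is to use that $Z$ is itself an MMAF whose $M_{|S_k|\times d}(\R)$-valued kernel $g$ merely stacks the $|S_k|$ spatial shifts $f(A,\cdot-s_i)$, $s_i\in S_k$, of the scalar kernel $f$, and then to apply Proposition \ref{proposition:mmaetaweaklydep} \emph{directly to $Z$}, comparing the explicit $\eta$-bounds so obtained with those for $X$ through a change of variables that removes the shifts $s_i$.

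First I would verify that $Z$ is a well-defined MMAF, i.e. that $g$ is $\Lambda$-integrable. Every row of $g$ is a translate in the $s$-variable of $f$, and the integrability conditions (\ref{equation:intcond1})--(\ref{equation:intcond3}) are invariant under such translations, Lebesgue measure being translation invariant and $\pi$ not depending on $s$; hence each row, and therefore the finite stack $g$, is $\Lambda$-integrable. For the same reason the moment and integrability hypotheses of Proposition \ref{proposition:mmaetaweaklydep}(i)--(iv) transfer from $f$ to $g$: the conditions on $\nu$ are untouched, while $\norm{g}_{L^p(S\times\R^m,\pi\otimes\lambda)}^p=\sum_i\norm{f(\cdot-s_i)}_{L^p}^p=|S_k|\,\norm{f}_{L^p}^p$, so $g$ lies in the same $L^p$-spaces as $f$. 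Thus Proposition \ref{proposition:mmaetaweaklydep} applies to the MMAF $Z$.

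Applying Proposition \ref{proposition:mmaetaweaklydep}(i) to $Z$ gives $\eta_Z^{(i)}(h)\leq(\int_S\int_{((-h/2,h/2)^m)^c}\textup{tr}(g(A,-s)\Sigma_\Lambda g(A,-s)')\,ds\,\pi(dA))^{1/2}$. Since $g$ stacks the rows $f(A,-s-s_i)$, one has $\textup{tr}(g\Sigma_\Lambda g')=\sum_i f(A,-s-s_i)\Sigma_\Lambda f(A,-s-s_i)'$, a sum of $|S_k|$ terms. In the $i$-th term I would perform the change of variables shifting $s$ by $s_i$; this replaces $f(A,-s-s_i)$ by $f(A,-s)$ and turns the integration region $\{\norm{s}_\infty\geq h/2\}$ into $\{\norm{s-s_i}_\infty\geq h/2\}$, which by the triangle inequality and $\norm{s_i}_\infty\leq k$ is contained in $\{\norm{s}_\infty\geq h/2-k\}=((-(h-2k)/2,(h-2k)/2)^m)^c$. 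Hence each summand is at most $(\hat\eta_X^{(i)}(h-2k))^2$. The same substitution treats case (ii), and in the finite-variation/$L^1$ cases (iii),(iv), where the bounds are sums of integrals with no outer square root, the identical region-shift applies term by term.

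It then remains to collect the $|S_k|$ summands. In the variance cases (i),(ii) the outer square root turns the sum of $|S_k|$ equal bounds into a factor of order $|S_k|^{1/2}$, whereas in the $L^1$ cases (iii),(iv) the linear summation produces a factor of order $|S_k|$; combined with the norm comparisons on $\R^{|S_k|}$ carried out exactly as in the proof of Proposition \ref{proposition:vectorinfluencedweaklydep}, this yields the stated constants $\mathcal{D}=|S_k|^{m/2}$ and $\mathcal{C}=|S_k|^m$, valid once $h>2k$ so that the shifted box-complement is genuinely contained in the larger one. The one genuinely delicate point is the region-shift above: one must check that translating the complement of the box by a vector of sup-norm at most $k$ enlarges it by at most $k$ on each face, which is precisely what forces the uniform replacement $h\mapsto h-2k$ throughout; summing finitely many translated copies and comparing the norms is then routine.
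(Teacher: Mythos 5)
Your proposal is correct and follows essentially the same route as the paper: one checks that the stacked kernel $g$ inherits $\Lambda$-integrability from the translates of $f$, applies Proposition \ref{proposition:mmaetaweaklydep} directly to the vector-valued MMAF $Z$, splits the resulting trace (resp. sum) into the $|S_k|$ shifted terms, and enlarges the integration region from the complement of $(-h/2,h/2)^m$ to that of $(-(h-2k)/2,(h-2k)/2)^m$ via the substitution $s\mapsto s+s_i$ with $\norm{s_i}_\infty\leq k$, before collecting the constants $\mathcal{D}$ and $\mathcal{C}$. This is precisely the argument of Proposition \ref{proposition:vectorinfluencedweaklydep} transposed to the $\eta$-setting, which is what the paper's one-line proof invokes.
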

\begin{proof}
	Analogous to Proposition \ref{proposition:vectorinfluencedweaklydep}. 
\end{proof}

\subsection{Sample moments of non-influenced MMAF}
\label{sec3-6}
Let us consider an $\R^n$-valued MMAF
\begin{gather}
	X=(X_u)_{u\in\Z^m} \text{ with } X_u=\int_S\int_{\R^m}f(A,u-s)\Lambda(dA,ds).\label{equation:discretegeneralMMAfield}
\end{gather}
As in Section \ref{sec3-2} we assume that we observe $X$ on a sequence of finite sampling sets $D_n\subset\Z^m$, such that (\ref{condition:samplingset}) holds.

\begin{Theorem}\label{theorem:etasamplemean}
	Let $(X_u)_{u\in\Z^m}$ be an MMAF as defined in (\ref{equation:discretegeneralMMAfield}) such that $E[X_0]=0$ and $E[\norm{X_0}^{2+\delta}]<\infty$ for some $\delta>0$. Assume that $X$ has $\eta$-coefficients satisfying $\eta_X(h)=\mathcal{O}(h^{-\beta})$, where $\beta>m\max\left(2,\left(1+\frac{1}{\delta}\right)\right)$.
	Then,
	\begin{gather}\label{equation:sigmaeta}
		\Sigma=\sum_{u\in\Z^m}Cov(X_0,X_u)=\sum_{u\in\Z^m}E[X_0X_u']
	\end{gather}
	is finite, positive semidefinite and 
	\begin{gather}\label{equation:etaasympnorm}
		\frac{1}{|D_n|^{\frac{1}{2}}}\sum_{u\in D_n}X_u\underset{n\ra\infty}{\xrightarrow{\makebox[2em][c]{d}}}N(0,\Sigma).
	\end{gather}
\end{Theorem}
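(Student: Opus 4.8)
The plan is to obtain this central limit theorem not by re-deriving it, but by reducing it to the existing CLT for $\eta$-weakly dependent stationary random fields of \cite{DMT2008}; here $\eta$-weak dependence plays the role that $\theta$-lex-weak dependence played in Theorem \ref{theorem:clt}, and the stronger decay requirement $\beta>m\max(2,1+\frac1\delta)$ (versus $\alpha>m(1+\frac1\delta)$) simply reflects that $\eta$-weakly dependent fields demand faster decay. First I would reduce to the scalar case via the Cram\'er--Wold device: for fixed $c\in\R^n$ the map $x\mapsto\langle c,x\rangle$ is Lipschitz, so by the hereditary remark stated just before Proposition \ref{proposition:mmathetahereditary} the real-valued field $(\langle c,X_u\rangle)_{u\in\Z^m}$ is again $\eta$-weakly dependent with $\eta$-coefficients bounded by a constant multiple of $\eta_X$, hence with the same polynomial decay. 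Establishing a one-dimensional CLT for each projection then yields the multivariate statement (\ref{equation:etaasympnorm}). Ergodicity of $X$, which ensures that the limiting variance is the deterministic $\langle c,\Sigma c\rangle$ rather than a random mixture (as in the passage from Theorem \ref{theorem:clt} to Corollary \ref{corollary:ergodicclt}), is supplied by \cite[Theorem 3.6]{PV2017}, exactly as in the proof of Theorem \ref{theorem:thetasamplemean}.

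Before invoking \cite{DMT2008} I would settle the finiteness and positive semidefiniteness of $\Sigma$. Because the identity map is unbounded, $Cov(X_0,X_u)$ cannot be controlled directly through Definition \ref{etaweaklydependent}; instead I would truncate. Writing $X_0=\varphi_T(X_0)+(X_0-\varphi_T(X_0))$ for a bounded $1$-Lipschitz truncation $\varphi_T$ at level $T$ and using centering, the truncated part contributes at most a constant times $T\,\eta_X(|u|)$ by Definition \ref{etaweaklydependent} with $u=v=1$, while the remainder is estimated by H\"older against the full moment, giving a tail of order $T^{-\delta}$ once $E[\norm{X_0}^{2+\delta}]<\infty$ is used. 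Optimizing over $T$ produces a bound of the form $|Cov(X_0,X_u)|\leq C\,\eta_X(|u|)^{\delta/(1+\delta)}$, and summing over the $\mathcal{O}(h^{m-1})$ lattice points at sup-distance $h$ shows that $\Sigma$ is absolutely summable precisely when $\beta>m(1+\frac1\delta)$; positive semidefiniteness then follows since $\Sigma$ is the limit of the (positive semidefinite) normalized covariance matrices of the partial sums.

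The final step is to verify the hypotheses of the $\eta$-weak-dependence CLT of \cite{DMT2008} for each scalar projection: strict stationarity and centering are inherited, the moment bound $E[|\langle c,X_0\rangle|^{2+\delta}]\leq\norm{c}^{2+\delta}E[\norm{X_0}^{2+\delta}]<\infty$ holds, the sampling sets $D_n$ obey the F\o lner-type condition (\ref{condition:samplingset}), and $\eta_X(h)=\mathcal{O}(h^{-\beta})$ with $\beta>m\max(2,1+\frac1\delta)$ meets the summability imposed there. I expect the main obstacle to be bookkeeping rather than conceptual, namely reconciling the two-regime threshold $\beta>m\max(2,1+\frac1\delta)$ with the exact conditions of \cite{DMT2008}: the term $m(1+\frac1\delta)$ enters through the moment--truncation trade-off controlling the covariances and hence $\Sigma$, whereas the term $2m$ (dominant once $\delta\geq1$) enters through the block-variance and Lindeberg estimates that are genuinely harder in dimension $m$. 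A secondary point to check is that their limit theorem, which may be phrased for cubes such as $E_n$ in (\ref{equation:equidistantsample}), transfers to arbitrary sequences $D_n$ satisfying (\ref{condition:samplingset}).
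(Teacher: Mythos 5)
Your proposal is correct and follows essentially the same route as the paper: the paper's proof simply observes that $\eta$-weak dependence implies $\lambda$-weak dependence in the sense of \cite[Definition 1]{DMT2008}, invokes \cite[Theorem 2]{DMT2008} to get both the summability of the covariances and the scalar CLT, and finishes with the Cram\'er--Wold device. Your additional steps (the truncation bound $|Cov(X_0,X_u)|\leq C\,\eta_X(|u|)^{\delta/(1+\delta)}$ and the appeal to ergodicity via \cite[Theorem 3.6]{PV2017}) are respectively subsumed by, and not needed for, that citation, since the limiting variance in \cite[Theorem 2]{DMT2008} is already deterministic.
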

\begin{proof}
	$X$ is $\lambda$-weakly dependent, see \cite[Definition 1]{DMT2008}. Then, \cite[Theorem 2]{DMT2008} implies the summability of $\sigma^2$ and the result stated in (\ref{equation:etaasympnorm}). The multivariate extension follows by using the Cram\'er-Wold device.
\end{proof}

\begin{Remark}
	Theorem \ref{theorem:etasamplemean} can be formulated as a functional central limit theorem, following \cite[Theorem 3]{DMT2008}. For $t\in [0,1]^m$, where $[0,1]^m$ denotes the $m$-fold Cartesian product of $[0,1]$, we set $S_n(t)=\sum_{j\in tE_n}X_j$ with $E_n$ as defined in (\ref{equation:equidistantsample}) and the additional assumption that $S_n(t)=0$ if one coordinate of $t$ equals zero. The product $tE_n$ has to be understood coordinatewise. Then, under the assumptions of Theorem \ref{theorem:etasamplemean} it holds that 
	\begin{gather}
		\frac{1}{n^{\frac{m}{2}}}S_n(t)  \underset{n\ra\infty}{\xrightarrow{\mathcal{D}([0,1]^m)}}\sigma W(t),
	\end{gather}
	where $W=\{W(t), t\in[0,1]^m\}$ denotes a Brownian sheet, i.e., a centered Gaussian process such that $Cov(W(t_1,\ldots,t_m),W(s_1,\ldots,s_m)')=\prod_{i=1}^m t_i \wedge s_i$ for all $t_1,\ldots,t_m,s_1,\ldots,s_m\in[0,1]$, and $\underset{n\ra\infty}{\xrightarrow{\mathcal{D}([0,1]^m)}}$ denotes the convergence in the Skorokhod space (see e.g. \cite[Section 3]{BW1971} for a definition of the Skorokhod topology on $[0,1]^m$).
\end{Remark}

Analogous to Proposition \ref{proposition:deltavererbungtheta} we show the following result.

\begin{Proposition}\label{proposition:deltavererbungeta}
	Let $(X_u)_{u\in\Z^m}$ be a real-valued MMAF as defined in (\ref{equation:discretegeneralMMAfield}) such that $E[X_0]=0$ and $E[\norm{X_0}^{2+\delta}]<\infty$ for some $\delta>0$.
	Then, $(Y_{j,k})_{j\in\Z^m}$, $k\in\N_0\times\Z^{m-1}$ as defined in (\ref{equation:sampleautocovy}) is $\eta$-weakly dependent with coefficients
	\begin{gather*}
		\eta_Y(h)\leq\mathcal{C}(\sqrt{2}\hat{\eta}^{(i)}_X(h-2|k|))^{\frac{\delta}{1+\delta}},
	\end{gather*}
	where $\mathcal{C}$ is a constant independent of $h$ and $\hat\eta_X^{(i)}(\cdot)$ is defined as in Proposition \ref{proposition:vectorgeneralmmaweaklydep}. 
	
	Furthermore, in the finite variation case and for $\hat\eta_X^{(iii)}(\cdot)$ defined as in Proposition \ref{proposition:vectorgeneralmmaweaklydep}, it holds
	\begin{gather*}
		\eta_Y(h)\leq\mathcal{C}(2\hat{\eta}^{(iii)}_X(h-2|k|))^{\frac{\delta}{1+\delta}}.
	\end{gather*}
\end{Proposition}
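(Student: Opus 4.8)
The plan is to mirror the proof of Proposition \ref{proposition:deltavererbungtheta}, replacing the $\theta$-lex structure by the $\eta$ structure throughout. Since $Y_{j,k}=X_jX_{j+k}-R(k)$ differs from $X_jX_{j+k}$ only by the deterministic constant $R(k)$, and covariances are invariant under the addition of constants, it suffices to establish $\eta$-weak dependence for the field $(X_jX_{j+k})_{j\in\Z^m}$.

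First I would introduce the two-dimensional vector field $Z=(X_j,X_{j+k})_{j\in\Z^m}$, which is an MMAF (a special instance of the construction in Proposition \ref{proposition:vectorgeneralmmaweaklydep}, with the two coordinates corresponding to the lags $0$ and $k$). By that proposition $Z$ is $\eta$-weakly dependent, and tracing through its proof in the two-coordinate case yields the explicit bound $\eta_Z(h)\leq\sqrt{2}\,\hat\eta_X^{(i)}(h-2|k|)$ for $h>2|k|$; in the finite variation case the analogous bound is $\eta_Z(h)\leq 2\,\hat\eta_X^{(iii)}(h-2|k|)$. The argument shift $2|k|$ compensates for the lag between the two coordinates of $Z$, while the factor $\sqrt{2}$ (respectively $2$) is the constant $\mathcal{D}$ (respectively $\mathcal{C}$) of Proposition \ref{proposition:vectorgeneralmmaweaklydep} specialised to two coordinates.

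Next I would transfer dependence from $Z$ to the product through the bilinear map $\phi:\R^2\ra\R$, $\phi(x_1,x_2)=x_1x_2$. This map fixes the origin, $\phi(0)=0$, and obeys the local Lipschitz estimate
\begin{gather*}
	|\phi(x)-\phi(y)|\leq|x_1|\,|x_2-y_2|+|y_2|\,|x_1-y_1|\leq\norm{x-y}(1+\norm{x}+\norm{y}),
\end{gather*}
so it satisfies the hypotheses of Proposition \ref{proposition:mmathetahereditary} with $c=1$ and $a=2$. The assumption $E[\norm{X_0}^{2+\delta}]<\infty$ furnishes a finite moment of order $p=2+\delta>a$ for the stationary field $Z$, so the $\eta$-part of Proposition \ref{proposition:mmathetahereditary}, applied to $\phi(Z)=X_jX_{j+k}$, produces $\eta$-coefficients bounded by a constant multiple of $\eta_Z(h)^{(p-a)/(p-1)}=\eta_Z(h)^{\delta/(1+\delta)}$. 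Substituting the bounds on $\eta_Z$ from the previous step gives $\eta_Y(h)\leq\mathcal{C}(\sqrt{2}\,\hat\eta_X^{(i)}(h-2|k|))^{\delta/(1+\delta)}$ in general and $\eta_Y(h)\leq\mathcal{C}(2\,\hat\eta_X^{(iii)}(h-2|k|))^{\delta/(1+\delta)}$ in the finite variation case, which are exactly the asserted estimates.

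I do not expect a genuine conceptual obstacle here, as the structure is parallel to the $\theta$-lex case; the only care required is bookkeeping. One must check that the hereditary estimate of Proposition \ref{proposition:mmathetahereditary}, formulated for a single $\R^n$-valued field, applies to the vector field $Z$ together with the product map $\phi$, that the moment of order $2+\delta$ of $X_0$ transfers to $Z$ so that $a=2$ lies strictly below $p=2+\delta$, and that the constant $\mathcal{C}$ absorbs the specialised value of $\mathcal{D}$ (respectively $\mathcal{C}$) together with the constant from the hereditary bound. With these verifications in place, the factor $\sqrt2$ (respectively $2$) and the shift $h\mapsto h-2|k|$ are inherited directly from Proposition \ref{proposition:vectorgeneralmmaweaklydep}.
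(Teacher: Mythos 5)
Your argument is correct and is exactly the route the paper intends: the paper states this result with the remark that it is ``analogous to Proposition \ref{proposition:deltavererbungtheta}'', whose proof is precisely your chain — form $Z=(X_j,X_{j+k})$, invoke Proposition \ref{proposition:vectorgeneralmmaweaklydep} for $\eta_Z(h)\leq\sqrt{2}\,\hat\eta_X^{(i)}(h-2|k|)$ (resp.\ $2\,\hat\eta_X^{(iii)}$), and then apply the hereditary Proposition \ref{proposition:mmathetahereditary} with $p=2+\delta$, $a=2$, $c=1$ to the product map. Your additional bookkeeping (checking $\phi(0)=0$, the local Lipschitz bound, and that subtracting the constant $R(k)$ leaves the $\eta$-coefficients unchanged) is all accurate.
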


In the following, we give asymptotic properties of the sample autocovariances (\ref{equation:sampleautocovariance}).

\begin{Corollary}\label{corollary:sampleautocovarianceeta}
	Let $(X_u)_{u\in\Z^m}$ be a real-valued MMAF as defined in (\ref{equation:discretegeneralMMAfield}) such that $\int_{\norm{x}>1}\norm{x}^{4+\delta}\nu(dx)<\infty$, $\gamma+\int_{\norm{x}>1}x\nu(dx)=0$ and $f:S\times\R^m\ra M_{1\times d}(\R)$ satisfies $f\in L^2(S\times\R^{m},\pi\otimes\lambda)\cap L^{4+\delta}(S\times\R^{m},\pi\otimes\lambda)$ for some $\delta>0$. If $\hat{\eta}^{(i)}_X(h)=\mathcal{O}(h^{-\beta})$, with $\hat\eta_X^{(i)}$ defined as in Proposition \ref{proposition:vectorgeneralmmaweaklydep}, and $\beta>m\max\left(2,\left(1+\frac{1}{\delta}\right)\right)(\frac{3+\delta}{2+\delta})$, then
	\begin{gather*}
		\Sigma=\sum_{l\in\Z^m}Cov\left(\left(\begin{array}{c}
			Y_{0,0}\\
			\vdots \\
			Y_{0,k}
		\end{array}\right),
		\left(\begin{array}{c}
			Y_{l,0}\\
			\vdots \\
			Y_{l,k}
		\end{array}\right)
		\right)=\sum_{l\in\Z^m}Cov\left(\left(\begin{array}{c}
			X_0X_0\\
			\vdots \\
			X_0X_k
		\end{array}\right),
		\left(\begin{array}{c}
			X_lX_l\\
			\vdots \\
			X_lX_{l+k}
		\end{array}\right)
		\right)
	\end{gather*}
	is finite, positive semidefinite and 
	\begin{gather*}
		\frac{1}{|E_{n-\tilde{k}}|^{\frac{1}{2}}}\sum_{j\in E_{n-\tilde{k}}}
		\left(\begin{array}{c}
			Y_{j,0}\\
			\vdots \\
			Y_{j,k}
		\end{array}\right)
		\underset{N\ra\infty}{\xrightarrow{\makebox[2em][c]{d}}}N\left(0,\Sigma\right),
	\end{gather*}
	where $\tilde{k}=|k|$.
\end{Corollary}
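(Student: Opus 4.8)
The plan is to deduce the statement from the central limit theorem for stationary $\eta$-weakly dependent fields underlying Theorem \ref{theorem:etasamplemean}, combined with the Cram\'er--Wold device, applied to the stacked field $W=(W_j)_{j\in\Z^m}$ with $W_j=(Y_{j,0},\ldots,Y_{j,k})'$. Two facts are needed: that $W$ is stationary, centered and has a finite absolute moment of some order strictly above $2$, and that $W$ is $\eta$-weakly dependent with coefficients decaying fast enough for that theorem. Once these hold, the theorem yields finiteness and positive semidefiniteness of $\Sigma$ and the asserted Gaussian limit; the second representation of $\Sigma$ is immediate because $Y_{j,\ell}=X_jX_{j+\ell}-R(\ell)$ differs from $X_jX_{j+\ell}$ only by the constant $R(\ell)$, which cancels in every covariance.

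First I would assemble all shifted observations entering $W_j$ into one auxiliary MMAF. Let $Z_j=(X_j,X_{j+s_1},\ldots,X_{j+s_{|S_k|}})'$ be the vector-valued MMAF attached to the shift set $S_k$ of (\ref{equation:shiftedmmafieldvector}), which contains every lag $0,\ldots,k$ occurring in $W_j$. Because the centering condition $\gamma+\int_{\norm{x}>1}x\nu(dx)=0$ and $f\in L^2(S\times\R^m,\pi\otimes\lambda)\cap L^{4+\delta}(S\times\R^m,\pi\otimes\lambda)$ hold, Proposition \ref{proposition:vectorgeneralmmaweaklydep}(i) shows $Z$ is $\eta$-weakly dependent with $\eta_Z(h)\leq\mathcal{D}\,\hat\eta_X^{(i)}(h-2k)$ for $h>2k$, so that $\eta_Z(h)=\mathcal{O}(h^{-\beta})$.

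Next I would realize $W$ as a locally Lipschitz image of $Z$. Writing $P$ for the quadratic map sending $Z_j$ to the vector of products $(X_jX_{j+\ell})_{\ell=0,\ldots,k}$ and $c$ for the constant vector $(R(0),\ldots,R(k))'$, we have $W_j=P(Z_j)-c$. The map $P$ satisfies $P(0)=0$ and the growth bound of Proposition \ref{proposition:mmathetahereditary} with $a=2$, while $E[\norm{Z_0}^{4+\delta}]<\infty$ by Proposition \ref{proposition:MMAexistencemoments}(i); hence that proposition gives
\begin{gather*}
	\eta_{P(Z)}(h)\leq\mathcal{C}\,\eta_Z(h)^{\frac{(4+\delta)-2}{(4+\delta)-1}}=\mathcal{C}\,\eta_Z(h)^{\frac{2+\delta}{3+\delta}}.
\end{gather*}
Subtracting the deterministic vector $c$ changes no covariance, so $W$ carries the same $\eta$-coefficients and $\eta_W(h)=\mathcal{O}\big(h^{-\beta\frac{2+\delta}{3+\delta}}\big)$. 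A Cauchy--Schwarz estimate gives $E[|X_jX_{j+\ell}|^{2+\delta/2}]\leq E[|X_0|^{4+\delta}]<\infty$, so $W$ is centered with finite moment of order $2+\delta/2$, and it is ergodic as a measurable image of the ergodic MMAF $X$.

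Finally I would apply the $\eta$-weakly dependent central limit theorem to $W$ with surplus-moment parameter $\delta'=\delta/2$: its decay requirement $\beta'>m\max(2,1+\tfrac1{\delta'})$ on the $\eta$-coefficients, combined with the attained rate $\beta'=\beta\tfrac{2+\delta}{3+\delta}$, produces a lower bound on $\beta$ of the same form as the one in the statement, and the theorem then gives the claimed convergence. The main obstacle is exactly this bookkeeping: one must propagate two competing exponent changes through the construction---the halving of the surplus moment ($\delta\mapsto\delta/2$) forced by Cauchy--Schwarz when squaring, and the contraction of the decay exponent by the factor $\tfrac{2+\delta}{3+\delta}$ forced by the hereditary step (Proposition \ref{proposition:mmathetahereditary})---and verify that their combination still clears the threshold of the $\eta$-central limit theorem. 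Everything else is a transcription of the $\theta$-lex argument behind Corollary \ref{corollary:sampleautocovariancetheta} into the $\eta$-setting.
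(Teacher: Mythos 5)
Your route is the same as the paper's: the paper's one-line proof invokes Proposition \ref{proposition:deltavererbungeta} (which is exactly your combination of the stacked MMAF $Z$ from Proposition \ref{proposition:vectorgeneralmmaweaklydep} with the locally Lipschitz hereditary bound of Proposition \ref{proposition:mmathetahereditary} applied to the quadratic map, giving the exponent $\tfrac{2+\delta}{3+\delta}$) and then applies Theorem \ref{theorem:etasamplemean}, i.e.\ the $\eta$-central limit theorem of \cite{DMT2008}, to the vector of the $Y_{j,\ell}$ via Cram\'er--Wold. That part of your argument is correct and is precisely what the authors intend.

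The one step you do not actually carry out is the final exponent check, and it does not come out the way you assert. You correctly observe that Cauchy--Schwarz only gives $W$ a finite moment of order $2+\delta/2$, so the surplus-moment parameter fed into the $\eta$-CLT is $\delta'=\delta/2$ and the required decay for $\eta_W$ is $\beta'>m\max\left(2,1+\tfrac{2}{\delta}\right)$. Combined with the attained rate $\beta'=\beta\,\tfrac{2+\delta}{3+\delta}$ this yields the requirement $\beta>m\max\left(2,1+\tfrac{2}{\delta}\right)\tfrac{3+\delta}{2+\delta}$, which coincides with the threshold in the statement only when $\delta\geq2$; for $\delta<2$ it is strictly stronger than the stated $m\max\left(2,1+\tfrac{1}{\delta}\right)\tfrac{3+\delta}{2+\delta}$. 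So, as written, your argument proves the corollary under a possibly stronger decay hypothesis rather than the stated one; to close the gap you would have to justify taking $\delta$ rather than $\delta/2$ as the surplus moment of $W$ (which Cauchy--Schwarz alone does not give from $E[|X_0|^{4+\delta}]<\infty$) or accept the modified constant. The paper's proof is silent on this bookkeeping --- the same issue is hidden behind ``analogous to Theorem \ref{theorem:etasamplemean}'' --- but since you explicitly name this verification as the main obstacle and then claim it produces ``a lower bound of the same form as the one in the statement,'' that claim must be either proved or corrected.
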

\begin{proof}
	Analogous to Theorem \ref{theorem:etasamplemean} we obtain the stated convergence using Proposition \ref{proposition:deltavererbungeta}. 
\end{proof}

\begin{Remark}
	Note that for $m=1$ Theorem \ref{theorem:etasamplemean} improves the only existing central limit theorem for MMA processes based on $\eta$-weak dependence (see \cite[Theorem 4.1]{CS2018}) by reducing the necessary decay of the $\eta$-coefficients from $\beta>4+\frac{2}{\delta}$ to $\beta>\max\left(2,\left(1+\frac{1}{\delta}\right)\right)$.
\end{Remark}

\begin{Remark}
	Let $X$ be an $(A,\Lambda)$-influenced MMAF satisfying the conditions of Proposition \ref{proposition:mmathetaweaklydep} (i). Then $X$ is $\theta$-lex- and $\eta$-weakly dependent with the same weak dependence coefficients and both the asymptotic results in Section \ref{sec3-4} and \ref{sec3-6} can be applied. \\
	Note that the asymptotic results in Section \ref{sec3-4} hold under weaker decay demands for the weak dependence coefficients compared to the results in Section  \ref{sec3-6}.
\end{Remark}

\subsection{Example of $(A,\Lambda)$-influenced MMAF: MSTOU processes}
\label{sec3-7}
We apply the developed asymptotic theory to mixed spatio-temporal Ornstein-Uhlenbeck (MSTOU) processes. MSTOU processes were introduced in \cite{NV2017} and extend the spatio-temporal Ornstein-Uhlenbeck (STOU) processes (see \cite{BS2004,NV2015}) by additionally mixing the mean reversion parameter. This extension allows versatile modeling of short-range as well as long-range dependence structures in space-time. 

In the following, we will treat the temporal and spatial domains separately. MSTOU processes are an example of $(A,\Lambda)$-influenced MMAF where the sphere of influence is a family of ambit sets, i.e. $A_t(x)\subset \R\times\R^m$ such that
\begin{gather}\label{equation:ambitset}
	\begin{cases}
		A_t(x)=A_0(0)+(t,x), \text{ (Translation invariant)}\\
		A_s(x)\subset A_t(x),\\
		A_t(x)\cap((t,\infty))\times\R^m=\emptyset.\text{ (Non-anticipative)}.
	\end{cases}
\end{gather}

\begin{Proposition}
	\label{mstou}
	Let $\Lambda$ be a real-valued L\'evy basis on $(0,\infty)\times\R\times\R^m$ with characteristic quadruplet $(\gamma,\Sigma,\nu,\pi)$ such that $\int_{|x|>1}x^2\nu(dx)<\infty$ and  $f(\lambda)$ be the density function of $\pi$ (i.e. the mean reversion parameter $\lambda$) with respect to the Lebesgue measure. Furthermore, let $A=(A_t(x))_{(t,x)\in\R\times\R^m}$ be an ambit set. If
	\begin{gather*}\label{equations:MSTOUcond1}
		\int_0^\infty \int_{A_t(x)} \exp(-\lambda(t-s)) \,ds\, d\xi\, f(\lambda) \,d\lambda<\infty,
	\end{gather*}
	then the $(A,\Lambda)$-influenced MMAF
	\begin{gather*}\label{equation:MSTOU}
		Y_t(x)=\int_0^\infty \int_{A_t(x)}\exp(-\lambda(t-s)) \, \Lambda(d\lambda,ds,d\xi),\quad (t,x)\in\R\times\R^d
	\end{gather*}
	is well defined and we call $Y_t(x)$ a mixed spatio-temporal Ornstein-Uhlenbeck (MSTOU) process.
\end{Proposition}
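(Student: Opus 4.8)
The plan is to show that the kernel
$$g(\lambda,s,\xi)=e^{-\lambda(t-s)}\,\mathbb{1}_{A_t(x)}(s,\xi)$$
is $\Lambda$-integrable in the sense of Theorem \ref{theorem:2}, which guarantees that the stochastic integral defining $Y_t(x)$ exists. Here $S=(0,\infty)$ plays the role of the Polish space, $\R\times\R^m$ the role of $\R^m$ in the general MMAF framework, and $d=n=1$. Since $g$ is supported on $(0,\infty)\times A_t(x)$ and the ambit set $A$ is translation invariant by \ref{equation:ambitset}, the field $Y$ is then automatically an $(A,\Lambda)$-influenced MMAF in the sense of Definition \ref{definition:influenced}, its sphere of influence being the family of ambit sets.

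The first observation I would make is that the non-anticipativity condition in \ref{equation:ambitset} forces $s\le t$ on $A_t(x)$, hence $t-s\ge 0$ and $0\le g\le 1$. Consequently the hypothesis reads
$$\int_0^\infty\!\int_{A_t(x)}e^{-\lambda(t-s)}\,ds\,d\xi\,f(\lambda)\,d\lambda=\|g\|_{L^1((0,\infty)\times\R\times\R^m,\,\pi\otimes\lambda)}<\infty,$$
and combined with $g^2\le g$ this immediately upgrades to $g\in L^1\cap L^2$. I would also record that the standing assumption $\int_{|x|>1}x^2\nu(dx)<\infty$ yields $\int_{|x|>1}|x|\nu(dx)<\infty$, because $|x|\le x^2$ on $\{|x|>1\}$.

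With these facts verifying the three conditions of Theorem \ref{theorem:2} is routine. Condition \ref{equation:intcond2} reduces to $\int\Sigma g^2\,d(\pi\otimes\lambda)=\Sigma\|g\|_{L^2}^2<\infty$. For \ref{equation:intcond3} I would bound $1\wedge|gx|^2\le g^2x^2$ and split $\nu$ at $|x|=1$, obtaining $\|g\|_{L^2}^2\int_{|x|\le1}x^2\nu(dx)$ and $\|g\|_{L^2}^2\int_{|x|>1}x^2\nu(dx)$, both finite. For the drift condition \ref{equation:intcond1} the point is that $0\le g\le1$ gives $|gx|\le|x|$, so the two indicators agree whenever $|x|\le 1$ and the inner integrand vanishes there; only $|x|>1$ contributes, where $\mathbb{1}_{[0,1]}(|x|)=0$ and $\bigl|gx\,\mathbb{1}_{[0,1]}(|gx|)\bigr|\le g|x|$. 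Bounding the whole integrand of \ref{equation:intcond1} by $g\bigl(|\gamma|+\int_{|x|>1}|x|\nu(dx)\bigr)$ and integrating gives $\bigl(|\gamma|+\int_{|x|>1}|x|\nu(dx)\bigr)\|g\|_{L^1}<\infty$.

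The only genuinely delicate step is this last one: one must resist the crude estimate $g\le1$ and instead retain the factor $g$, because the ambit set $A_t(x)$ typically has infinite Lebesgue measure, so it is the $L^1$-norm of the kernel — not the bare measure of its support — that is controlled. Keeping $g$ and invoking $\int_{|x|>1}|x|\nu(dx)<\infty$ is exactly what renders \ref{equation:intcond1} finite. Once the three conditions hold, Theorem \ref{theorem:2} yields the $\Lambda$-integrability of $g$, hence the well-definedness of $Y_t(x)$, completing the argument.
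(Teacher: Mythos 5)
Your argument is correct. The paper itself disposes of this proposition in one line, by citing an external result (Corollary 1 of Nguyen and Veraart, \cite{NV2017}), so you have taken a genuinely different — and more self-contained — route: a direct verification of the Rajput--Rosi\'nski conditions (\ref{equation:intcond1})--(\ref{equation:intcond3}) via Theorem \ref{theorem:2}. All the key steps check out: non-anticipativity of the ambit set forces $t-s\geq 0$ on the support so that $0\leq g\leq 1$, the hypothesis is exactly $\|g\|_{L^1(\pi\otimes\lambda)}<\infty$ and $g^2\leq g$ upgrades this to $g\in L^1\cap L^2$, the moment assumption gives $\int_{|x|>1}|x|\nu(dx)<\infty$, and your observation that $|gx|\leq|x|$ makes the two indicators in (\ref{equation:intcond1}) cancel on $\{|x|\leq 1\}$ is precisely the right way to handle the drift term — as you note, retaining the factor $g$ rather than crudely bounding it by $1$ is essential because $A_t(x)$ generally has infinite Lebesgue measure. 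What the paper's citation buys is brevity (the cited corollary performs essentially this same computation for the exponential kernel); what your version buys is a proof entirely internal to the paper's own framework, plus the explicit record that the kernel lies in $L^1\cap L^2$, which is exactly the input needed for the subsequent weak-dependence and moment computations in Section \ref{sec3-7}. One cosmetic remark: to match Definition \ref{definition:influenced} and the stationarity claim one should also note that the kernel can be written as a function of $\lambda$ and $(t,x)-(s,\xi)$ alone, i.e.\ $g=e^{-\lambda(t-s)}\mathbb{1}_{A_0(0)}((s,\xi)-(t,x))$ by translation invariance of the ambit sets; you gesture at this but it is worth stating explicitly.
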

\begin{proof}
	Follows immediately from \cite[Corollary 1]{NV2017}.
\end{proof}

To calculate the assumptions under which the asymptotic results of Section \ref{sec3-3} hold, it becomes necessary to specify a family of ambit sets. In the following, we will consider c-class MSTOU processes, a sub-class of the $g$-class MSTOU processes as given in \cite[Definition 9]{NV2017}.

\begin{Definition}
	Let $Y_t(x)$ be an MSTOU process as in Proposition \ref{mstou}. If, for a constant 
	$c>0$,
	\begin{gather*}
		A_t(x)=\{ (s,\xi): s\leq t, \norm{x-\xi}\leq c|t-s| \},
	\end{gather*}
	then $Y_t(x)$ is called a c-class MSTOU process.
	A c-class MSTOU process is well defined if 
	\begin{gather}
		\int_0^\infty \frac{1}{\lambda^{m+1}}f(\lambda) d\lambda<\infty.
		\label{equations:gclassMSTOUcond1}
	\end{gather}
\end{Definition}

The next theorem expresses the $\theta$-lex coefficients of c-class MSTOU processes in terms of the characteristic quadruplet of the driving L\'evy basis. We note that $A_0(0)$ is a closed convex proper cone with Lebesgue measure strictly greater than zero satisfying (\ref{condition:scalarproduct}). From (\ref{equation:psi}) it follows that $\psi(h)=\frac{1}{\sqrt{c^2+1}}\frac{h}{\sqrt{m+1}}$.

\begin{Theorem}
	Let $(Y_t(x))_{(t,x)\in\R\times\R^m}$ be a c-class MSTOU process and $(\gamma,\Sigma,\nu,\pi)$ the characteristic quadruplet of its driving L\'evy basis. Moreover, let $f(\lambda)$ be the density of $\pi$ with respect to the Lebesgue measure. 
	\begin{enumerate}[(i)]
		\item If $\int_{|x|>1} x^2\nu(dx)<\infty$ and $\gamma+\int_{|x|>1}x \,\nu(dx)=0$, then $Y_t(x)$ is $\theta$-lex-weakly dependent. Let $c\in(0,1]$, then for
		\begin{alignat*}{2}
			m&=1,\qquad &&\theta_Y(h)\leq\Bigg(2c\Sigma_\Lambda\int_0^\infty \frac{(2\lambda \psi(h)+1)}{\lambda^2}e^{-2\lambda \psi(h)}f(\lambda)d\lambda \Bigg)^{\frac{1}{2}},\\
			\textrm{and for} & && \\   
			m& \geq 2,\qquad &&\theta_Y(h)\leq2\left(V_m(c)\Sigma_\Lambda  \int_0^\infty 
			\frac{m!\sum_{k=0}^m \frac{1}{k!}(2\lambda \psi(h))^k }{(2\lambda)^{m+1}} e^{-2\lambda \psi(h)}
			f(\lambda)d\lambda\right)^{\frac{1}{2}}.
		\end{alignat*}
		Let $c>1$, then for 
		\begin{alignat*}{2}
			m&=1,\qquad &&\theta_Y(r) \leq 2 \Big( c\Sigma_\Lambda\int_0^\infty  \frac{(2\lambda \frac{\psi(h)}{c}+1)}{2\lambda^2}e^{-2\lambda \frac{\psi(h)}{c}}ds f(\lambda)d\lambda\Big)^{\frac{1}{2}},\\
			\textrm{and for} & && \\   
			m&\geq 2,\qquad &&\theta_Y(h)\leq2\Bigg(V_m(c)\Sigma_\Lambda  \int_0^\infty \frac{m!\sum_{k=0}^m \frac{1}{k!}(2\frac{\lambda \psi(h)}{c})^k }{(2\lambda)^{m+1}} e^{-2\frac{\lambda \psi(h)}{c}}f(\lambda)d\lambda\Bigg)^{\frac{1}{2}}.
		\end{alignat*}
		\item If $\int_{\R} |x| \,\nu(dx)<\infty$, $\Sigma=0$ and $\gamma_0$ as defined in (\ref{equation:gammazero}), then $Y_t(x)$ is $\theta$-lex-weakly dependent. Let $c \in (0,1]$, then for
		\begin{alignat*}{2}
			m&\in\N,\qquad && \theta_Y(h)\leq2V_m(c) \gamma_{abs} \left(\int_0^\infty \frac{m!\sum_{k=0}^m \frac{1}{k!}(\lambda \psi(h))^k }{\lambda^{m+1}} e^{-\lambda \psi(h)}f(\lambda)d\lambda\right),
		\end{alignat*}
		whereas for $c>1$ and
		\begin{alignat*}{2}
			m&\in\N, \qquad && \theta_Y(h)\leq 2 V_m(c)\gamma_{abs}\Bigg(\int_0^\infty \frac{m!\sum_{k=0}^m \frac{1}{k!}(\frac{\lambda \psi(h)}{c})^k }{\lambda^{m+1}} e^{-\frac{\lambda \psi(h)}{c}}f(\lambda)d\lambda\Bigg).
		\end{alignat*}
	\end{enumerate}
	The results above hold for all $h>0$, where $\gamma_{abs}=|\gamma_0|+\int_\R|x|\nu(dx)$, $V_m(c)=\frac{\left(\Gamma(\frac{1}{2})c\right)^m }{\Gamma(\frac{m}{2}+1)}$ denotes the volume of the $m$-dimensional ball with radius $c$, $\psi(h)=\frac{1}{\sqrt{c^2+1}}\frac{h}{\sqrt{m+1}}$ and $\Sigma_\Lambda =\Sigma+\int_{\R}x^2\nu(dx)$. 
\end{Theorem}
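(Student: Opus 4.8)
The plan is to recognise the $c$-class MSTOU process as a scalar $(A,\Lambda)$-influenced MMAF and then read off both bounds directly from Proposition \ref{proposition:mmathetaweaklydep}, so that the entire proof reduces to evaluating the integrals defining $\hat\theta_X^{(i)}$ and $\hat\theta_X^{(iii)}$. Here the ambient space is $\R\times\R^m=\R^{m+1}$, the kernel is the scalar function $f(\lambda,(t,x)-(s,\xi))=\exp(-\lambda(t-s))$, the Polish space $S=(0,\infty)$ carries the mean-reversion parameter $\lambda$ with density $f(\lambda)$, and the initial sphere of influence is the cone $A_0(0)=\{(s,\xi):s\le0,\ \norm{\xi}\le -cs\}$. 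Because the ambit set is non-anticipative we have $A_0(0)\subset V_0$, so no rotation as in Remark \ref{remark:halfspaces} is required. First I would verify condition (\ref{condition:scalarproduct}) with $\alpha=(1,0,\ldots,0)$: then $\alpha'(s,\xi)=s\le0$ on the cone, and the supremum of $s$ over the unit vectors of $A_0(0)$ in the sense of (\ref{equation:xi}) — attained on the boundary ray $\norm{\xi}=-cs$ — gives $b=-1/\sqrt{1+c^2}$. Inserting this and the dimension $m+1$ into (\ref{equation:psi}) yields exactly $\psi(h)=\frac{1}{\sqrt{c^2+1}}\frac{h}{\sqrt{m+1}}$.

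Next I would check the integrability hypotheses. Slicing the cone at a fixed time $s\le0$ gives a Euclidean ball of radius $-cs$ in $\R^m$, of volume $V_m(c)(-s)^m$, so for $p\in\{1,2\}$ one computes $\int_0^\infty\int_{A_0(0)}e^{p\lambda s}\,d\xi\,ds\,f(\lambda)d\lambda=V_m(c)\,m!\int_0^\infty(p\lambda)^{-(m+1)}f(\lambda)d\lambda$, which is finite precisely under the $c$-class well-definedness condition (\ref{equations:gclassMSTOUcond1}). Hence $f\in L^1\cap L^2$ and, since the kernel is scalar so that $\textup{tr}(f\Sigma_\Lambda f')=\Sigma_\Lambda e^{2\lambda s}$, the hypotheses of Proposition \ref{proposition:mmathetaweaklydep}(i) (for case (i)) and of (iii) together with $\Sigma=0$ (for the finite-variation case) hold. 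In case (i) this already gives
\[
\theta_Y(h)\le 2\Big(\Sigma_\Lambda\int_0^\infty\!\!\int_{A_0(0)\cap V_0^{\psi(h)}}e^{2\lambda s}\,d\xi\,ds\,f(\lambda)d\lambda\Big)^{1/2},
\]
while the finite-variation bound takes the same form with $e^{2\lambda s}\Sigma_\Lambda$ replaced by $e^{\lambda s}\gamma_{abs}$, the square root removed, and the overall factor $2$ retained, in accordance with $\hat\theta_X^{(iii)}$.

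The geometric heart of the argument is describing $A_0(0)\cap V_0^{\psi(h)}$, and this is where the two regimes separate. For $c\le1$ every cone point satisfies $\norm{\xi}_\infty\le\norm{\xi}\le -cs\le -s$, so $\norm{(s,\xi)}_\infty=-s$ and the far-region constraint $\norm{(s,\xi)}_\infty\ge\psi(h)$ collapses to the half-line $\{s\le-\psi(h)\}$; the integral is then taken exactly over $A_0(0)\cap\{s\le-\psi(h)\}$. For $c>1$ I would instead use the inclusion $A_0(0)\cap V_0^{\psi(h)}\subset\{(s,\xi)\in A_0(0):-s\ge\psi(h)/c\}$: if $-s\ge\psi(h)$ it is immediate, while if $\norm{\xi}_\infty\ge\psi(h)$ then $c(-s)\ge\norm{\xi}\ge\norm{\xi}_\infty\ge\psi(h)$ forces $-s\ge\psi(h)/c$. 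Since the integrands are non-negative, enlarging the domain to this time-truncated cone gives a valid upper bound, which amounts to replacing $\psi(h)$ by $\psi(h)/c$ in the final expressions.

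In both regimes the remaining step is routine: with the substitution $u=-s$ and the ball-volume slice $V_m(c)u^m$, the spatial integration is immediate and the temporal one is the identity $\int_a^\infty u^m e^{-\beta u}\,du=\frac{m!}{\beta^{m+1}}e^{-\beta a}\sum_{k=0}^m\frac{(\beta a)^k}{k!}$, applied with $\beta=2\lambda$ (case (i)) or $\beta=\lambda$ (finite-variation case) and with $a=\psi(h)$ for $c\le1$, $a=\psi(h)/c$ for $c>1$; for $m=1$ the volume $V_1(c)=2c$ specialises the sum to $1+\beta a$, recovering the stated $m=1$ formulae. The main obstacle I anticipate is the clean handling of the regime $c>1$: the exact set $A_0(0)\cap V_0^{\psi(h)}$ is not a simple time-truncated cone, and one must argue, via the non-negativity of the integrand, that replacing it by the tractable cone $\{-s\ge\psi(h)/c\}$ yields the correct closed form without spoiling the polynomial decay rate that the asymptotic theory of Section \ref{sec3-4} later requires.
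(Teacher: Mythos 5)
Your proposal is correct and follows essentially the same route as the paper: cast the c-class MSTOU process as an $(A,\Lambda)$-influenced MMAF, invoke Proposition \ref{proposition:mmathetaweaklydep} (i) and (iii), identify the truncated set $A_0(0)\cap V_{0}^{\psi(h)}$ by splitting into the regimes $c\in(0,1]$ and $c>1$, and evaluate the resulting ball-slice and incomplete-gamma integrals. Your explicit verification of $b=-1/\sqrt{1+c^2}$ and the inclusion $A_0(0)\cap V_0^{\psi(h)}\subset\{-s\geq\psi(h)/c\}$ for $c>1$ merely makes rigorous what the paper conveys through its figures, so no substantive difference remains.
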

\begin{proof}
	\begin{enumerate}
		\item [(i)] Let us consider the case $m=1$. From Proposition \ref{proposition:mmathetaweaklydep} we deduce
		\begin{gather}\label{eq:MSTOUtemp}
			\theta_Y(h)\leq 2 \Big( \Sigma_\Lambda\int_0^\infty \int_{A_0(0)\cap V_{0}^{\psi(h)}} \exp(2s\lambda) ds \, d\xi \, f(\lambda)d\lambda\Big)^{\frac{1}{2}}.
		\end{gather}
		As first step, one has to evaluate the truncated integration set $A_0(0)\cap V_{0}^{\psi(h)}$. Depending on the width of $A_0(0)$, we distinguish the two cases illustrated in the following figures. Figure \ref{Plotcases1} and \ref{Plotcases2} consider the case $c\in(0,1]$ and Figure \ref{Plotcases3} and \ref{Plotcases4} cover the case $c>1$.
		\begin{figure}[H]
			\begin{minipage}[H]{3cm}
				\centering
				\includegraphics[width=3.5cm]{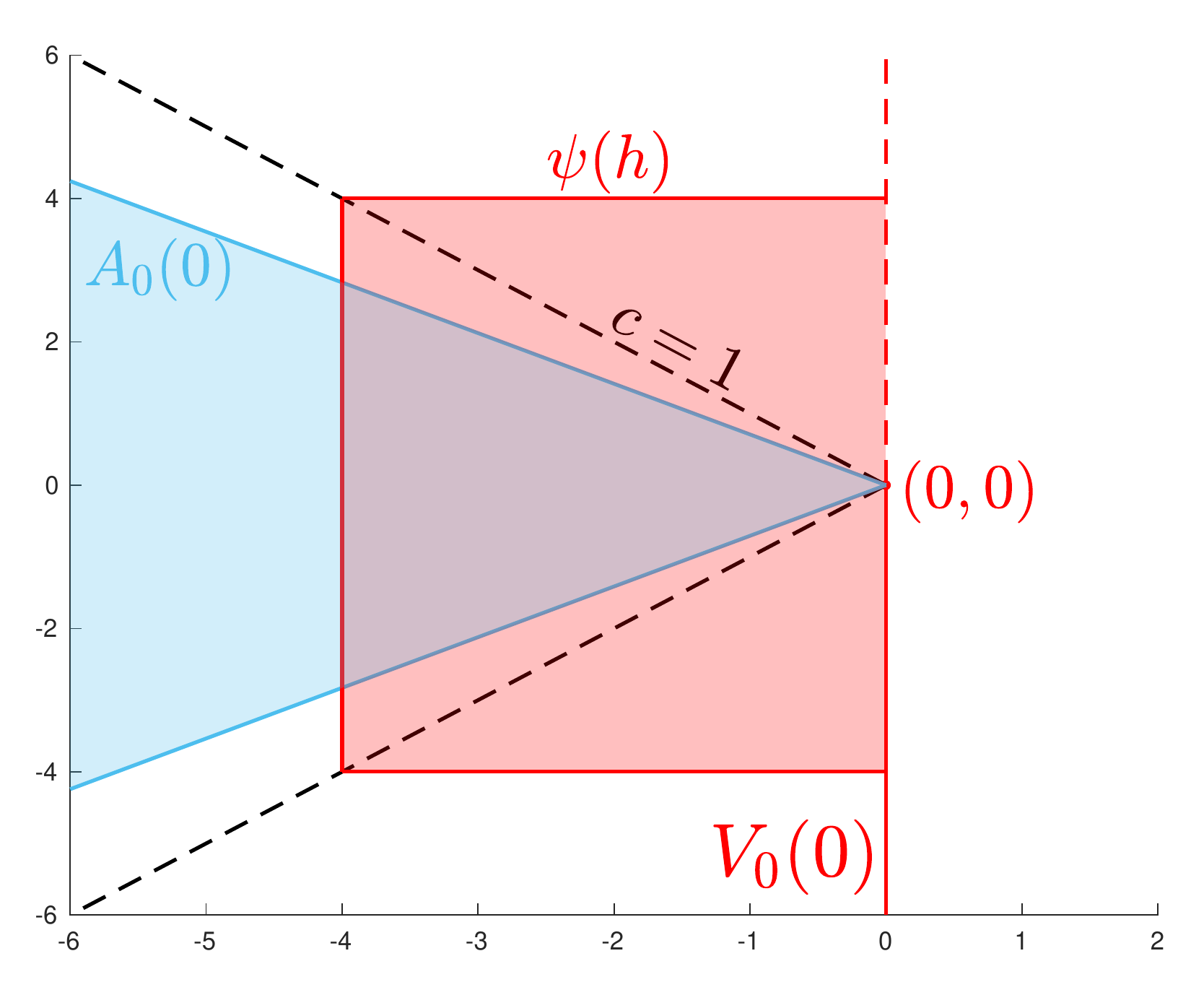}\label{figurePlotcases1}\\
				\caption{\small{Integration set $A_0(0)$ with $(V_{(0,0)}^h)^c$ for $c=\frac{1}{\sqrt{2}}$ and $h=4\sqrt{3}$.}}\label{Plotcases1}
			\end{minipage}
			\hfill
			\begin{minipage}[H]{3cm}
				\centering
				\includegraphics[width=3.5cm]{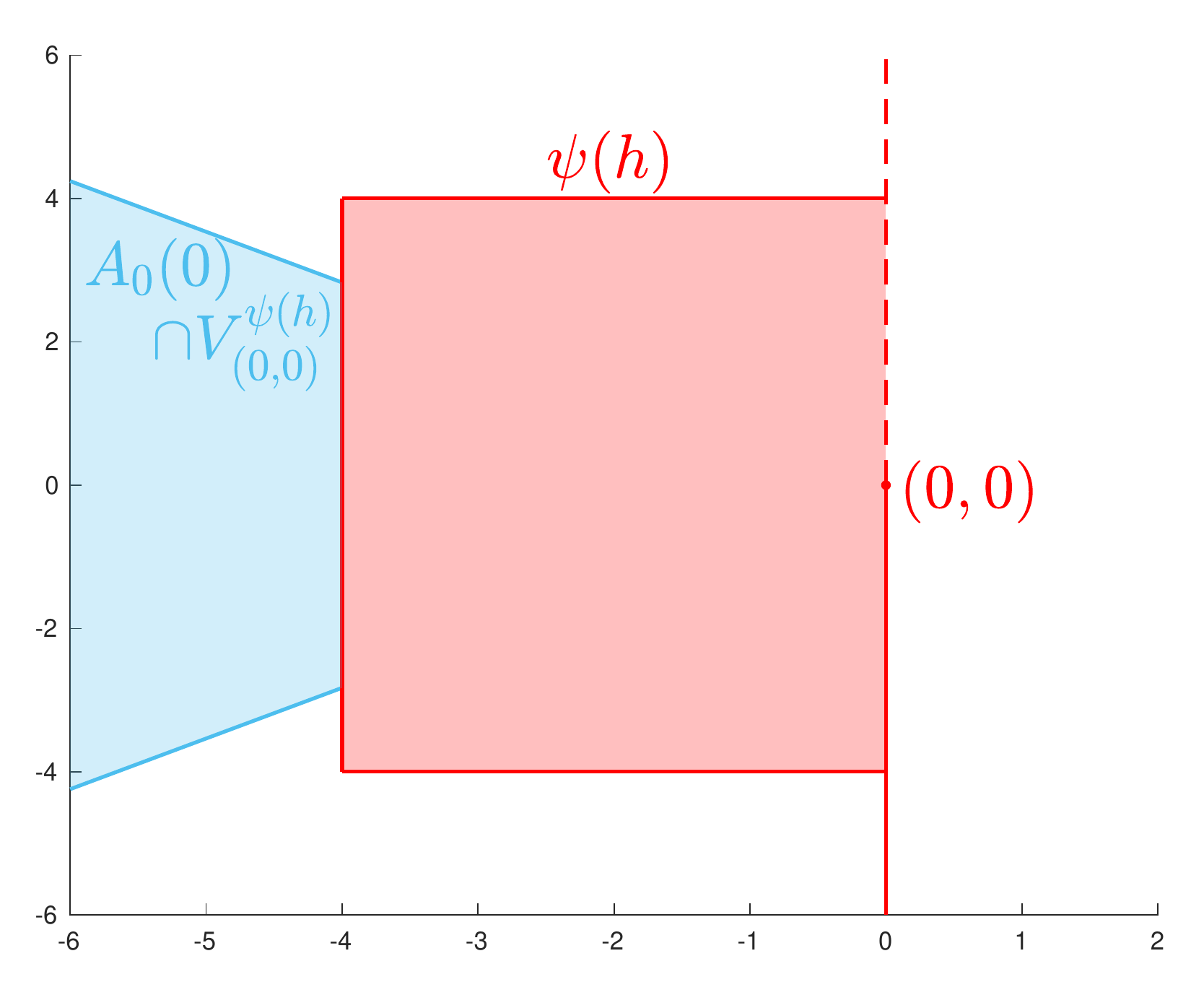}\label{figurePlotcases2}\\
				\caption{\small{Truncated set $A_0(0)\cap V_{(0,0)}^{\psi(h)}$ for $c=\frac{1}{\sqrt{2}}$ and $h=4\sqrt{3}$.}}
				\label{Plotcases2}
			\end{minipage}
			\hfill
			\begin{minipage}[H]{3cm}
				\centering
				\includegraphics[width=3.5cm]{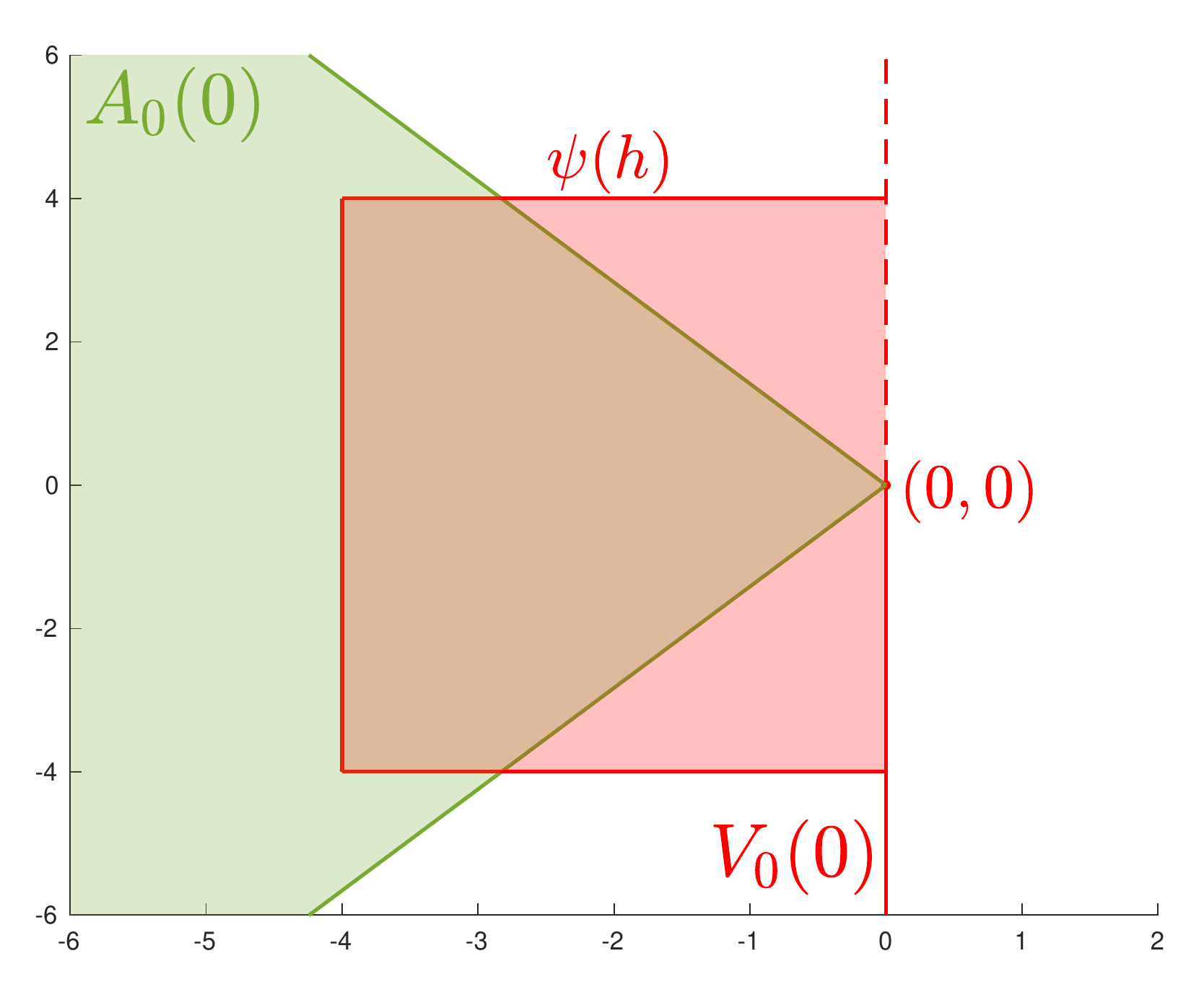}\label{figurePlotcases3}\\
				\caption{\small{Integration set $A_0(0)$ with $(V_{(0,0)}^h)^c$ for $c=\sqrt{2}$ and $h=4\sqrt{6}$.}}
				\label{Plotcases3}
			\end{minipage}
			\hfill
			\begin{minipage}[H]{3cm}
				\centering
				\includegraphics[width=3.5cm]{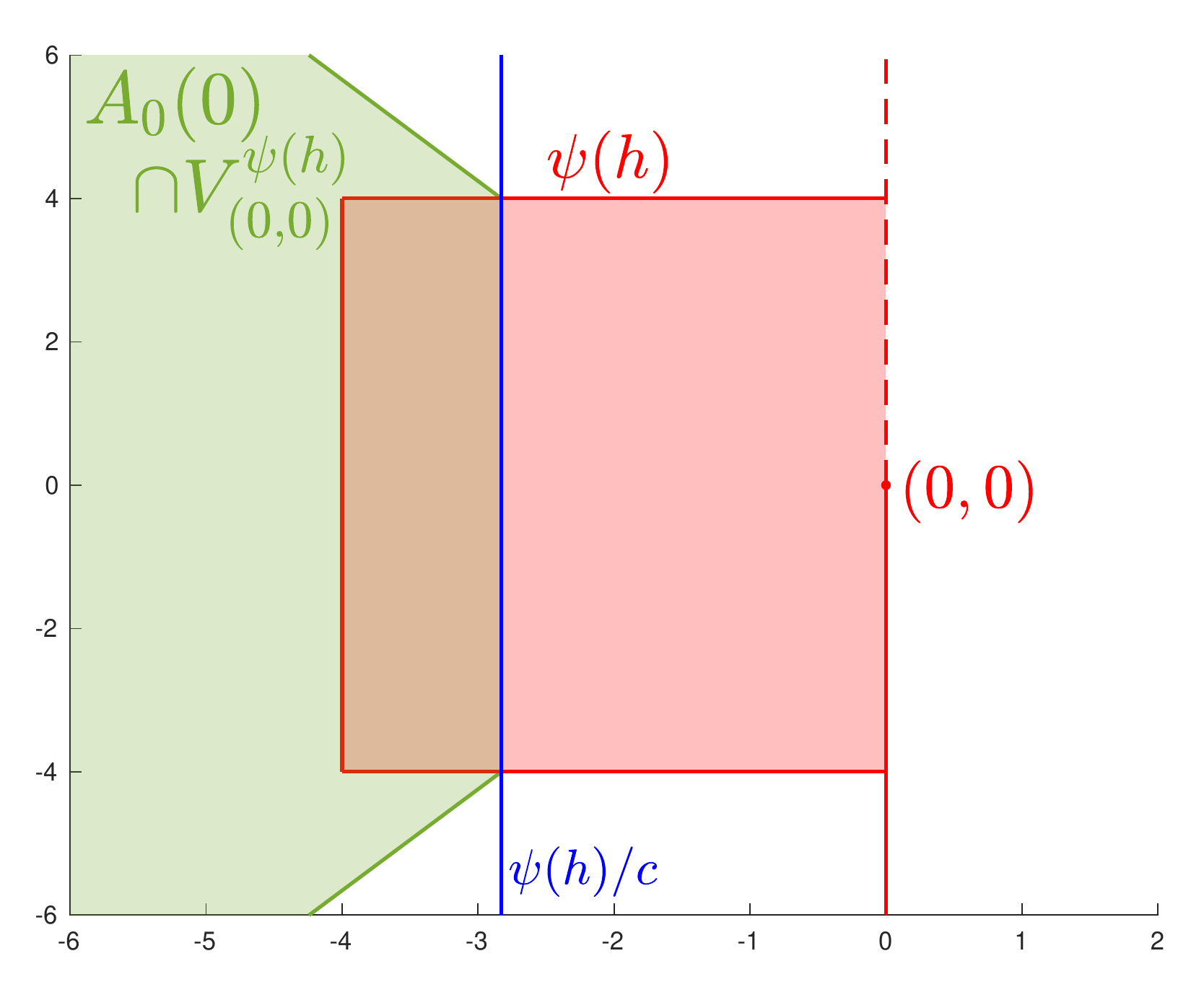}\label{figurePlotcases4}\\
				\caption{\small{Truncated set $A_0(0)\cap V_{(0,0)}^{\psi(h)}$ for $c=\sqrt{2}$ and $h=4\sqrt{6}$.}}
				\label{Plotcases4}
			\end{minipage}
		\end{figure} 
		Let $c\in (0,1]$, then (\ref{eq:MSTOUtemp}) is equal to
		\begin{align*}
			&2 \Big(\! \Sigma_\Lambda\!\!\int_0^\infty\!\! \int_{-\infty}^{-\psi(h)}\!\! \int_{\|\xi\|\leq cs} d\xi \,  e^{2s\lambda} ds f(\lambda)d\lambda\Big)^{\frac{1}{2}}\!\! =2 \Big( \Sigma_\Lambda\int_0^\infty \!\!\int_{-\infty}^{-\psi(h)} (-2cs) e^{2s\lambda} ds f(\lambda)d\lambda\Big)^{\frac{1}{2}} \nonumber\\
			&= \Big( 2c\Sigma_\Lambda\int_0^\infty  \frac{(2\lambda \psi(h)+1)}{\lambda^2}e^{-2\lambda \psi(h)} f(\lambda)d\lambda\Big)^{\frac{1}{2}}.\nonumber
		\end{align*}
		The integral $\int_{\|\xi\|\leq cs} d\xi$ is the volume of an $m$-dimensional ball of radius $cs$, which for $m=1$ is equal to $-2cs$. For $c>1$ we can bound (\ref{eq:MSTOUtemp}) by 
		\begin{align*}
			2 \Big( \Sigma_\Lambda\!\!\int_0^\infty\! \!\!\int_{-\infty}^{-\frac{\psi(h)}{c}}\!\!\!\! \!(-2cs) e^{2s\lambda} ds f(\lambda)d\lambda \Big)^{\frac{1}{2}}\!
			=2 \Big( \!c\Sigma_\Lambda\!\!\int_0^\infty \!\! \frac{(2\lambda \frac{\psi(h)}{c}+1)}{2\lambda^2}e^{-2\lambda \frac{\psi(h)}{c}}ds f(\lambda)d\lambda\Big)^{\frac{1}{2}}. 	
		\end{align*}
		In a similar way, one can derive the $\theta$-lex coefficients for $m \geq 2$.
		\item [(ii)] Analogous to (i).
	\end{enumerate}
\end{proof}

We now give explicit computations of the $\theta$-lex-coefficients of a c-class MSTOU process in the case in which the mean reverting parameter $\lambda$ is gamma distributed. For a $Gamma(\alpha,\beta)$ distributed mean reversion parameter $\lambda$, i.e. $f(\lambda)=\frac{\beta^\alpha}{\Gamma(\alpha)}\lambda^{\alpha-1}e^{-\beta\lambda}$ $\mathbb{1}_{[0,\infty)}(\lambda)$, the c-class MSTOU process is well defined if $\alpha>m+1$ and $\beta>0$ due to condition (\ref{equations:gclassMSTOUcond1}).

\begin{Theorem}
	Let $(Y_t(x))_{(t,x)\in\R\times\R^m}$ be a c-class MSTOU process and $(\gamma,\Sigma,\nu,\pi)$ the characteristic quadruplet of its driving L\'evy basis. Moreover, let the mean reversion parameter $\lambda$ be $Gamma(\alpha,\beta)$ distributed with $\alpha >m+1$ and $\beta>0$.
	\begin{enumerate}[(i)]
		\item If $\int_{|x|>1} x^2 \,\nu(dx)<\infty$ and $\gamma+\int_{|x|>1}x\nu(dx)=0$, then $Y_t(x)$ is $\theta$-lex-weakly dependent. Let $c\in[0,1]$, then for
		\begin{alignat*}{2}
			m&=1,\quad && \theta_Y(h)\leq2\left(\frac{c\Sigma_\Lambda \beta^\alpha}{2\Gamma(\alpha)} \left(\frac{\Gamma(\alpha-2)}{(2\psi(h)+\beta)^{\alpha-2}}+\frac{2\psi(h)\Gamma(\alpha-1)}{(2\psi(h)+\beta)^{\alpha-1}} \right)\right)^{\frac{1}{2}}, \\
			\textrm{and for}  & && \\
			m& \geq 2,\quad &&\theta_Y(h)\leq2\left(V_m(c)\frac{m!\Sigma_\Lambda \beta^{\alpha} }{2^{m+1}} \sum_{k=0}^m \frac{(2\psi(h))^k}{k!(2\psi(h)+\beta)^{\alpha-m-1+k}}\frac{\Gamma(\alpha-m-1+k)}{\Gamma(\alpha)}\right)^{\frac{1}{2}}.
		\end{alignat*}
		Let $c>1$, then for
		\begin{alignat*}{2}
			m&\in\N,\ \ &&\theta_Y(h)\leq2\Bigg(V_m(c)\frac{m!\Sigma_\Lambda \beta^{\alpha} }{2^{m+1}} \sum_{k=0}^m \frac{\left(\frac{2\psi(h)}{c}\right)^k}{k!\left(\frac{2\psi(h)}{c}+\beta\right)^{\alpha-m-1+k}}\frac{\Gamma(\alpha-m-1+k)}{\Gamma(\alpha)}\Bigg)^{\frac{1}{2}}.
		\end{alignat*}
		The above implies that, in general, $\theta_Y(h)=\mathcal{O}(h^{\frac{(m+1)-\alpha}{2}})$. 
		\item If $\int_{\R} |x| \,\nu(dx)<\infty$, $\Sigma=0$ and $\gamma_0$ as defined in (\ref{equation:gammazero}), then $Y_t(x)$ is $\theta$-lex-weakly dependent. Let $c \in (0,1]$, then for 
		\begin{alignat*}{2}
			m&\in\N,\qquad&& \theta_Y(h)\leq2V_m(c) m! \beta^{\alpha} \gamma_{abs}  \sum_{k=0}^m \frac{\psi(h)^k}{k!(\psi(h)+\beta)^{\alpha-m-1+k}}\frac{\Gamma(\alpha-m-1+k)}{\Gamma(\alpha)},
		\end{alignat*}
		whereas for $c>1$ and
		\begin{alignat*}{2}
			m&\in\N,\qquad &&\theta_Y(h)\leq2V_m(c)d! \beta^{\alpha}\gamma_{abs} \sum_{k=0}^m \frac{\left(\frac{\psi(h)}{c}\right)^k}{k!\left(\frac{\psi(h)}{c}+\beta\right)^{\alpha-m-1+k}}\frac{\Gamma(\alpha-m-1+k)}{\Gamma(\alpha)},
		\end{alignat*}
		where $\gamma_{abs}=|\gamma_0|+\int_\R|x|\nu(dx)$, and $V_m(c)$ denotes the volume of the $m$-dimensional ball with radius $c$. the above implies that, in general, $\theta_Y(h)=\mathcal{O}(h^{(m+1)-\alpha})$.
	\end{enumerate}
\end{Theorem}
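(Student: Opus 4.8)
The plan is to treat this as a direct evaluation of the integrals already furnished by the preceding theorem, specialized to the Gamma density. That theorem expresses every $\theta$-lex-coefficient bound as a constant ($\Sigma_\Lambda$ in case (i), $\gamma_{abs}$ in case (ii), together with $V_m(c)$) times an integral of the form $\int_0^\infty(\cdots)e^{-a\lambda\psi(h)}f(\lambda)\,d\lambda$ against the mixing density $f$. Hence the first step is simply to substitute $f(\lambda)=\frac{\beta^\alpha}{\Gamma(\alpha)}\lambda^{\alpha-1}e^{-\beta\lambda}\mathbb{1}_{[0,\infty)}(\lambda)$ and to reduce everything to one-dimensional Gamma-type integrals, merging the two exponential factors into $e^{-(2\psi(h)+\beta)\lambda}$ (respectively $e^{-(\psi(h)+\beta)\lambda}$ in the finite variation case).

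For case (i) with $c\in(0,1]$ and $m=1$ the integrand becomes a linear combination of $\lambda^{\alpha-2}$ and $\lambda^{\alpha-3}$, while for $m\geq 2$ it is the combination $\frac{m!\,\beta^\alpha}{2^{m+1}\Gamma(\alpha)}\sum_{k=0}^m\frac{(2\psi(h))^k}{k!}\lambda^{\alpha-m-2+k}$, each weighted by $e^{-(2\psi(h)+\beta)\lambda}$. I would then evaluate every piece with the elementary identity $\int_0^\infty\lambda^{s-1}e^{-a\lambda}\,d\lambda=\Gamma(s)/a^s$ at $a=2\psi(h)+\beta$, which produces exactly the terms $\frac{\Gamma(\alpha-m-1+k)}{(2\psi(h)+\beta)^{\alpha-m-1+k}}$ claimed in the statement; collecting the constants $V_m(c)\Sigma_\Lambda$, $\frac{m!}{2^{m+1}}$ and $\beta^\alpha/\Gamma(\alpha)$ reproduces the displayed bounds. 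The identity applies precisely because the well-definedness condition $\alpha>m+1$ forces $s=\alpha-m-1+k>0$ for all $k\in\{0,\dots,m\}$, so no integral diverges at the origin. For $c>1$ one repeats the computation verbatim with $\psi(h)$ replaced by $\psi(h)/c$, and case (ii) is identical except that $\Sigma_\Lambda$ is replaced by $\gamma_{abs}$, the outer square root is absent, and the exponential rate is $\psi(h)+\beta$.

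Finally, to read off the decay I would let $h\to\infty$ and use $\psi(h)=\mathcal{O}(h)$. Since $(2\psi(h)+\beta)^{\alpha-m-1+k}\sim(2\psi(h))^{\alpha-m-1+k}$, each summand $\frac{(2\psi(h))^k}{(2\psi(h)+\beta)^{\alpha-m-1+k}}$ scales like $\psi(h)^{\,k-(\alpha-m-1+k)}=\psi(h)^{(m+1)-\alpha}$, independently of $k$, so the whole bracket is $\mathcal{O}(h^{(m+1)-\alpha})$. Taking the square root in case (i) gives $\theta_Y(h)=\mathcal{O}(h^{((m+1)-\alpha)/2})$, whereas in case (ii) the absence of the square root yields $\theta_Y(h)=\mathcal{O}(h^{(m+1)-\alpha})$. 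The only genuine obstacle is bookkeeping: keeping the constants $\frac{m!}{2^{m+1}}$, $V_m(c)$, $\beta^\alpha/\Gamma(\alpha)$ and the index shift $s=\alpha-m-1+k$ aligned through the $m\geq 2$ computation. The analytic content beyond that is just the Gamma-integral identity and the observation that all powers of $\psi(h)$ collapse to a single leading order.
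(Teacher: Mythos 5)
Your proposal is correct and is exactly the argument the paper intends: the theorem follows from the preceding result by substituting the $Gamma(\alpha,\beta)$ density into the integral bounds, merging the exponentials into $e^{-(2\psi(h)+\beta)\lambda}$ (resp. $e^{-(\psi(h)+\beta)\lambda}$), and evaluating each term via $\int_0^\infty\lambda^{s-1}e^{-a\lambda}\,d\lambda=\Gamma(s)/a^s$ with $s=\alpha-m-1+k>0$ guaranteed by $\alpha>m+1$. Your reading of the decay rate, including the halving of the exponent under the square root in case (i), also matches the stated conclusion.
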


This implies the following sufficient conditions for the asymptotic normality of the sample mean and the sample autocovariance function.

\begin{Corollary}\label{corollary:cltMSTOU}
	Let $(Y_t(x))_{(t,x)\in\R\times\R^m}$ be a c-class MSTOU process and $(\gamma,\Sigma,\nu,\pi)$ the characteristic quadruplet of its driving L\'evy basis. Moreover, let the mean reversion parameter $\lambda$ be $Gamma(\alpha,\beta)$ distributed with $\alpha>m+1$ and $\beta>0$. 
	\begin{enumerate}
		\item[(i)] If  $\gamma+\int_{ |x|>1}x\nu(dx)=0$, $\int_{|x|>1} |x|^{2+\delta}\nu(dx)<\infty$ for some $\delta>0$ and $\alpha>(m+1) \left(3+\frac{2}{\delta} \right)$, then the sample mean of $Y_t(x)$ as defined in (\ref{equation:samplemean}) is asymptotically normal.
		\item[(ii)] If $\gamma+\int_{ |x|>1}x\nu(dx)=0$, $\int_{|x|>1} |x|^{4+\delta}\nu(dx)<\infty$ for some $\delta>0$ and $\alpha>(m+1)\left(\frac{3+\delta}{2+\delta}\right)\left(3+\frac{2}{\delta}\right)$, then the sample autocovariances as defined in (\ref{equation:sampleautocovariance}) are asymptotically normal.
	\end{enumerate}
\end{Corollary}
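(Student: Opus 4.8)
The plan is to recognise the c-class MSTOU process $(Y_t(x))_{(t,x)\in\R\times\R^m}$ as an $(A,\Lambda)$-influenced MMAF indexed by $\R^{m+1}$, whose initial sphere of influence $A_0(0)$ is the closed convex proper cone satisfying (\ref{condition:scalarproduct}) noted before the preceding theorem, and then to feed the explicit $\theta$-lex decay rate obtained there into Theorem \ref{theorem:thetasamplemean} (for the sample mean) and Corollary \ref{corollary:sampleautocovariancetheta} (for the sample autocovariances), both read in dimension $m+1$. The one piece of bookkeeping to keep straight is the symbol clash between the shape parameter of the $Gamma(\alpha,\beta)$ law and the abstract decay exponent appearing in those two results; I track the latter through the explicit value $\tfrac{(m+1)-\alpha}{2}$ furnished by the preceding theorem.

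For part (i) I first verify the hypotheses of Theorem \ref{theorem:thetasamplemean} in dimension $m+1$. The centering assumption $\gamma+\int_{|x|>1}x\,\nu(dx)=0$ gives $E[Y_0]=0$ by Proposition \ref{proposition:MMAmoments}, and $\int_{|x|>1}|x|^{2+\delta}\nu(dx)<\infty$ is assumed. For the kernel integrability, since at time-lag $u\ge0$ the cone $A_0(0)$ has slice equal to a ball of radius $cu$, the $L^r$-norm of the MSTOU kernel equals $\tfrac{V_m(c)\,m!}{r^{m+1}}\int_0^\infty \lambda^{-(m+1)}f(\lambda)\,d\lambda$, which is finite for every $r\ge1$ exactly under the standing condition $\alpha>m+1$ of (\ref{equations:gclassMSTOUcond1}); in particular the kernel lies in $L^2\cap L^{2+\delta}$. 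The preceding theorem gives $\theta_Y(h)=\mathcal{O}(h^{((m+1)-\alpha)/2})$, so the decay hypothesis of Theorem \ref{theorem:thetasamplemean} reads $\tfrac{\alpha-(m+1)}{2}>(m+1)(1+\tfrac1\delta)$, which rearranges \emph{exactly} to the stated $\alpha>(m+1)(3+\tfrac2\delta)$; ergodicity is supplied internally by \cite[Theorem 3.6]{PV2017}. This proves (i).

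For part (ii) I instead invoke Corollary \ref{corollary:sampleautocovariancetheta} in dimension $m+1$. Here $E[Y_0]=0$ again, and $E[|Y_0|^{4+\delta}]<\infty$ follows from $\int_{|x|>1}|x|^{4+\delta}\nu(dx)<\infty$ together with the kernel lying in $L^{4+\delta}\cap L^2$, once more guaranteed by $\alpha>m+1$ via the same norm computation and Proposition \ref{proposition:MMAexistencemoments}. Using $\hat\theta_Y^{(i)}(h)=\mathcal{O}(h^{((m+1)-\alpha)/2})$, the decay requirement of Corollary \ref{corollary:sampleautocovariancetheta} becomes $\tfrac{\alpha-(m+1)}{2}>(m+1)(1+\tfrac1\delta)(\tfrac{3+\delta}{2+\delta})$, i.e. $\alpha>(m+1)\big[\,1+2(1+\tfrac1\delta)(\tfrac{3+\delta}{2+\delta})\,\big]$. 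It then remains to check that the clean stated threshold dominates this: reducing both bracketed quantities over the common denominator $\delta(2+\delta)$, the genuine requirement carries numerator $3\delta^2+10\delta+6$ while $(\tfrac{3+\delta}{2+\delta})(3+\tfrac2\delta)$ carries numerator $3\delta^2+11\delta+6$, larger by $\delta>0$; hence the stated $\alpha>(m+1)(\tfrac{3+\delta}{2+\delta})(3+\tfrac2\delta)$ is (strictly) sufficient. This yields joint asymptotic normality of $(Y_{j,k})$ and hence of the sample autocovariances (\ref{equation:sampleautocovariance}), proving (ii).

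The only genuine subtlety — and the step I would double-check most carefully — is this bookkeeping: fixing the effective index dimension at $m+1$ rather than $m$, resolving the $\alpha$-symbol collision, and confirming that the tidy stated threshold in (ii) really does dominate the sharp requirement produced by the hereditary loss in passing from $Y$ to the product field $X_jX_{j+k}$ (Proposition \ref{proposition:deltavererbungtheta}). Everything else — the kernel-norm computation and the substitution of the decay rate — is routine once the dimension and the explicit rate are fixed.
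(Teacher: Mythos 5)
Your proposal is correct and follows the route the paper implicitly takes: substitute the explicit decay rate $\theta_Y(h)=\mathcal{O}(h^{((m+1)-\alpha)/2})$ from the preceding theorem into Theorem \ref{theorem:thetasamplemean} and Corollary \ref{corollary:sampleautocovariancetheta}, read with index dimension $m+1$, after checking the kernel and moment hypotheses via the $L^r$-norm computation under $\alpha>m+1$. Your arithmetic is right in both parts, including the check that the stated threshold in (ii) (numerator $3\delta^2+11\delta+6$ over $\delta(2+\delta)$) strictly dominates the sharp requirement coming from Corollary \ref{corollary:sampleautocovariancetheta} (numerator $3\delta^2+10\delta+6$), so the corollary's tidier condition is indeed sufficient.
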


\begin{Corollary}\label{corollary:cltMSTOU2}
	Let $(Y_t(x))_{(t,x)\in\R\times\R^m}$ be a c-class MSTOU process and $(\gamma,\Sigma,\nu,\pi)$ the characteristic quadruplet of its driving L\'evy basis. Moreover, let the mean reversion parameter $\lambda$ be $Gamma(\alpha,\beta)$ distributed such that $\alpha>m+1$ and $\beta>0$. 
	\begin{enumerate}
		\item [(i)] If $\int_{\R} |x|\nu(dx)<\infty$, $\Sigma=0$, and $\alpha>(m+1) \left(2+\frac{2}{\delta} \right)$, then the sample mean of $Y_t(x)$ as defined in (\ref{equation:samplemean}) is asymptotically normal.
		\item [(ii)]If $\int_{\R} |x|\nu(dx)<\infty$, $\Sigma=0$, $\int_{|x|>1} |x|^{4+\delta}\nu(dx)<\infty$ for some $\delta>0$ and $\alpha>(m+1)\left(\frac{3+\delta}{2+\delta}\right)\left(2+\frac{2}{\delta}\right)$, then the sample autocovariances as defined in (\ref{equation:sampleautocovariance}) are asymptotically normal.
	\end{enumerate}
\end{Corollary}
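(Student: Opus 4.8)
The plan is to specialise the abstract central limit theorems of Section~\ref{sec2-3} and the sample-moment results of Section~\ref{sec3-4} to the Gamma-driven $c$-class MSTOU process, whose finite-variation $\theta$-lex-coefficients have just been computed explicitly. The one structural point to keep in mind is that an MSTOU process is indexed by $(t,x)\in\R\times\R^m$, so the ambient lattice dimension in every result of Sections~\ref{sec2-3} and~\ref{sec3-4} must be read as $m+1$ rather than $m$; this substitution $m\mapsto m+1$ is exactly what produces the factor $(m+1)$ in the statement.

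For part~(i) I would start from the finite-variation bound of the preceding theorem, $\theta_Y(h)=\mathcal{O}(h^{(m+1)-\alpha})$, which holds because under $\Sigma=0$, $\int_\R|x|\nu(dx)<\infty$ and the well-definedness condition~(\ref{equations:gclassMSTOUcond1}) (i.e.\ $\alpha>m+1$) the process falls under Proposition~\ref{proposition:mmathetaweaklydep}(iii). I would then center the field (a Lipschitz operation, hence harmless for the $\theta$-lex-coefficients), recall that an MMAF is ergodic by \cite[Theorem 3.6]{PV2017}, check the finite $(2+\delta)$-th moment via the finite-variation branch of Proposition~\ref{proposition:MMAexistencemoments}, and invoke Corollary~\ref{corollary:ergodicclt} (equivalently Theorem~\ref{theorem:thetasamplemean}) with $m$ replaced by $m+1$. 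Its decay requirement $\alpha-(m+1)>(m+1)(1+\tfrac1\delta)$ is implied by the stated $\alpha>(m+1)(2+\tfrac2\delta)$, which yields asymptotic normality of the sample mean.

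For part~(ii) I would apply Proposition~\ref{proposition:deltavererbungtheta} with the MSTOU process playing the role of its $X$: the centered product field $(Y_{j,k})$ of~(\ref{equation:sampleautocovy}) then inherits $\theta$-lex-weak dependence with $\theta_Y(h)\le\mathcal{C}\,(2\hat\theta_X^{(iii)}(h-\psi^{-1}(|k|)))^{\frac{\delta}{1+\delta}}$, the exponent $\frac{\delta}{1+\delta}$ originating from the locally-Lipschitz hereditary bound of Proposition~\ref{proposition:mmathetahereditary} for $(x_1,x_2)\mapsto x_1x_2$. The strengthened hypothesis $\int_{|x|>1}|x|^{4+\delta}\nu(dx)<\infty$ secures, once more via Proposition~\ref{proposition:MMAexistencemoments}, enough moments for the stacked vector $(Y_{j,0},\dots,Y_{j,k})'$, and the finite-variation analogue of Corollary~\ref{corollary:sampleautocovariancetheta} (Corollary~\ref{corollary:ergodicclt} applied to this vector field in dimension $m+1$) then gives the joint asymptotic normality. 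Matching the degraded rate $(h^{(m+1)-\alpha})^{\frac{\delta}{1+\delta}}$ against that threshold reproduces the amplification factor $\frac{3+\delta}{2+\delta}$ and hence the bound $\alpha>(m+1)\frac{3+\delta}{2+\delta}(2+\tfrac2\delta)$; the whole argument mirrors the proof of Corollary~\ref{corollary:cltMSTOU}.

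The main obstacle will be bookkeeping rather than a new idea. I must track at once (a) the passage from $m$ to $m+1$, (b) the fact that in the finite-variation regime $\theta_Y$ decays like $h^{(m+1)-\alpha}$ and not like its square root as in the $L^2$ case, and (c) the loss $\tfrac{\delta}{1+\delta}$ incurred by composing with the quadratic map, and then confirm that their combination lands precisely on the stated threshold for the Gamma shape parameter $\alpha$. Establishing $L^{2+\delta}$ (respectively $L^{4+\delta}$) integrability of the kernel $\exp(-\lambda(t-s))$ against the Gamma mixing measure under $\alpha>m+1$ is routine but has to be verified in order to apply Proposition~\ref{proposition:MMAexistencemoments}.
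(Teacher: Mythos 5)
Your proposal is correct and follows exactly the route the paper intends (the corollary is stated there without an explicit proof): read the lattice dimension as $m+1$, feed the finite-variation decay $\theta_Y(h)=\mathcal{O}(h^{(m+1)-\alpha})$ from the preceding theorem into Theorem \ref{theorem:thetasamplemean} (after centering and checking moments/ergodicity) for part (i), and into Proposition \ref{proposition:deltavererbungtheta} together with the finite-variation analogue of Corollary \ref{corollary:sampleautocovariancetheta} for part (ii), exactly mirroring Corollary \ref{corollary:cltMSTOU}. The one bookkeeping detail to correct is that under $E[|X_0|^{4+\delta}]<\infty$ the hereditary exponent for the quadratic map coming from Proposition \ref{proposition:mmathetahereditary} is $\frac{2+\delta}{3+\delta}$ (Proposition \ref{proposition:deltavererbungtheta} applied with $\delta$ replaced by $2+\delta$), whose reciprocal is precisely the amplification factor $\frac{3+\delta}{2+\delta}$ in the stated threshold, rather than the exponent $\frac{\delta}{1+\delta}$ you quote.
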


\begin{Remark}
	Since the c-class MSTOU processes satisfy the assumptions of Theorem \ref{theorem:thetasamplemeanspecial}, we can derive asymptotic normality of its sample mean under the weaker assumptions $E[Y_t(x)^{2}]<\infty$ and $\alpha>3(m+1)$.
\end{Remark}	

We conclude with some remarks regarding the short and long range dependence of  an MSTOU process. 
\begin{Definition}
	A stationary random field $Y=(Y_t(x))_{(t,x)\in\R\times\R^m}$ is said to have temporal short-range dependence if 
	\begin{gather*}
		\int_0^{\infty}Cov(Y_t(x),Y_{t+\tau}(x))d\tau<\infty,
	\end{gather*}
	and temporal long-range dependence if the integral is infinite.\\ 
	If $Cov(Y_t(x),Y_t(x+m_x))=C(|m_x|)$ for all $m_x\in\R^m$ and a positive definite function $C$ the random field $Y$ is called isotropic. Now, an isotropic random field is said to have spatial short-range dependence if 
	\begin{gather*}
		\int_0^{\infty}C(r)dr<\infty.
	\end{gather*}
\end{Definition}	

We have that an MSTOU process is a stationary and isotropic random field, see Theorem 5 \cite{NV2017}.
By assuming a $Gamma(\alpha,\beta)$-distributed mean reversion parameter $\lambda$, we have the following results, as shown in Section 6 \cite{CS2018} and Section 3.3 \cite{NV2017}:
\begin{enumerate}[(i)]
	\item For $m=0$, we have that $Y$ is a supOU process which is well-defined for $\alpha >1$ and $\beta >0$. Thus, we obtain a long-memory process for $1<\alpha \leq 2$ and a short memory one for $\alpha > 2$.
	\item For $m=1$, $Y$ is well-defined if $\alpha>2$ and $\beta>0$. $Y$ exhibits temporal as well as spatial long-range dependence for $2<\alpha\leq3$. If $\alpha>3$ we observe temporal and spatial short-range dependence.
	\item For $m=3$, $Y$ is well-defined if $\alpha >4$ and $\beta >0$. $Y$ exhibits temporal as well as spatial long-range dependence for $4<\alpha\leq 5$. If $\alpha>5$ we observe temporal and spatial short-range dependence.
\end{enumerate}

It is then easy to see that the assumptions in the Corollaries \ref{corollary:cltMSTOU} and \ref{corollary:cltMSTOU2} imply that we are in the realm of short-range dependence.

\begin{Remark}(GMM estimator)
	For $m=0$, a consistent GMM estimator for the supOU process is defined in \cite{STW2015}. In \cite{CS2018}, the authors show asymptotic normality of the estimator and that if the underlying L\'evy process is of finite variation and all moments exist, then the GMM estimator is asymptotic normally distributed for $\alpha >2$.
	
	For $m\geq 1$, a consistent GMM estimator for a c-class MSTOU process is introduced in \cite{NV2017}. The results in Corollaries \ref{corollary:cltMSTOU} and \ref{corollary:cltMSTOU2} should pave the way for an analysis of the asymptotic normality of the GMM estimator defined in \cite{NV2017} using arguments similar to \cite{CS2018}.
	For example, when $m=1$, in the finite variation case and when all moments exist, we can apply our results to short-range dependent MSTOU processes with $\alpha>4$.
\end{Remark}

\subsection{Example of non-influenced MMAF: L\'evy-driven CARMA fields}
\label{sec3-8}
We conclude the section by showing that our developed asymptotic theory can be applied to the class of L\'evy-driven CARMA fields defined on $\R^m$. 

CARMA (continuous autoregressive moving average) fields are an extension of the well-known CARMA processes (see, e.g., \cite{B2014} for a comprehensive introduction) and have been introduced in \cite{B2019b, BM2017,KP2019, P2018}.	

In \cite{BM2017}, the authors define CARMA fields as isotropic random fields 
\begin{gather}\label{equation:isotropicCARMA}
	Y(t)=\int_{\R^m}g(t-s)dL(s),\quad t\in\R^m,
\end{gather}
where $g$ is a radially symmetric kernel and $L$ a real-valued L\'evy basis on $\R^m$.
When the L\'evy basis $L$ has a finite second-order structure, the CARMA fields generate a rich family of isotropic covariance functions on $\R^m$, which are not necessarily non-negative or monotone.

On the other hand, in \cite{P2018}, the author defines CARMA(p,q) fields based on a system of stochastic partial differential equations. For $0 \leq q <p$, the mild solution of the system is called a causal CARMA field and is given by 
\begin{gather}\label{equation:causalCARMA}
	Y(t)= b^T \int_{-\infty}^{t_1}\cdots \int_{-\infty}^{t_m} e^{A_1(t_1-s_1)}\cdots e^{A_m(t_m-s_m)}c~dL(s), \quad (t_1,\ldots,t_m)\in\R^m,
\end{gather}
where $A_1,\ldots,A_m$ are companion matrices, $L$ is a real-valued L\'evy basis on $\R^m$, $b=(b_0,\ldots,b_{p-1})^T \in \R^p$ with $b_q \neq 0$ and $b_i=0$ for $i >q$ and $c=(0,\ldots,0,1)^T\in\R^p$, see  \cite[Definition 3.3]{P2018}.\\
In \cite{B2019b}, the author shows the existence of a mild solution for the CARMA stochastic partial differential equation, c.f. \cite[equation (1.7)]{B2019b}, in \cite[Theorem 5.3]{B2019b}. The causal CARMA fields presented in \cite{P2018} can be seen as a special case of the CARMA random fields defined in \cite{B2019b}. A more subtle relationship exists between the definition of CARMA field in \cite{B2019b} and \cite{BM2017} just when $m$ is odd, see \cite[Section 7]{B2019b}. 

In general, our framework can be applied to the class of CARMA fields introduced in \cite{B2019b} and \cite{BM2017} when the assumption of the theorem below holds.	

\begin{Theorem}\label{theorem:levydrivenexpdecaykernel}
	Let $L$ be an $\R^d$-valued L\'evy basis with characteristic quadruplet $(\gamma,\Sigma,\nu,\pi)$ such that $\int_{\norm{x}>1}\norm{x}^2\nu(dx)<\infty$ and $\gamma+\int_{\norm{x}>1}x\nu(dx)=0$. Let $g:\R^m\ra M_{n\times d}(\R)$ such that $g$ is exponentially bounded in norm, i.e. there exists $M,K\in\R^+$ such that
	\begin{gather}\label{equation:expdecaykernel}
		\norm{g(t)}^2\leq M e^{-K\norm{t}}, \text{ for all }t\in\R^m.
	\end{gather}
	Then, the moving average field $X_t=\int_{\R^m}g(t-s)L(ds)$, $t\in\R^m$ is an $\eta$-weakly dependent field with exponentially decaying $\eta$-coefficients.
\end{Theorem}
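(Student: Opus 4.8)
The plan is to view the moving average field as a non-influenced MMAF with a trivial mixing space, apply Proposition~\ref{proposition:mmaetaweaklydep}(i) to obtain an explicit bound on the $\eta$-coefficients, and then show that this bound decays exponentially by a tail estimate exploiting the exponential domination of $g$.

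First I would embed $X$ into the framework of Section~\ref{sec3-5}. Choosing $S=\{\ast\}$ to be a one-point Polish space with $\pi=\delta_\ast$, the L\'evy basis $L$ on $\R^m$ is identified with a L\'evy basis $\Lambda$ on $S\times\R^m$ having the same characteristic triplet $(\gamma,\Sigma,\nu)$, and setting $f(\ast,t-s)=g(t-s)$ gives $X_t=\int_S\int_{\R^m}f(A,t-s)\Lambda(dA,ds)$. It then remains to verify the three hypotheses of Proposition~\ref{proposition:mmaetaweaklydep}(i). The moment condition $\int_{\norm{x}>1}\norm{x}^2\nu(dx)<\infty$ and the centering condition $\gamma+\int_{\norm{x}>1}x\nu(dx)=0$ are assumed directly. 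The square-integrability $f\in L^2(S\times\R^m,\pi\otimes\lambda)$ is equivalent to $g\in L^2(\R^m,\lambda)$, and this follows from (\ref{equation:expdecaykernel}) because
\[
\int_{\R^m}\norm{g(s)}^2\,ds\leq M\int_{\R^m}e^{-K\norm{s}}\,ds<\infty.
\]
In particular, together with the second-moment condition on $\nu$, this guarantees that $g$ is $\Lambda$-integrable, so that $X$ is a well-defined MMAF.

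Proposition~\ref{proposition:mmaetaweaklydep}(i) now yields $\eta_X(h)\leq\hat\eta_X^{(i)}(h)$, and the remaining work is the tail estimate. Since $\Sigma_\Lambda$ is symmetric and positive semidefinite, one has $\textup{tr}(g(-s)\Sigma_\Lambda g(-s)')\leq c_0\norm{g(-s)}^2$ for a constant $c_0$ depending only on $\Sigma_\Lambda$ and the dimensions $n,d$, whence, using (\ref{equation:expdecaykernel}),
\[
\hat\eta_X^{(i)}(h)^2\leq c_0 M\int_{\left(\left(-\frac{h}{2},\frac{h}{2}\right)^m\right)^c}e^{-K\norm{s}}\,ds.
\]
The key geometric observation is that every $s$ in the complement of the cube $\left(-\frac{h}{2},\frac{h}{2}\right)^m$ has at least one coordinate of modulus at least $\frac{h}{2}$, so $\norm{s}\geq\norm{s}_\infty\geq\frac{h}{2}$; the domain of integration is therefore contained in $\{\norm{s}\geq h/2\}$. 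Splitting $e^{-K\norm{s}}=e^{-K\norm{s}/2}\,e^{-K\norm{s}/2}$ and bounding the first factor by $e^{-Kh/4}$ on this set gives
\[
\int_{\left(\left(-\frac{h}{2},\frac{h}{2}\right)^m\right)^c}e^{-K\norm{s}}\,ds\leq e^{-Kh/4}\int_{\R^m}e^{-K\norm{s}/2}\,ds=:C\,e^{-Kh/4},
\]
so that $\hat\eta_X^{(i)}(h)\leq(c_0 M C)^{1/2}e^{-Kh/8}$, proving exponential decay of the $\eta$-coefficients. The computation is essentially routine once the field is placed inside the MMAF framework; the only genuine step is the geometric reduction of the cube-complement to the exterior of a ball, which converts the exponential pointwise domination of $g$ into an exponential decay rate for $\hat\eta_X^{(i)}$.
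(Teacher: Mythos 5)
Your proof is correct and follows essentially the same route as the paper: both invoke Proposition \ref{proposition:mmaetaweaklydep}(i) and then show that the resulting tail integral of $\norm{g}^2$ over the complement of the cube $\left(-\frac{h}{2},\frac{h}{2}\right)^m$ decays exponentially. The only (immaterial) difference lies in the final calculus step: the paper works with the $\ell^1$-norm, computes the integral exactly and applies Bernoulli's inequality to obtain the rate $e^{-Kh/4}$, whereas your splitting $e^{-K\norm{s}}\leq e^{-Kh/4}e^{-K\norm{s}/2}$ on $\{\norm{s}\geq h/2\}$ yields the slightly weaker but equally sufficient rate $e^{-Kh/8}$.
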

Due to the equivalence of norms, the result does not depend on a specific choice of norms.
\begin{proof}
	See Section \ref{sec5}.
\end{proof}

\begin{Remark}
	Since the kernels in (\ref{equation:isotropicCARMA}) and (\ref{equation:causalCARMA}) satisfy equation (\ref{equation:expdecaykernel}), for example, we can show that these fields are $\eta$-weakly dependent by applying Theorem \ref{theorem:levydrivenexpdecaykernel}. 
\end{Remark}

\section{Ambit fields}
\label{sec4}
In the following, we will briefly introduce stationary ambit fields. We discuss weak dependence properties of such fields and give sufficient conditions for the applicability of the results in Section \ref{sec2-3}.

\subsection{The ambit framework}
\label{sec4-1}	
Let $A_t(x)\subset\R\times\R^m$ for $(t,x)\in\R\times\R^m$ be an ambit set as defined in (\ref{equation:ambitset}). By $\mathcal{P}'$ we denote the usual predictable $\sigma$-algebra on $\R$, i.e. the $\sigma$-algebra generated by all left-continuous adapted processes. Then, a random field $X:\Omega\times\R\times\R^m\rightarrow\R$ is called predictable if it is measurable with respect to the $\sigma$-algebra $\mathcal{P}$ defined by $\mathcal{P}=\mathcal{P}' \otimes\mathcal{B}(\R^m)$. 

\begin{Definition}\label{definition:ambitfield}
	Let $\Lambda$ be a real-valued L\'evy basis on $\R\times\R^m$ with characteristic quadruplet $(\gamma,\Sigma,\nu,\pi)$, $\sigma$ a predictable stationary random field on $\R\times\R^m$ independent of $\Lambda$.  
	Furthermore, let $l:\R^m\times\R\rightarrow\R$ be a measurable function and $A_t(x)$ an ambit set. We assume that $f(\xi,s)=\mathbb{1}_{A_t(x)}(\xi,s)l(\xi,s)\sigma_s(\xi)$ satisfies (\ref{equation:intcond1}), (\ref{equation:intcond2}) and (\ref{equation:intcond3})  almost surely.
	Then, the random field
	\begin{gather}\label{eq:ambitfield}
		\begin{align}
			\begin{aligned}
				Y_t(x)=\int_{A_t(x)} l(x-\xi,t-s) \sigma_s(\xi) \Lambda(d\xi,ds),~ (t,x)\in\R\times\R^m,
			\end{aligned}
		\end{align}
	\end{gather}
	is called an ambit field and it is stationary (see p.~185 \cite{BBV2018}).
\end{Definition}

\begin{Remark}\label{remark:stochintegrand}
	Ambit fields require us to define integrals with respect to L\'evy bases where the integrand is stochastic. Although the integration theory from Rajput and Rosinski just enables us to define stochastic integrals with respect to deterministic integrands \cite{RR1989}, one can extend this theory to stochastic integrands which are predictable and independent of the L\'evy basis. We can condition on the $\sigma$-algebra generated by the field $\sigma$ and use again the integration theory introduced in \cite{RR1989}. Then, such integrals are well defined if the kernel function satisfies the sufficient conditions (\ref{equation:intcond1}), (\ref{equation:intcond2}) and (\ref{equation:intcond3}) almost surely.
	Allowing for dependence between the volatility field and the L\'evy basis demands the use of a different integration theory as presented in \cite[Section 1.2.1]{BBPV2014}, \cite[Proposition 39]{BBV2018}, \cite[Theorem 3.2]{BGP2013} and \cite{CK2015}.
\end{Remark}	

We conclude this section by giving explicit formulas for the first and second moment of an ambit field.

\begin{Proposition}\label{proposition:ambitmoments}
	Let $Y$ be an ambit field as defined in (\ref{eq:ambitfield}) driven by a real-valued L\'evy basis with characteristic quadruplet $(\gamma,\Sigma,\nu,\pi)$ and $\Lambda$-integrable kernel function $f(\xi,s)=\mathbb{1}_{A_t(x)}(\xi,s)l(\xi,s)\sigma_s(\xi)$, where $\sigma$ is predictable, stationary and independent of $\Lambda$. 
	\begin{enumerate}[(i)]
		\item If $E[|Y_t(x)|]<\infty$, the first moment of $Y$ is given by
		\begin{align*}
			E[Y_t(x)]= \mu_\Lambda E[\sigma_t(x)] \int_{A_t(x)}l(x-\xi,t-s) d\xi\, ds,
		\end{align*}
		where $\mu_\Lambda =\gamma+\int_{|x|\geq1}x\nu(dx)$.
		\item If $E[Y_t(x)^2]<\infty$, it holds
		\begin{align*}
			Var(Y_t(x))= &\Sigma_\Lambda E[\sigma_t(x)^2]\int_{A_t(x)}  l(x-\xi,t-s)^2 d\xi\, ds \\
			&+\mu_\Lambda ^2 \int_{A_t(x)}\int_{A_t(x)} l(x-\xi,t-s)l(x-\tilde{\xi},t-\tilde{s}) \rho(s,\tilde{s},\xi,\tilde{\xi}) d\xi\, ds\, d\tilde{\xi}\, d\tilde{s}\,  \text{ and}\\
			Cov(Y_t(x),Y_{\tilde{t}}(\tilde{x}))=&\Sigma_\Lambda E[\sigma_t(x)^2] \int_{A_t(x)\cap A_{\tilde{t}}(\tilde{x})}  l(x-\xi,t-s)l(\tilde{x}-\xi,\tilde{t}-s) d\xi\, ds\, \\
			&+\mu_\Lambda ^2 \int_{A_t(x)}\int_{A_{\tilde{t}}(\tilde{x})} l(x-\xi,t-s)l(\tilde{x}-\tilde{\xi},\tilde{t}-\tilde{s}) \rho(s,\tilde{s},\xi,\tilde{\xi}) d\xi\, ds\, d\tilde{\xi}\, d\tilde{s},
		\end{align*}
		where $\Sigma_\Lambda =\Sigma+\int_{\R}x^2\nu(dx)$ and $\rho(s,\tilde{s},\xi,\tilde{\xi})= E[\sigma_s(\xi)\sigma_{\tilde{s}}(\tilde{\xi})]-E[\sigma_s(\xi)]E[\sigma_{\tilde{s}}(\tilde{\xi})]$. 
	\end{enumerate}
\end{Proposition}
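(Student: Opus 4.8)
The plan is to condition on the volatility field, thereby reducing every moment computation to the deterministic-kernel case already settled for MMAF in Proposition \ref{proposition:MMAmoments}. Let $\mathcal{A}=\sigma(\sigma_s(\xi):(s,\xi)\in\R\times\R^m)$ denote the $\sigma$-algebra generated by $\sigma$. Since $\sigma$ is predictable and independent of $\Lambda$, conditionally on $\mathcal{A}$ the integrand $f(\xi,s)=\mathbb{1}_{A_t(x)}(\xi,s)\,l(x-\xi,t-s)\,\sigma_s(\xi)$ is deterministic; by Remark \ref{remark:stochintegrand} the field $Y_t(x)$ is then an ordinary Rajput--Rosinski integral of this kernel against $\Lambda$. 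Hence Theorem \ref{theorem:2} furnishes its conditional characteristic triplet, and Proposition \ref{proposition:MMAmoments} (with trivial mixing space $S$) yields the conditional first and second moments with $f(\xi,s)$ in place of the kernel.

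For part (i), Proposition \ref{proposition:MMAmoments}(i) applied conditionally gives
\[
	E[Y_t(x)\mid\mathcal{A}]=\mu_\Lambda\int_{A_t(x)}l(x-\xi,t-s)\,\sigma_s(\xi)\,d\xi\,ds.
\]
Taking expectations, interchanging $E$ with the $d\xi\,ds$-integral by Fubini, and using $E[\sigma_s(\xi)]=E[\sigma_t(x)]$ (stationarity of $\sigma$) produces the stated formula for $E[Y_t(x)]$.

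For part (ii), I would invoke the laws of total variance and of total covariance. Proposition \ref{proposition:MMAmoments}(ii) gives, conditionally on $\mathcal{A}$,
\[
	Var(Y_t(x)\mid\mathcal{A})=\Sigma_\Lambda\int_{A_t(x)}l(x-\xi,t-s)^2\,\sigma_s(\xi)^2\,d\xi\,ds,
\]
and, by bilinearity of the integral together with the independently scattered property of $\Lambda$, the corresponding expression for $Cov(Y_t(x),Y_{\tilde t}(\tilde x)\mid\mathcal{A})$, which is supported on the overlap $A_t(x)\cap A_{\tilde t}(\tilde x)$. Taking expectations and using stationarity yields the first summand of each formula, with the factor $E[\sigma_t(x)^2]$ pulled out. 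The second summand is the (co)variance of the conditional means: writing $W=\int_{A_t(x)}l(x-\xi,t-s)\,\sigma_s(\xi)\,d\xi\,ds$, a direct computation of $E[W^2]-(E[W])^2$ by Fubini delivers exactly the $\mu_\Lambda^2$-term involving $\rho(s,\tilde s,\xi,\tilde\xi)=E[\sigma_s(\xi)\sigma_{\tilde s}(\tilde\xi)]-E[\sigma_s(\xi)]E[\sigma_{\tilde s}(\tilde\xi)]$, and the same computation applied to $E[W_1W_2]-E[W_1]E[W_2]$ gives the covariance term.

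The main obstacle is making the conditioning rigorous: one must verify that, for $P$-almost every realisation of $\sigma$, the integrability conditions (\ref{equation:intcond1})--(\ref{equation:intcond3}) hold, so that the conditional triplet from Theorem \ref{theorem:2} is well defined and Proposition \ref{proposition:MMAmoments} applies $\omega$-wise in the $\mathcal{A}$-measurable randomness. The remaining interchanges are routine applications of Fubini's theorem, whose hypotheses follow from the standing assumptions $E[|Y_t(x)|]<\infty$ and $E[Y_t(x)^2]<\infty$ together with the independence of $\sigma$ and $\Lambda$.
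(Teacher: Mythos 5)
Your argument is correct, and it is in substance the proof behind the result: the paper itself gives no argument, disposing of the proposition with the single line ``Immediate from [Proposition 41, BBV2018]'', and the proof in that reference is exactly your conditioning-on-$\sigma$ scheme (conditional Rajput--Rosinski moments for a now-deterministic kernel, then the laws of total expectation, variance and covariance, with stationarity of $\sigma$ pulling out $E[\sigma_t(x)]$ and $E[\sigma_t(x)^2]$). The one ``obstacle'' you flag --- that the integrability conditions (\ref{equation:intcond1})--(\ref{equation:intcond3}) must hold for $P$-almost every realisation of $\sigma$ so that the conditional triplet is well defined --- is already built into the standing hypotheses: Definition \ref{definition:ambitfield} assumes precisely that $f(\xi,s)=\mathbb{1}_{A_t(x)}(\xi,s)l(\xi,s)\sigma_s(\xi)$ satisfies these conditions almost surely, and Remark \ref{remark:stochintegrand} is the paper's licence for the $\omega$-wise application of Theorem \ref{theorem:2}. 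So nothing is missing; your write-up simply makes explicit what the paper delegates to the citation.
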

\begin{proof}
	Immediate from \cite[Proposition 41]{BBV2018}.
\end{proof}

\subsection{Weak dependence properties of ambit fields}
\label{sec4-2}
Let us consider a stationary ambit field $Y=(Y_t(x))_{(t,x)\in\R\times\R^m}$ as defined in (\ref{eq:ambitfield}). 
To analyze the covariance structure of $Y$, it becomes necessary to specify a model for $\sigma$. 
In \cite{BBV2012} the authors proposed to model $\sigma$ by kernel-smoothing of a homogeneous L\'evy basis, i.e. a moving average random field
\begin{gather}
	\label{sigma}
	\sigma_t(x)=\int_{A_t^\sigma(x)}j(x-\xi,t-s) \Lambda^\sigma(d\xi,ds),
\end{gather} 
where $\Lambda^\sigma$ is a real valued L\'evy basis independent of $\Lambda$ with characteristic quadruplet $(\mu_{\sigma},\Sigma_{\sigma},\nu_{\sigma},\pi_{\sigma})$, $A^\sigma=(A^\sigma_t(x))_{(t,x)\in\R\times\R^m}$ an ambit set as defined in (\ref{equation:ambitset}) and $j$ a real valued $\Lambda^\sigma$-integrable function. In the following, we extend this model and assume $\sigma$ to be an $(A^\sigma,\Lambda^\sigma)$-influenced MMAF, i.e.
\begin{gather}\label{equation:volatilityfield}
	\sigma_t(x)=\int_S\int_{A_t^\sigma(x)}j(A,x-\xi,t-s) \Lambda^\sigma(dA,d\xi,ds).
\end{gather}

\begin{Proposition}\label{proposition:ambitthetaweaklydep}
	Let $Y=(Y_t(x))_{(t,x)\in\R\times\R^m}$ be an ambit field as defined in (\ref{eq:ambitfield}) with $\sigma=(\sigma_t(x))_{(t,x)\in\R\times\R^m}$ being a predictable $(A^\sigma,\Lambda^\sigma)$-influenced MMAF as defined in (\ref{equation:volatilityfield}) and such that $A_0(0)$ and $A_0^\sigma(0)$ satisfy (\ref{condition:scalarproduct}), $j\in L^1(S\times\R\times\R^m, \pi\otimes\lambda) \cap L^2(S\times\R^m\times\R, \pi\otimes\lambda)$, where $\lambda$ indicates the Lebesgue measure on $\R^{m+1}$, and $\int_{|x|>1}|x|^2\nu_\sigma(dx)<\infty$.
	\begin{enumerate}[(i)]
		\item If $l\in L^2(\R\times\R^m)$, $\int_{|x|>1}|x|^2\nu(dx)<\infty$ and $\gamma + \int_{|x|>1}x\nu(dx)=0$, then $Y$ is $\theta$-lex-weakly dependent with $\theta$-lex-coefficients $\theta_Y(h)$ satisfying
		\begin{gather}\label{equation:thetalexcoefficientsambitfield}
			\begin{aligned}
				\theta_Y(h)\leq&2 \Big(\Sigma_\Lambda  E[\sigma_0(0)^2]
				\int_{A_0(0) \cap V_{(0,0)}^{\psi(h)}}l(-\xi,-s)^2 d\xi ds\Big)^{\frac{1}{2}}\\
				&+ 2\Bigg( \Bigg(\Sigma_{\Lambda^\sigma} \int_S\int_ {A_0^\sigma(0)\cap V_{(0,0)}^{\psi(h)}} j(A,-\xi,-s)^2 d\xi ds \pi(dA)\\
				&+ \mu_{\Lambda^\sigma}^2 \Bigg(\!\int_S\int_ {A_0^\sigma(0)\cap V_{(0,0)}^{\psi(h)}}  j(A,-\xi,-s) d\xi ds \pi(dA)\Bigg)^2 \ \Bigg)\\
				&\qquad\times \Sigma_\Lambda\!\!   \int_{A_0(0) \backslash V_{(0,0)}^{\psi(h)}}l(-\xi ,-s)^2 d\xi ds\Bigg)^{\frac{1}{2}}.
			\end{aligned}
		\end{gather}
		\item If $l\in L^1(\R\times\R^m)\cap L^2(\R\times\R^m)$ and $\int_{|x|>1}|x|^2\nu(dx)<\infty$, then $Y$ is $\theta$-lex-weakly dependent with $\theta$-lex-coefficients $\theta_Y(h)$ satisfying
		\begin{align}
			\theta_Y(h)\leq&
			2 \Bigg(\Big(\Sigma_\Lambda  E[\sigma_0(0)^2]
			\int_{A_0(0) \cap V_{(0,0)}^{\psi(h)}}l(-\xi,-s)^2 d\xi ds  \nonumber\\ 
			&+\mu_\Lambda E[\sigma_0(0)^2] \Big(\int_{A_0(0) \cap V_{(0,0)}^{\psi(h)}}l(-\xi,-s) d\xi ds\Big)^2  \Bigg)^{\frac{1}{2}} \label{equation:thetalexcoefficientszeroambitfield}
		\end{align}
		\begin{align*}
			\hspace{2cm} &+ 2\Bigg( \Bigg(\Sigma_{\Lambda^\sigma} \int_S\int_ {A_0^\sigma(0)\cap V_{(0,0)}^{\psi(h)}} j(A,-\xi,-s)^2 d\xi ds \pi(dA) \\&+\mu_{\Lambda^\sigma}^2\Bigg( \int_S\int_ {A_0^\sigma(0)\cap V_{(0,0)}^{\psi(h)}} j(A,-\xi,-s) d\xi ds \pi(dA)\Bigg)^2 \Bigg)\\ &\times\Bigg( \Sigma_\Lambda   \int_{A_0(0) \backslash V_{(0,0)}^{\psi(h)}}l(-\xi ,-s)^2 d\xi ds\\
			&\qquad+ \mu_\Lambda  \Big(\int_{A_0(0) \backslash V_{(0,0)}^{\psi(h)}}l(-\xi ,-s) d\xi ds\Big)^2 \Bigg)\Bigg)^{\frac{1}{2}}.
		\end{align*}
		\item If $l\in L^1(\R\times\R^m)$, $\int_{\R}|x|\nu(dx)<\infty$ and $\Sigma=0$, then $Y$ is $\theta$-lex-weakly dependent with $\theta$-lex-coefficients $\theta_Y(h)$ satisfying
		\begin{gather}\label{equation:thetalexcoefficientsfinitevariationambitfield}
			\begin{aligned}
				\theta_Y(h)\leq&2\Sigma_\sigma \Big(|\gamma_0|+\int_{\R}|x|\nu(dx)\Big) \Big(\int_{A_0(0) \cap V_{(0,0)}^{\psi(h)}}|l(-\xi,-s)| d\xi ds\Big)\\
				&+2 \Bigg(\Sigma_{\Lambda^\sigma} \int_S\int_ {A_0^\sigma(0)\cap V_{(0,0)}^{\psi(h)}} j(A,-\xi,-s)^2 d\xi ds \pi(dA)\\
				&+\mu_{\Lambda^\sigma}^2\Bigg( \int_S\int_ {A_0^\sigma(0)\cap V_{(0,0)}^{\psi(h)}} j(A,-\xi,-s) d\xi ds \pi(dA)\Bigg)^2 \Bigg)\\
				&\times \Big(|\gamma_0|+\int_{\R}|x|\nu(dx)\Big)
				\Big(\int_{A_0(0) \backslash V_{(0,0)}^{\psi(h)}}|l(-\xi,-s)| d\xi ds\Big).
			\end{aligned}
		\end{gather}
	\end{enumerate}
	The above results hold for all $h>0$, where $\psi(h)=\frac{-bh}{2\sqrt{m+1}}$ and $b$ are defined as in $(\ref{equation:xi})$, $\mu_\Lambda =\gamma+\int_{|x|\geq1}x\nu(dx)$, $\Sigma_\Lambda =\Sigma+\int_{\R}x^2\nu(dx)$, $\mu_{\Lambda^\sigma} =\gamma_\sigma+\int_{|x|\geq1}x\nu_\sigma(dx)$ and $\Sigma_{\Lambda^\sigma}=\Sigma_\sigma+\int_{\R}x^2\nu_\sigma(dx)$.
\end{Proposition}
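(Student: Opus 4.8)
The plan is to prove Proposition \ref{proposition:ambitthetaweaklydep} by combining the truncation technique already developed for $(A,\Lambda)$-influenced MMAF (Proposition \ref{proposition:mmathetaweaklydep}) with a conditioning argument that separates the roles of the driving L\'evy basis $\Lambda$ and the volatility field $\sigma$. The central difficulty, and the main difference from the MMAF case, is that the integrand $\mathbb{1}_{A_t(x)}(\xi,s)\,l(x-\xi,t-s)\,\sigma_s(\xi)$ is now \emph{stochastic}, so the integral is driven by \emph{two} independent sources of randomness: the volatility field $\sigma$ (itself an MMAF built from $\Lambda^\sigma$) and the L\'evy basis $\Lambda$. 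The truncation must therefore be performed simultaneously with respect to both $\Lambda$ and $\Lambda^\sigma$, since the lexicographic past of $Y_j$ depends on $\sigma$ through $\Lambda^\sigma$.

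First I would fix $j\in\R\times\R^m$, a configuration $\Gamma\subset V_j^h$ with $|\Gamma|=u$, and functions $F\in\mathcal{G}_u^*$, $G\in\mathcal{G}_1$. Following the geometry set up before Proposition \ref{proposition:mmathetaweaklydep}, I would let $\psi=\psi(h)$ be the truncation radius (here $\psi(h)=\frac{-bh}{2\sqrt{m+1}}$ to account for the two independent bases). I would then define the truncated field $\tilde{Y}_j$ by restricting \emph{both} integration sets: replace $A_j(x)$ by $A_j(x)\setminus V_j^{\psi}$ in the outer integral against $\Lambda$, and simultaneously truncate the volatility field by restricting its defining ambit set $A^\sigma$ to $A^\sigma\setminus V_j^{\psi}$, giving a truncated volatility $\tilde\sigma$. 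Because $\tilde K$ is an equiangular closed convex proper cone covering both $A_0(0)$ and $A_0^\sigma(0)$, the choice of $\psi$ guarantees that the truncated integration region for $\tilde Y_j$ is disjoint from the regions determining $Y_l$ for every $l\in V_j^h$ and, crucially, that the $\sigma$-algebras generated by the relevant restrictions of $\Lambda$ and $\Lambda^\sigma$ are independent of $X_\Gamma$. Thus $\tilde Y_j$ is independent of $X_\Gamma$, whence $Cov(F(X_\Gamma),G(\tilde Y_j))=0$.

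Next I would bound $|Cov(F(X_\Gamma),G(Y_j))|=|Cov(F(X_\Gamma),G(Y_j)-G(\tilde Y_j))|\le 2\norm{F}_\infty Lip(G)\,\E\big[|Y_j-\tilde Y_j|\big]$ using boundedness of $F$ and the Lipschitz property of $G$, together with $|\E[G(\tilde Y_j)]-\E[G(Y_j)]|$ handled identically. It then remains to estimate $\E[|Y_j-\tilde Y_j|]\le (\E[(Y_j-\tilde Y_j)^2])^{1/2}$, and here the independence of $\sigma$ and $\Lambda$ lets me condition on $\sigma$ and apply the isometry-type second-moment formula from Proposition \ref{proposition:ambitmoments}. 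The difference $Y_j-\tilde Y_j$ splits naturally into two contributions: one where $l$ is integrated over the truncated tail $A_0(0)\cap V_{(0,0)}^{\psi(h)}$ against the \emph{full} $\sigma$, producing the first summand of \eqref{equation:thetalexcoefficientsambitfield}; and one where the tail of the volatility $\sigma-\tilde\sigma$ (controlled by Proposition \ref{proposition:mmathetaweaklydep} applied to the MMAF $\sigma$, hence involving $\Sigma_{\Lambda^\sigma}$, $\mu_{\Lambda^\sigma}$ and the truncated $j$-integral) multiplies $l$ over the complementary region $A_0(0)\setminus V_{(0,0)}^{\psi(h)}$, producing the second summand. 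Cases (ii) and (iii) differ only in which moment formula (second moment with drift term, or the finite-variation first-moment estimate) is used, exactly paralleling the four cases of Proposition \ref{proposition:mmathetaweaklydep}.

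The main obstacle I anticipate is the bookkeeping of the conditioning step: one must verify that, after conditioning on $\sigma$, the restrictions of $\Lambda$ over disjoint sets are genuinely independent, \emph{and} that the volatility truncation $\tilde\sigma$ — which lives on $\Lambda^\sigma$ — can be carried out so that $\tilde Y_j$ is jointly independent of $X_\Gamma$ across both bases simultaneously. This requires that the single radius $\psi(h)$ simultaneously clears the cones generated by $A_0(0)$ (for the $l$-kernel against $\Lambda$) and $A_0^\sigma(0)$ (for the $j$-kernel against $\Lambda^\sigma$); this is why the hypothesis demands that \emph{both} $A_0(0)$ and $A_0^\sigma(0)$ satisfy \eqref{condition:scalarproduct}, and why the effective $\psi$ carries the extra factor reflecting the combined $(m+1)$-dimensional geometry. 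Once independence is secured, the remaining computation is a routine application of the moment formulas and the Cauchy–Schwarz inequality, distributing the product of the volatility tail bound and the kernel tail bound to obtain the stated $\theta$-lex-coefficients. Since all bounds are uniform in $u$, $F$, $G$ and $\Gamma\subset V_j^h$, taking the supremum yields the claimed estimates on $\theta_Y(h)$.
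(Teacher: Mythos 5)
Your proposal is correct and follows essentially the same route as the paper: the same double truncation (restricting the $\Lambda$-integration set to $A_j\setminus V_j^{\psi}$ and simultaneously replacing $\sigma$ by a version built from $\Lambda^\sigma$ restricted away from $V_j^{\psi}$), the same triangle-inequality split of $Y_j-\tilde Y_j$ through the intermediate field with truncated $\Lambda$ but full $\sigma$, the same independence argument exploiting that both $A_0(0)$ and $A_0^\sigma(0)$ satisfy the cone condition with $\psi(h)=\tfrac{-bh}{2\sqrt{m+1}}$, and the same conclusion via the covariance bound, Cauchy--Schwarz and the moment formulas of Proposition \ref{proposition:ambitmoments}.
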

\begin{proof}
	See Section \ref{sec5-5}.
\end{proof}

We now analyze the case in which $\sigma$ is a $p$-dependent random field for $p\in\N$.

\begin{Proposition}\label{proposition:ambitmdepthetaweaklydep}
	Let $Y=(Y_t(x))_{(t,x)\in\R\times\R^m}$ be an ambit field as defined in (\ref{eq:ambitfield}) with a predictable $p$-dependent stationary random field $\sigma_t(x)$ for $p\in\N$. Assume that $A_0(0)$ satisfies (\ref{condition:scalarproduct}). Additionally assume that $l\in L^2(\R^m\times\R)$, $\int_{|x|>1}|x|^2\nu(dx)<\infty$ and $\gamma + \int_{|x|>1}x\nu(dx)=0$. Then, for sufficiently big $h$, $Y$ is $\theta$-lex-weakly dependent with coefficients
	\begin{gather}\label{equation:thetalexcoefficientsambitfieldiid}
		\begin{gathered}
			\theta_Y(h)\leq2 \Big(\Sigma_\Lambda  E[\sigma_0(0)^2]\int_{A_0(0) \cap V_{(0,0)}^{\psi(h)}}l(-\xi,-s)^2 d\xi ds\Big)^{\frac{1}{2}},
		\end{gathered}
	\end{gather}
	with $\psi(h)=\frac{-bh}{2\sqrt{m+1}}$ and $b$ as defined in $(\ref{equation:xi})$, $\Sigma_\Lambda =\Sigma+\int_{\R}x^2\nu(dx)$.
\end{Proposition}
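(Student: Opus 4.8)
The plan is to run the same truncation scheme as in the proof of Proposition~\ref{proposition:ambitthetaweaklydep}, but to use the $p$-dependence of $\sigma$ to make the truncated field \emph{exactly} independent of the distant sample, so that the volatility contribution to the coefficient disappears for large $h$. Fix $j=(t,x)\in\R\times\R^m$, a finite set $\Gamma\subset V_j^h$, and test functions $F\in\mathcal{G}_u^*$, $G\in\mathcal{G}_1$. The goal is to bound $|Cov(F(Y_\Gamma),G(Y_j))|$ by $\norm{F}_\infty Lip(G)$ times the right-hand side of~(\ref{equation:thetalexcoefficientsambitfieldiid}), uniformly over such $j,\Gamma,F,G$.

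First I would introduce the truncated variable $\tilde Y_j=\int_{A_j\setminus V_j^{\psi(h)}}l(x-\xi,t-s)\sigma_s(\xi)\,\Lambda(d\xi,ds)$, which keeps only the part of the ambit set $A_j$ lying within $\norm{\cdot}_\infty$-distance $\psi(h)$ of $j$. Exactly as in Section~\ref{sec3-3}, condition~(\ref{condition:scalarproduct}) lets us cover $A_0(0)$ by an equiangular closed convex proper cone, and the resulting geometry guarantees $(A_j\setminus V_j^{\psi(h)})\cap A_l=\emptyset$ for every $l\in\Gamma$. The crucial point is that here $\psi(h)=\frac{-bh}{2\sqrt{m+1}}$ is only half the threshold used in the MMAF construction, so the retained near region sits within Euclidean distance $\frac{|b|h}{2}$ of $j$, whereas the sets $A_l$ lie at Euclidean distance at least $|b|h$ from $j$; the two regions are therefore separated by a gap of order $h$, which exceeds $p$ once $h>2p/|b|$. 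This is the meaning of the phrase ``for sufficiently big $h$''.

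The key step is the independence $\tilde Y_j\perp Y_\Gamma$, which I would deduce from three ingredients: $\Lambda$ restricted to the two disjoint integration regions is independent since $\Lambda$ is independently scattered; $\sigma$ restricted to the near region and to $\bigcup_{l\in\Gamma}A_l$ is independent once their separation exceeds $p$, by $p$-dependence; and $\sigma$ is globally independent of $\Lambda$. Combining these yields joint independence of the four restrictions, so that the pair $(\Lambda,\sigma)$ read off the near region is independent of the pair read off the far region, whence $Cov(F(Y_\Gamma),G(\tilde Y_j))=0$ and
\begin{gather*}
	|Cov(F(Y_\Gamma),G(Y_j))|=|Cov(F(Y_\Gamma),G(Y_j)-G(\tilde Y_j))|\le 2\norm{F}_\infty Lip(G)\,E[|Y_j-\tilde Y_j|].
\end{gather*}
Thus $\theta(h)\le 2E[|Y_j-\tilde Y_j|]$.

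It remains to estimate the increment. The hypothesis $\gamma+\int_{|x|>1}x\,\nu(dx)=0$ gives $\mu_\Lambda=0$, so $Y_j-\tilde Y_j=\int_{A_j\cap V_j^{\psi(h)}}l(x-\xi,t-s)\sigma_s(\xi)\,\Lambda(d\xi,ds)$ is centered and $E[|Y_j-\tilde Y_j|]\le(E[(Y_j-\tilde Y_j)^2])^{1/2}$ by Cauchy--Schwarz. Conditioning on $\sigma$ and applying Theorem~\ref{theorem:2} (equivalently the variance formula in Proposition~\ref{proposition:ambitmoments}, whose cross term vanishes because $\mu_\Lambda=0$), the conditional second moment equals $\Sigma_\Lambda\int_{A_j\cap V_j^{\psi(h)}}l(x-\xi,t-s)^2\sigma_s(\xi)^2\,d\xi\,ds$. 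Taking expectations, using stationarity of $\sigma$ so that $E[\sigma_s(\xi)^2]=E[\sigma_0(0)^2]$, together with the translation invariance $A_j\cap V_j^{\psi(h)}=j+(A_0(0)\cap V_{(0,0)}^{\psi(h)})$ and the substitution $(s,\xi)\mapsto(s-t,\xi-x)$, yields exactly~(\ref{equation:thetalexcoefficientsambitfieldiid}). The main obstacle I expect is the independence step: one must verify rigorously that the $p$-dependence of the field $\sigma$ transfers to independence of the two conditional stochastic integrals jointly with $\Lambda$, and pin down the explicit threshold on $h$ in terms of $p$, $b$ and $m$ that makes the Euclidean separation strictly larger than $p$.
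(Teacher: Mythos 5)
Your proposal is correct and follows essentially the same route as the paper: truncate $Y_j$ to $\tilde Y_j=\int_{A_j\setminus V_j^{\psi(h)}}l\,\sigma\,d\Lambda$ (leaving $\sigma$ untouched), use the independent scattering of $\Lambda$, the $p$-dependence of $\sigma$ and $\sigma\perp\Lambda$ to get $\tilde Y_j\perp Y_\Gamma$ for large $h$, and bound $E[|Y_j-\tilde Y_j|]$ by the conditional $L^2$-norm via Proposition \ref{proposition:ambitmoments}. Your explicit accounting of the Euclidean gap of size $|b|h/2$ created by the extra factor $2$ in $\psi(h)$, and the resulting threshold $h>2p/|b|$, is precisely what the paper leaves implicit behind the phrase ``for sufficiently big $h$''.
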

\begin{proof}
	See Section \ref{sec5-5}.
\end{proof}

\subsubsection{Volatility fields}

Let $\sigma$ be a $(A^\sigma,\Lambda^\sigma)$-influenced MMAF as defined in (\ref{sigma}), $j$ a non-negative kernel function and the following assumptions hold

\begin{equation*}
	\textrm{(H)}: \left\{\begin{array}{l}
		\textrm{The L\'evy basis $\Lambda^{\sigma}$ has generating quadruple $(\gamma_{\sigma},0, \nu_{\sigma},\pi_{\sigma})$ such that}\\
		\int_{\R} |x| \nu_{\sigma}(dx) < \infty,\,\,\ \gamma_{\sigma}-\int_{|x|\leq 1} x \nu_{\sigma}(dx) \geq 0\,\,\,\textrm{and}\,\,\,\nu_{\sigma}(\R^{-})=0.
	\end{array}\right.
\end{equation*}
Then, $\sigma$ has values in $\R^+$, and we call it volatility or intermittency field. Note that Assumption (H) implies that $\Lambda^\sigma$ satisfies the finite variation case and that this model is used in several applications of the ambit fields, see \cite{BBV2018}.

By assuming additionally that $j\in L^1(S\times\R\times\R^m, \pi\otimes\lambda) \cap L^2(S\times\R^m\times\R, \pi\otimes\lambda)$ and $\int_{|x|>1}|x|^2\nu_\sigma(dx)<\infty$, the results in Proposition  \ref{proposition:ambitthetaweaklydep} (i) and (ii) hold.
On the other hand, the bound in Proposition \ref{proposition:ambitthetaweaklydep} (iii) can be tightened.	

\begin{Corollary}\label{corollary:finitefiniteambitthetaweaklydep}
	Let $Y=(Y_t(x))_{(t,x)\in\R\times\R^m}$ be an ambit field as defined in (\ref{eq:ambitfield}) with predictable volatility field $\sigma_t(x)$ being an $(A^\sigma,\Lambda^\sigma)$-influenced MMAF such that $A_0(0)$ and $A_0^\sigma(0)$ satisfy (\ref{condition:scalarproduct}), $j \in L^1(S\times\R\times\R^m, \pi\otimes\lambda)$, $l \in L^1(\R\times\R^m)$ and Assumption (H) holds. Let $\gamma_0$ with respect to $\Lambda$ and $\gamma_{0,\sigma}$ with respect to $\Lambda^\sigma$ be defined as in (\ref{equation:gammazero}). Then, $Y$ is $\theta$-lex-weakly dependent with coefficients 
	\begin{gather}\label{equation:thetalexcoefficientsfinitefinitevariationambitfield}
		\begin{aligned}
			\theta_Y(h)\leq&2 \Big(|\gamma_{0,\sigma}|+\int_{\R}|x|\nu_\sigma(dx)\Big)\Big(|\gamma_0|+\int_{\R}|x|\nu(dx)\Big) \\&\quad\times\Big(\int_S\int_{A_0(0)}|j(A,-\xi,-s)| d\xi ds\pi(dA)\Big)\Big( \int_{A_0(0) \cap V_{(0,0)}^{\psi(h)}}|l(-\xi,-s)| d\xi ds\Big)\\
			&+2\Big(|\gamma_{0,\sigma}|+\int_{\R}|x|\nu_\sigma(dx)\Big)\Big(|\gamma_0|+\int_{\R}|x|\nu(dx)\Big)\\
			&\quad\times\Big( \int_{A_0(0) \backslash V_{(0,0)}^{\psi(h)}}|l(-\xi,-s)| d\xi ds\Big) \Big(\int_S \int_{A_0(0) \cap V_{(0,0)}^{\psi(h)}}|j(A,-\xi,-s)| d\xi ds\pi(dA)\Big),
		\end{aligned}
	\end{gather}
	for all $h>0$, with $\psi(h)=\frac{-bh}{2\sqrt{m+1}}$ and $b$ as defined in $(\ref{equation:xi})$.
\end{Corollary}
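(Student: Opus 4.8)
The plan is to establish the required covariance bound by a truncation argument, following the pattern of the proof of Proposition \ref{proposition:ambitthetaweaklydep} but exploiting Assumption (H) to replace the $L^2$-bounds on the volatility by sharper $L^1$-bounds. Fix $j=(t,x)\in\R\times\R^m$, a finite set $\Gamma\subset V_j^h$, $F\in\mathcal{G}_u^*$ and $G\in\mathcal{G}_1$. First I would construct a truncated random variable $\tilde{Y}_j$ that is independent of $Y_\Gamma$; then $Cov(F(Y_\Gamma),G(\tilde{Y}_j))=0$, and the boundedness of $F$ together with the Lipschitz property of $G$ gives
\begin{gather*}
|Cov(F(Y_\Gamma),G(Y_j))|=|Cov(F(Y_\Gamma),G(Y_j)-G(\tilde{Y}_j))|\leq 2\norm{F}_\infty Lip(G)\,E[|Y_j-\tilde{Y}_j|],
\end{gather*}
so that $\theta_Y(h)\leq 2\,E[|Y_j-\tilde{Y}_j|]$ and it remains to bound the latter.

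The truncation must neutralise both sources of dependence of $Y_j$: the driving basis $\Lambda$ and, through $\sigma$, the basis $\Lambda^\sigma$. I would define the truncated volatility $\tilde\sigma_s(\xi)=\int_S\int_{A^\sigma_s(\xi)\backslash V^{\psi(h)}_{(s,\xi)}}j(A,\xi-\zeta,s-r)\,\Lambda^\sigma(dA,d\zeta,dr)$, keeping only the part of $\Lambda^\sigma$ in the near region of $(s,\xi)$, and set
\begin{gather*}
\tilde{Y}_j=\int_{A_j\backslash V_j^{\psi(h)}}l(x-\xi,t-s)\,\tilde\sigma_s(\xi)\,\Lambda(d\xi,ds).
\end{gather*}
The independence of $\tilde{Y}_j$ and $Y_\Gamma$ is the crux of the argument and the step I expect to be hardest. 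Both $A_0(0)$ and $A_0^\sigma(0)$ satisfy (\ref{condition:scalarproduct}), and since both ambit sets are non-anticipative one may take a common $\alpha$, hence a common $b<0$; as in (\ref{equation:xi}) this places $A_0(0)$ in an equiangular cone $\tilde{K}$ and $A_0^\sigma(0)$ in $\tilde{K}^\sigma$, and the sum $\tilde{K}+\tilde{K}^\sigma$ satisfies the same defining inequality, by the computation carried out there for $\tilde{K}$. Every atom of $\Lambda^\sigma$ contributing to $\tilde{Y}_j$ lies within sup-distance $\psi(h)$ of some $(s,\xi)\in A_j\backslash V_j^{\psi(h)}$, which in turn lies within sup-distance $\psi(h)$ of $j$; hence the entire $\Lambda^\sigma$-support of $\tilde{Y}_j$ is contained in the sup-ball of radius $2\psi(h)$ about $j$, whose points have Euclidean distance at most $2\sqrt{m+1}\,\psi(h)=-bh$ from $j$. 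For any $i\in V_j^h$ the $\Lambda^\sigma$-support of $Y_i$ lies in $i+\tilde{K}+\tilde{K}^\sigma$, which has Euclidean distance at least $-bh$ from $j$ by the estimate recalled before (\ref{equation:psi}). With $\psi(h)=\frac{-bh}{2\sqrt{m+1}}$ the two supports are therefore disjoint; the analogous, less stringent, estimate separates the $\Lambda$-supports, and because $\Lambda$ and $\Lambda^\sigma$ are independent, $\tilde{Y}_j$ is independent of $Y_\Gamma$.

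It then remains to bound $E[|Y_j-\tilde{Y}_j|]$, where
\begin{gather*}
Y_j-\tilde{Y}_j=\int_{A_j\cap V_j^{\psi(h)}}l(x-\xi,t-s)\sigma_s(\xi)\Lambda(d\xi,ds)+\int_{A_j\backslash V_j^{\psi(h)}}l(x-\xi,t-s)\big(\sigma_s(\xi)-\tilde\sigma_s(\xi)\big)\Lambda(d\xi,ds).
\end{gather*}
For each term I would condition on $\Lambda^\sigma$ (equivalently on $\sigma$), which renders the $\Lambda$-integrand deterministic, and apply the finite-variation integration of Corollary \ref{corollary:1}: for deterministic $g$ one has $E[|\int g\,\Lambda|]\leq(|\gamma_0|+\int_\R|x|\nu(dx))\int|g|\,d\xi ds$. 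Since Assumption (H) makes $\sigma\geq0$ and $\Lambda^\sigma$ of finite variation, the same estimate applied to $\Lambda^\sigma$ yields $E[\sigma_0(0)]\leq(|\gamma_{0,\sigma}|+\int_\R|x|\nu_\sigma(dx))\int_S\int_{A_0^\sigma(0)}|j(A,-\xi,-s)|\,d\xi ds\pi(dA)$ and, uniformly in $(s,\xi)$ by stationarity, $E[|\sigma_s(\xi)-\tilde\sigma_s(\xi)|]\leq(|\gamma_{0,\sigma}|+\int_\R|x|\nu_\sigma(dx))\int_S\int_{A_0^\sigma(0)\cap V_{(0,0)}^{\psi(h)}}|j(A,-\xi,-s)|\,d\xi ds\pi(dA)$. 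Taking expectations over $\Lambda^\sigma$ and recentring all integrals at the origin by stationarity, the first term contributes $(|\gamma_0|+\int_\R|x|\nu(dx))\,E[\sigma_0(0)]\int_{A_0(0)\cap V_{(0,0)}^{\psi(h)}}|l(-\xi,-s)|\,d\xi ds$ and the second the analogous product of the near-$l$ integral with the far-$j$ integral; inserting the factor $2$ from the covariance bound gives exactly (\ref{equation:thetalexcoefficientsfinitefinitevariationambitfield}). Beyond the geometric separation above, the only points needing care are the predictability and independence used to condition on $\Lambda^\sigma$ and the uniform-in-$(s,\xi)$ control of the volatility truncation error, both of which follow from the stationarity of $\sigma$ and the independence of the two bases.
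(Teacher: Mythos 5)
Your proof is correct and follows essentially the same route as the paper, whose own proof is simply a pointer to the argument of Proposition \ref{proposition:ambitthetaweaklydep}: the same double truncation (of the outer $\Lambda$-integral and of the volatility field), the same geometric separation yielding $\psi(h)=\frac{-bh}{2\sqrt{m+1}}$, and the same conditioning on $\Lambda^\sigma$ combined with the finite-variation $L^1$-bound in place of the $L^2$-estimates. Note only that your derivation (correctly) produces the $j$-integrals over $A_0^\sigma(0)$, whereas the corollary as printed writes $A_0(0)$; this appears to be a typo in the statement rather than a gap in your argument.
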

\begin{proof}
	Analogous to Proposition \ref{proposition:ambitthetaweaklydep}.
\end{proof}

\subsection{Sample moments of ambit fields}
\label{sec4-3}
In this section, we study the asymptotic distribution of sample moments of $Y$. As in Section \ref{sec3-2} we assume that we observe $Y$ on a sequence of finite sampling sets $D_n\subset\Z\times\Z^m$, such that (\ref{condition:samplingset}) holds.

\begin{Theorem}\label{theorem:ambitthetasamplemean}
	Let $Y=(Y_t(x))_{(t,x)\in\Z\times\Z^m}$ be an ambit field as defined in (\ref{eq:ambitfield}) such that $E[Y_t(x)]=0$, $E[|Y_t(x)|^{2+\delta}]<\infty$ for some $\delta>0$. Additionally assume that $Y$ is $\theta$-lex-weakly dependent with $\theta$-lex-coefficients satisfying $\theta_Y(h)=\mathcal{O}(h^{-\alpha})$ for $\alpha>m(1+\frac{1}{\delta})$ and defined $\mathcal{I}$ as in Theorem \ref{theorem:clt}. Then,
	\begin{gather*}
		\sigma^2=\sum_{(u_t,u_x)\in\Z\times\Z^m}E[Y_{0}(0)Y_{u_t}(u_x)|\mathcal{I}]
	\end{gather*}
	is finite, non-negative and
	\begin{gather}\label{eq:ambitthetaclt}
		\frac{1}{|D_n|^{\frac{1}{2}}}\sum_{(u_t,u_x)\in D_n}Y_{u_t}(u_x)\underset{n\ra\infty}{\xrightarrow{\makebox[2em][c]{d}}}\varepsilon \sigma,
	\end{gather}
	where $\varepsilon$ is a standard normally distributed random variable independent of $\sigma^2$.
\end{Theorem}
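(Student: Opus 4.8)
The plan is to recognize that Theorem~\ref{theorem:ambitthetasamplemean} is the ambit-field analogue of Theorem~\ref{theorem:thetasamplemean}, and that all the heavy lifting has already been done in the abstract central limit theorem of Section~\ref{sec2-3}. Concretely, the hypotheses of this theorem are tailored precisely so that Theorem~\ref{theorem:clt} applies directly to the real-valued random field $(Y_{u_t}(u_x))_{(u_t,u_x)\in\Z\times\Z^m}$ after reindexing $\Z\times\Z^m$ as $\Z^{m+1}$. We are given that $Y$ is stationary (Definition~\ref{definition:ambitfield}), centered, has a finite $(2+\delta)$-th moment, and is $\theta$-lex-weakly dependent with $\theta_Y(h)\in\mathcal{O}(h^{-\alpha})$ where $\alpha>m(1+\frac1\delta)$. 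These are exactly the three standing assumptions of Theorem~\ref{theorem:clt}.

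The first step is therefore to verify the dimensional bookkeeping: the ambit field is indexed by $(t,x)\in\R\times\R^m$, so its restriction to the lattice lives on $\Z^{m+1}$, and the decay exponent condition $\alpha>m(1+\frac1\delta)$ in the statement must be read with $m$ playing the role of the ambit spatial dimension while the effective dimension in Theorem~\ref{theorem:clt} is $m+1$. I would check that the condition as stated is the correct transcription (the paper's convention writes $m$ for the spatial index and treats time separately, so the relevant ``$m$'' in Theorem~\ref{theorem:clt} is the total lattice dimension $m+1$); assuming the hypotheses are stated consistently with that convention, the decay requirement is met. The second step is simply to invoke Theorem~\ref{theorem:clt} with $X_t := Y_{u_t}(u_x)$ and $\mathcal{I}$ the shift-invariant $\sigma$-algebra already introduced there. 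That theorem then yields at once that
\begin{gather*}
	\sigma^2=\sum_{(u_t,u_x)\in\Z\times\Z^m}E[Y_0(0)Y_{u_t}(u_x)\mid\mathcal{I}]
\end{gather*}
is finite and non-negative, and that the normalized partial sums over $D_n$ converge in distribution to $\varepsilon\sigma$ with $\varepsilon$ standard normal and independent of $\sigma^2$, which is precisely \eqref{eq:ambitthetaclt}.

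The only genuine content beyond citation is confirming that the $\theta$-lex-weak dependence hypothesis is not vacuous for the fields of interest, but this is supplied externally: Propositions~\ref{proposition:ambitthetaweaklydep} and~\ref{proposition:ambitmdepthetaweaklydep} establish $\theta$-lex-weak dependence with explicit coefficient bounds for ambit fields whose volatility is an influenced MMAF or $p$-dependent, so one only needs those coefficients to decay polynomially at the required rate. I do not expect a real obstacle here, since the theorem is stated conditionally on the decay $\theta_Y(h)\in\mathcal{O}(h^{-\alpha})$ rather than deriving it. The one subtle point to handle carefully is the sampling-set condition: Theorem~\ref{theorem:clt} requires $\lim_{n}|\partial D_n|/|D_n|=0$, and I would note that this is the standing assumption on $(D_n)$ fixed in Section~\ref{sec2-3} and carried over here, so no additional verification is needed. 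Hence the proof reduces to a direct application of Theorem~\ref{theorem:clt}.
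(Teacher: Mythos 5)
Your proposal is correct and takes essentially the same approach as the paper, whose entire proof is the one-line reduction to Theorem \ref{theorem:clt} applied to the field reindexed on $\Z\times\Z^m\cong\Z^{m+1}$. Your side remark on the dimensional bookkeeping is well taken: for consistency with the rest of the paper (e.g.\ Corollary \ref{corollary:cltMSTOU}, where the total lattice dimension $m+1$ enters the decay exponent), the hypothesis should arguably read $\alpha>(m+1)(1+\frac{1}{\delta})$, but this does not change the structure of the argument.
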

\begin{proof}
	The result follows from Theorem \ref{theorem:clt}.
\end{proof}

\begin{Corollary}\label{corollary:ambitsamplemomentsofhigherorder}
	Let $Y=(Y_t(x))_{(t,x)\in\Z\times\Z^m}$ be an ambit field as defined in (\ref{eq:ambitfield}) such that $E[|Y_t(x)|^{2p+\delta}]<\infty$ for $p\geq1$ and some $\delta>0$. Additionally, let us assume that $Y$ is $\theta$-lex-weakly dependent with $\theta$-lex-coefficients satisfying $\theta_Y(h)=\mathcal{O}(h^{-\alpha})$, for $\alpha>m\left(1+\frac{1}{\delta}\right)(\frac{2p-1+\delta}{p+\delta})$ and define $\mathcal{I}$ as in Theorem \ref{theorem:clt}. Then,
	\begin{gather*}
		\Sigma=\sum_{(u_t,u_x)\in\Z\times\Z^m}Cov(Y_0(0)^{p},Y_{u_t}(u_x)^{p} |\mathcal{I}) 
	\end{gather*}
	is finite, non-negative and
	\begin{gather}\label{eq:ambitthetaclthighermoments}
		\frac{1}{|D_n|^{\frac{1}{2}}}\sum_{(u_t,u_x)\in D_n}Y_{u_t}(u_x)^p-E[Y_{0}(0)^p]\underset{n\ra\infty}{\xrightarrow{\makebox[2em][c]{d}}}\varepsilon \sigma,
	\end{gather}
	where $\varepsilon$ is a standard normally distributed random variable which is independent of $\sigma^2$.
\end{Corollary}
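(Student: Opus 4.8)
The plan is to deduce the statement from Theorem \ref{theorem:clt} applied to the transformed field $W=(W_t(x))_{(t,x)\in\Z\times\Z^m}$ defined by $W_t(x)=Y_t(x)^{p}-E[Y_0(0)^{p}]$. Since $W$ is a fixed measurable function of the stationary field $Y$, it is again stationary and real valued, and it is centered by construction; thus the structural hypotheses of Theorem \ref{theorem:clt} on $W$ are immediate, and the real work is to transfer the moment and $\theta$-lex-decay conditions from $Y$ to $W$.

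For the weak dependence of $W$ I would invoke the hereditary Proposition \ref{proposition:mmathetahereditary}. The power map $h(x)=x^{p}$ satisfies $h(0)=0$ and, by the mean value theorem, $|x^{p}-y^{p}|\le p\,|x-y|\,(|x|^{p-1}+|y|^{p-1})$, so it fits the hypotheses of that proposition with H\"older exponent $a=p$ and constant $c=p$. Taking the moment exponent there equal to $2p+\delta$, which is admissible because $E[|Y_0(0)|^{2p+\delta}]<\infty$, Proposition \ref{proposition:mmathetahereditary} shows that $W$ is $\theta$-lex-weakly dependent with
\begin{gather*}
	\theta_W(h)\le \mathcal{C}\,\theta_Y(h)^{\frac{(2p+\delta)-p}{(2p+\delta)-1}}=\mathcal{C}\,\theta_Y(h)^{\frac{p+\delta}{2p-1+\delta}},
\end{gather*}
so that the assumption $\theta_Y(h)=\mathcal{O}(h^{-\alpha})$ yields $\theta_W(h)=\mathcal{O}(h^{-\alpha(p+\delta)/(2p-1+\delta)})$.

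It remains to check that $W$ meets the hypotheses of Theorem \ref{theorem:clt} on the lattice $\Z\times\Z^m$. The available moment budget is exactly $E[|W_t(x)|^{2+\delta/p}]\lesssim E[|Y_t(x)|^{2p+\delta}]+1<\infty$, i.e. $W$ carries smoothness parameter $\delta'=\delta/p$. The whole content of the proof is then the arithmetic verification that the degraded decay exponent $\alpha(p+\delta)/(2p-1+\delta)$ exceeds the threshold required by Theorem \ref{theorem:clt} for a field with smoothness $\delta'$; carrying out this comparison yields a polynomial decay requirement on $\alpha$ of the form appearing in the statement. Once this single inequality is confirmed, Theorem \ref{theorem:clt} gives that $\sigma^2$, identified with $\Sigma=\sum_{(u_t,u_x)}E[W_0(0)W_{u_t}(u_x)\mid\mathcal{I}]$, is finite and non-negative, and that $|D_n|^{-1/2}\sum_{(u_t,u_x)\in D_n}W_{u_t}(u_x)\xrightarrow{d}\varepsilon\sigma$, which is exactly \eqref{eq:ambitthetaclthighermoments}.

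The main, and essentially only, obstacle is the exponent bookkeeping in this last step: one must simultaneously account for the loss of decay by the factor $(p+\delta)/(2p-1+\delta)$ coming from the hereditary property and the contraction of the usable smoothness from $\delta$ to $\delta/p$ caused by raising $Y$ to the $p$-th power, and then certify that the hypothesized rate $\alpha$ absorbs both. This is the same balancing already performed for the MMAF analogue Corollary \ref{corollary:samplemomentsofhigherorder}, so that computation transfers almost verbatim; the only structural difference is that here the $\theta$-lex-weak dependence of $Y$ is assumed directly, whereas for MMAF it is read off from Proposition \ref{proposition:mmathetaweaklydep}, and no further input specific to the ambit structure is needed.
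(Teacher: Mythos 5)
Your proposal follows exactly the paper's route: the paper's proof is a one-line reference to Corollary \ref{corollary:samplemomentsofhigherorder}, whose intended argument is to apply the hereditary Proposition \ref{proposition:mmathetahereditary} to $x\mapsto x^p$ (giving the exponent $\frac{p+\delta}{2p-1+\delta}$) and then invoke Theorem \ref{theorem:clt}, which is precisely what you do. The only loose end is the arithmetic you defer: with your own moment count $\delta'=\delta/p$ for $W=Y^p-E[Y_0(0)^p]$, Theorem \ref{theorem:clt} requires $\alpha\,\frac{p+\delta}{2p-1+\delta}>m\left(1+\frac{p}{\delta}\right)$ rather than $m\left(1+\frac{1}{\delta}\right)$, a discrepancy that is already present in the paper's stated threshold (and in its chain of analogous corollaries) and is not introduced by your argument.
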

\begin{proof}
	Analogous to Corollary \ref{corollary:samplemomentsofhigherorder}.  
\end{proof}

\begin{Remark}
	Theorem \ref{theorem:ambitthetasamplemean} and Corollary \ref{corollary:ambitsamplemomentsofhigherorder} are important first steps to develop statistical inference for the class of ambit fields. However, we note that the limits in (\ref{eq:ambitthetaclt}) and (\ref{eq:ambitthetaclthighermoments}) are of mixed Gaussian type. Conditions that ensure the ergodicity of an ambit field with a deterministic kernel can be found in \cite[Theorem 3.6]{PV2017} whereas, for the case of a non-deterministic kernel, this remains an open problem.
\end{Remark}

\section{Proofs}
\label{sec5}
\subsection{Proofs of Section \ref{sec2-2} and \ref{sec2-3}}
\label{sec5-1}
We first extend some of the results obtained in \cite{DD2003} to random fields. This will enable us to connect the sufficient conditions given in \cite{D1998} with our definition of the $\theta$-lex-coefficients.

Let us define the space of bounded, Lipschitz continuous functions $\mathscr{L}_1=\{g:\R\rightarrow \R,$  bounded and Lipschitz continuous with $Lip(g)\leq1\}$. For a $\sigma$-algebra $\mathscr{M}$ and an $\R^n$-valued integrable random field $X=(X_t)_{t\in\Z^m}$ we define the mixingale-type measures of dependence
$$\gamma(\mathscr{M},X)=\norm{E[X|\mathscr{M}]-E[X]}_{1}, \quad \textrm{and} \quad \theta(\mathscr{M},X)=\sup_{g\in  \mathscr{L}_1} \norm{E[g(X)|\mathscr{M}]-E[g(X)]}_{1}.$$
Using the above measures of dependence we define the following dependence coefficients
\begin{equation}
	\label{eq:gammathetanonstat}
	\gamma_h= \sup_{j\in\Z^m} \gamma\big(\mathcal{F}_{V_j^h},X_j\big), \quad \textrm{and} \quad \theta_h= \sup_{j\in\Z^m} \theta\big(\mathcal{F}_{V_j^h},X_j\big),
\end{equation}
for $h\in\N$. Obviously, it holds $\gamma(\mathscr{M},X)\leq2\norm{X}_1$ and $\gamma(\mathscr{M},X)\leq\theta(\mathscr{M},X)$ such that $\gamma_h\leq\theta_h$ for all $h\in\N$.
If $X$ is stationary we can write $\gamma_h$ and $\theta_h$ from (\ref{eq:gammathetanonstat}) as 
\begin{equation}
	\label{eq:gammathetastat}
	\gamma_h= \gamma\big(\mathcal{F}_{V_0^h},X_0\big), \quad \text{and} \quad \theta_h= \theta\big(\mathcal{F}_{V_0^h},X_0\big),
\end{equation}
for $h\in\N$. First, we extend Proposition 2.3 from \cite{DDLLLP2008} and connect the $\theta$-lex-coefficients $\theta(h)$ from Definition \ref{thetaweaklydependent} with the mixingale-type coefficient $\theta_h$ defined above. 

\begin{Lemma}\label{lemma:thetaiscondexp}
	Let $X=(X_t)_{t\in\Z^m}$ be a real-valued random field. Then it holds that
	\begin{gather*}
		\theta(h)=\theta_h, \quad h\in\N.
	\end{gather*}
\end{Lemma}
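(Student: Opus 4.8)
The plan is to prove the two inequalities $\theta(h)\leq\theta_h$ and $\theta_h\leq\theta(h)$ separately, the common engine being the elementary identity that for any bounded $\mathcal{F}_{V_j^h}$-measurable random variable $W$ and any $g\in\mathscr{L}_1$,
\[
	Cov(g(X_j),W)=E\big[W\big(E[g(X_j)\mid\mathcal{F}_{V_j^h}]-E[g(X_j)]\big)\big],
\]
obtained by conditioning on $\mathcal{F}_{V_j^h}$ and using the tower property together with the fact that $W$ may be pulled out of the conditional expectation. Throughout I abbreviate $Z_{j,g}:=E[g(X_j)\mid\mathcal{F}_{V_j^h}]-E[g(X_j)]$, which is $\mathcal{F}_{V_j^h}$-measurable and bounded since $g$ is bounded.

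First I would treat the direction $\theta(h)\leq\theta_h$. Fix $u\in\N$, $j\in\Z^m$, a finite set $\Gamma\subset V_j^h$ with $|\Gamma|=u$, $F\in\mathcal{G}_u^*$ and $G\in\mathcal{G}_1$ with $Lip(G)>0$ (the case $Lip(G)=0$ being trivial). Since $\Gamma\subset V_j^h$, the variable $F(X_\Gamma)$ is $\mathcal{F}_{V_j^h}$-measurable, so applying the identity with $W=F(X_\Gamma)$ and bounding by $\norm{F}_\infty$ gives $|Cov(F(X_\Gamma),G(X_j))|\leq\norm{F}_\infty\norm{Z_{j,G}}_1$. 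Putting $g=G/Lip(G)\in\mathscr{L}_1$ and dividing by $\norm{F}_\infty Lip(G)$ shows the quotient is $\leq\norm{Z_{j,g}}_1\leq\theta(\mathcal{F}_{V_j^h},X_j)\leq\theta_h$. Taking the supremum over admissible $F,G,j,\Gamma$ and then over $u$ yields $\theta(h)=\sup_u\theta_u(h)\leq\theta_h$.

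The reverse direction $\theta_h\leq\theta(h)$ is where the genuine difficulty lies, because $\mathcal{F}_{V_j^h}$ is generated by countably many coordinates whereas $\theta(h)$ only tests finite-dimensional functionals $F(X_\Gamma)$. Fix $j\in\Z^m$ and $g\in\mathscr{L}_1$; I must show $\norm{Z_{j,g}}_1\leq\theta(h)$. By $L^1$-duality via the sign function, $\norm{Z_{j,g}}_1=E[Z_{j,g}\,\mathrm{sgn}(Z_{j,g})]$, and since $\mathrm{sgn}(Z_{j,g})$ is bounded and $\mathcal{F}_{V_j^h}$-measurable, the identity gives $\norm{Z_{j,g}}_1=Cov(g(X_j),\mathrm{sgn}(Z_{j,g}))$. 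Enumerating the countable set $V_j^h$ I choose finite subsets $\Gamma_n\uparrow V_j^h$; by Lévy's upward martingale convergence theorem the variables $W_n:=E[\mathrm{sgn}(Z_{j,g})\mid\sigma(X_{\Gamma_n})]$ converge to $\mathrm{sgn}(Z_{j,g})$ in $L^1$, and by the Doob--Dynkin lemma each $W_n=F_n(X_{\Gamma_n})$ for some measurable $F_n$ with $\norm{F_n}_\infty\leq1$, i.e. $F_n\in\mathcal{G}_{|\Gamma_n|}^*$. Since $g(X_j)$ is bounded, $Cov(g(X_j),W_n)\ra Cov(g(X_j),\mathrm{sgn}(Z_{j,g}))=\norm{Z_{j,g}}_1$, while for each $n$ the definition of $\theta_{|\Gamma_n|}(h)$ gives $|Cov(F_n(X_{\Gamma_n}),g(X_j))|\leq\theta_{|\Gamma_n|}(h)\,\norm{F_n}_\infty\,Lip(g)\leq\theta(h)$. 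Letting $n\ra\infty$ yields $\norm{Z_{j,g}}_1\leq\theta(h)$, and taking the supremum over $g\in\mathscr{L}_1$ and $j\in\Z^m$ gives $\theta_h\leq\theta(h)$.

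The crux, and the only nontrivial step, is the finite-dimensional approximation of the bounded $\mathcal{F}_{V_j^h}$-measurable variable $\mathrm{sgn}(Z_{j,g})$ by functionals of finitely many coordinates. I expect to resolve it precisely through the martingale convergence along an increasing exhaustion of the countable index set $V_j^h$, which simultaneously supplies the $L^1$ convergence and the representation $W_n=F_n(X_{\Gamma_n})$ with $\norm{F_n}_\infty\leq1$ keeping the approximants inside the admissible test class $\mathcal{G}_{|\Gamma_n|}^*$. Combining the two inequalities gives $\theta(h)=\theta_h$ for every $h\in\N$.
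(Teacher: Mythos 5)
Your proof is correct and follows essentially the same route as the paper's: the easy inequality by conditioning on $\mathcal{F}_{V_j^h}$ and pulling the $\mathcal{F}_{V_j^h}$-measurable factor $F(X_\Gamma)$ out, and the converse by a martingale-convergence argument along a finite exhaustion of the countable set $V_j^h$. The only cosmetic difference is that you project $\mathrm{sgn}(Z_{j,g})$ onto the finite-dimensional $\sigma$-algebras $\sigma(X_{\Gamma_n})$, whereas the paper takes the sign of $E[G(X_j)\mid\mathcal{F}_{V_j^h\setminus V_j^k}]$, which is already a bounded function of finitely many coordinates, and then lets $k\to\infty$; both variants deliver the same bound.
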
 
\begin{proof}
	Fix $u,h\in\N$. We first show $\theta_u(h)\leq\theta_{h}$. Let $F\in \mathcal{G}^*_k$, $G\in \mathcal{G}_1$, $j\in\R^m$, $k\leq u$ and $\Gamma=\{i_1,\ldots,i_k\}$ with $i_1,\ldots,i_k\in V_j^h$. Now
	\begin{align*}
		&\left| Cov \left( \frac{F(X_\Gamma)}{\norm{F}_\infty}, \frac{G(X_j)}{ Lip(G)} \right) \right|
		= \left| E\left[  \frac{F(X_\Gamma)}{\norm{F}_\infty}\frac{G(X_j)}{Lip(G)} - E\left[  \frac{F(X_\Gamma)}{\norm{F}_\infty}\right] E\left[\frac{G(X_j)}{Lip(G)}\right] \right]\right|\\
		&= \left| E\left[E\left[\frac{F(X_\Gamma)}{\norm{F}_\infty}\frac{G(X_j)}{Lip(G)}\bigg| \mathcal{F}_{V_j^h} \right] - \frac{F(X_\Gamma)}{\norm{F}_\infty} E\left[\frac{G(X_j)}{Lip(G)}\right] \right]\right|\\
		&\leq E\left[ \left|\frac{F(X_\Gamma)}{\norm{F}_\infty}\right|  \left|E\left[ \frac{G(X_j)}{Lip(G)}\bigg| \mathcal{F}_{V_j^h} \right] -  E\left[\frac{G(X_j)}{Lip(G)}\right]\right| \right]\\
		&\leq \left\lVert E\left[ \frac{G(X_j)}{Lip(G)}\bigg| \mathcal{F}_{V_j^h} \right] -  E\left[\frac{G(X_j)}{Lip(G)}\right] \right\rVert_1
		= \theta(\mathcal{F}_{V_j^h},X_j)\leq \theta_{h}.
	\end{align*}
	Taking the supremum on the left hand side we obtain $\theta_u(h)\leq \theta_{h}$ and finally $\theta(h)\leq \theta_{h}$.\\
	To prove the converse inequality, we first remark that by the martingale convergence theorem
	\begin{gather}
		\theta(\mathcal{F}_{V_j^h},X_j)=\lim_{k\rightarrow\infty} \theta( \mathcal{F}_{V_j^h\backslash V_j^k},X_j).\label{eq:convergencetailalgebra}
	\end{gather}
	Now, let $G\in\mathscr{L}_1$, i.e. $G\in\mathcal{G}_1$ with $Lip(G)\leq1$ and $j\in\R^m$. We first define $X_j^h(k)=\{ X_i:i\in V_j^h\backslash V_j^k \}$ and $F(X_j^h(k))=sign(E[G(X_j) | \mathcal{F}_{V_j^h\backslash V_j^k}]-E[g(X_j)])$ for $k>h$. Then $F\in\mathcal{G}^*_{u}$ for $u=|V_j^h\backslash V_j^k|\in\N$ with $\norm{F}_\infty=1$ and it holds
	\begin{align*}
		&E\left[ \left| E[G(X_j) | \mathcal{F}_{V_j^h\backslash V_j^k}]-E[G(X_j)]\right| \right]
		=E\left[ \left(E[G(X_j) | \mathcal{F}_{V_j^h\backslash V_j^k}]-E[G(X_j)]\right)F\Big(X_j^h(k)\Big)\right]\\
		&=E\left[ E\left[F\Big(X_j^h(k)\Big)G(X_j) | \mathcal{F}_{V_j^h\backslash V_j^k}\right]-E\left[F\Big(X_j^h(k)\Big)\right] E\left[G(X_j)\right] \right]\\
		&=Cov\left(F\Big(X_j^h(k)\Big), G(X_j)\right)\leq \theta(h).
	\end{align*}
	Using (\ref{eq:convergencetailalgebra}) we can deduce the stated equality.
\end{proof}

We define $Q_X$ as the generalized inverse of the tail function $x\mapsto P(|X|>x)$ and $G_X$ as the inverse of $x\mapsto \int_0^xQ_{X}(u)du$.

\begin{Lemma}\label{lemma:finitesumclt}
	Let $X=(X_t)_{t\in\Z^m}$ be a stationary centered real-valued random field such that $\norm{X_0}_2<\infty$ and assume that 
	\begin{gather}
		\int_{0}^{\norm{X_0}_1}\tilde\theta(u)  Q_{X_0}\circ G_{X_0}(u)du<\infty,\label{eq:summability}
	\end{gather}
	with $Q_X$ and $G_X$ as defined above and $\tilde\theta(u)= \sum_{k\in V_0} \mathbb{1}_{\left\{u<\theta_{|k|}\right\}}$. Then, 
	\begin{gather}
		\sum_{k\in V_0}|E[X_kE_{|k|}[X_0]]|<\infty,\label{eq:finitesumclt}
	\end{gather}
	where $E_{|k|}[X_0]=E[X_0|F_{V_0^{|k|}}]$.
\end{Lemma}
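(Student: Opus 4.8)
The plan is to bound each summand $|E[X_kE_{|k|}[X_0]]|$ by a quantile integral whose upper cut-off is the mixingale coefficient $\theta_{|k|}$ (in fact $\gamma_{|k|}$), and then to interchange summation and integration so that the assumed condition (\ref{eq:summability}) delivers finiteness. First I would record the elementary but crucial observation that for $k\in V_0$ one has $|k|=\norm{k}_\infty$, hence $k\in V_0^{|k|}$, so that $X_k$ is $\mathcal{F}_{V_0^{|k|}}$-measurable. Writing $b_k:=E_{|k|}[X_0]=E[X_0\mid\mathcal{F}_{V_0^{|k|}}]$ and using $E[X_0]=0$, the summand then satisfies $|E[X_kb_k]|\le E[|X_k|\,|b_k|]$, where both factors are $\mathcal{F}_{V_0^{|k|}}$-measurable and $\norm{b_k}_1=\gamma_{|k|}$ by the stationary form (\ref{eq:gammathetastat}). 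Finiteness of every individual term is guaranteed by $\norm{X_0}_2<\infty$ through Cauchy--Schwarz.

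The core of the argument is the per-term covariance inequality
\[ |E[X_kb_k]|\le\int_0^{\gamma_{|k|}}Q_{X_0}\circ G_{X_0}(u)\,du, \]
which extends the corresponding estimate of Dedecker--Doukhan \cite{DD2003} from the causal past of a process to the lexicographic past $V_0^{|k|}$ of a field. I would establish it in three moves. By stationarity $Q_{X_k}=Q_{X_0}$, so the Hardy--Littlewood rearrangement inequality gives $E[|X_k|\,|b_k|]\le\int_0^1 Q_{X_0}(u)Q_{|b_k|}(u)\,du$. Since $b_k$ is a conditional expectation of $X_0$, conditional Jensen yields the majorization $\int_0^t Q_{|b_k|}(u)\,du\le\int_0^t Q_{X_0}(u)\,du$ for all $t\in[0,1]$, while $\int_0^1 Q_{|b_k|}=\gamma_{|k|}$. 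Comparing $Q_{|b_k|}$ with the truncated profile $Q_{X_0}\mathbb{1}_{[0,w]}$, where $w=G_{X_0}(\gamma_{|k|})$ is chosen so that $\int_0^w Q_{X_0}=\gamma_{|k|}$, an Abel summation (integration by parts against the non-increasing weight $Q_{X_0}$, using that the primitive of the difference is non-negative and vanishes at both endpoints) shows $\int_0^1 Q_{X_0}Q_{|b_k|}\le\int_0^w Q_{X_0}^2$. The substitution $u=\int_0^v Q_{X_0}$ rewrites $\int_0^w Q_{X_0}(v)^2\,dv=\int_0^{\gamma_{|k|}}Q_{X_0}\circ G_{X_0}(u)\,du$. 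As $\gamma_h\le\theta_h$, the bound holds a fortiori with $\theta_{|k|}$ in place of $\gamma_{|k|}$.

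With the per-term bound in hand I would sum over $k\in V_0$. Because $G_{X_0}$ maps $[0,\norm{X_0}_1]$ onto $[0,1]$ and $\int_0^1 Q_{X_0}=\norm{X_0}_1$, the function $Q_{X_0}\circ G_{X_0}$ is supported on $[0,\norm{X_0}_1]$; writing $\int_0^{\theta_{|k|}}=\int_0^\infty\mathbb{1}_{\{u<\theta_{|k|}\}}$ and applying Tonelli (all integrands non-negative) gives
\[ \sum_{k\in V_0}|E[X_kE_{|k|}[X_0]]|\le\int_0^{\norm{X_0}_1}\Big(\sum_{k\in V_0}\mathbb{1}_{\{u<\theta_{|k|}\}}\Big)Q_{X_0}\circ G_{X_0}(u)\,du=\int_0^{\norm{X_0}_1}\tilde\theta(u)\,Q_{X_0}\circ G_{X_0}(u)\,du, \]
which is finite by hypothesis (\ref{eq:summability}); this is precisely (\ref{eq:finitesumclt}).

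The main obstacle is the per-term covariance inequality, and inside it the conditional-expectation majorization $\int_0^t Q_{|b_k|}\le\int_0^t Q_{X_0}$ combined with the Abel-summation comparison against the monotone weight $Q_{X_0}$. The adaptation from the process setting of \cite{DD2003} reduces to replacing the causal $\sigma$-field by $\mathcal{F}_{V_0^{|k|}}$, which is legitimate exactly because $k\in V_0^{|k|}$ and because Lemma \ref{lemma:thetaiscondexp} identifies $\theta(h)$ with $\theta_h$; the interchange of sum and integral and the support bookkeeping for $Q_{X_0}\circ G_{X_0}$ are then routine.
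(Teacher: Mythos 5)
Your proof is correct, and it reaches the same Tonelli interchange as the paper, but it gets the crucial per-term estimate by a genuinely different route. The paper first rewrites $E\big[|X_k|\,\varepsilon_k E_{|k|}[X_0]\big]$ as $Cov(|X_k|\varepsilon_k,X_0)$ with $\varepsilon_k=\mathrm{sign}(E_{|k|}[X_0])$ and then simply cites the covariance inequality (4.2) of Proposition~1 in \cite{DD2003}, obtaining $2\int_0^{\gamma(\mathcal{F}_{V_0^{|k|}},X_0)/2}Q_{X_k}\circ G_{X_0}(u)\,du$; you instead bound $|E[X_kb_k]|\le E[|X_k|\,|b_k|]$ and re-derive the quantile bound from first principles via Hardy--Littlewood rearrangement, the conditional-Jensen majorization $\int_0^tQ_{|b_k|}\le\int_0^tQ_{X_0}$, and an Abel-summation comparison against the truncated profile $Q_{X_0}\mathbb{1}_{[0,w]}$ with $w=G_{X_0}(\gamma_{|k|})$. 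Your bound $\int_0^{\gamma_{|k|}}Q_{X_0}\circ G_{X_0}(u)\,du$ is in fact marginally sharper than the cited $2\int_0^{\gamma_{|k|}/2}(\cdots)$ since $Q_{X_0}\circ G_{X_0}$ is non-increasing, though the constant is immaterial for the conclusion. What the paper's route buys is brevity; what yours buys is a self-contained argument that makes explicit where the conditional-expectation structure and the measurability of $X_k$ with respect to $\mathcal{F}_{V_0^{|k|}}$ (i.e.\ $k\in V_0^{|k|}$) actually enter -- which is precisely the only point at which the process-case argument of \cite{DD2003} needs adapting to the lexicographic past. Two small points you gloss over, both harmless under the standing hypotheses: the boundary term $Q_{X_0}(t)H(t)$ at $t=0$ in the Abel step vanishes because $Q_{X_0}\in L^2(0,1)$ (this is where $\norm{X_0}_2<\infty$ is used), and the restriction of the integral to $[0,\norm{X_0}_1]$ relies on $Q_{X_0}\circ G_{X_0}$ vanishing beyond that point, which holds since $G_{X_0}$ saturates at $1$ and $Q_{X_0}(1)=0$.
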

\begin{proof}
	First, let us observe that $X_k$ is $\mathcal{F}_{V_0^{|k|}}$ measurable, since $k\in V_0^{|k|}$. Then, we define $ \varepsilon_{k}=sign(E_{|k|}[X_0])$ such that
	\begin{align*}
		&\sum_{k\in V_0}|E[X_kE_{|k|}[X_0]]|
		\leq \sum_{k\in V_0}E[|X_k||E_{|k|}[X_0]|]
		= \sum_{k\in V_0}E[|X_k| \varepsilon_{k} E_{|k|}[X_0]]\\
		&= \sum_{k\in V_0}E[ E_{|k|}[|X_k| \varepsilon_{k}X_0]]
		=\sum_{k\in V_0} Cov(|X_k| \varepsilon_{k},X_0).
	\end{align*}
	We use Equation (4.2) of \cite[Proposition 1]{DD2003} to get 
	\begin{align*}
		&\leq 2\sum_{k\in V_0}  \int_{0}^{\gamma\Big(\mathcal{F}_{V_0^{|k|}},X_0\Big)/2} Q_{ \varepsilon_{k}|X_k|}\circ G_{X_0}(u)du\\
		&=2  \int_{0}^{\norm{X_0}_1} \sum_{k\in V_0}\mathbb{1}_{\left\{u<\gamma\Big(\mathcal{F}_{V_0^{|k|}},X_0\Big)/2\right\}} Q_{X_k}\circ G_{X_0}(u)du\\
		&\leq 2  \int_{0}^{\norm{X_0}_1} \sum_{k\in V_0}\mathbb{1}_{\left\{u< \theta\Big(\mathcal{F}_{V_0^{|k|}},X_0\Big)/2\right\}} Q_{X_k}\circ G_{X_0}(u)du\\
		&\leq 2  \int_{0}^{\norm{X_0}_1} \sum_{k\in V_0}\mathbb{1}_{\left\{u< \theta_{|k|}/2 \right\}} Q_{X_k}\circ G_{X_0}(u)du.
	\end{align*}
	Now, let $\tilde\theta(u)= \sum_{k\in V_0} \mathbb{1}_{\left\{u<\theta_{|k|}\right\}}$ and note that $Q_{X_k}=Q_{X_0}$, such that
	\begin{gather*}
		\leq 2 \int_{0}^{\norm{X_0}_1}\tilde\theta(u)  Q_{X_0}\circ G_{X_0}(u)du.
	\end{gather*}
	This shows that (\ref{eq:finitesumclt}) holds if (\ref{eq:summability}) is satisfied.
\end{proof}

We now derive sufficient criteria such that (\ref{eq:summability}) holds similar to \cite[Lemma 2]{DD2003}.

\begin{Lemma}\label{lemma:summability}
	Let $X=(X_t)_{t\in\Z^m}$ be a stationary real-valued random field and $\theta_h$ defined as above. Then (\ref{eq:summability}) holds if $\norm{X}_r<\infty$ for some $r>p>1$ and $\sum_{h=0}^\infty  (h+1)^{m(p-1)\frac{(r-1)}{(r-p)}-1} \theta_h<\infty$. In particular, for $p=2$ and $r=2+\delta$ with $\delta>0$ the above condition holds if $\theta_{h}\in \mathcal{O}(h^{-\alpha})$ for $\alpha>m(1+\frac{1}{\delta})$.
\end{Lemma}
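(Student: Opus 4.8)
The plan is to reduce the summability condition (\ref{eq:summability}) to the prescribed series by three ingredients: a counting estimate for the lattice points of $V_0$, a change of variables that turns the moment hypothesis into integrability of $Q_{X_0}\circ G_{X_0}$, and a Hölder inequality separating the dependence coefficients $\theta_h$ from the moment quantities. Throughout I would use that $\theta_h=\theta(h)$ by Lemma \ref{lemma:thetaiscondexp}, so the coefficients entering $\tilde\theta$ are exactly the $\theta$-lex-coefficients; I may also assume $\theta_h$ non-increasing after replacing it by $\sup_{h'\ge h}\theta_{h'}$, which only enlarges $\tilde\theta$ and changes the series by a multiplicative constant.

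First I would count: the number of $k\in V_0$ with $|k|=\norm{k}_\infty=h$ is at most $C(h+1)^{m-1}$ (the surface of the cube $[-h,h]^m$, intersected with a half-space), so that
\begin{gather*}
	\tilde\theta(u)=\sum_{k\in V_0}\mathbb{1}_{\{u<\theta_{|k|}\}}\le C\,\Theta(u),\qquad \Theta(u)=\sum_{h\ge0}(h+1)^{m-1}\mathbb{1}_{\{u<\theta_h\}},
\end{gather*}
and it suffices to bound $\int_0^{\norm{X_0}_1}\Theta(u)\,Q_{X_0}\!\circ G_{X_0}(u)\,du$. Next I would record the change of variables $u=\int_0^x Q_{X_0}(v)\,dv$, under which $G_{X_0}(u)=x$ and $du=Q_{X_0}(x)\,dx$; this gives $\int_0^{\norm{X_0}_1}(Q_{X_0}\!\circ G_{X_0}(u))^{\beta}\,du=\int_0^1 Q_{X_0}(x)^{\beta+1}\,dx=\norm{X_0}_{\beta+1}^{\beta+1}$, and in particular $\int_0^{\norm{X_0}_1}(Q_{X_0}\!\circ G_{X_0})^{r-1}\,du=\norm{X_0}_r^r<\infty$. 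Applying Hölder's inequality with the conjugate exponents $\tfrac{r-1}{r-p}$ and $\tfrac{r-1}{p-1}$ then yields
\begin{gather*}
	\int_0^{\norm{X_0}_1}\Theta\,(Q_{X_0}\!\circ G_{X_0})\,du\le\Big(\int_0^{\norm{X_0}_1}\Theta(u)^{\frac{r-1}{r-p}}du\Big)^{\frac{r-p}{r-1}}\Big(\int_0^{\norm{X_0}_1}(Q_{X_0}\!\circ G_{X_0})^{\frac{r-1}{p-1}}du\Big)^{\frac{p-1}{r-1}},
\end{gather*}
whose second factor equals $\norm{X_0}_{(r+p-2)/(p-1)}^{\,\cdot}$ and is finite as soon as $\tfrac{r-1}{p-1}+1\le r$, i.e. $p\ge 2$; this covers the case $p=2$ relevant for the application. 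Everything then reduces to showing $\int_0^{\norm{X_0}_1}\Theta(u)^{\frac{r-1}{r-p}}\,du<\infty$.

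Finally I would estimate this integral by the series. With $\theta_h$ non-increasing, on $u\in[\theta_{h+1},\theta_h)$ one has $\Theta(u)=\sum_{h'\le h}(h'+1)^{m-1}\le C(h+1)^m$, so, writing $s=\tfrac{r-1}{r-p}$ and discarding a non-negative boundary term in the summation by parts,
\begin{gather*}
	\int_0^{\theta_0}\Theta(u)^{s}\,du\le C\sum_{h\ge0}(h+1)^{ms}(\theta_h-\theta_{h+1})\le C'\sum_{h\ge0}(h+1)^{ms-1}\theta_h=C'\sum_{h\ge0}(h+1)^{m\frac{r-1}{r-p}-1}\theta_h.
\end{gather*}
For $p=2$ this is exactly the stated series; for $p>2$ the exponent $m(p-1)\tfrac{r-1}{r-p}-1$ in the hypothesis exceeds $m\tfrac{r-1}{r-p}-1$, so (all terms being non-negative and $h+1\ge1$) the assumed convergence forces convergence of the bound above. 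The main obstacle, and the genuinely field-specific point, is precisely this last reduction: carrying the surface count $(h+1)^{m-1}$ of $\{|k|=h\}$ through the power $s$ of the counting function $\Theta$ and matching it against the prescribed exponent. The ``in particular'' assertion then follows by taking $p=2$, $r=2+\delta$, where $m(p-1)\tfrac{r-1}{r-p}-1=m(1+\tfrac1\delta)-1$, so that $\theta_h\in\mathcal{O}(h^{-\alpha})$ with $\alpha>m(1+\tfrac1\delta)$ makes $\sum_h(h+1)^{m(1+\frac1\delta)-1}\theta_h\asymp\sum_h(h+1)^{m(1+\frac1\delta)-1-\alpha}$ converge.
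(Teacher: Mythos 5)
Your proposal follows the same skeleton as the paper's proof (lattice-point count for $\{k\in V_0:|k|=h\}$, the identity $\int_0^{\norm{X_0}_1}(Q_{X_0}\circ G_{X_0})^{\beta}du=\norm{X_0}_{\beta+1}^{\beta+1}$, H\"older with exponents $\tfrac{r-1}{r-p}$ and $\tfrac{r-1}{p-1}$, then an Abel-summation/telescoping step turning $\int\Theta^{s}du$ into the series), and for the case $p=2$, $r=2+\delta$ --- the only case invoked in Theorem \ref{theorem:clt} --- your argument is correct and essentially identical to the paper's. The one genuine divergence is where the exponent $p-1$ sits in the H\"older step, and it creates a gap for $1<p<2$: you apply H\"older directly to $\Theta\cdot(Q_{X_0}\circ G_{X_0})$, so your second factor is $\norm{X_0}_{(r+p-2)/(p-1)}$, which is controlled by $\norm{X_0}_r<\infty$ only when $\tfrac{r+p-2}{p-1}\le r$, i.e.\ $p\ge2$ (as you note). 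The paper instead applies H\"older to $\tilde\theta(u)^{p-1}(Q_{X_0}\circ G_{X_0}(u))^{p-1}$, so that the second factor becomes exactly $\big(\int_0^{\norm{X_0}_1}Q_{X_0}^{r-1}\circ G_{X_0}\,du\big)^{p-1}=\norm{X_0}_r^{r(p-1)}$, finite for every $p\in(1,r)$, and the first factor is $\int\tilde\theta^{(p-1)\frac{r-1}{r-p}}du$; this is precisely where the factor $(p-1)$ in the stated exponent $m(p-1)\tfrac{r-1}{r-p}-1$ comes from, which your bound (exponent $m\tfrac{r-1}{r-p}-1$) only reproduces at $p=2$. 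For $p>2$ your weaker exponent is dominated by the hypothesized one, so your conclusion still follows there; for $1<p<2$ your route does not establish the lemma as stated. (To be fair, the paper's own proof for $p\neq2$ bounds $\int\tilde\theta^{p-1}(Q\circ G)^{p-1}$ rather than the integral in (\ref{eq:summability}) itself, and passing from the former to the latter on the finite measure space $[0,\norm{X_0}_1]$ again requires $p\ge2$; so the delicate regime is $1<p<2$ in both arguments, and it is never used.)

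Two small remarks: the monotonicity of $\theta_h$ that you impose by taking running suprema is automatic from $V_0^{h+1}\subset V_0^h$, so no modification of the coefficients is needed; and your discarded boundary term in the summation by parts is legitimately non-positive once one notes that convergence of $\sum_h(h+1)^{ms-1}\theta_h$ with $\theta_h$ non-increasing forces $(h+1)^{ms}\theta_{h+1}\to0$.
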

\begin{proof}
	As stated in \cite[Proof of Lemma 2]{DD2003} we note that $\int_{0}^{\norm{X}_1} Q_{X}^{r-1}\circ G_{X}(u)du=\int_0^1Q_X^r(u)d  u=E[|X|^r]$. Applying H\"older's inequality with $q=\frac{r-1}{r-p}$ and $q'=\frac{r-1}{p-1}$ gives
	\begin{align*}
		&\left(\int_{0}^{\norm{X}_1}\tilde\theta(u)^{p-1}  Q_{X}^{p-1}\circ G_{X}(u)du\right)^{(r-1)}\\
		&\leq\left( \int_{0}^{\norm{X}_1}\tilde\theta(u)^{(p-1)\frac{(r-1)}{(r-p)}} du\right)^{(r-p)} \left(\int_{0}^{\norm{X}_1}Q_{X}^{r-1}\circ G_{X}(u)du\right)^{(p-1)}\\
		&=\left( \int_{0}^{\norm{X}_1}\tilde\theta(u)^{(p-1)\frac{(r-1)}{(r-p)}} du\right)^{(r-p)} \norm{X}_r^{(rp-r)}.
	\end{align*}
	Let us note that $\theta_h$ as defined in (\ref{eq:gammathetastat}) is non-increasing. Then, for any function $f$ we have 
	\begin{align*}
		f(\tilde\theta(u))&=f\left( \sum_{k\in V_0} \mathbb{1}_{\left\{u<\theta_{|k|}\right\}} \right) =\sum_{h=0}^\infty f\left( \sum_{k\in V_0} \mathbb{1}_{\left\{u<\theta_{|k|}\right\}} \right) \mathbb{1}_{\{ \theta_{h+1}\leq u<\theta_{h} \}}\\
		&=\sum_{h=0}^\infty f\left( \sum_{k\in V_0:|k|\leq h} 1 \right) \mathbb{1}_{\{ \theta_{h+1}\leq u<\theta_{h} \}}.
	\end{align*}
	Note that $ \sum_{k\in V_0:|k|\leq h} 1=\sum_{i=0}^{m-1}h(2h+1)^i=\frac{1}{2}\left( (2h+1)^m-1\right)$ such that the above is equal to
	\begin{gather}
		\label{p1}
		\sum_{h=0}^\infty f\left(\frac{1}{2}\Big((2h+1)^m-1\Big) \right) \mathbb{1}_{\{ \theta_{h+1}\leq u<\theta_{h} \}}.
	\end{gather} 
	Let us assume that $f$ is monotonically increasing, sub-multiplicative and $f(0)=0$ such that $f\left(\frac{1}{2}\Big((2h+1)^m-1\Big) \right)\leq f(2^{m-1})f((h+1)^m)=\sum_{k=0}^h f(2^{m-1})\left(f\left((k+1)^m \right)-f\left(k^m \right)\right)$. Finally we can deduce that (\ref{p1}) is less than or equal to
	\begin{gather*}
		\sum_{h=0}^\infty f(2^{m-1}) f\left((h+1)^m \right) \mathbb{1}_{\{ \theta_{h+1}\leq u<\theta_{h} \}}
		=f(2^{m-1})\sum_{h=0}^\infty \left( f\left((h+1)^m \right)- f\left(h^m\right)\right) \mathbb{1}_{\{ u<\theta_{h} \}}.
	\end{gather*}
	Applying the above result for $f(x)=x^{v}$ with $v=(p-1)\frac{(r-1)}{(r-p)}$ and noting that $(h+1)^{vm}-h^{vm}\leq vm (h+1)^{vm-1}$ for $vm\geq1$ and $(h+1)^{vm}-h^{vm}\leq vm~h^{vm-1}$ for $vm<1$ ($h>0$ by the mean value theorem), we get that for a constant $C=2^{v(m-1)}\theta_1>0$
	\begin{align*}
		&\Bigg( \int_{0}^{\norm{X}_1}(\tilde\theta(u))^{(p-1)\frac{(r-1)}{(r-p)}} du\Bigg)^{(r-p)}\\
		&\leq \begin{cases}
			\left(C+ \int_{0}^{\norm{X}_1}2^{v(m-1)} vm \sum_{h=0}^\infty  (h+1)^{vm-1}\mathbb{1}_{\{ u<\theta_{h} \}} du\right)^{(r-p)}\!\!\!\!\!\!\!\!\!&,\text{ if $vm\geq1$}\\
			\left(C+ \int_{0}^{\norm{X}_1}2^{v(m-1)} vm \sum_{h=1}^\infty  h^{vm-1}\mathbb{1}_{\{ u<\theta_{h} \}} du\right)^{(r-p)}&, \text{ if $vm<1$}
		\end{cases}\\
		&=\begin{cases}
			\left(C+2^{v(m-1)}vm \sum_{h=0}^\infty  (h+1)^{vm-1} \theta_h\right)^{(r-p)}&,\text{ if $vm\geq1$}\\
			\left(C+2^{v(m-1)}vm \sum_{h=1}^\infty  h^{vm-1} \theta_h\right)^{(r-p)}&,\text{ if $vm<1$}
		\end{cases}\\
		&\leq \max\left(1,2^{r-p-1}\right)\left(C^{r-p}+\left((vm)2^{v(m-1)}\left(~ \sum_{h=0}^\infty  (h+1)^{vm-1} \theta_h\right)\!\right)^{(r-p)}\right),
	\end{align*}
	which concludes the proof.
\end{proof}

Before we give the proof of Theorem \ref{theorem:clt}, we use Lemma \ref{lemma:thetaiscondexp} to show Proposition \ref{proposition:thetavsalpha} and Proposition \ref{proposition:AR1notmixingbuttheta}.

\begin{proof}[Proof of Proposition \ref{proposition:thetavsalpha}]
	From Lemma \ref{lemma:thetaiscondexp}, \cite[Lemma 1]{DD2003}, \cite[Proposition 25.15 (I) (a)]{B2007}, and by applying H\"older's inequality for $q >1$ and noting that $\int_0^1Q_{X_0}^r(u)du=E[|X_0|^r]$, we obtain that
	\begin{align*}
		\theta(h)&=\theta_h=\theta\big(\mathcal{F}_{V_0^h},X_0\big)\leq2 \int_0^{2\alpha\big(\mathcal{F}_{V_0^h},\sigma(\{X_0\})\big)}Q_{X_0}(u)du \\[-10pt]
		&= 2\!\int_0^{1}\!\!\mathbb{1}_{\left\{u\leq2\alpha\big(\mathcal{F}_{V_0^h},\sigma(\{X_0\})\big)\right\}}Q_{X_0}(u)du
		\leq 2^{\frac{2q-1}{q}}\!\!\alpha\big(\mathcal{F}_{V_0^h},\sigma(\{X_0\})\big)^{\frac{q-1}{q}}\!\!\left( \int_0^{1}Q_{X_0}^q(u)du\right)^{\frac{1}{q}}.
	\end{align*}
	Thus, we have that $\theta(h)\leq 2^{\frac{2q-1}{q}}\alpha(h)^{\frac{q-1}{q}} \norm{X_0}_q$ for $m=1$, and $\theta(h)\leq 2^{\frac{2q-1}{q}}\alpha_{\infty,1}(h)^{\frac{q-1}{q}} \norm{X_0}_q$ for all $m\geq1$.
\end{proof}

\begin{proof}[Proof of Proposition \ref{proposition:AR1notmixingbuttheta}]
	Consider $\theta_{p,r}(h)$ and $\tilde\delta_{1,n}$ as defined in \cite[Definition 2.3 and Section 3.1.4]{DDLLLP2008}, respectively. Due to Lemma \ref{lemma:thetaiscondexp} it holds that $\theta(h)=\theta_h=\theta_{1,1}(h)\leq \theta_{1,\infty}(h)\leq \tilde\delta_{1,h}$, where the last inequality follows from \cite[Section 3.1.4]{DDLLLP2008}. We define $(\tilde\xi_{t})_{t\in\Z}$ as $\tilde\xi_{t}=\xi_t$ for $t>0$ and $\tilde\xi_{t}=\xi'_t$ for $t\leq0$, where $\xi'_t$ is an independent copy of $\xi_t$. For $\tilde{X}_t=\sum_{j=0}^\infty 2^{-j-1}\tilde\xi_{t-j}$, we have
	\begin{align*}
		\norm{X_h-\tilde{X}_h}_1\leq 2^{-h}=:\tilde\delta_{1,h}\underset{h\rightarrow\infty}{\longrightarrow}0.
	\end{align*}
	Thus, $X$ is $\theta$-lex-weakly dependent. To show that $X$ is neither $\alpha$- nor $\alpha_{\infty,1}$-mixing, we follow the idea given in \cite[Section 1.5]{DDLLLP2008}. The set $A=\{X_0\leq \frac{1}{2}\}$ belongs to the past $\sigma$-algebra $\sigma(\{X_s,s\leq0\})$, to the $\sigma$-algebra generated by $X_h$ and to the future $\sigma$-algebra $\sigma(\{X_s,s\geq h\})$. Hence for all $h$, $\alpha(h)\geq |P(A)P(A)-P(A)|=\frac{1}{4}$, and similarly $\alpha_{\infty,1}(h)\geq \frac{1}{4}$. 
\end{proof}

\begin{proof}[Proof of Theorem \ref{theorem:clt}]
	In order to use \cite[Theorem 1]{D1998} we need to show that
	\begin{gather*}
		\sum_{k\in V_0}|E[X_kE_{|k|}[X_k]]|<\infty.
	\end{gather*}
	By Lemma \ref{lemma:finitesumclt} and Lemma \ref{lemma:summability} the result is proven if $\theta_{h}\in \mathcal{O}(h^{-\alpha})$ with $\alpha>m(1+\frac{1}{\delta})$.
	Finally, since $X$ is stationary an application of Lemma \ref{lemma:thetaiscondexp} concludes.
\end{proof}

\subsection{Proofs of Section \ref{sec3-3}}
\label{sec5-2}

\begin{proof}[Proof of Proposition \ref{proposition:mmathetaweaklydep}]
	\begin{enumerate}[(i)]
		\item Let $t\in\R^{m}$, $\psi>0$. We restrict the MMAF $X$ to a finite support and define the truncated sequence
		\begin{gather}\label{equation:truncatedtheta}
			X_{t}^{(\psi)}=\int_S\int_{A_{t}\backslash V_{t}^\psi}f(A,t-s)\Lambda(dA,ds).		\end{gather} 
		Note that the kernel function $f$ is square integrable such that (\ref{equation:intcond1}), (\ref{equation:intcond2}) and (\ref{equation:intcond3}) hold. Therefore, $f$ is $\Lambda$-integrable. Since $E[X_{t}X_{t}']<\infty$ for all $t\in\R^m$, by Proposition \ref{proposition:MMAexistencemoments} we can derive an upper bound of the expectation
		\begin{align}
			\begin{aligned}\label{eq:L1norminequality}
				E\big[\norm{X_{t}-X_{t}^{(\psi)}}\big]&=E\bigg[ \Big\lVert\int_S\int_{A_t\cap V_{t}^\psi}f(A,t-s)\Lambda(dA,ds)\Big\rVert\bigg]\\ 
				&\leq E\bigg[ \Big\lVert\int_S\int_{A_t\cap V_{t}^\psi}f(A,t-s)\Lambda(dA,ds)\Big\rVert^2\bigg]^{\frac{1}{2}}\\
				& =\left(\sum_{\kappa=1}^nE\Bigg[ \bigg( \Big(\int_S\int_{A_t\cap V_{t}^\psi}f(A,t-s)\Lambda(dA,ds)\Big)^{(\kappa)}\bigg)^2\Bigg]\right)^{\frac{1}{2}}.
			\end{aligned}
		\end{align}
		Using Proposition \ref{proposition:MMAmoments} and the translation invariance of $A_t$ and $V_t^\psi$, the above is equal to
		\begin{gather*}
			\Big(\int_S\int_{A_0 \cap V_{0}^\psi}\textup{tr}(f(A,-s)\Sigma_\Lambda f(A,-s)')ds\pi(dA)\Big)^{\frac{1}{2}}.
		\end{gather*}
		Let $u\in\N, h\in\R^{+}, \Gamma=\{i_1,\ldots,i_u\}\in(\R^m)^u$ and $j\in\R^m$ as in Definition \ref{thetaweaklydependent} such that $i_1,\ldots,i_u\in V_j^h$. Moreover, let $G\in\mathcal{G}_1$ and $F\in\mathcal{G}_u^*$. For $a\in\{1,\ldots,u\}$ define
		\begin{gather*}
			X_{i_a}=\int_S\int_{A_{i_a}}f(A,i_a-s)\Lambda(dA,ds), \quad \textrm{and} \quad
			X_{j}^{(\psi)}=\int_S\int_{A_{j}\backslash V_{j}^\psi}f(A,j-s)\Lambda(dA,ds).
		\end{gather*}
		W.l.o.g. we assume that $i_a\leq_{lex}i_u$ for all $a\in\{1,\ldots,u\}$. If there exists a $\psi$ such that $A_{i_u}\cap A_{j}\backslash V_{j}^\psi=\emptyset$, then $A_{i_a}\cap A_{j}\backslash V_{j}^\psi=\emptyset$. \\
		Now, $A$ is translation invariant with initial sphere of influence $A_0$. Furthermore, $A_0$ satisfies (\ref{condition:scalarproduct}). Then, for $\psi(h)$ as defined in (\ref{equation:psi}) it holds that $A_{i_u}\cap A_{j}\backslash V_{j}^\psi=\emptyset$.\\
		From now on we set $\psi=\psi(h)$. We then get that $I_a=S\times A_{i_a}$ and $J=S\times A_{j}\backslash V_{j}^\psi$ are disjoint or have intersection on a set $S\times O$, where $O\subset\R^m$ and $\dim(O)<m$. Since $(\pi\times\lambda)(S\times O)=0$ and by the definition of a L\'evy basis, $X_{i_a}$ and $X_j^{(\psi)}$ are independent for all $a\in\{1,\ldots,u\}$. Finally, we get that $X_\Gamma$ and $X_j^{(\psi)}$ are independent and therefore also $F(X_\Gamma)$ and $G(X_j^{(\psi)})$. Now
		\begin{align*}
			& |Cov(F(X_\Gamma),G(X_j))|\\&\leq |Cov(F(X_\Gamma),G(X_j^{(\psi)}))|+|Cov(F(X_\Gamma),G(X_j)-G(X_j^{(\psi)}))|\\
			&=|E[(G(X_j)-G(X_j^{(\psi)}))F(X_\Gamma)]-E[G(X_j)-G(X_j^{(\psi)})]E[F(X_\Gamma)]|\\
			&\leq 2\norm{F}_\infty E\big[|G(X_j)-G(X_j^{(\psi)})|\big] \leq 2Lip(G) \norm{F}_\infty E[\norm{X_{j}-X_{j}^{(\psi)}}]\\
			&\leq 2 Lip(G)\norm{F}_\infty \Big(\int_S\int_{A_0\cap V_{0}^\psi}\textup{tr}(f(A,-s)\Sigma_\Lambda f(A,-s)')ds\pi(dA)\Big)^{\frac{1}{2}},
		\end{align*}
		using (\ref{eq:L1norminequality}). Therefore, $X$ is $\theta$-lex weakly dependent with $\theta$-lex-coefficients
		\begin{gather*}
			\theta_X(h)\leq 2 \Big(\int_S\int_{A_0\cap V_{0}^\psi}\textup{tr}(f(A,-s)\Sigma_\Lambda f(A,-s)')ds\pi(dA)\Big)^{\frac{1}{2}},
		\end{gather*}
		which converge to zero as $h$ goes to infinity by applying the dominated convergence theorem.
		\item Let $t\in\R^m$, $\psi>0$ and $X_t^{(\psi)}$ be defined as in (i). By applying Proposition \ref{proposition:MMAmoments}, we obtain
		\begin{align*}
			E\big[\norm{X_t-X_t^{(\psi)}}\big]&\leq\bigg(\int_S\int_{A_0\cap V_t^{\psi}}\textup{tr}(f(A,t-s)\Sigma_\Lambda f(A,t-s)')ds\pi(dA)\\ &\qquad+\Big\lVert\int_S\int_{A_0\cap V_t^{\psi}} f(A,t-s)\mu_\Lambda  ds\pi(dA)\Big\rVert^2\bigg)^{\frac{1}{2}}.
		\end{align*} 
		Finally, by proceeding as in the proof of (i), we obtain the stated bound for the $\theta$-lex-coefficients.
		\item Since the kernel function $f$ is in $L^1$ the Equations (\ref{equation:intcondfinvar1}) and (\ref{equation:intcondfinvar2}) hold. Moreover, by Proposition \ref{proposition:MMAexistencemoments} $f$ is $\Lambda$-integrable and $E[X_{t}]<\infty$. Let $t\in\R^m$, $\psi>0$ and $X_t^{(\psi)}$ be defined as in (i). Then, we can derive with the help of Proposition \ref{proposition:MMAmoments} that
		\begin{align*}
			& E\big[\norm{X_{t}-X_{t}^{(\psi)}}\big]\\
			&\leq \Big(\int_S\int_{A_0\cap V_{0}^\psi} \big\lVert f(A,-s)\gamma_0\big\rVert ds\pi(dA)+\int_S\int_{A_0 \cap V_{0}^\psi} \int_{\R^d} \big\lVert f(A,-s)y\big\rVert \nu(dy)ds\pi(dA) \Big),
		\end{align*}
		where we used that $E[\int_Ef(t)d\mu(t)]=\int_Ef(t)d\nu(t)$ for a Poisson random measure $\mu$ with corresponding intensity measure $\nu$ and an arbitrary set $E$.\\
		Now for $F$, $G$, $X_\Gamma$, $X_j$ and $\psi=\psi(h)$ as described in the proof of (i) we get
		\begin{align*}
			|Cov(F(X_\Gamma),G(X_j))|
			\leq2Lip(G)\norm{F}_\infty& \Big(\int_S\int_{A_0 \cap V_{0}^{\psi(h)}} \big\lVert f(A,-s)\gamma_0\big\rVert ds\pi(dA)\\&+\int_S\int_{A_0\cap V_{0}^{\psi(h)}} \int_{\R^d} \big\lVert f(A,-s)y\big\rVert \nu(dy)ds\pi(dA) \Big).
		\end{align*}
		Therefore $X$ is $\theta$-lex weakly dependent with $\theta$-lex-coefficients
		\begin{align*}
			\theta_X(h)\leq 2 \Big(&\int_S\int_{A_0 \cap V_{0}^{\psi(h)}} \big\lVert f(A,-s)\gamma_0\big\rVert ds\pi(dA)\\&+\int_S\int_{A_0\cap V_{0}^{\psi(h)}} \int_{\R^d} \big\lVert f(A,-s)y\big\rVert \nu(dy)ds\pi(dA) \Big),
		\end{align*}
		which converge to zero as $h$ goes to infinity by applying the dominated convergence theorem.
		\item We use the notations described in (i). We have that $\Lambda$ is in distribution the sum of two $\R^d$-valued independent L\'evy bases $\Lambda_1$ and $\Lambda_2$ with characteristic quadruplets $(\gamma,\Sigma,\restr{\nu}{\norm{x}\leq1},\pi)$ and $(0,0,\restr{\nu}{\norm{x}>1},\pi)$, respectively. Since $f\in L^1\cap L^2$ we know that both integrals $X_t^{(\Lambda_1)}=\int_S\int_{\R^m}f(A,t-s)\Lambda_1(dA,ds)$ and $X_t^{(\Lambda_2)}=\int_S\int_{\R^m}f(A,t-s)\Lambda_2(dA,ds)$ exist. Additionally, it holds that $\int_S\int_{\R^m}f(A,t-s)\Lambda(dA,ds)=X_t^{(\Lambda_1)}+X_t^{(\Lambda_2)}$. By noting that
		\begin{align*}
			E\big[\norm{X_t-X_t^{(\psi)}}\big]&\leq E\big[\norm{X_t^{(\Lambda_1)}-(X_t^{(\Lambda_1)})^{(\psi)}}\big] +E\big[\norm{X_t^{(\Lambda_2)}-(X_t^{(\Lambda_2)})^{(\psi)}}\big]\\
			&\leq E\bigg[ \Big\lVert X_t^{(\Lambda_1)}-(X_t^{(\Lambda_1)})^{(\psi)}\Big\rVert ^2\bigg]^{\frac{1}{2}}+E\bigg[\Big\lVert X_t^{(\Lambda_2)}-(X_t^{(\Lambda_2)})^{(\psi)}\Big\rVert\bigg]
		\end{align*}
		and following the proof of (ii) (for the first summand) and (iii) (for the second summand) we obtain the stated bound for the $\theta$-lex-coefficients.
	\end{enumerate}
\end{proof}

\begin{proof}[Proof of Proposition \ref{proposition:vectorinfluencedweaklydep}]
	In order to show that $Z$ is a well defined MMAF, we need to check that $g(A,s)$ is $\Lambda$-integrable as described in Theorem \ref{theorem:2}, i.e. $g(A,s)$ satisfies the conditions (\ref{equation:intcond1}), (\ref{equation:intcond2}) and (\ref{equation:intcond3}). For the sake of brevity we will consider in the following the norm $\norm{(x_1,\ldots,x_m)}=\norm{x_1}+\cdots+\norm{x_m}$ for $x_i\in\R$ for $i=1,\ldots,m$.\\
	Let us start by showing that $g(A,s)$ is $\Lambda$-integrable for $k=1$, then
	\begin{gather*}
		g(A,s)=\big(f\big(A,s\big),f\big(A,s-s_1,\ldots, f\big(A,s-s_{|S_1|}\big)'\text{, where }|S_1|=2\cdot 3^{m-1}.
	\end{gather*}
	Note that for $x\in\R^d$
	\begin{align*}
		\mathbb{1}_{[0,1]}(\norm{g(A,s)x})&\leq\mathbb{1}_{[0,1]}(\norm{f(A,s)x}),\\
		\mathbb{1}_{[0,1]}(\norm{g(A,s)x})&\leq\mathbb{1}_{[0,1]}(\norm{f(A,s-s_1)x}),\\
		&\ldots \\
		\mathbb{1}_{[0,1]}(\norm{g(A,s)x})&\leq\mathbb{1}_{[0,1]}(\norm{f(A,s-s_{|S_1|})x}),
	\end{align*}
	such that
	{\fontsize{10}{4}
		\begin{align*}
			&\int_S\!\int_{\R^m} \Big\lVert g(A,s)\gamma\!+\!\! \int_{\R^d}\!g(A,s)x\left(\mathbb{1}_{[0,1]}(\norm{g(A,s)x})\!-\mathbb{1}_{[0,1]}(\norm{x})\right)\nu(dx)\Big\rVert ds \pi(dA) \\
			&\leq \int_S\!\int_{\R^m} \Big\lVert f(A,s)\gamma\!+\!\! \int_{\R^d}\!f(A,s)x\left(\mathbb{1}_{[0,1]}(\norm{f(A,s)x})\!-\mathbb{1}_{[0,1]}(\norm{x})\right)\nu(dx)\Big\rVert ds \pi(dA)\\
			&+\int_S\!\int_{\R^m} \Big\lVert f(A,s-s_1)\gamma+\!\! \int_{\R^d}\!f(A,s-s_1)x\left(\mathbb{1}_{[0,1]}(\norm{f(A,s-s_1)x})\!-\mathbb{1}_{[0,1]}(\norm{x})\right)\nu(dx)\Big\rVert ds \pi(dA)\\
			&+\cdots+\!\!\int_S\!\int_{\R^m} \Big\lVert f\big(A,s-s_{|S_1|})\gamma\!+\!\! \int_{\R^d}\!f(A,s-s_{|S_1|})\\
			&\qquad\qquad\times x\left(\mathbb{1}_{[0,1]}(\norm{f(A,s-s_{|S_1|})}\!-\mathbb{1}_{[0,1]}(\norm{x})\right)\!\nu(dx)\Big\rVert ds \pi(dA).
	\end{align*}}
	Since $f$ is $\Lambda$-integrable, we can conclude that the above expression is finite and (\ref{equation:intcond1}) holds. Now
	\begin{gather*}
		\int_S\!\int_{\R^m}\!\norm{g(A,s)\Sigma g(A,s)'} ds\pi(dA)\\
		=\int_S\!\int_{\R^m}\!\!\norm{f(A,s)\Sigma f(A,s)'} ds\pi(dA) \!+\!\!\! \int_S\!\int_{\R^m}\!\!\norm{f(A,s-s_1)\Sigma f(A,s-s_1)'} ds\pi(dA)\\
		+\ldots+\int_S\!\int_{\R^m}\!\!\norm{f(A,s-s_{|S_1|})\Sigma f(A,s-s_{|S_1|})'} ds\pi(dA),
	\end{gather*}
	and it is finite since $f$ is $\Lambda$-integrable and (\ref{equation:intcond2}) holds. Since $\left(\sum_{i=1}^na_i\right)^2\leq n \sum_{i=1}^na_i^2$, we have
	\begin{gather*}
		\norm{g(A,s)}^2\leq |S_1| \Big( \norm{f(A,s)}^2+\norm{f(A,s-s_1)}^2+\ldots+\norm{f(A,s-s_{|S_1|})}^2\Big),
	\end{gather*}
	and finally
	\begin{align*}
		&\int_S\int_{\R^m}\int_{\R^d}  \Big(1\wedge \norm{g(A,s)x}^2 \Big)\nu(dx)ds\pi(dA)\\
		&\leq |S_k| \bigg(\int_S\int_{\R^m}\int_{\R^d}  \Big(1\wedge  \norm{f(A,s)}^2 \Big)\nu(dx)ds\pi(dA)\\ &\qquad+\int_S\int_{\R^m}\int_{\R^d}  \Big(1\wedge  \norm{f(A,s-s_1)}^2 \Big)\nu(dx)ds\pi(dA)\\
		&\qquad+\ldots+ \int_S\int_{\R^m}\int_{\R^d}  \Big(1\wedge \norm{f(A,s-s_{|S_1|})}^2\Big)\nu(dx)ds\pi(dA)\bigg),
	\end{align*}
	that is finite since $f$ satisfies  (\ref{equation:intcond3}). Thus, $g$ is $\Lambda$-integrable and $Z$ is an $(A,\Lambda)$-influenced MMAF. By induction, the above statement can be shown for each $k \in \N$.\\
	Assume that $X$ satisfies the assumptions of Proposition \ref{proposition:mmathetaweaklydep} (i) and consider $\psi(h)$ as defined in (\ref{equation:psi}). Then,
	\begin{align*}
		\theta_Z^{(i)}(h)\leq&2\Bigg(\int_S\int_{A_0\cap V_{0}^{\psi(h)}}\textup{tr}\Big(g(A,-s)\Sigma_\Lambda g(A,-s)'\Big)ds\pi(dA)\Bigg)^{\frac{1}{2}}\\
		=&2\Bigg(\int_S\int_{A_0\cap V_{0}^{\psi(h)}}\textup{tr}\Big(f(A,-s)\Sigma_\Lambda f(A,-s)'\Big)ds\pi(dA)\\
		&~+\int_S\int_{A_0\cap V_{0}^{\psi(h)}}\textup{tr}\Big(f(A,s_1-s)\Sigma_\Lambda f(A,s_1-s)'\Big)ds\pi(dA)\\
		&~+\ldots+\int_S\int_{A_0\cap V_{0}^{\psi(h)}}\textup{tr}\Big(f(A,s_{|S_k|}-s)\Sigma_\Lambda f(A,s_{|S_k|}-s\big)')ds\pi(dA)\Bigg)^{\frac{1}{2}}\\
		\leq&2 |S_k|^{\frac{m}{2}} \Bigg(\int_S\int_{A_0\cap V_{0}^{\psi(h)-k}}\textup{tr}\Big(f\big(A,-s\big)\Sigma_\Lambda f\big(A,-s\big)'\Big)ds\pi(dA)\Bigg)^{\frac{1}{2}}\\
		=&|S_k|^{\frac{m}{2}} \hat{\theta}_X^{(i)}(h-\psi^{-1}(k)).
	\end{align*}
	where $\psi^{-1}$ denotes the inverse of $\psi$, for all $\psi(h)>k$. Thus, $Z$ is a $(k+1)(2k+1)^{m-1}$-dimensional $\theta$-lex-weakly dependent MMAF. Similar calculations lead to the other statements in (\ref{eq:mmainfluencedvectorweakly}).
\end{proof}

\subsection{Proof of Section \ref{sec3-4}}
\label{sec5-3}
\begin{proof}[Proof of Theorem \ref{theorem:thetasamplemeanspecial}]
	Let us first consider $X$ to be univariate. In order to use \cite[Theorem 1]{D1998} we need to show 
	\begin{gather}\label{equation:L1condition}
		\sum_{k\in V_0}|X_kE_{|k|}[X_0]|\in L^1.
	\end{gather} The H\"older inequality implies
	\begin{gather*}
		\norm{X_kE_{|k|}(X_0)}_{1}\leq \norm{X_k}_{2}\norm{E[X_0|\mathcal{F}_{V_0^{|k|}}]}_{2},
	\end{gather*}
	where $ \norm{X_k}_{2}<C$ for all $k$ and a constant $C$. Furthermore, we note that $\lVert E[X|\mathcal{F}] \rVert_{2}= \lVert E[E[X|\mathcal{H}]|\mathcal{F}] \rVert_{2}\leq \lVert E[X|\mathcal{H}]\rVert_{2}$ holds for a  $\sigma$-algebra $\mathcal{H}$, a sub $\sigma$-algebra $\mathcal{F}$ and an $L^2$ random variable $X$, as the conditional expectation is the orthogonal projection in $L^2$. Now, using (\ref{equation:influencedMMAfield})
	\begin{align*}
		\norm{E[X_0|\mathcal{F}_{V_0^{|k|}}]}_{2}=& \left\lVert E\left[ \int_S\int_{V_0}\mathbb{1}_{A_0}(s) f(A,-s)\Lambda(dA,ds)\Big|\sigma(X_l:l\in V_0^{|k|} )\right]\right\rVert_{2}\\
		\leq& \left\lVert E\left[ \int_S\int_{V_0}\mathbb{1}_{A_0}(s) f(A,-s)\Lambda(dA,ds)\Big|\sigma(\Lambda(B):B\in\mathcal{B}(V_0^{|k|}))\right]\right\rVert_{2}\\
		=&\Bigg\lVert E\left[ \int_S\int_{V_0^{|k|}}\mathbb{1}_{A_0}(s) f(A,-s)\Lambda(dA,ds)\Big|\sigma(\Lambda(B):B\in\mathcal{B}(V_0^{|k|}))\right]\\
		&~+E\left[\int_S\int_{V_0\backslash V_0^{|k|}}\mathbb{1}_{A_0}(s) f(A,-s)\Lambda(dA,ds)\Big|\sigma(\Lambda(B):B\in\mathcal{B}(V_0^{|k|}))\right]\Bigg\rVert_{2}.
	\end{align*}
	We note that $\int_S\int_{V_0^{|k|}}\mathbb{1}_{A_0}(s) f(A,-s)\Lambda(dA,ds)$ is measurable with respect to $\sigma(\Lambda(B):B\in\mathcal{B}(V_0^{|k|}))$. Since $\Lambda$ is a L\'evy basis (in particular independent for disjoint sets) we get that $\int_S\int_{V_0\backslash V_0^{|k|}}\mathbb{1}_{A_0}(s) f(A,-s)$ $\Lambda(dA,ds)$ is independent of $\sigma(\Lambda(B):B\in\mathcal{B}(V_0^{|k|}))$, such that the above equation is equal to
	\begin{gather*}
		\Bigg\lVert \int_S\int_{V_0^{|k|}}\mathbb{1}_{A_0}(s) f(A,-s)\Lambda(dA,ds)+E\left[\int_S\int_{V_0\backslash V_0^{|k|}}\mathbb{1}_{A_0}(s)f(A,-s)\Lambda(dA,ds)\right]\Bigg\rVert_{2}.
	\end{gather*}
	Since $\gamma+\int_{\norm{x}>1}x\nu(dx)=0$ the second summand is equal to zero and the above is equal to
	\begin{align*}
		&\Bigg\lVert \int_S\int_{V_0^{|k|}}\mathbb{1}_{A_0}(s) f(A,-s) f(A,-s)\Lambda(dA,ds)\Bigg\rVert_{2}\\
		&=\Big(\int_S\int_{A_0\cap V_0^{|k|}}\textup{tr}(f(A,-s)\Sigma_\Lambda f(A,-s)')ds\pi(dA)\Big)^{\frac{1}{2}}
		=\theta_X(|k|),
	\end{align*}
	using Proposition \ref{proposition:MMAmoments}.\\ 
	The stated result then follows from \cite[Theorem 1]{D1998} using the dominated convergence theorem. 
	The Cram\'er-Wold device establishes the multivariate case straightforwardly.
\end{proof}

\subsection{Proofs of Section \ref{sec3-5}}
\label{sec5-4}
\begin{proof}[Proof of Proposition \ref{proposition:mmaetaweaklydep}]
	\begin{enumerate}[(i)]
		\item Let $t\in\R^m$ and $\psi>0$. We restrict the MMAF to a finite support, and define the sequence
		\begin{align}\label{equation:truncatedeta}
			\begin{aligned}
				X_t^{(\psi)}&=\int_S\int_{\R^m}f(A,t-s)\mathbb{1}_{(-\psi,\psi)^m}(t-s)\Lambda(dA,ds)\\
				&=\int_S\int_{(t-\psi,t+{\psi})^m}f(A,t-s)\Lambda(dA,ds).
			\end{aligned}
		\end{align} 
		Note that the kernel function $f$ is square integrable such that (\ref{equation:intcond1}), (\ref{equation:intcond2}) and (\ref{equation:intcond3}) hold. Therefore, $f$ is $\Lambda$-integrable. Since $E[X_tX_t']<\infty$ for all $t\in\R^m$, by Proposition \ref{proposition:MMAexistencemoments} we can derive an upper bound of the expectation
		\begin{align}
			\begin{aligned}\label{eq:L1norminequality2}
				&E\big[\norm{X_t-X_t^{(\psi)}}\big]=E\bigg[ \Big\lVert\int_S\int_{\big((t-\psi,t+\psi)^m\big)^c}f(A,t-s)\Lambda(dA,ds)\Big\rVert\bigg]\\ 
				&\qquad\leq E\bigg[ \Big\lVert\int_S\int_{\big((t-\psi,t+\psi)^m\big)^c}f(A,t-s)\Lambda(dA,ds)\Big\rVert^2\bigg]^{\frac{1}{2}}\\
				&\qquad= \left(\sum_{\kappa=1}^nE\Bigg[ \bigg( \Big(\int_S\int_{\big((t-\psi,t+\psi)^m\big)^c}f(A,t-s)\Lambda(dA,ds)\Big)^{(\kappa)}\bigg)^2\Bigg]\right)^{\frac{1}{2}},
			\end{aligned}
		\end{align}
		where $x^{(\kappa)}$ denotes the $\kappa$th coordinate of $x\in\R^n$.
		Using Proposition \ref{proposition:MMAmoments} and the stationarity of $X$ this is equal to
		\begin{gather*}
			\Big(\int_S\int_{\big((\psi,\psi)^m\big)^c}\textup{tr}(f(A,-s)\Sigma_\Lambda f(A,-s)')ds\pi(dA)\Big)^{\frac{1}{2}}.
		\end{gather*}
		For $(u,v)\in\N\times\N$ let $F\in\mathcal{G}_u$, $G\in\mathcal{G}_v$, $h\in\R^{+}, \Gamma_i=\{i_1,\ldots,i_u\}\in(\R^m)^u$ and $\Gamma_j=\{j_1,\ldots,j_v\}\in(\R^m)^v$ as in Definition \ref{etaweaklydependent} such that $dist(\Gamma_i,\Gamma_j)\geq h$. For $a\in\{1,\ldots,u\}$ and $b\in\{1,\ldots,v\}$ define
		\begin{align*}
			X_{i_a}^{(\psi)}&=\int_S\int_{(i_a-{\psi},i_a+{\psi})^m}f(A,i_a-s)\,\Lambda(dA,ds) \text{ and }\\
			X_{j_b}^{(\psi)}&=\int_S\int_{(j_b-{\psi},j_b+{\psi})^m}f(A,j_b-s)\,\Lambda(dA,ds).
		\end{align*}
		Now consider $a\in\{1,\ldots,u\}$ and $b\in\{1,\ldots,v\}$ such that $\inf_{1\leq x \leq u, 1\leq y \leq v}\norm{i_x-j_y}_\infty=\norm{i_a-j_b}_\infty$. Define the two sets $I_a=S\times (i_a-{\psi},i_a+{\psi})^m$ and $J_b=S\times (j_b-{\psi},j_b+{\psi})^m$. Let $\psi=\frac{h}{2}$ and $\norm{i_a-j_b}_\infty\geq h$. Then, it holds that $I_a$ and $J_b$ are disjoint as well as $I_{\tilde{a}}$ and $J_{\tilde{b}}$ for all $\tilde{a}=1,\ldots,u$ and $\tilde{b}=1,\ldots,v$. By the definition of a L\'evy basis $X_{i_a}^{(\psi)}$ and $X_{j_b}^{(\psi)}$ are independent for all $a\in\{1,\ldots,u\}$ and $b\in\{1,\ldots,v\}$. Finally, we get that $X_{\Gamma_i}^{(\psi)}$ and $X_{\Gamma_j}^{(\psi)}$ are independent and therefore also $F(X_{\Gamma_i}^{(\psi)})$ and $G(X_{\Gamma_j}^{(\psi)})$. Now, 
		\begin{align*}
			&|Cov(F(X_{\Gamma_i}),G(X_{\Gamma_j}))|\\&\leq |Cov(F(X_{\Gamma_i})-F(X_{\Gamma_i}^{(\psi)}),G(X_{\Gamma_j}))|+|Cov(F(X_{\Gamma_i}^{(\psi)}),G(X_{\Gamma_j})-G(X_{\Gamma_i}^{(\psi)}))|\\
			&= |E[(F(X_{\Gamma_i})-F(X_{\Gamma_i}^{(\psi)}))G(X_{\Gamma_j})]-E[F(X_{\Gamma_i})-F(X_{\Gamma_i}^{(\psi)})]E[G(X_{\Gamma_j})]|\\
			&~+|E[(G(X_{\Gamma_j})-G(X_{\Gamma_j}^{(\psi)}))F(X_{\Gamma_i}^{(\psi)})]-E[G(X_{\Gamma_j})-G(X_{\Gamma_j}^{(\psi)})]E[F(X_{\Gamma_i}^{(\psi)})]|\\
			&\leq 2\Big(\norm{G}_\infty E\big[|F(X_{\Gamma_i})-F(X_{\Gamma_i}^{(\psi)})|\big]+ \norm{F}_\infty E\big[|G(X_{\Gamma_j})-G(X_{\Gamma_i}^{(\psi)})|\big]\Big)\\
			&\leq 2 \Big(\norm{G}_\infty Lip(F) \sum_{l=1}^u E[\norm{X_{i_l}-X_{i_l}^{(\psi)}}] +\norm{F}_\infty Lip(G) \sum_{k=1}^v E[\norm{X_{j_k}-X_{j_k}^{(\psi)}}] \Big)\\
			&\leq 2 (u\norm{G}_\infty Lip(F)+v\norm{F}_\infty Lip(G))\\
			&\qquad\times \Big(\int_S\int_{\big(\big(-\frac{{h}}{2},\frac{{h}}{2}\big)^m\big)^c}\textup{tr}(f(A,-s)\Sigma_\Lambda f(A,-s)')ds\pi(dA)\Big)^{\frac{1}{2}},
		\end{align*}
		using (\ref{eq:L1norminequality2}). Therefore, $X$ is $\eta$-weakly dependent with $\eta$-coefficients
		\begin{gather*}
			\eta_X(h)\leq 2 \Big(\int_S\int_{\big(\big(-\frac{{h}}{2},\frac{{h}}{2}\big)^m\big)^c}\textup{tr}(f(A,-s)\Sigma_\Lambda f(A,-s)')ds\pi(dA)\Big)^{\frac{1}{2}},
		\end{gather*}
		which converge to zero as $h$ goes to infinity by applying the dominated convergence theorem.
		\item[(iii)] Since $f\in L^1$, (\ref{equation:intcondfinvar1}) and (\ref{equation:intcondfinvar2}) hold and $f$ is $\Lambda$-integrable. Let $t\in\R^m$, $\psi>0$ and $X_t^{(\psi)}$ be defined as in (i). Moreover, Proposition \ref{proposition:MMAexistencemoments}  implies that $E[X_t]<\infty$. Then, using Proposition \ref{proposition:MMAmoments}
		\begin{align*}
			E\big[\norm{X_t-X_t^{(\psi)}}\big]
			&\leq \Big(\int_S\int_{\big((t-\psi,t+{\psi})^m\big)^c} \big\lVert f(A,-s)\gamma_0\big\rVert ds\pi(dA)\\
			&\qquad+\int_S\int_{\big((t-\psi,t+{\psi})^m\big)^c} \int_{\R^d} \big\lVert f(A,-s)x\big\rVert \nu(dx)ds\pi(dA) \Big).
		\end{align*}
		Now, for $F$, $G$, $X_{\Gamma_i}$ and $X_{\Gamma_j}$ and $\psi$ as described in (i), we get
		\begin{align*}
			|Cov(F(X_{\Gamma_i}),G(X_{\Gamma_j}))| &\leq
			2 (u\norm{G}_\infty Lip(F)+v\norm{F}_\infty Lip(G))\\
			&\quad \times\Bigg( \int_S\int_{\big(\big(-\frac{{h}}{2},\frac{{h}}{2}\big)^m\big)^c} \big\lVert f(A,-s)\gamma_0\big\rVert ds\pi(dA)\\
			&\qquad\quad+\int_S\int_{\big(\big(-\frac{{h}}{2},\frac{{h}}{2}\big)^m\big)^c} \int_{\R^d} \big\lVert f(A,-s)x\big\rVert \nu(dx)ds\pi(dA)\Bigg).
		\end{align*}
		Therefore, $X$ is $\eta$ weakly dependent with $\eta$-coefficients
		\begin{align*}
			\eta_X(h)&\leq 2 \Bigg( \int_S\int_{\big(\big(-\frac{{h}}{2},\frac{{h}}{2}\big)^m\big)^c} \big\lVert f(A,-s)\gamma_0\big\rVert ds\pi(dA)\\
			&\qquad +\int_S\int_{\big(\big(-\frac{{h}}{2},\frac{{h}}{2}\big)^m\big)^c} \int_{\R^d} \big\lVert f(A,-s)x\big\rVert \nu(dx)ds\pi(dA)\!\Bigg),
		\end{align*}
		which converge to zero as $h$ goes to infinity by applying the dominated convergence theorem.
	\end{enumerate}
	\noindent
	Part (ii) and (iv) of the Proposition follow from the above results, analogously to Proposition \ref{proposition:mmathetaweaklydep} (ii) and (iv).
\end{proof}

\subsection{Proofs of Section \ref{sec3-8}}
\begin{proof}[Proof of Theorem \ref{theorem:levydrivenexpdecaykernel}]
	Let $\norm{A}_F=\sqrt{tr(AA')}$ for $A\in M_{n\times d}(\R)$ denote the Frobenius norm and $\norm{x}_1=\sum_{\nu=1}^m|x^{(\nu)}|$ for $x\in\R^m$. From Proposition \ref{proposition:mmaetaweaklydep} it follows that $X$ is $\eta$-weakly dependent with $\eta$-coefficients
	\begin{align*}
		\eta_X(h)&=\Bigg(\int_{\left(\left(-\frac{h}{2},\frac{h}{2}\right)^m\right)^c}\textup{tr}(g(-s)\Sigma_Lg(-s)')ds\Bigg)^{\frac{1}{2}}
		=\Bigg(\int_{\left(\left(-\frac{h}{2},\frac{h}{2}\right)^m\right)^c} \norm{g(-s)\Sigma^{\frac{1}{2}}}_F^2  ds\Bigg)^{\frac{1}{2}}\\
		&\leq \Bigg(\norm{\Sigma^{\frac{1}{2}}}_F^2 \int_{\left(\left(-\frac{h}{2},\frac{h}{2}\right)^m\right)^c} \norm{g(-s)}_F^2  ds\Bigg)^{\frac{1}{2}}
		\leq \Bigg(\norm{\Sigma^{\frac{1}{2}}}_F^2M \int_{\left(\left(-\frac{h}{2},\frac{h}{2}\right)^m\right)^c}  e^{-K\norm{s}_1}ds\Bigg)^{\frac{1}{2}}\\
		&= \norm{\Sigma^{\frac{1}{2}}}_F M^{\frac{1}{2}}\Bigg(\int_{\R^m}  e^{-K\norm{s}_1}ds-\int_{\left(-\frac{h}{2},\frac{h}{2}\right)^m}  e^{-K\norm{s}_1}ds\Bigg)^{\frac{1}{2}}\\
		&=\norm{\Sigma^{\frac{1}{2}}}_F M^{\frac{1}{2}}\Bigg(\left(\frac{1}{2K}\right)^m-\left(\frac{1}{2K}-\frac{e^{-\frac{K}{2}h}}{2K}\right)^m \Bigg)^{\frac{1}{2}}\\
		&=  \frac{\norm{\Sigma^{\frac{1}{2}}}_FM^{\frac{1}{2}}}{(2K)^\frac{m}{2}}\Bigg(1-\left(1-e^{-\frac{K}{2}h}\right)^m \Bigg)^{\frac{1}{2}}
		\leq \frac{m\norm{\Sigma^{\frac{1}{2}}}_FM^{\frac{1}{2}}}{(2K)^\frac{m}{2}} e^{-\frac{K}{4}h},
	\end{align*}
	where the last inequality follows from Bernoulli's inequality.
\end{proof}

\subsection{Proofs of Section \ref{sec4-2}}
\label{sec5-5}
\begin{proof}[Proof of Proposition \ref{proposition:ambitthetaweaklydep}]
	\begin{enumerate}[(i)]
		\item Let $(t,x)\in\R\times\R^{m}$, $\psi>0$. We define the two truncated sequences 
		\begin{align*}
			&\tilde{Y}_{t}^{(\psi)}(x)=\int_{A_t(x)\backslash V_{(t,x)}^\psi}l(x-\xi,t-s)\sigma_s(\xi)\Lambda(d\xi,ds), \,\,\, \text{and}\\
			&Y_{t}^{(\psi)}(x)=\int_{A_{t}(x)\backslash V_{(t,x)}^\psi}l(x-\xi,t-s)\sigma_s^{(\psi)}(\xi)\Lambda(d\xi,ds),\,\, \text{where}\\
			&\sigma_t^{(\psi)}(x)=\int_S\int_{A_{(t,x)}^\sigma(x)\backslash V_{(t,x)}^\psi}j(x-\xi,t-s) \Lambda^\sigma(dA,d\xi,ds).
		\end{align*}
		Since the kernel function $j$ is square integrable we have that (\ref{equation:intcond1}), (\ref{equation:intcond2}) and (\ref{equation:intcond3}) hold. Therefore, $j$ is $\Lambda^\sigma$-integrable and $\sigma$ is well-defined and stationary. Now, by Proposition \ref{proposition:MMAexistencemoments} it holds that $\sigma_t(x)\in L^2(\Omega)$. Since additionally $l\in L^2(\R^m\times\R)$ and $\sigma$ is stationary, it holds that $l\sigma\in L^2(\Omega\times\R^m\times\R)$. This implies that $l\sigma\in L^2(\R^m\times\R)$ almost surely. Then, $l\sigma$ satisfies (\ref{equation:intcond1}), (\ref{equation:intcond2}) and (\ref{equation:intcond3}) almost surely and the ambit field $Y$ is well-defined. Analogous to Proposition \ref{proposition:mmathetaweaklydep} and using Proposition \ref{proposition:ambitmoments} 
		\begin{align*}
			&E\big[|Y_{t}(x)-Y_{t}^{(\psi)}(x)|\big]\leq E\big[|Y_{t}(x)-\tilde{Y}_{t}^{(\psi)}(x)|\big] + E\big[|\tilde{Y}_{t}^{(\psi)}(x)-Y_{t}^{(\psi)}(x)|\big] \\
			&=E\bigg[ \Big|\int_{A_t(x)\cap V_{(t,x)}^\psi}l(x-\xi,t-s)\sigma_s(\xi)\Lambda(d\xi,ds)\Big|\bigg]\\
			&\quad+E\bigg[ \Big|\int_{A_t(x)\backslash V_{(t,x)}^\psi}l(x-\xi,t-s)(\sigma_s(\xi)-\sigma_s^{(\psi)}(\xi))\Lambda(d\xi,ds)\Big|\bigg] \\ 
			&\leq E\Bigg[ \bigg( \int_{A_t(x)\cap V_{(t,x)}^\psi}l(x-\xi,t-s)\sigma_s(\xi)\Lambda(d\xi,ds)\bigg)^2\Bigg]^{\frac{1}{2}}\\
			&\quad+E\Bigg[ \bigg( \int_{A_t(x)\backslash V_{(t,x)}^\psi} l(x-\xi,t-s)(\sigma_s(\xi)-\sigma_s^{(\psi)}(\xi))\Lambda(d\xi,ds))\bigg)^2\Bigg]^{\frac{1}{2}}.
		\end{align*}
		Using Proposition \ref{proposition:ambitmoments} and the translation invariance of $A_t(x)$ and $V_{(t,x)}^\psi$, the above is equal to
		\begin{align*}
			&\Big(\Sigma_\Lambda  E[\sigma_0(0)^2] \int_{A_0(0) \cap V_{(0,0)}^\psi}l(-\xi,-s)^2 d\xi ds\Big)^{\frac{1}{2}}\\
			&\quad+ \Bigg( E\left[\left(\int_S\int_ {A_0^\sigma(0)\cap V_{(0,0)}^\psi} j(A,-\xi,-s) \,\Lambda^\sigma(dA,d\xi,ds) \right)^2\right]\Sigma_\Lambda \, \int_{A_0(0) \backslash V_{(0,0)}^\psi} l(-\xi ,-s)^2 d\xi ds\,\Bigg)^{\frac{1}{2}}\\
			&=\Big(\Sigma_\Lambda  E[\sigma_0(0)^2] \int_{A_0(0) \cap V_{(0,0)}^\psi}l(-\xi,-s)^2 d\xi ds\Big)^{\frac{1}{2}}\\
			&\quad+ \Bigg(\Bigg(\Sigma_{\Lambda^\sigma} \int_S\int_ {A_0^\sigma(0)\cap V_{(0,0)}^\psi} j(A,-\xi,-s)^2 d\xi ds \pi(dA)\\
			&\quad+ \mu_{\Lambda^\sigma}^2\Bigg( \int_S\int_ {A_0^\sigma(0)\cap V_{(0,0)}^\psi} j(A,-\xi,-s) \, d\xi ds \pi(dA)\Bigg)^2 \Bigg)
			\Sigma_\Lambda \, \int_{A_0(0) \backslash V_{(0,0)}^\psi} l(-\xi ,-s)^2 d\xi ds\Bigg)^{\frac{1}{2}}.
		\end{align*}
		Now let $G\in\mathcal{G}_1$ and $F\in\mathcal{G}^*_u$, i.e. $F,G$ are bounded and $G$ additionally Lipschitz-continuous for $u\in\N, h\in\R^{+}, \Gamma=\{(t_{i_1},x_{i_1}),\ldots,(t_{i_u},x_{i_u})\}\in(\R\times\R^m)^u$ and $(t_j,x_j)\in\R\times\R^m$ such that $(t_{i_1},x_{i_1}),\ldots,(t_{i_u},x_{i_u})\in V_{(t_j,x_j)}^h$. For $a\in\{1,\ldots,u\}$ define
		\begin{align*}
			&Y_{t_{i_a}}(x_{i_a})=\int_{A_{t_{i_a}}(x_{i_a})}l(x_{i_a}-\xi,t_{i_a}-s)\sigma_s(\xi) \Lambda(d \xi ,ds),\,\, \text{ and } \\
			&Y_{t_j}^{(\psi)}(x_j)=\int_{A_{t_j}(x_j)\backslash V_{(t_j,x_j)}^{\psi(h)}}l(x_j-\xi,t_j-s)\sigma_s^{(\psi)}(\xi)\Lambda(d\xi,ds).
		\end{align*}
		W.l.o.g. we assume that $(t_{i_a},x_{i_a})\leq_{lex}(t_{i_u},x_{i_u})$ for all $a\in\{1,\ldots,u\}$. Since $A_0(0)\cup A_0^\sigma(0)$ satisfy (\ref{condition:scalarproduct}) we find analogous to (\ref{equation:psi}) a function $\psi(h)= \frac{-hb}{2\sqrt{m+1}}$, such that $A_{s_1}^\sigma(\xi_1)$ and $A_{s_2}^\sigma(\xi_2)\backslash V_{(s_2,\xi_2)}^{\psi(h)}$ are disjoint for all $(s_1,\xi_1)\in A_{(t_{i_u},x_{i_u})}$ and $(s_2,\xi_2)\in  A_{(t_{j},x_{j})}\backslash V_{(t_j,x_j)}^{\psi(h)}$ or have intersection with zero Lebesgue measure. Then, by the definition of a L\'evy basis we get that $\sigma_{s_1}(\xi_1)$ and $\sigma_{s_2}^{\psi(h)}(\xi_2)$ are independent.
		Furthermore, it holds that $A_{t_{i_u}}(x_{i_u})$ and $A_{t_{j}}(x_{j})\backslash V_{(t_j,x_j)}^{\psi(h)}$ are disjoint. We set $\psi=\psi(h)$ throughout. Finally, we get that $Y_{t_{i_a}}(x_{i_a})$ and $Y_{t_j}^{(\psi(h))}(x_j)$ are independent for all $a\in\{1,\ldots,u\}$ and therefore also $F(Y_\Gamma)$ and $G(Y_{t_j}^{(\psi(h))}(x_j))$. Now
		\begin{align*}
			&|Cov(F(Y_\Gamma),G(Y_{t_j}(x_j)))| \\
			&\leq |Cov(F(X_\Gamma),G(Y_{t_j}^{(\psi(h))}(x_j)))|+|Cov(F(Y_\Gamma),G(Y_{t_j}(x_j))-G(Y_{t_j}^{(\psi(h))}(x_j)))|\\
			&=|E[(G(Y_{t_j}(x_j))-G(Y_{t_j}^{(\psi(h))}(x_j))) F(X_\Gamma)]\\
			&\quad -E[G(Y_{t_j}(x_j))-G(Y_{t_j}^{(\psi(h))}(x_j))]E[F(X_\Gamma)]|\\
			&\leq 2\norm{F}_\infty E\big[|(G(Y_{t_j}(x_j))-G(Y_{t_j}^{(\psi(h))}(x_j)))|\big]\\
			&\leq 2Lip(G) \norm{F}_\infty E[\norm{Y_{t_j}(x_j)-Y_{t_j}^{(\psi(h))}(x_j)}]\\
			&\leq 2 Lip(G)\norm{F}_\infty\Bigg(\Big(\Sigma_\Lambda  E[\sigma_0(0)^2] \int_{A_0(0) \cap V_{(0,0)}^{\psi(h)}}l(-\xi,-s)^2 d\xi ds\Big)^{\frac{1}{2}}\\
			&+ \Bigg(\Bigg(\Sigma_{\Lambda^\sigma} \int_S\int_ {A_0^\sigma(0)\cap V_{(0,0)}^{\psi(h)}} j(A,-\xi,-s)^2 d\xi ds \pi(dA)\\
			&+ \mu_{\Lambda^\sigma}^2\!\Bigg(\!\! \int_S\!\int_ {A_0^\sigma(0)\cap V_{(0,0)}^{\psi(h)}}\!\! j(A,-\xi,-s) d\xi ds \pi(dA)\Bigg)^2 \Bigg)
			\Sigma_\Lambda \!\!  \int_{A_0(0) \backslash V_{(0,0)}^{\psi(h)}}\!\!\!\!\!l(-\xi ,-s)^2 d\xi ds\Bigg)^{\frac{1}{2}}\Bigg),
		\end{align*}
		using the above inequality for $E\big[|Y_{t}(x)-Y_{t}^{(\psi(h))}(x)|\big]$. Therefore, $Y$ is $\theta$-lex weakly dependent with $\theta$-lex-coefficients
		\begin{align*}
			\theta_Y(h)&\leq 2\Bigg(\Big(\Sigma_\Lambda  E[\sigma_0(0)^2] \int_{A_0(0) \cap V_{(0,0)}^{\psi(h)}}l(-\xi,-s)^2 d\xi ds\Big)^{\frac{1}{2}}\\
			&\qquad+ \Bigg(\Bigg(\Sigma_{\Lambda^\sigma} \int_S\int_ {A_0^\sigma(0)\cap V_{(0,0)}^{\psi(h)}} j(A,-\xi,-s)^2 d\xi ds \pi(dA)\\
			&\qquad+ \mu_{\Lambda^\sigma}^2\!\!\Bigg( \int_S\int_ {A_0^\sigma(0)\cap V_{(0,0)}^{\psi(h)}}\!\! j(A,-\xi,-s) d\xi ds \pi(dA)\Bigg)^2 \Bigg)\\
			&\qquad\times\Sigma_\Lambda   \int_{A_0(0) \backslash V_{(0,0)}^{\psi(h)}}\!\!\!\!\!l(-\xi ,-s)^2 d\xi ds\Bigg)^{\frac{1}{2}}\Bigg),
		\end{align*}
		which converges to zero as $h$ goes to infinity by applying the dominated convergence theorem.\\
		\item Let $(t,x)\in\R\times\R^m$ and $\psi>0$, $Y_t^{(\psi)}(x)$ and $\tilde{Y}_t^{(\psi)}(x)$ be defined as in (i). By Proposition \ref{proposition:ambitmoments}
		\begin{align*} 
			&E\big[|Y_{t}(x)-\tilde{Y}_{t}^{(\psi)}(x)|\big] + E\big[|\tilde{Y}_{t}^{(\psi)}(x)-Y_{t}^{(\psi)}(x)|\big]\\
			&\leq\Bigg(\Big(\Sigma_\Lambda  E[\sigma_0(0)^2]
			\int_{A_t(x) \cap V_{(t,x)}^\psi}l(x-\xi,t-s)^2 d\xi ds \\
			&\quad+\mu_\Lambda E[\sigma_0(0)^2] \Big(\int_{A_t(x) \cap V_{(t,x)}^\psi}l(x-\xi,t-s) d\xi ds\Big)^2  \Bigg)^{\frac{1}{2}}\\
			&\quad+ \Bigg( \Bigg(\Sigma_{\Lambda^\sigma} \int_S\int_ {A_0^\sigma(0)\cap V_{(0,0)}^\psi} j(A,-\xi,-s)^2 d\xi ds \pi(dA)\\
			&\quad+ \mu_{\Lambda^\sigma}^2\Bigg( \int_S\int_ {A_0^\sigma(0)\cap V_{(0,0)}^\psi} j(A,-\xi,-s) d\xi ds \pi(dA)\Bigg)^2 \Bigg)\\
			&\qquad\times\Bigg( \Sigma_\Lambda   \int_{A_t(x) \backslash V_{(t,x)}^\psi}\!\!\!\!\!\!\!l(x-\xi ,t-s)^2 d\xi ds
			+ \mu_\Lambda  \Big(\int_{A_t(x) \backslash V_{(t,x)}^\psi}\!\!\!\!\!\!\!l(x-\xi ,t-s) d\xi ds\Big)^2 \Bigg)\Bigg)^{\frac{1}{2}}.
		\end{align*} 
		Finally, we can proceed as in the proof of part (i) to obtain the stated bound for the $\theta$-lex-coefficients.\\
		\item Note that the kernel function $j$ is square integrable such that $\sigma$ is well defined and stationary. Now, by Proposition \ref{proposition:MMAexistencemoments} it holds that $\sigma\in L^1(\Omega)$. Since additionally $l\in L^1(\R^m\times\R)$ and $\sigma$ is stationary it holds that $l\sigma\in L^1(\Omega\times\R^m\times\R)$. This implies that $l\sigma\in L^1(\R^m\times\R)$ almost surely. Then $l\sigma$ satisfies (\ref{equation:intcondfinvar1}) and (\ref{equation:intcondfinvar2}) almost surely and the ambit field $Y$ is well defined. Let $(t,x)\in\R\times\R^m$ and $\psi>0$ be defined as in (i). By Proposition \ref{proposition:ambitmoments}
		\begin{align*}
			&E\big[|Y_{t}(x)-\tilde{Y}_{t}^{(\psi)}(x)|\big] + E\big[|\tilde{Y}_{t}^{(\psi)}(x)-Y_{t}^{(\psi)}(x)|\big]\\
			&\leq E[|\sigma_0(0)|]\Big(|\gamma_0|+\int_{\R^d}|y|\nu(dy)\Big) \Big(\int_{A_0(0)\cap V_{(0,0)}^\psi} |l(-\xi,-s)| d\xi ds \Big)\\
			&\quad+E[|\sigma_0(0)-\sigma_0^{(\psi)}(0)|]\Big(|\gamma_0|+\int_{\R^d}|y|\nu(dy)\Big) \Big(\int_{A_0(0)\backslash V_{(0,0)}^\psi} |l(-\xi,-s)| d\xi ds \Big) .
		\end{align*}
		Finally, we obtain a bound for the $\theta$-lex-coefficients by proceeding as in (i)
	\end{enumerate}
\end{proof}

\begin{proof}[Proof of Proposition \ref{proposition:ambitmdepthetaweaklydep}]
	Let $(t,x)\in\R\times\R^{m}$ and $\psi>0$ be defined as in Proposition \ref{proposition:ambitthetaweaklydep}. We define the truncated sequence
	\begin{gather*}\label{equation:truncatedambitthetaiid}
		Y_{t}^{(\psi)}(x)=\int_{A_t(x)\backslash V_{(t,x)}^\psi}l(x-\xi,t-s)\sigma_s(\xi)\Lambda(d\xi,ds).
	\end{gather*}
	Since $l\in L^2(\R^m\times\R)$, $\sigma\in L^2(\Omega)$ and $\sigma$ is stationary, it holds that $l\sigma\in L^2(\Omega\times\R^m\times\R)$. This implies $l\sigma\in L^2(\R^m\times\R)$ almost surely. Then $l\sigma$ satisfies (\ref{equation:intcond1}), (\ref{equation:intcond2}) and (\ref{equation:intcond3}) almost surely and the ambit field $Y$ is well defined. By Proposition \ref{proposition:ambitmoments} 
	\begin{align*}
		E\big[|Y_{t}(x)-Y_{t}^{(\psi)}(x)|\big]&=E\bigg[ \Big|\int_{A_t(x)\cap V_{(t,x)}^\psi}l(x-\xi,t-s)\sigma_s(\xi)\Lambda(d\xi,ds)\Big|\bigg]\\
		&\leq E\Bigg[ \bigg( \int_{A_t(x)\cap V_{(t,x)}^\psi}l(x-\xi,t-s)\sigma_s(\xi)\Lambda(d\xi,ds)\bigg)^2\Bigg]^{\frac{1}{2}}.
	\end{align*}
	Using the translation invariance of $A_t(x)$ and $V_{(t,x)}^\psi$ this is equal to
	\begin{gather*}
		\Big(\Sigma_\Lambda E[\sigma_0(0)] \int_{A_0(0) \cap V_{(0,0)}^\psi}l(-\xi,-s)^2 d\xi ds\Big)^{\frac{1}{2}}.
	\end{gather*}
	Define $\Gamma \in (\R\times\R^m)^u$, $(t_{j},x_{j})\in\R\times\R^m$ as in the proof of Proposition \ref{proposition:ambitthetaweaklydep}. Since $\sigma$ is $p$-dependent we get that $Y_\Gamma$ and $Y_{t_j}^{(\psi)}(x_j)$ are independent for a sufficiently big $h$. Then, for sufficiently big $h$, $Y$ is $\theta$-lex weakly dependent with $\theta$-lex-coefficients
	\begin{gather*}
		\theta_Y(h)\leq 2 \Big(\Sigma_\Lambda  E[\sigma_0(0)^2] \int_{A_0(0) \cap V_{(0,0)}^{\psi(h)}}l(-\xi,-s)^2 d\xi ds\Big)^{\frac{1}{2}},
	\end{gather*}
	which converge to zero as $h$ goes to infinity by applying the dominated convergence theorem.
\end{proof}

%
%

\section*{Acknowledgements}
The authors are grateful to two anonymous referees for helpful comments, which considerably improved this work.
The third author was supported by the scholarship program of the Hanns-Seidel Foundation, funded by the Federal Ministry of Education and Research.

\end{document}